\newcommand{\Scal}{\mathcal{S}}
\newcommand{\KL}{{\rm KL}}
\newcommand{\ep}{\epsilon}
\newcommand{\Hrm}{{\rm H}}
\newcommand{\GL}{{\rm GL}}
\newcommand{\Xcal}{\mathcal{X}}
\newcommand{\dettwo}{\mathrm{det_2}}
\newcommand{\Xfrak}{\mathfrak{X}}
\newcommand{\SymTr}{\mathrm{SymTr}}
\newcommand{\varep}{\varepsilon}
\newcommand{\Hcal}{\mathcal{H}}
\newcommand{\Nbb}{\mathbb{N}}
\newcommand{\PC}{\mathscr{PC}}
\newcommand{\Gauss}{\mathrm{Gauss}}
\newcommand{\Sym}{\mathrm{Sym}}
\newcommand{\Tr}{\mathrm{Tr}}
\newcommand{\HS}{\mathrm{HS}}
\newcommand{\Ncal}{\mathcal{N}}
\newcommand{\trace}{\mathrm{tr}}
\newcommand{\R}{\mathbb{R}}
\newcommand{\compose}{\ensuremath{\circ}}
\newcommand{\la}{\ensuremath{\langle}}
\newcommand{\ra}{\ensuremath{\rangle}}
\newcommand{\mapto}{\ensuremath{\rightarrow}}
\newcommand{\Lcal}{\mathcal{L}}
\newcommand{\range}{\mathrm{range}}
\newcommand{\approach}{\ensuremath{\rightarrow}}
\newcommand{\equivalent}{\ensuremath{\iff}}
\newcommand{\tr}{\mathrm{tr}}
\newcommand{\imply}{\ensuremath{\Rightarrow}}
\newcommand{\SymHS}{\mathrm{SymHS}}
\newcommand{\mysqrt}{\mathrm{sqrt}}
\newcommand{\aiHS}{\mathrm{aiHS}}
\newcommand{\Bsc}{\mathscr{B}}
\newcommand{\Exp}{\mathrm{Exp}}
\newcommand{\Log}{\mathrm{Log}}
\title{Fisher-Rao geometry of equivalent Gaussian measures on infinite-dimensional  Hilbert spaces
}
\titlerunning{Fisher-Rao Riemannian geometry of Gaussian measures on Hilbert space}
\author{H\`a Quang Minh}
\institute{RIKEN Center for Advanced Intelligence Project, Tokyo, Japan
\\
\email{minh.haquang@riken.jp}
}
\begin{document}
\maketitle              
\begin{abstract}
This work presents an explicit description of the Fisher-Rao Riemannian metric on the Hilbert manifold
of equivalent centered Gaussian measures on an infinite-dimensional Hilbert space. We show
that the corresponding quantities from the finite-dimensional setting of Gaussian densities on Euclidean space,
including the Riemannian metric, Levi-Civita connection, curvature, geodesic curve, and Riemannian distance,
when properly formulated, directly generalize to this setting. Furthermore, we discuss the connection with
the Riemannian geometry of positive definite unitized Hilbert-Schmidt operators on Hilbert space, which can be viewed as a regularized version
of the current setting. 
\keywords{Fisher-Rao metric  \and Gaussian measures \and Hilbert space \and positive Hilbert-Schmidt operators.}
\end{abstract}

\section{Introduction}
\label{section:intro}

This work studies the Fisher-Rao metric, an object of central importance in information geometry,
on the set of all centered Gaussian measures on a Hilbert space that are equivalent to a fixed one. This is the infinite-dimensional Hilbert space generalization of the Fisher-Rao metric on the set of all Gaussian densities on $\R^n$.
We show that, in this setting, many quantities of interest in the corresponding Riemannian manifold structure admit explicit expressions that directly generalize those in the finite-dimensional setting. We also show that these results are closely connected with the Riemannian geometry of the set of positive definite unitized Hilbert-Schmidt operators on Hilbert space.

We first briefly review the Fisher-Rao metric,
for more detail we refer to e.g. 
\cite{Amari:InformationGeometry2000},  with the focus on the finite-dimensional setting of Gaussian densities on $\R^n$.
Let $\Scal$ be a family of probability density functions $P_{\theta}$ on $\Xcal = \R^n$, parametrized by 
a parameter $\theta = (\theta^1, \ldots, \theta^k) \in \Theta$, where $\Theta$ is an open subset  in $\R^k$, for some $k \in \Nbb$, that is 
$	\Scal = \{P_{\theta} = P(x;\theta) \; | \; \theta = (\theta^1, \ldots, \theta^k) \in \Theta \subset \R^k\}$,
where the mapping $\theta \mapto P_{\theta}$ is assumed to be injective. Such an $\Scal$ is called a $k$-dimensional {\it statistical model}
or  a {\it parametric model} on $\Xcal$. Assume further that for each fixed $x \in \Xcal$,
the mapping $\theta \mapto P(x;\theta)$ is $C^{\infty}$, so that all partial derivatives, such as $\frac{\partial P(x;\theta)}{\partial \theta^i}$, $1 \leq i \leq k$,
are well-defined and continuous.

A $k$-dimensional statistical model $\Scal$ can be considered as a smooth manifold. 
%
At each point $\theta \in \Theta$, 
the {\it Fisher information matrix} \cite{fisher1922mathematical} of $\Scal$ at 
$\theta$ is the $k \times k$ matrix $G(\theta) = [g_{ij}(\theta)]$, $1\leq i,j \leq k$,
with the $(i,j)$th entry given by
\begin{align}
	g_{ij}(\theta) = \int_{\R^n}\frac{\partial \ln P(x;\theta)}{\partial \theta^i}\frac{\partial \ln P(x;\theta)}{\partial \theta^j}P(x;\theta)dx.
\end{align}
It is clear
that the Fisher information matrix $G(\theta)$ is symmetric, positive semi-definite. Assume further that
$G(\theta)$ is strictly positive definite $\forall \theta \in \Theta$, then it defines an inner product on the tangent space $T_{P_{\theta}}(\Scal)$, via the
inner product on the basis $\{\frac{\partial}{\partial \theta^j}\}_{j=1}^k$ of $T_{P_{\theta}}(\Scal)$, by
$	\left\la \frac{\partial}{\partial \theta^i}, \frac{\partial}{\partial \theta^j}\right\ra_{P_{\theta}} = g_{ij}(\theta).
$
This inner product defines a Riemannian metric on $\Scal$, the so-called {\it Fisher-Rao metric}, or {\it Fisher information metric} \cite{Rao1945}, turning $\Scal$ into a Riemannian manifold.

{\bf Gaussian density setting}. 
Let $\Sym(n)$ denote the set of $n \times n$ real symmetric matrices.
Let $\Sym^{++}(n)$ denote the set of $n\times n$ real symmetric, positive definite matrices.
Consider the family $\Scal$ of multivariate Gaussian density functions on $\R^n$ with mean zero
\begin{align}
	\label{equation:Gaussian-family}
	\Scal = \left\{P(x;\theta) = \frac{1}{\sqrt{(2\pi)^n \det(\Sigma(\theta))}}\exp\left(-\frac{1}{2}x^T\Sigma(\theta)^{-1}x\right)\right.,
	\nonumber
	\\ 
	\left.\Sigma(\theta) \in \Sym^{++}(n), \theta \in \R^{k}\right\}.
\end{align}
Here $k = \frac{n(n+1)}{2}$ and $\theta = [\theta^1, \ldots, \theta^k]$, with the $\theta^j$'s corresponding
to the upper triangular entries in $\Sigma(\theta)$ according to the following order:
$\Sigma(\theta)_{11} = \theta^{1}$, $\Sigma(\theta)_{12} = \theta^{2}$, $\ldots, \Sigma(\theta)_{22} = \theta^{n+1}$, 
$\ldots, \Sigma(\theta)_{nn} = \theta^{\frac{n(n+1)}{2}}$.
%
In this case, the Fisher information matrix is given by the following (see e.g \cite{Skov:Riemannian1984,Lenglet:Gaussian2006,Felice:Gaussian2017})
	\begin{align}
		\label{equation:Fisher-Gaussian-finite}
		g_{ij}(\theta) = \frac{1}{2}\trace[\Sigma^{-1}(\partial_{\theta^i}\Sigma) \Sigma^{-1}(\partial_{\theta^j}\Sigma)], \;\; 1 \leq i,j \leq k,
	\end{align}
	where $\partial_{\theta^i} = \frac{\partial}{\partial \theta^i}$, $\partial_{\theta^j} = \frac{\partial}{\partial \theta^j}$.
With the one-to-one correspondence $P_{\theta} \leftrightarrow \Sigma(\theta)$,
we can identify the statistical manifold $\Scal$ with the manifold $\Sym^{++}(n)$ and the corresponding tangent space $T_{P_{\theta}}(\Scal)$ with the tangent space $T_{\Sigma(\theta)}(\Sym^{++}(n)) \cong \Sym(n)$.
The corresponding Riemannian metric on $\Sym^{++}(n)$ is given by  
\begin{align}
	\la A,B\ra_{\Sigma} &= \frac{1}{2}\trace(\Sigma^{-1}A\Sigma^{-1}B), \;\; A,B \in \Sym(n), \Sigma \in \Sym^{++}(n)
	\label{equation:metric-Sym++(n)-1}
	\\
	&= \frac{1}{2}\trace[(\Sigma^{-1/2}A\Sigma^{-1/2})(\Sigma^{-1/2}B\Sigma^{-1/2})].
	\label{equation:metric-Sym++(n)-1/2}
\end{align}
This is precisely $1/2$ the so-called affine-invariant Riemannian metric on $\Sym^{++}(n)$, which has been studied extensively, see e.g. \cite{Bhatia:2007,Pennec:IJCV2006}.

{\bf Infinite-dimensional Gaussian setting}. In this work, we generalize the Fisher-Rao metric for the Gaussian densities in $\R^n$ to the setting of Gaussian measures on an infinite-dimensional separable Hilbert space $\Hcal$. In the general Gaussian setting, this is not possible since no Lebesgue measure exists on $\Hcal$, hence density functions are not well-defined. What we can show is the generalization of the Fisher-Rao metric to the set of Gaussian measures {\it equivalent to a fixed Gaussian measure}, which is a Hilbert manifold,  along with the corresponding Riemannian connection, curvature tensor, geodesic, and Riemannian distance, all in closed form expressions. Furthermore, we show that this
setting is closely related to the geometric setting of positive definite unitized Hilbert-Schmidt operators in \cite{Larotonda:2007}, which can be viewed as a 
regularized version of the equivalent Gaussian measure setting. The current work
thus provides a link between the geometric framework for positive Hilbert-Schmidt operators in \cite{Larotonda:2007} with the information geometry of Gaussian measures on Hilbert space.

{\bf Related work in the infinite-dimensional setting}.
While most work in information geometry is concerned with the finite-dimensional setting, many authors have also considered the infinite-dimensional setting.
In \cite{pistone1995infinite} and subsequent work \cite{gibilisco1998connections,pistone1999exponential,cena2007exponential}, the authors constructed an infinite-dimensional Banach manifold, modeled on Orlicz spaces, for the set of all probability measures equivalent to a given one. 
In \cite{newton2012infinite}, the author constructed a Hilbert manifold of all probability measures equivalent to a given one, with finite entropy, along with the definition of the Fisher-Rao metric, which is generally a pseudo-Riemannian metric.
In \cite{ay2015informationSufficient,ay2017informationGeometry,ay2018parametrized},
the authors constructed general parametrized measure models and statistical models on a given sample space by utilizing the natural immersion of the set of probability measures into the Banach space of all finite signed measures under the total variation norm.
This framework is independent of the reference measure and encompasses that proposed in \cite{pistone1995infinite}.
The previously mentioned work all deal with highly general settings.
Instead, the current work focuses exclusively with the concrete setting
of equivalent Gaussian measures on Hilbert space, where a concrete Hilbert manifold structure exists and many quantities of interest can be computed explicitly.

{\it A short and preliminary version of this work, containing a summary of the main results, but without proofs, was presented at the
{6th International Conference on Geometric Science of Information} \cite{Minh:GSI2023}}.

\section{Background: Gaussian measures and positive Hilbert-Schmidt operators on Hilbert space}
\label{section:finite}

Throughout the following, let $(\Hcal, \la, \ra)$ be a real, separable Hilbert space, with $\dim(\Hcal) = \infty$ unless explicitly stated otherwise.
For two separable Hilbert spaces {\color{black}$(\Hcal_i, \la,\ra_i)$},$i=1,2$, let $\Lcal(\Hcal_1,\Hcal_2)$ denote the Banach space of bounded linear operators from $\Hcal_1$ to $\Hcal_2$, with operator norm $||A||=\sup_{||x||_1\leq 1}||Ax||_2$.
For $\Hcal_1=\Hcal_2 = \Hcal$, we use the notation $\Lcal(\Hcal)$.

Let $\Sym(\Hcal) \subset \Lcal(\Hcal)$ be the set of bounded, self-adjoint linear operators on $\Hcal$. Let $\Sym^{+}(\Hcal) \subset \Sym(\Hcal)$ be the set of
self-adjoint, {\it positive} operators on $\Hcal$, i.e. $A \in \Sym^{+}(\Hcal) \equivalent A^{*}=A, \la Ax,x\ra \geq 0 \forall x \in \Hcal$. 
Let $\Sym^{++}(\Hcal)\subset \Sym^{+}(\Hcal)$ be the set of self-adjoint, {\it strictly positive} operator on $\Hcal$,
i.e $A \in \Sym^{++}(\Hcal) \equivalent A^{*}=A, \la x, Ax\ra > 0$ $\forall x\in \Hcal, x \neq 0$.
We write $A \geq 0$ for $A \in \Sym^{+}(\Hcal)$ and $A > 0$ for $A \in \Sym^{++}(\Hcal)$.
If $\gamma I+A > 0$, where $I$ is the identity operator,$\gamma \in \R,\gamma > 0$, then $\gamma I+A$ is also invertible, in which case it is called
{\it positive definite}. {\color{black}In general, $A \in  \Sym(\Hcal)$ is said to be positive definite if $\exists M_A > 0$ such that $\la x, Ax\ra \geq M_A||x||^2$ $\forall x \in \Hcal$ - this condition is equivalent to $A$ being both strictly positive and invertible, see e.g. \cite{Petryshyn:1962}.}

The Banach space $\Tr(\Hcal)$  of trace class operators on $\Hcal$ is defined by (see e.g. \cite{ReedSimon:Functional})
$\Tr(\Hcal) = \{A \in \Lcal(\Hcal): ||A||_{\tr} = \sum_{k=1}^{\infty}\la e_k, (A^{*}A)^{1/2}e_k\ra < \infty\}$,
for any orthonormal basis {\color{black}$\{e_k\}_{k \in \Nbb} \subset \Hcal$}.
For $A \in \Tr(\Hcal)$, its trace is defined by $\trace(A) = \sum_{k=1}^{\infty}\la e_k, Ae_k\ra$, which is independent of choice of $\{e_k\}_{k\in \Nbb}$. 

The Hilbert space $\HS(\Hcal_1,\Hcal_2)$ of Hilbert-Schmidt operators from $\Hcal_1$ to $\Hcal_2$ is defined by 
(see e.g. \cite{Kadison:1983})
$\HS(\Hcal_1, \Hcal_2) = \{A \in \Lcal(\Hcal_1, \Hcal_2):||A||^2_{\HS} = \trace(A^{*}A) =\sum_{k=1}^{\infty}||Ae_k||_2^2 < \infty\}$,
for any orthonormal basis $\{e_k\}_{k \in \Nbb}$ in $\Hcal_1$,
with inner product $\la A,B\ra_{\HS}=\trace(A^{*}B)$. For $\Hcal_1 = \Hcal_2 = \Hcal$, we write $\HS(\Hcal)$. 
When $\dim(\Hcal) = \infty$,
{\color{black}$\Tr(\Hcal) \subsetneq \HS(\Hcal) \subsetneq \Lcal(\Hcal)$},
 with $||A||\leq ||A||_{\HS}\leq ||A||_{\tr}$.


{\bf Equivalence of Gaussian measures}. 
On $\R^n$, any two Gaussian densities are equivalent, that is they have the same support, which is all of $\R^n$.
The situation is drastically different in the infinite-dimensional setting.
Let $Q,R$ be two self-adjoint, positive trace class operators on $\Hcal$ such that $\ker(Q) = \ker(R) = \{0\}$. Let $m_1, m_2 \in \Hcal$. 
A fundamental result in the theory of Gaussian measures is the Feldman-Hajek Theorem \cite{Feldman:Gaussian1958}, \cite{Hajek:Gaussian1958}, which states that 
two Gaussian measures $\mu = \Ncal(m_1,Q)$ and 
$\nu = \Ncal(m_2, R)$
are either mutually singular or
equivalent, {\color{black}that is either $\mu \perp \nu$ or $\mu \sim \nu$}.
The necessary and sufficient conditions for
the equivalence of the two Gaussian measures $\nu$ and $\mu$ are given by the following.
\begin{theorem}
	[\cite{Bogachev:Gaussian}, Corollary 6.4.11, \cite{DaPrato:PDEHilbert}, Theorems  1.3.9 and 1.3.10]
	\label{theorem:Gaussian-equivalent}
	Let $\Hcal$ be a separable Hilbert space. Consider two Gaussian measures $\mu = \Ncal(m_1, Q)$,
	$\nu = \Ncal(m_2, R)$ on $\Hcal$. Then $\mu$ and $\nu$ are equivalent if and only if 
	the following
	conditions 
	both hold
	\begin{enumerate}
		\item $m_2 - m_1 \in \range(Q^{1/2})$.
		\item There exists  $S \in  \Sym(\Hcal) \cap \HS(\Hcal)$, without the eigenvalue $1$, such that
		$R = Q^{1/2}(I-S)Q^{1/2}$.
	\end{enumerate}
\end{theorem}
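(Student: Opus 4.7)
The plan is to prove this classical dichotomy by exploiting the transitivity of measure equivalence and splitting the problem into two essentially orthogonal sub-problems: the shift in mean at fixed covariance, and the change of covariance at zero mean. Concretely, I would write
\[
\mu = \Ncal(m_1,Q) \;\sim\; \Ncal(m_1,R) \;\sim\; \Ncal(m_2,R) = \nu,
\]
so that $\mu \sim \nu$ iff both intermediate equivalences hold. Because the dichotomy (Feldman--Hajek) already rules out any intermediate case, it suffices to characterize each link, and then to verify that conditions (1) and (2) in the theorem together amount to the conjunction of those two characterizations.

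For the first link (mean shift, fixed covariance) I would invoke the Cameron--Martin theorem: $\Ncal(a,C) \sim \Ncal(b,C)$ iff $b-a$ lies in the Cameron--Martin space $C^{1/2}(\Hcal)$, with Radon--Nikodym derivative given by the usual exponential shift. Applied to the middle link this yields $m_2-m_1 \in R^{1/2}(\Hcal)$. To match condition (1) of the theorem I would then show, via the factorization $R=Q^{1/2}(I-S)Q^{1/2}$ from condition (2), that $\range(R^{1/2}) = \range(Q^{1/2})$. This reduces to Douglas's range lemma once one observes that $I-S$ is strictly positive and invertible (since $S$ is self-adjoint without eigenvalue $1$ and $R \geq 0$ forces $I-S \geq 0$), so $(I-S)^{1/2}$ is a bounded bijection on $\Hcal$ and does not alter the range of $Q^{1/2}$.

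For the second link (covariance change, zero mean) I would compare the Gaussian measures $\Ncal(0,Q)$ and $\Ncal(0,R)$ via the operator $T := Q^{-1/2}RQ^{-1/2}$, defined on the Cameron--Martin space $\range(Q^{1/2})$, so that $S = I-T$ encodes the deviation of $R$ from $Q$ in normalized coordinates. The heart of the argument is to prove that these measures are equivalent iff $S \in \Sym(\Hcal)\cap\HS(\Hcal)$ without eigenvalue $1$. I would establish the ``if'' direction by computing the Radon--Nikodym derivative through a regularized Carleman--Fredholm determinant expansion in an eigenbasis of $S$, exhibiting an $L^2$ density, and the ``only if'' direction by applying the Feldman--Hajek dichotomy: if $S \notin \HS(\Hcal)$, comparison of characteristic functionals produces divergent cross-entropy, forcing singularity. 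The constraint that $1$ is not an eigenvalue of $S$ is exactly the nondegeneracy condition $\ker(R)=\{0\}$ needed for $\nu$ to be a centered nondegenerate Gaussian on all of $\Hcal$.

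The main technical obstacle is the second step, because $Q$ and $R$ do not generally commute and $Q^{-1/2}$ is unbounded. The natural route via Kakutani's theorem on product measures only works after simultaneous diagonalization, which is unavailable here. I expect the cleanest way around this is to fix an orthonormal eigenbasis of $Q$, reduce $\mu$ to a standard product Gaussian by the Paley--Wiener isomorphism, and view $\nu$ as a Gaussian with covariance $I-S$ relative to this isomorphism; the Hilbert--Schmidt condition then emerges as the summability of eigenvalues of $S$, which is the correct non-commutative replacement for the classical condition $\sum_n (\log(r_n/q_n))^2 < \infty$.
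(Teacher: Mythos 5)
The paper does not actually prove this statement: it is the Feldman--Hajek characterization, quoted directly from Bogachev (Corollary 6.4.11) and Da Prato--Zabczyk (Theorems 1.3.9 and 1.3.10), so there is no in-paper proof to compare against. Your sketch follows the same standard route those references take --- Cameron--Martin for the mean shift, a white-noise/product-measure reduction for the covariance change --- and the individual ingredients you list are sound: the identity $\range(R^{1/2})=\range(Q^{1/2})$ via the polar decomposition, the strict positivity and bounded invertibility of $I-S$ (since $S$ is compact self-adjoint without eigenvalue $1$ and $I-S\geq 0$), and the Carleman--Fredholm construction of the Radon--Nikodym density all reappear as lemmas elsewhere in this paper.

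The genuine gap is in your reduction step. That $\Ncal(m_1,Q)\sim\Ncal(m_1,R)$ together with $\Ncal(m_1,R)\sim\Ncal(m_2,R)$ implies $\mu\sim\nu$ is transitivity and is fine; but the converse --- that $\mu\sim\nu$ forces each intermediate equivalence separately --- does not follow from transitivity, and the Feldman--Hajek dichotomy does not supply it: the dichotomy only says each pair of Gaussian measures is either equivalent or singular, not that equivalence of the endpoints propagates to the two intermediate pairs. The standard mechanism for the splitting is the Hellinger integral $H(\mu,\nu)=\int\sqrt{d\mu\,d\nu}$: on finite-dimensional projections it factors into a covariance-only term and a mean-only term, each lying in $(0,1]$, and $\mu\sim\nu$ precisely when the infimum of the product over all projections is positive, which happens iff each factor separately stays bounded away from zero. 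That computation (or equivalently Kakutani's theorem after the white-noise reduction) is what carries the necessity direction of both conditions, including the ``only if'' of the Hilbert--Schmidt requirement, which you currently dispatch with the unsubstantiated phrase that a non-Hilbert--Schmidt $S$ ``produces divergent cross-entropy, forcing singularity.'' As written, your argument establishes sufficiency of conditions (1)--(2) but not their necessity.
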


{\bf Riemannian geometry of positive definite unitized Hilbert-Schmidt operators}.
Let $\Gauss(\Hcal)$ denote the set of all Gaussian measures on $\Hcal$. Each zero-mean Gaussian measure
$\mu = \Ncal(0,C) \in \Gauss(\Hcal)$ corresponds to an operator $C \in \Sym^{+}(\Hcal) \cap \Tr(\Hcal)$ and vice versa.
The set $\Sym^{+}(\Hcal) \cap \Tr(\Hcal)$ of positive trace class operators is a subset of the set
$\Sym^{+}(\Hcal) \cap \HS(\Hcal)$ of positive Hilbert-Schmidt operators. The generalization
of the affine-invariant Riemannian metric on $\Sym^{++}(n)$ to this set is 
accomplished via extended (unitized) Hilbert-Schmidt operators \cite{Larotonda:2007}, as follows.

{\bf Extended Hilbert-Schmidt operators}. 
In \cite{Larotonda:2007}, the author considered the following set of {\it extended}, or {\it unitized}, Hilbert-Schmidt operators
\begin{align}
	\HS_X(\Hcal) = \{A + \gamma I: A \in \HS(\Hcal), \gamma \in \R\}.
\end{align}
This set is a Hilbert space under the {\it extended Hilbert-Schmidt inner product and extended Hilbert-Schmidt norm}, under which the Hilbert-Schmidt and scalar operators are orthogonal, 
\begin{align}
	\la A+\gamma I, B + \mu I\ra_{\HS_X} = \la A,B\ra_{\HS} + \gamma\mu, \;\; ||A+\gamma I||^2_{\HS_X} = ||A||^2_{\HS} + \gamma^2.
\end{align}
{\bf Manifold of positive definite Hilbert-Schmidt operators}. 
Consider the following subset of {\it (unitized) positive definite Hilbert-Schmidt operators}
\begin{align}
	\PC_2(\Hcal) = \{A+\gamma I > 0: A \in \Sym(\Hcal) \cap\HS(\Hcal), \gamma \in \R\} \subset \HS_X(\Hcal).
\end{align}
The set $\PC_2(\Hcal)$ is an open subset in the Hilbert space $\HS_X(\Hcal)$ and is thus a Hilbert manifold.
It can be equipped with the following Riemannian metric, generalizing the finite-dimensional affine-invariant metric,
\begin{align}
		\label{equation:affine-Riemannian-metric-infinite}
	\la A+\gamma I, B+\mu I\ra_{P} = \la P^{-1/2}(A+\gamma I)P^{-1/2}, P^{-1/2}(B+\mu I)P^{-1/2}\ra_{\HS_X},
\end{align}
for $P \in \PC_2(\Hcal)$, $A+\gamma I,B+\mu I \in T_P(\PC_2(\Hcal)) \cong \Sym(\Hcal) \cap \HS_X(\Hcal))$.
Under this metric, $\PC_2(\Hcal)$ becomes a Cartan-Hadamard manifold, that is it is complete, simply connected, 
with nonpositive sectional curvature. There is a unique geodesic joining every pair 
$(A+\gamma I), (B+\mu I)$, given by
\begin{align}
	\gamma_{AB}(t) =& (A+\gamma I)^{1/2}\exp[t\log((A+\gamma I)^{-1/2}(B+\mu I)(A+\gamma I)^{-1/2})](A+\gamma) I^{1/2},
	\nonumber
	\\
	& 0 \leq t \leq 1.
\end{align}
The Riemannian distance between $(A+\gamma I),(B+\mu I) \in \PC_2(\Hcal)$ is the length of this geodesic and is given by
\begin{align}
		\label{equation:d-aiHS}
	d_{\aiHS}[(A+\gamma I), (B+\mu I)] = ||\log[(A+\gamma I)^{-1/2}(B+\mu I)(A+\gamma I)^{-1/2}]||_{\HS_X}.
\end{align}
The definition of the extended Hilbert-Schmidt norm $||\;||_{\HS_X}$ guarantees that the distance $d_{\aiHS}$ is always
well-defined and finite on $\PC_2(\Hcal)$.
We show below that, when restricted to the subset $\Sym^{+}(\Hcal)\cap \Tr(\Hcal)$, this can be viewed as a regularized version of the exact Fisher-Rao distance between
two equivalent centered Gaussian measures on $\Hcal$. In the next section, we show that the set of 
equivalent Gaussian measures on a separable Hilbert space forms a Hilbert manifold, modeled on the Hilbert space of extended Hilbert-Schmidt operators.

\section{Fisher-Rao metric for equivalent infinite-dimensional Gaussian measures}
\label{section:infinite-gaussian}

Throughout the following,
let $C_0 \in \Sym^{+}(\Hcal) \cap \Tr(\Hcal)$ be fixed, with $\ker(C_0) = {0}$. Let
$\mu_0 =\Ncal(0,C_0)$ be the corresponding Gaussian measure, which is then said to be {\it nondegenerate}.
Consider the set of all zero-mean Gaussian measures on $\Hcal$ equivalent to $\mu_0$,
which, by Theorem \ref{theorem:Gaussian-equivalent}, is given by
\begin{align}
	\label{equation:def-Gauss-HcalC0}
	\Gauss(\Hcal,\mu_0) = \{\mu = \Ncal(0, C), \;\; C = C_0^{1/2}(I-S)C_0^{1/2}, 
	\nonumber
	\\
	S \in  \Sym(\Hcal)\cap \HS(\Hcal), I-S > 0
	\}.
\end{align}
Motivated by Theorem \ref{theorem:Gaussian-equivalent}, we define the following set
\begin{align}
	\label{equation:SymHSHcal<I}
\SymHS(\Hcal)_{<I} = \{S: S \in \Sym(\Hcal) \cap \HS(\Hcal), I-S > 0\}.
\end{align} 
This is an open subset in the Hilbert space $\Sym(\Hcal) \cap \HS(\Hcal)$ and hence is a Hilbert manifold
(for Banach manifolds in general, see e.g. \cite{lang2012fundamentals}).
\begin{lemma}
\label{lemma:Hilbert-manifold-SymHS<I}
The set $\SymHS(\Hcal)_{<I}$ is a Hilbert manifold with tangent space $T_S(\SymHS(\Hcal)_{<I}) \cong \Sym(\Hcal)\cap \HS(\Hcal)$
$\forall S \in  \SymHS(\Hcal)_{<I}$.
\end{lemma}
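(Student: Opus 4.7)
The plan is to exhibit $\SymHS(\Hcal)_{<I}$ as an open subset of the ambient Hilbert space $\Sym(\Hcal) \cap \HS(\Hcal)$. Once this is done, the standard fact that an open subset $U$ of a Hilbert space $\mathbb{E}$ is automatically a smooth Hilbert manifold modeled on $\mathbb{E}$, with $T_pU \cong \mathbb{E}$ canonically at every $p \in U$ (see e.g.\ \cite{lang2012fundamentals}), delivers both conclusions of the lemma at once. So the proof reduces to two items: verifying that $\Sym(\Hcal) \cap \HS(\Hcal)$ is itself a Hilbert space, and verifying that the strict-positivity condition $I - S > 0$ carves out an open set inside it.

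The first item is routine. The adjoint $A \mapsto A^{*}$ is an isometry on $\HS(\Hcal)$ with respect to $\|\cdot\|_{\HS}$, so the fixed-point set $\{A \in \HS(\Hcal): A^{*} = A\} = \Sym(\Hcal) \cap \HS(\Hcal)$ is a closed linear subspace of $\HS(\Hcal)$ and hence a Hilbert space with the restricted inner product $\la\,,\,\ra_{\HS}$.

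The main step is openness. Fix $S_0 \in \SymHS(\Hcal)_{<I}$. Because $S_0$ is compact and self-adjoint, its spectrum is a sequence of real eigenvalues accumulating only at $0$, so the largest eigenvalue $\lambda_{\max}(S_0)$ is attained; the hypothesis $I - S_0 > 0$ then forces $\lambda_{\max}(S_0) < 1$, and we obtain the uniform operator bound $I - S_0 \geq \epsilon I$ with $\epsilon := 1 - \lambda_{\max}(S_0) > 0$. For any $S \in \Sym(\Hcal) \cap \HS(\Hcal)$ with $\|S - S_0\|_{\HS} < \epsilon/2$, the estimate $\|S - S_0\| \leq \|S - S_0\|_{\HS} < \epsilon/2$ yields $\la x, (I - S)x\ra \geq \la x, (I - S_0)x\ra - \|S - S_0\|\|x\|^2 \geq (\epsilon/2)\|x\|^2$ for every $x \in \Hcal$, so $I - S > 0$ and $S \in \SymHS(\Hcal)_{<I}$. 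Thus the $\HS$-ball of radius $\epsilon/2$ around $S_0$ lies in $\SymHS(\Hcal)_{<I}$, proving openness.

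The only genuine obstacle is the upgrade from the pointwise statement $\la x, (I - S_0)x\ra > 0$ for all nonzero $x$ to the uniform lower bound $I - S_0 \geq \epsilon I$; this fails in general for strictly positive bounded self-adjoint operators, and it is precisely the compactness of $S_0$, guaranteed by the Hilbert--Schmidt condition, that forces the largest spectral value of $S_0$ to be attained strictly below $1$ and makes the open-set argument work. With openness established, the identity map serves as a global chart, $\SymHS(\Hcal)_{<I}$ inherits a smooth Hilbert manifold structure modeled on $\Sym(\Hcal) \cap \HS(\Hcal)$, and the tangent space at every $S$ is canonically isomorphic to this model space.
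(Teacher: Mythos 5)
Your proof is correct, and it takes a route that differs from the paper's in where it puts the work. The paper's proof asserts openness of $\SymHS(\Hcal)_{<I}$ in $\Sym(\Hcal)\cap\HS(\Hcal)$ in a single sentence and then spends its effort on the tangent space: it checks that derivatives of curves lie in $\Sym(\Hcal)\cap\HS(\Hcal)$, and conversely realizes each $V\in\Sym(\Hcal)\cap\HS(\Hcal)$ as the velocity of the explicit curve $A(t)=I+S-\exp(-tV)$, verifying $I-A(t)>0$ for small $|t|$ via the positive-definiteness constant $M_S$ of $I-S$. You invert the emphasis: you prove openness carefully (the spectral-gap argument showing that compactness of $S_0$ upgrades strict positivity of $I-S_0$ to a uniform bound $I-S_0\geq\epsilon I$, combined with $\|S-S_0\|\leq\|S-S_0\|_{\HS}$), and then invoke the standard fact that an open subset of a Hilbert space is a Hilbert manifold with canonically identified tangent spaces. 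The uniform lower bound you isolate is exactly the ingredient ($M_S$) the paper uses inside its curve construction, so the two proofs rest on the same analytic fact; your version makes explicit the openness that the paper takes for granted, and once openness is in hand the converse tangent-space direction is automatic (the straight line $S+tV$ already stays in the set for small $t$, making the paper's exponential curve unnecessary). One cosmetic remark: in the case where $S_0$ has no positive eigenvalues, the supremum of its spectrum is $0$ and need not be an attained eigenvalue, so "the largest eigenvalue is attained" is not literally correct there; but then $I-S_0\geq I$ trivially, so the conclusion $\sup\sigma(S_0)<1$ and the uniform bound hold in all cases and nothing in your argument breaks.
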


The set $\Gauss(\Hcal,\mu_0)$
corresponds to the following subset of $\Sym^{+}(\Hcal) \cap \Tr(\Hcal)$
\begin{align}
	\Tr(\Hcal,C_0) = \{C \in \Sym^{+}(\Hcal) \cap \Tr(\Hcal): C = C_0^{1/2}(I-S)C_0^{1/2}
	\nonumber
	\\
	\;\text{for some } S \in \SymHS(\Hcal)_{<I}\}.
	\label{equation:def-TrHC0}
\end{align}
The set $\Tr(\Hcal,C_0)$, equivalently $\Gauss(\Hcal,\mu_0)$, is a Hilbert manifold modeled on $\Sym(\Hcal)\cap\HS(\Hcal)$ via the following bijection
\begin{align}
	\varphi:\Tr(\Hcal,C_0) \mapto \SymHS(\Hcal)_{<I}, \;\;\; \varphi(C) = I-C_0^{-1/2}CC_0^{-1/2}.
\end{align}
We show that $\Tr(\Hcal,C_0)$ can also be embedded as an open subset in a larger Hilbert space, as follows.
Define the following set 
\begin{align}
	\label{equation:def-HSX}
\HS_X(\Hcal,C_0) = \{C_0^{1/2}(A+\gamma I)C_0^{1/2}: A \in \HS(\Hcal), \gamma \in \R\} \subset \Tr(\Hcal).
\end{align}
This is a Hilbert space under the following inner product and norm
\begin{align}
&\la C_0^{1/2}(A+\gamma I)C_0^{1/2}, C_0^{1/2}(B + \mu I)C_0^{1/2}\ra_{\HS_X(\Hcal,C_0)} = \la A,B\ra_{\HS} + \gamma\mu, 
\label{equation:HSX-H-C0-inner-product}
\\
&||C_0^{1/2}(A+\gamma I)C_0^{1/2}||^2_{\HS_X(\Hcal,C_0)} = ||A||^2_{\HS} + \gamma^2.
\label{equation:HSX-H-C0-norm}
\end{align}
$\Tr(\Hcal, C_0)$ is a subset of  the following subspace of self-adjoint operators in $\HS_X(\Hcal,C_0)$
\begin{align}
	\label{equation:def-SymHSX}
	\SymHS_X(\Hcal,C_0) = \{C_0^{1/2}(A+\gamma I)C_0^{1/2}: A \in \Sym(\Hcal) \cap \HS(\Hcal), \gamma \in \R\}.
\end{align}
Since the set $\{S: S \in \Sym(\Hcal)\cap \HS(\Hcal), I-S > 0\}$ is an open subset in the Hilbert space $\Sym(\Hcal) \cap\HS(\Hcal)$
under the $||\;||_{\HS}$ norm, it follows that $\Tr(\Hcal,C_0)$ is an open subset in the Hilbert space
$\SymHS_X(\Hcal,C_0)$ under the $||\;||_{\HS_X(\Hcal,C_0)}$ norm. Thus
$\Tr(\Hcal,C_0)$ is a Hilbert manifold modeled on $\SymHS_X(\Hcal,C_0)$.
By the correspondence $\Ncal(0,C) \in \Gauss(\Hcal,\mu_0)\equivalent C \in \Tr(\Hcal,C_0)$,
we have that the corresponding set of zero-mean Gaussian measures $\Gauss(\Hcal,\mu_0)$ is a Hilbert manifold modeled on $\SymHS_X(\Hcal,C_0)$. Subsequently, we focus on this manifold structure.

{\bf Tangent space}. Let $\Sigma \in \Tr(\Hcal,C_0)$ be fixed, with $\Sigma = C_0^{1/2}(I-S)C_0^{1/2}$, $S \in \SymHS(\Hcal)_{<I}$.
We first specify the tangent space to $\Tr(\Hcal,C_0)$ at $\Sigma$.
Let
\begin{align}
	\label{equation:def-SymHS}
\SymHS(\Hcal,C_0) = \{V = C_0^{1/2}XC_0^{1/2}, X \in \Sym(\Hcal) \cap \HS(\Hcal)\} \subset \SymHS_X(\Hcal,C_0).
\end{align}
\begin{proposition}
	\label{proposition:tangent-space}
	Let $\Sigma \in \Tr(\Hcal,C_0)$ be fixed.
	The tangent space of the Hilbert manifold $\Tr(\Hcal,C_0)$ at $\Sigma$
	is the following Hilbert subspace of $\SymHS_X(\Hcal,C_0)$
	\begin{align}
		T_{\Sigma}(\Tr(\Hcal,C_0)) = \SymHS(\Hcal,C_0) = \SymHS(\Hcal,\Sigma).
	\end{align}
\end{proposition}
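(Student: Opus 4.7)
The plan is to translate the abstract manifold structure given by the chart $\varphi$ into an explicit realization of the tangent space inside the ambient Hilbert space $\SymHS_X(\Hcal,C_0)$, and then show that this realization is independent of the basepoint. Write $\Sigma = C_0^{1/2}(I-S_0)C_0^{1/2}$ with $S_0 \in \SymHS(\Hcal)_{<I}$, and view $\varphi^{-1}: S \mapsto C_0^{1/2}(I-S)C_0^{1/2}$ as a global chart from the open set $\SymHS(\Hcal)_{<I} \subset \Sym(\Hcal) \cap \HS(\Hcal)$ onto $\Tr(\Hcal,C_0)$. Tangent vectors at $\Sigma$ are then pushforwards of elements of $\Sym(\Hcal) \cap \HS(\Hcal)$ via $d\varphi^{-1}|_{S_0}$.

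To make this concrete, for any $X \in \Sym(\Hcal)\cap\HS(\Hcal)$ the curve $t \mapsto C_0^{1/2}(I - S_0 - tX)C_0^{1/2}$ lies in $\Tr(\Hcal,C_0)$ for small $|t|$ (since $\SymHS(\Hcal)_{<I}$ is open) and passes through $\Sigma$ at $t=0$ with derivative $-C_0^{1/2} X C_0^{1/2}$. As $X$ varies, these derivatives sweep out all of $\SymHS(\Hcal,C_0)$. Conversely, any differentiable curve in $\Tr(\Hcal,C_0)$ can be written in the form $C(t) = C_0^{1/2}(I - S(t))C_0^{1/2}$ with $S(t) \in \Sym(\Hcal) \cap \HS(\Hcal)$, so its derivative at $t=0$ is $-C_0^{1/2} S'(0) C_0^{1/2} \in \SymHS(\Hcal,C_0)$. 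This identifies $T_\Sigma\Tr(\Hcal,C_0)$ with $\SymHS(\Hcal,C_0)$; the latter is closed in $\SymHS_X(\Hcal,C_0)$ because the decomposition of $\SymHS_X(\Hcal,C_0)$ into its $\HS$-part and its scalar part is orthogonal under the inner product defined in \eqref{equation:HSX-H-C0-inner-product}, so $\SymHS(\Hcal,C_0)$ is a genuine Hilbert subspace.

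The substantive step is proving $\SymHS(\Hcal,C_0) = \SymHS(\Hcal,\Sigma)$, since $C_0^{1/2}$ and $(I-S_0)^{1/2}$ generally do not commute, and $\Sigma^{1/2}$ is not simply their product. The tool is the polar decomposition of the non-self-adjoint operator $T = C_0^{1/2}(I-S_0)^{1/2}$, which satisfies $T T^* = \Sigma$. Because $C_0^{1/2}$ is injective with dense range (as $\ker(C_0)=\{0\}$) and $(I-S_0)^{1/2}$ is bounded, self-adjoint, and invertible, both $T$ and $T^*$ are injective with dense range, so the partial isometry in $T = \Sigma^{1/2} V$ is in fact unitary. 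This gives the two identities
\begin{equation*}
C_0^{1/2} = \Sigma^{1/2} V (I-S_0)^{-1/2}, \qquad C_0^{1/2} = (I-S_0)^{-1/2} V^* \Sigma^{1/2},
\end{equation*}
the second being the adjoint of the first. Substituting, for any $X \in \Sym(\Hcal)\cap\HS(\Hcal)$,
\begin{equation*}
C_0^{1/2} X C_0^{1/2} = \Sigma^{1/2} \bigl[V (I-S_0)^{-1/2} X (I-S_0)^{-1/2} V^*\bigr] \Sigma^{1/2},
\end{equation*}
and the bracketed operator remains self-adjoint and Hilbert-Schmidt since $X \in \HS$ and we only compose with bounded operators and conjugate by a unitary. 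This proves $\SymHS(\Hcal,C_0) \subseteq \SymHS(\Hcal,\Sigma)$; the reverse inclusion is obtained by the symmetric argument, writing $C_0 = \Sigma^{1/2}(I - S_0')\Sigma^{1/2}$ for some $S_0' \in \SymHS(\Hcal)_{<I}$, which is permissible since equivalence of Gaussian measures is symmetric. The main obstacle is this last step: handling the non-commutativity via the unitary intertwiner from polar decomposition is what forces the basepoint-independence of the tangent space description.
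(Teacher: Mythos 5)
Your proof is correct and follows essentially the same route as the paper: tangent vectors are identified through curves in the global chart, and the equality $\SymHS(\Hcal,C_0)=\SymHS(\Hcal,\Sigma)$ is established via a polar-decomposition unitary exactly as in the paper's Lemma~\ref{lemma:switch-C0-Sigma}. The only cosmetic differences are that you use the straight-line curve $t\mapsto C_0^{1/2}(I-S_0-tX)C_0^{1/2}$ where the paper uses $\Sigma^{1/2}\exp(tY)\Sigma^{1/2}$, and you polar-decompose $C_0^{1/2}(I-S_0)^{1/2}$ directly rather than $(I-T)^{1/2}\Sigma^{1/2}$; both choices are equally valid.
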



We now give the formula for the Fisher metric on $\Gauss(\Hcal,\mu_0)$, using the abstract framework
in \cite{ay2015informationSufficient,ay2017informationGeometry,ay2018parametrized}.
Let $\mu = \Ncal(0,C) \in \Gauss(\Hcal,\mu_0)$, with $C = C_0^{1/2}(I-S)C_0^{1/2}$, $S \in \SymHS(\Hcal)_{<I}$.
Let $\frac{d\mu}{d\mu_0}$ denote its Radon-Nikodym density with respect to $\mu_0$, which has an explicit form, see Section \ref{section:computation-Fisher-metric}.
%
We consider the set
$\Gauss(\Hcal,\mu_0)$ to be parametrized by $S \in \SymHS(\Hcal)_{< I}$.
For a fixed $S \in \SymHS(\Hcal)_{<I}$, the Fisher metric at $S$ is defined to be, for $V_1,V_2 \in T_S(\SymHS(\Hcal)_{<I}) \cong\Sym(\Hcal) \cap \HS(\Hcal)$,
\begin{align}
	\label{equation:Fisher-metric-definition}
	g_S(V_1,V_2) = \int_{\Hcal}D\log\left\{\frac{d\mu}{d\mu_0}(x)\right\}(S)(V_1)D\log\left\{\frac{d\mu}{d\mu_0}(x)\right\}(S)(V_2)d\mu(x).
\end{align} 
Here $D$ denotes the Fr\'echet derivative and the quantity $g_S(V_1,V_2)$ is finite whenever
$D\log\left\{\frac{d\mu}{d\mu_0}\right\}(S)(V_j) \in \Lcal^2(\Hcal,\mu)$, $j=1,2$.
In the current setting, we show that this is always the case, as follows.

\begin{theorem}
[Fr\'echet logarithmic derivative of Radon-Nikodym density]
	\label{theorem:derivative-log-RN-general}
	Let $\mu = \mu(S) = \Ncal(0,C)$, $C = C_0^{1/2}(I-S)C_0^{1/2}$, $S \in \SymHS(\Hcal)_{< I}$.
	Let $S_{*} \in \SymHS(\Hcal)_{< I}$ be fixed but arbitrary, let $C_{*} = C_0^{1/2}(I-S_{*})C_0^{1/2}$ and $\mu_{*} = \Ncal(0,C_{*})$ be the corresponding Gaussian measure. 
	Consider the following three scenarios of increasing generality
	\begin{enumerate}
		\item $S_{*} \in \SymTr(\Hcal)_{< I}$, $V \in \Sym(\Hcal) \cap \Tr(\Hcal)$. In this case
		\begin{align}
			\label{equation:derivative-log-Radon-Nikodym-S-trace-class-1}
			&D\log\left\{\frac{d\mu}{d\mu_0}(x) \right\}(S_{*})(V) 
			\nonumber
			\\
			&= \frac{1}{2}\trace\left[(I-S_{*})^{-1}V\right] - \frac{1}{2}\la C_0^{-1/2}x, (I-S_{*})^{-1}V{(I-S_{*})^{-1}}C_0^{-1/2}x\ra.
		\end{align}
		Here
			$\la C_0^{-1/2}x, (I-S_{*})^{-1}V{(I-S_{*})^{-1}}C_0^{-1/2}x\ra 
			\doteq \lim_{N \approach \infty}\la C_0^{-1/2}P_Nx, (I-S_{*})^{-1}V{(I-S_{*})^{-1}}C_0^{-1/2}P_Nx\ra$, with the limit being taken in $\Lcal^2(\Hcal,\mu_0)$, $P_N = \sum_{k=1}^Ne_k \otimes e_k$, with $\{e_k\}_{k \in \Nbb}$ being the orthonormal eigenvectors of $C_0$,  is the orthogonal projection onto the subspace spanned by $\{e_k\}_{k=1}^N$.
		\item $S_{*} \in \SymHS(\Hcal)_{< I}$, $V \in \Sym(\Hcal) \cap \Tr(\Hcal)$.
		Let $\{S_k\}_{k \in \Nbb}\in \SymTr(\Hcal)_{<I}$ 
		be
		such that $\lim\limits_{k \approach \infty}||S_k-S_{*}||_{\HS} = 0$.
		Then $\forall V \in \Sym(\Hcal) \cap \Tr(\Hcal)$, with the limit being taken in $\Lcal^2(\Hcal,\mu_{*})$,
		\begin{align}
			\label{equation:derivative-log-Radon-Nikodym-S-HS-V-Tr-1}
			D\log\left\{\frac{d\mu}{d\mu_0}(x) \right\}(S_{*})(V) = \lim_{k \approach \infty} D\log\left\{\frac{d\mu}{d\mu_0}(x) \right\}(S_k)(V).
		\end{align}
		
		\item $S_{*} \in \SymHS(\Hcal)_{< I}$, $V \in \Sym(\Hcal) \cap \HS(\Hcal)$.
		Let $\{V_j\}_{j \in \Nbb} \in \Sym(\Hcal) \cap\Tr(\Hcal)$ be
		such that $\lim\limits_{j \approach \infty}||V_j - V||_{\HS} = 0$. Then,
	with the limit being taken in $\Lcal^2(\Hcal,\mu_{*})$,
		\begin{align}
			\label{equation:derivative-log-Radon-Nikodym-S-HS-V-HS-1}
			&D\log\left\{\frac{d\mu}{d\mu_0}(x) \right\}(S_{*})(V) = \lim_{j \approach \infty}D\log\left\{\frac{d\mu}{d\mu_0}(x) \right\}(S_{*})(V_j).
		\end{align}

	\end{enumerate}
	For any $S_{*} \in \SymHS(\Hcal)_{<I}$, for any $V \in \Sym(\Hcal) \cap \HS(\Hcal)$,
	\begin{align}
		\left\| D\log\left\{\frac{d\mu}{d\mu_0}(x) \right\}(S_{*})(V)\right\|^2_{\Lcal^2(\Hcal,\mu_{*})} = \frac{1}{2}||(I-S_{*})^{-1/2}V(I-S_{*})^{-1/2}||^2_{\HS}.
	\end{align}
	For any $S_{*} \in \SymHS(\Hcal)_{<I}$, for any pair $V,W \in \Sym(\Hcal) \cap \HS(\Hcal)$,
\begin{align}
	\label{equation:inner-derivative-log-RN-1}
	&\left\la D\log\left\{\frac{d\mu}{d\mu_0}(x) \right\}(S_{*})(V), D\log\left\{\frac{d\mu}{d\mu_0}(x) \right\}(S_{*})(W)\right\ra_{\Lcal^2(\Hcal,\mu_{*})}
	\nonumber
	\\
	&=\frac{1}{2}\trace[(I-S_{*})^{-1}V(I-S_{*})^{-1}W].
\end{align}

\end{theorem}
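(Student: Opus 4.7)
My plan is to anchor the proof on the explicit Radon--Nikodym density and then propagate the identities by approximation. For $S_* \in \SymTr(\Hcal)_{<I}$ (case 1), the classical formula for equivalent zero-mean Gaussian measures gives
\begin{align*}
\log\frac{d\mu}{d\mu_0}(x) = -\tfrac{1}{2}\log\det(I-S_*) - \tfrac{1}{2}\la C_0^{-1/2}x, [(I-S_*)^{-1}-I]C_0^{-1/2}x\ra,
\end{align*}
where $(I-S_*)^{-1}-I = S_*(I-S_*)^{-1}$ is trace class so the quadratic form is well-defined $\mu_0$-a.s. Differentiating termwise in $S$ at $S_*$ in direction $V \in \Sym(\Hcal)\cap\Tr(\Hcal)$, using the standard operator identities $D[(I-S)^{-1}](S_*)(V) = (I-S_*)^{-1}V(I-S_*)^{-1}$ and $D[\log\det(I-S)](S_*)(V) = -\trace[(I-S_*)^{-1}V]$, immediately yields formula (9). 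The requisite Fr\'echet differentiability in the trace-class topology follows from the convergent Neumann and Fredholm expansions.

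For case 2, I approximate $S_*\in\SymHS(\Hcal)_{<I}$ by finite-rank truncations $S_k\in\SymTr(\Hcal)_{<I}$ with $\|S_k - S_*\|_{\HS} \to 0$. The resolvent identity $(I-S_*)^{-1} - (I-S_k)^{-1} = (I-S_*)^{-1}(S_*-S_k)(I-S_k)^{-1}$, combined with uniform boundedness of $\|(I-S_k)^{-1}\|$ (since $I-S_*$ is strictly positive and invertible), shows $\trace[(I-S_k)^{-1}V] \to \trace[(I-S_*)^{-1}V]$ and $(I-S_k)^{-1}V(I-S_k)^{-1} \to (I-S_*)^{-1}V(I-S_*)^{-1}$ in trace norm. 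Convergence in $\Lcal^2(\Hcal,\mu_*)$ then follows from the Gaussian second-moment bound $\|\la C_0^{-1/2}\cdot, AC_0^{-1/2}\cdot\ra - \trace[A(I-S_*)]\|^2_{\Lcal^2(\Hcal,\mu_*)} = 2\|(I-S_*)^{1/2}A(I-S_*)^{1/2}\|^2_{\HS}$ applied to the difference of $A$'s, establishing (10). Case 3 extends to $V\in\Sym(\Hcal)\cap\HS(\Hcal)$ by the same bound applied to the sequence $V_j$, which shows that $V \mapsto D\log\{d\mu/d\mu_0\}(S_*)(V)$ extends continuously from $\Sym(\Hcal)\cap\Tr(\Hcal)$ to $\Sym(\Hcal)\cap\HS(\Hcal)$.

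The norm identity I would obtain by a direct second-moment calculation under $\mu_*$, initially for $V$ trace class. Writing $C_* = C_0^{1/2}(I-S_*)C_0^{1/2}$ and $R = C_0^{-1/2}C_*^{1/2}$, one has $RR^* = I-S_*$. Under $\mu_*$, the random variable $Y = \la C_0^{-1/2}x, QC_0^{-1/2}x\ra$ with $Q = (I-S_*)^{-1}V(I-S_*)^{-1}$ has the law of $\la z, R^*QRz\ra$ for a standard cylindrical Gaussian $z$, so $E_{\mu_*}[Y] = \trace(QRR^*) = \trace[(I-S_*)^{-1}V]$, which cancels the first term of (9). By Isserlis' theorem, $\var_{\mu_*}[Y] = 2\trace[(R^*QR)^2] = 2\trace[Q(I-S_*)Q(I-S_*)] = 2\trace[(I-S_*)^{-1}V(I-S_*)^{-1}V]$. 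Dividing by $4$ gives exactly $\tfrac{1}{2}\|(I-S_*)^{-1/2}V(I-S_*)^{-1/2}\|^2_{\HS}$, and polarization yields the inner-product identity. Both identities then extend to HS $V,W$ by the continuity established in case 3.

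The hard part will be managing the simultaneous loss of regularity in $S_*$ and $V$: when both are merely Hilbert--Schmidt, the scalar trace in (9) is ill-defined, so the derivative has no pointwise representation and must be interpreted intrinsically through the $\Lcal^2(\Hcal,\mu_*)$-limits in (10)--(11). Showing that these limits exist and are independent of the approximating sequences is precisely quantified by the norm identity, which certifies continuity of the derivative as a bounded linear map $\Sym(\Hcal)\cap\HS(\Hcal) \to \Lcal^2(\Hcal,\mu_*)$. A careful treatment of $\la C_0^{-1/2}x, \cdot\, C_0^{-1/2}x\ra$ as an $\Lcal^2$-random variable, via the projections $P_N$ (since $C_0^{-1/2}x$ is white noise rather than an element of $\Hcal$), is the key technical device running through every step.
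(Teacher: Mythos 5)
Your route is the same as the paper's: differentiate the explicit trace-class formula termwise (case 1), approximate first in $S$ and then in $V$ (cases 2--3), and obtain the norm identity from the Gaussian second moment of the quadratic form under $\mu_{*}$. The moment calculation you sketch is correct and is exactly the paper's Lemma~\ref{lemma:integral-quadratic-S-star} combined with Proposition~\ref{proposition:D-logRN-inner-V-Tr}, and you correctly observe that the pointwise formula \eqref{equation:derivative-log-Radon-Nikodym-S-trace-class-1} survives for $S_{*}$ Hilbert--Schmidt as long as $V$ stays trace class, breaking down only when $V$ itself is merely Hilbert--Schmidt.

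The genuine gap is in case 2. You prove that the candidate derivatives $D\log\{d\mu/d\mu_0\}(S_k)(V)$ form a Cauchy sequence in $\Lcal^2(\Hcal,\mu_{*})$, but convergence of derivatives at nearby points does not identify the limit as the Fr\'echet derivative at $S_{*}$. To conclude, one must also control $\log\{d\mu(S_k)/d\mu_0\}-\log\{d\mu(S_{*})/d\mu_0\}$ and $\log\{d\mu(S_k+tV)/d\mu_0\}-\log\{d\mu(S_{*}+tV)/d\mu_0\}$ in $\Lcal^2(\Hcal,\mu_{*})$, and this presupposes knowing what $\log\{d\mu(S_{*})/d\mu_0\}$ even is when $S_{*}$ is merely Hilbert--Schmidt: the formula $-\frac{1}{2}\log\det(I-S_{*})-\frac{1}{2}\la C_0^{-1/2}x, S_{*}(I-S_{*})^{-1}C_0^{-1/2}x\ra$ on which your entire argument rests fails there, since $\log\det(I-S_{*})$ and the quadratic form each diverge individually. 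One needs the Hilbert--Carleman determinant $\dettwo$ and the $\Lcal^2$-convergent white-noise series $-\frac{1}{2}\sum_{k}\bigl[\frac{\alpha_k}{1-\alpha_k}W^2_{\phi_k}+\log(1-\alpha_k)\bigr]$ of Theorem~\ref{theorem:radon-nikodym-infinite}, together with a quantitative continuity estimate of the form $\|\log\{d\mu(S_k)/d\mu_0\}-\log\{d\mu(S_{*})/d\mu_0\}\|^2_{\Lcal^2(\Hcal,\mu_{*})}\leq c(S_k,S_{*})\,\|S_k-S_{*}\|^2_{\HS}$. That estimate (Proposition~\ref{proposition:inner-logRN-mu-star} and Corollary~\ref{corollary:logRN-convergence-HS-norm}) is where the bulk of the paper's technical work lies, and your proposal has no substitute for it; without it, equation \eqref{equation:derivative-log-Radon-Nikodym-S-HS-V-Tr-1} asserts an identity whose left-hand side has not been shown to exist.
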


Equivalently, the Fisher metric can be obtained by taking the second derivative of 
the Kullback-Leibler (KL) divergence between two equivalent Gaussian measures on $\Hcal$. Let $\mu = \Ncal(m_1,Q)$ and $\nu = \Ncal(m_2,R)$ on $\Hcal$. If $\mu \perp \nu$, then
$\KL(\nu|| \mu) = \infty$. If $\mu \sim \nu$, then we have the following result. 
\begin{theorem}
	[\cite{Minh:2020regularizedDiv}]
	\label{theorem:KL-gaussian}
	Let $\mu = \Ncal(m_1, Q)$, $\nu = \Ncal(m_2, R)$, with $\ker(Q) = \ker{R} = \{0\}$, and
	$\mu \sim \nu$.
	Let $S \in \HS(\Hcal)\cap \Sym(\Hcal)$, $I-S > 0$, 
	be such that $R = Q^{1/2}(I-S)Q^{1/2}$, then
	\begin{align}
		\label{equation:KL-gaussian}
		\KL(\nu ||\mu) = \frac{1}{2}||Q^{-1/2}(m_2-m_1)||^2 -\frac{1}{2}\log\dettwo(I-S).
	\end{align}
\end{theorem}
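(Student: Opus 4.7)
The plan is to prove the formula by decomposing $\KL(\nu||\mu)$ into a mean-shift contribution and a pure-covariance contribution, then treating the latter by finite-dimensional approximation in a basis adapted to $Q$.

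First I shift both measures by $-m_1$, so that $\KL(\nu||\mu) = \KL(\Ncal(m_2-m_1,R)||\Ncal(0,Q))$, and apply the chain rule
$$\log\frac{d\Ncal(m_2-m_1, R)}{d\Ncal(0,Q)} = \log\frac{d\Ncal(m_2-m_1, R)}{d\Ncal(m_2-m_1,Q)} + \log\frac{d\Ncal(m_2-m_1,Q)}{d\Ncal(0,Q)}.$$
The second term is the Cameron-Martin logarithmic derivative of a translation of $\Ncal(0,Q)$; condition (1) of Theorem \ref{theorem:Gaussian-equivalent} supplies $m_2-m_1 \in \range(Q^{1/2})$, so $Q^{-1/2}(m_2-m_1)$ is well-defined, and a direct computation of its expectation under $\Ncal(m_2-m_1,R)$ yields $\frac{1}{2}||Q^{-1/2}(m_2-m_1)||^2$. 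Translation invariance of the covariance-only Radon-Nikodym derivative reduces the first term to $\KL(\Ncal(0,R)||\Ncal(0,Q))$, so it remains to show that this equals $-\frac{1}{2}\log\dettwo(I-S)$.

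For the centered case I choose the orthonormal eigenbasis $\{e_k\}_{k\in\Nbb}$ of $Q$ with $Qe_k=q_ke_k$, and let $P_N$ denote the orthogonal projection onto $\myspan\{e_1,\dots,e_N\}$. Since $P_N$ commutes with $Q^{1/2}$, the projected covariances satisfy $R_N := P_NRP_N = Q_N^{1/2}(I-S_N)Q_N^{1/2}$ on $\range(P_N)$, where $Q_N=P_NQP_N$ is invertible on that subspace and $S_N=P_NSP_N$. The classical finite-dimensional Gaussian KL identity, combined with the factorisation $\dettwo(I-S_N) = \det(I-S_N)\,e^{\trace(S_N)}$ valid for trace-class $S_N$, simplifies to
$$\KL(\Ncal(0,R_N)||\Ncal(0,Q_N)) = \frac{1}{2}\bigl[\trace(I-S_N) - N - \log\det(I-S_N)\bigr] = -\frac{1}{2}\log\dettwo(I-S_N).$$

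Passing to the limit, the right-hand side converges to $-\frac{1}{2}\log\dettwo(I-S)$ because truncations of a Hilbert-Schmidt operator satisfy $S_N\approach S$ in $||\cdot||_{\HS}$ and $\dettwo$ is continuous on $\HS(\Hcal)$. For the left-hand side, the sigma-algebras $\sigma(P_N)$ form an increasing filtration generating the Borel sigma-algebra on $\Hcal$, so the standard monotone-convergence theorem for relative entropy along a filtration gives $\KL(\Ncal(0,R_N)||\Ncal(0,Q_N)) \nearrow \KL(\Ncal(0,R)||\Ncal(0,Q))$, and combining the pieces produces the claimed formula. The main obstacle I expect is this convergence step: it requires identifying the projected KL as the relative entropy between pushforwards on the $N$-dimensional subspace and invoking the monotone-convergence principle for relative entropy along increasing sigma-algebras. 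A secondary technical point is the choice of approximating basis; diagonalising $Q$ (rather than $S$) is essential, since only then does $P_N$ commute with $Q^{1/2}$, which produces the clean decomposition $R_N = Q_N^{1/2}(I-S_N)Q_N^{1/2}$ from which $\dettwo$ emerges.
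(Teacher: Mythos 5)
Your proposal is correct, but it follows a genuinely different route from the one the paper relies on. The paper does not prove Theorem \ref{theorem:KL-gaussian} here at all: it imports it from \cite{Minh:2020regularizedDiv}, and the supporting machinery it develops (Theorem \ref{theorem:radon-nikodym-infinite}, the white noise mapping, and the moment identities for $W_{\phi_k}$ in Section \ref{section:computation-Fisher-metric}) points to a proof by direct termwise integration of the explicit series $\log\frac{d\nu}{d\mu} = -\frac{1}{2}\sum_k \Phi_k - \frac{1}{2}\|(I-S)^{-1/2}Q^{-1/2}(m_2-m_1)\|^2$ against $\nu$, using $\int W_{\phi_k}^2\,d\nu = 1-\alpha_k$ to produce $-\frac{1}{2}\sum_k[\alpha_k + \log(1-\alpha_k)] = -\frac{1}{2}\log\dettwo(I-S)$. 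You instead decompose via the Cameron--Martin factor plus a centered covariance term, compute the centered term by projecting onto the eigenbasis of $Q$ to get the classical finite-dimensional identity $\KL(\Ncal(0,R_N)\|\Ncal(0,Q_N)) = -\frac{1}{2}\log\dettwo(I-S_N)$, and pass to the limit using $\|P_NSP_N - S\|_{\HS}\to 0$, continuity of $\dettwo$ on $\HS(\Hcal)$, and monotone convergence of relative entropy along the filtration $\sigma(P_N)\uparrow\Bsc(\Hcal)$. All the steps check out: $P_N$ commutes with $Q^{1/2}$ so the factorization $R_N = Q_N^{1/2}(I-S_N)Q_N^{1/2}$ is exact; the uniform bound $I-S_N \geq M_S I$ on $\range(P_N)$ keeps $\dettwo(I-S_N)$ away from zero so the logarithms converge; and the restrictions of $\Ncal(0,R)$, $\Ncal(0,Q)$ to $\sigma(P_N)$ are precisely the push-forwards $\Ncal(0,R_N)$, $\Ncal(0,Q_N)$ since $\{e_k\}$ is an orthonormal basis (here $\ker(Q)=\{0\}$ is used). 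What your route buys is elementarity and independence from the explicit Radon--Nikodym expansion: everything reduces to the matrix formula plus two standard limit theorems. What the paper's route buys is stronger information -- it controls $\log\frac{d\nu}{d\mu}$ as an element of $\Lcal^2(\Hcal,\nu)$, not merely its mean, which is exactly what the Fisher--Rao metric computation elsewhere in the paper requires.
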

Here $\dettwo$ is the Hilbert-Carleman determinant, see e.g. \cite{Simon:1977}, with $\dettwo(I+A) = \det[(I+A)\exp(-A)]$
for $A \in \HS(\Hcal)$, where $\det$ is the Fredholm determinant,
given by $\det(I+A) = \prod_{j=1}^{\infty}(1+\lambda_k(A))$, $A \in \Tr(\Hcal)$, $\{\lambda_k(A)\}_{k=1}^{\infty}$ being the eigenvalues of $A$.

Applying either Theorem \ref{theorem:derivative-log-RN-general} or \ref{theorem:KL-gaussian}, we obtain the following 
the explicit expression for the Fisher metric.

\begin{theorem}
	[Riemannian metric]
	\label{theorem:Riemannian-metric}
	The Fisher-Rao metric
	on $\Gauss(\Hcal,\mu_0)$ is given as follows. Let $S \in \SymHS(\Hcal)_{<I}$ be fixed. Then
	\begin{align}
		\label{equation:Fisher-Rao-metric-infinite}
		g_S(V_1,V_2) = \frac{1}{2}\trace[(I-S)^{-1}V_1(I-S)^{-1}V_2], \;\;\; V_1,V_2 \in \Sym(\Hcal)\cap \HS(\Hcal).
	\end{align}
	The corresponding Riemannian metric on $\Tr(\Hcal,C_0)$ is given as follows.
	Let $\Sigma \in \Tr(\Hcal,C_0)$ be fixed.
	For $A_1,A_2 \in T_{\Sigma}(\Tr(\Hcal,C_0)) \cong \SymHS(\Hcal,C_0) = \SymHS(\Hcal,\Sigma)$,
	\begin{align}
		\la A_1,A_2\ra_{\Sigma} &= \frac{1}{2}\la \Sigma^{-1/2}A_1\Sigma^{-1/2}, \Sigma^{-1/2}A_2\Sigma^{-1/2}\ra_{\HS}
		\label{equation:Riemannian-metric-Tr-H-C0-1}
		\\
		& = \frac{1}{2}\trace(\Sigma^{-1/2}A_1\Sigma^{-1}A_2\Sigma^{-1/2}).
		\label{equation:Riemannian-metric-Tr-H-C0-2}
	\end{align}
\end{theorem}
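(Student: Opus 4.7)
The plan is to prove the two formulas sequentially, with the first being essentially immediate from Theorem \ref{theorem:derivative-log-RN-general} and the second obtained from the first by pullback through the global chart $\varphi$.

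For the first formula \eqref{equation:Fisher-Rao-metric-infinite}, I would observe that the defining expression \eqref{equation:Fisher-metric-definition} of $g_S(V_1,V_2)$ is precisely the $\Lcal^2(\Hcal,\mu)$ inner product
\begin{equation*}
g_S(V_1,V_2) \;=\; \left\la D\log\left\{\frac{d\mu}{d\mu_0}\right\}(S)(V_1),\; D\log\left\{\frac{d\mu}{d\mu_0}\right\}(S)(V_2)\right\ra_{\Lcal^2(\Hcal,\mu)},
\end{equation*}
which has been computed in closed form in equation \eqref{equation:inner-derivative-log-RN-1} of Theorem \ref{theorem:derivative-log-RN-general}. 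Setting $S_{*}=S$ and reading off the right-hand side gives \eqref{equation:Fisher-Rao-metric-infinite} directly.

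For the second formula, I would pull this metric back along the global affine chart $\varphi\colon \Tr(\Hcal,C_0)\mapto \SymHS(\Hcal)_{<I}$, $\varphi(C)=I-C_0^{-1/2}CC_0^{-1/2}$, whose Fr\'echet derivative at $\Sigma$ identifies $A\in T_{\Sigma}(\Tr(\Hcal,C_0))=\SymHS(\Hcal,C_0)$ with $V=-C_0^{-1/2}AC_0^{-1/2}\in\Sym(\Hcal)\cap\HS(\Hcal)$. Substituting $V_i=-C_0^{-1/2}A_iC_0^{-1/2}$ together with $(I-S)^{-1}=C_0^{1/2}\Sigma^{-1}C_0^{1/2}$ into \eqref{equation:Fisher-Rao-metric-infinite} and applying cyclicity of the trace, the outermost $C_0^{\pm 1/2}$ factors cancel and leave $\tfrac{1}{2}\trace[\Sigma^{-1}A_1\Sigma^{-1}A_2]$; self-adjointness of the $A_i$ together with one further cyclic permutation then recasts this as both $\tfrac{1}{2}\la\Sigma^{-1/2}A_1\Sigma^{-1/2},\Sigma^{-1/2}A_2\Sigma^{-1/2}\ra_{\HS}$ and $\tfrac{1}{2}\trace(\Sigma^{-1/2}A_1\Sigma^{-1}A_2\Sigma^{-1/2})$, matching \eqref{equation:Riemannian-metric-Tr-H-C0-1}--\eqref{equation:Riemannian-metric-Tr-H-C0-2}.

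No serious obstacle arises at this stage, since all of the analytical difficulty sits upstream in Theorem \ref{theorem:derivative-log-RN-general}, which must handle Fr\'echet differentiation of $\log\frac{d\mu}{d\mu_0}$ and $\Lcal^2(\Hcal,\mu_{*})$ approximation across its three successive scenarios. Once those identities are in hand, the present theorem reduces to cyclic-trace algebra and the change of variables $\varphi$. As indicated in the statement, an independent derivation is available through Theorem \ref{theorem:KL-gaussian}: write the perturbed family as $\mu(S_{*}+tV_1+sV_2)=\Ncal(0,C_{S_*}^{1/2}(I-\tilde{S}(t,s))C_{S_*}^{1/2})$ for an implicit $\tilde{S}(t,s)$ depending smoothly on $(t,s)$, expand $-\tfrac{1}{2}\log\dettwo(I-\tilde{S}(t,s))$ to second order, and read off the mixed partial at $t=s=0$; this route is more computationally involved but yields the same Hessian as an independent check.
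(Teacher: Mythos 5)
Your proposal is correct and follows essentially the same route as the paper: the paper itself states that Theorem \ref{theorem:Riemannian-metric} follows by ``applying either Theorem \ref{theorem:derivative-log-RN-general} or \ref{theorem:KL-gaussian}'', and you take the first route (reading \eqref{equation:Fisher-Rao-metric-infinite} off from \eqref{equation:inner-derivative-log-RN-1} with $S_{*}=S$, so that $\mu_{*}=\mu$ and the $\Lcal^2(\Hcal,\mu)$ inner product is exactly the defining integral \eqref{equation:Fisher-metric-definition}), while the paper writes out the second route via the KL divergence and Corollary \ref{corollary:second-derivative-logdet-2-st}, with the heavy lifting for the first route deferred to Section \ref{section:computation-Fisher-metric}.

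One technical caveat in your pullback step: after cancelling the outer $C_0^{\pm 1/2}$ factors you land on the intermediate expression $\tfrac{1}{2}\trace[\Sigma^{-1}A_1\Sigma^{-1}A_2]$, which is precisely the form the paper's remark after Theorem \ref{theorem:Riemannian-metric} warns is \emph{not} well defined in general, since $\Sigma^{-1}A_j$ need not be bounded; likewise, ``cyclicity of the trace'' applied to products involving the unbounded operator $C_0^{-1/2}$ needs justification. The clean way to organize the computation is to write $A_j=\Sigma^{1/2}W_j\Sigma^{1/2}$ with $W_j=\Sigma^{-1/2}A_j\Sigma^{-1/2}\in\Sym(\Hcal)\cap\HS(\Hcal)$ (well defined by Lemmas \ref{lemma:C0-Sigma-1/2-inverse-bounded} and \ref{lemma:switch-C0-Sigma}) and observe that both sides equal $\tfrac{1}{2}\trace(W_1W_2)$, which is exactly \eqref{equation:Riemannian-metric-Tr-H-C0-1}--\eqref{equation:Riemannian-metric-Tr-H-C0-2}; this is how the paper's KL-based proof handles the change of variables, via the polar decomposition $\Sigma^{1/2}=C_0^{1/2}(I-S)^{1/2}U$. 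With that adjustment your argument is complete.
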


\begin{remark}
	In the finite-dimensional setting, if we characterize the Gaussian density $P_{\theta}$ by $S = S(\theta)$, instead of $\Sigma = \Sigma(\theta)$,
	with $S(\theta)_{11} = \theta^{1}$, $S(\theta)_{12} = \theta^{2}$, $\ldots, S(\theta)_{22} = \theta^{n+1}$, 
	$\ldots, S(\theta)_{nn} = \theta^{\frac{n(n+1)}{2}}$, then we have the following expression for the Fisher metric
	\begin{align}
		g_{ij}(\theta) = g_{ij}(S(\theta)) = \frac{1}{2}\trace[(I-S)^{-1}\partial_{\theta_i}S(I-S)^{-1}\partial_{\theta_j}S].
	\end{align}
	By identifying the basis $\{\partial_{\theta_i}\}$ of $T_{P_{\theta}}(\Scal)$ with a basis for $\Sym(n)$, we
	obtain Eq.\eqref{equation:Fisher-Rao-metric-infinite} for $V_1,V_2 \in \Sym(n)$.
	
	We note that Eq.\eqref{equation:metric-Sym++(n)-1/2} in the finite-dimensional setting directly generalizes to Eqs.\eqref{equation:Riemannian-metric-Tr-H-C0-1} and \eqref{equation:Riemannian-metric-Tr-H-C0-2}
	in the current setting of equivalent Gaussian measures on Hilbert space. However,
	Eq.\eqref{equation:metric-Sym++(n)-1} is generally not well-defined in this setting, since
	for $A \in\SymHS(\Hcal,\Sigma)$, $\Sigma^{-1}A$ is not necessarily bounded.
\end{remark}

\begin{theorem}
	[\textbf{Riemannian structures on $\Tr(\Hcal,C_0)$}]
	Under the Riemannian metric in Eq.\eqref{equation:Riemannian-metric-Tr-H-C0-2}, $\Tr(\Hcal,C_0)$ is an infinite-dimensional Cartan-Hadamard manifold.
	Let $P \in \Tr(\Hcal,C_0)$, let $X,Y,Z$ be smooth vector fields on $\Tr(\Hcal,C_0)$,
	with $X_P,Y_P,Z_P \in T_P(\Tr(\Hcal,C_0))\cong \SymHS(\Hcal,C_0) =  \SymHS(\Hcal,P)$.
	\begin{enumerate}
		\item The Levi-Civita connection is given by
		\begin{align}
				\label{equation:Levi-Civita-infnite}
			(\nabla_{X}Y)_P = D(Y)(P)[X_P] - \frac{1}{2}[X_PP^{-1}Y_P + Y_PP^{-1}X_P].
		\end{align}
		Here $D(Y)(P)$ denotes the Fr\'echet derivative of $Y$ at $P$ in the open subset $\Tr(\Hcal,C_0)$ of the Hilbert space $\SymHS_X(\Hcal,C_0)$.
			\item Let $X_P = P^{1/2}\tilde{X}_PP^{1/2}$, $Y_P = P^{1/2}\tilde{Y}_PP^{1/2}$, $Z_P = P^{1/2}\tilde{Z}_PP^{1/2}$, for
		$\tilde{X}_P$,$\tilde{Y}_P$,$\tilde{Z}_P$ $\in \Sym(\Hcal)\cap\HS(\Hcal)$.
		The Riemannian curvature tensor is given by
		\begin{align}
			\label{equation:Riemann-curvature-tensor}
			[R(X,Y)Z](P) = -\frac{1}{4}P^{1/2}[[\tilde{X}_P, \tilde{Y}_P],\tilde{Z}_P]P^{1/2}.
		\end{align}
		Here $[,]$ denotes the operator commutator
		$[A,B] = AB-BA$.
		\item The sectional curvature is everywhere nonpositive
		%
		\begin{align}
			S_P(X_P,Y_P) =
			-\frac{\trace[(\tilde{X}_P)^2(\tilde{Y}_P)^2 - (\tilde{X}_P\tilde{Y}_P)^2]}{\trace(\tilde{X}_P)^2\trace(\tilde{Y}_P)^2 -\trace(\tilde{X}_P\tilde{Y}_P)^2} \leq 0,
			%
		\end{align}
		where $X_P,Y_P$ are any two linearly independent operators in $\SymHS(\Hcal,P)$.
		\item There is a unique geodesic connecting any pair $A\in \Tr(\Hcal,C_0)$, $B = A^{1/2}(I-S)A^{1/2} \in \Tr(\Hcal,C_0)$, given by
		\begin{align}
			\gamma_{AB}(t) = A^{1/2}\exp[t\log(I-S)]A^{1/2}.
		\end{align}
	Equivalently, with $V \in T_A(\Tr(\Hcal,C_0)) = \SymHS(\Hcal,A)$,
	\begin{align}
		\gamma_V(t) = A^{1/2}\exp[tA^{-1/2}VA^{-1/2}]A^{1/2}, \;\; \gamma_V(0) = A, \;\; \dot{\gamma}_V(0) = V.
	\end{align}
	The exponential map $\Exp_{A}: T_{A}(\Tr(\Hcal,C_0)) \mapto \Tr(\Hcal,C_0)$ is given by
	\begin{align}
		\Exp_{A}(V) = \gamma_V(1) = A^{1/2}\exp(A^{-1/2}VA^{-1/2})A^{1/2}.
	\end{align}
	Its inverse, the logarithm map $\Log_A: \Tr(\Hcal,C_0) \mapto T_{A}(\Tr(\Hcal,C_0))$ is 
	\begin{align}
		\Log_A(B) =  A^{1/2}\log(A^{-1/2}BA^{-1/2})A^{1/2}, \;\; B \in  \Tr(\Hcal,C_0).
	\end{align}
	\item The length of geodesic $\gamma_{AB}(t)$ is the Riemannian distance between $A$ and $B$, or equivalently, the Fisher-Rao distance
		between $\Ncal(0,A)$ and $\Ncal(0,B)$\footnote{There is a typo in the corresponding result in Theorem 4 in \cite{Minh:GSI2023}, with the constant in the Fisher-Rao distance being $1/2$. The correct constant is $1/\sqrt{2}$ as stated here.},
		\begin{align}
			\label{equation:Fisher-Rao-distance-infinite}
			d(A,B) = \frac{1}{\sqrt{2}}||\log(A^{-1/2}BA^{-1/2})||_{\HS} =\frac{1}{\sqrt{2}} ||\log(I-S)||_{\HS}.
		\end{align}
	\end{enumerate}
		%
\end{theorem}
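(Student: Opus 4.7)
The plan is to exploit the fact that $\Tr(\Hcal,C_0)$ is an open subset of the Hilbert space $\SymHS_X(\Hcal,C_0)$, so that smooth vector fields are Fréchet-differentiable maps into this space and every assertion can be verified by direct calculation, paralleling the finite-dimensional affine-invariant geometry on $\Sym^{++}(n)$ and the geometry of $\PC_2(\Hcal)$ in \cite{Larotonda:2007}. The structural advantage here is that each tangent vector at $\Sigma$ has the form $\Sigma^{1/2}\tilde X\Sigma^{1/2}$ with $\tilde X\in\Sym(\Hcal)\cap\HS(\Hcal)$, so the conjugated vector $\Sigma^{-1/2}(\Sigma^{1/2}\tilde X\Sigma^{1/2})\Sigma^{-1/2}=\tilde X$ lies in $\HS(\Hcal)$ with no scalar part, ensuring that all traces appearing in the metric and the subsequent formulas are finite.

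First, I would verify the Levi-Civita formula \eqref{equation:Levi-Civita-infnite} by checking the two defining properties. Torsion-freeness reduces to the observation that the corrective term $\tfrac12(X_PP^{-1}Y_P+Y_PP^{-1}X_P)$ is symmetric in $X_P,Y_P$. Metric compatibility reduces to computing the Fréchet derivative of $P\mapsto\tfrac12\trace(P^{-1}Y_PP^{-1}Z_P)$ along $X$ and using $D(P\mapsto P^{-1})(P)[X_P]=-P^{-1}X_PP^{-1}$; the algebra matches the finite-dimensional case, and all operator products are trace-class because each factor of $P^{-1}$ is paired with a tangent vector of the form $P^{1/2}\tilde{(\cdot)}P^{1/2}$. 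The curvature tensor \eqref{equation:Riemann-curvature-tensor} then follows by substituting into $R(X,Y)Z=\nabla_X\nabla_YZ-\nabla_Y\nabla_XZ-\nabla_{[X,Y]}Z$, evaluated on the canonical vector fields $X_P=P^{1/2}\tilde XP^{1/2}$ with constant $\tilde X\in\Sym(\Hcal)\cap\HS(\Hcal)$. The sectional curvature formula is then algebraic: expanding $\langle R(X,Y)Y,X\rangle_P=-\tfrac14[\trace(\tilde X^2\tilde Y^2)-\trace((\tilde X\tilde Y)^2)]$ against $\|X\|_P^2\|Y\|_P^2-\langle X,Y\rangle_P^2=\tfrac14[\trace(\tilde X^2)\trace(\tilde Y^2)-\trace(\tilde X\tilde Y)^2]$ yields the stated ratio, and nonpositivity follows from the Cauchy--Schwarz-type inequality $\trace((\tilde X\tilde Y)^2)\leq\trace(\tilde X^2\tilde Y^2)$ for self-adjoint Hilbert--Schmidt operators.

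For the geodesic, the equation $\nabla_{\dot\gamma}\dot\gamma=0$ becomes $\ddot\gamma=\dot\gamma\gamma^{-1}\dot\gamma$. Writing $\gamma_V(t)=A^{1/2}U(t)A^{1/2}$ with $U(t)=\exp(t\tilde V)$ and $\tilde V=A^{-1/2}VA^{-1/2}\in\Sym(\Hcal)\cap\HS(\Hcal)$, a direct computation gives $\dot\gamma_V(t)=A^{1/2}\tilde VU(t)A^{1/2}$, $\ddot\gamma_V(t)=A^{1/2}\tilde V^2U(t)A^{1/2}$, and $\gamma_V(t)^{-1}=A^{-1/2}U(t)^{-1}A^{-1/2}$, so $\dot\gamma_V\gamma_V^{-1}\dot\gamma_V=\ddot\gamma_V$ as required; the initial conditions $\gamma_V(0)=A$, $\dot\gamma_V(0)=V$ are immediate. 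The exponential and logarithm maps read off directly. Since a geodesic has constant speed, the Riemannian distance equals $\|\dot\gamma_V(0)\|_A=\tfrac{1}{\sqrt2}\|A^{-1/2}VA^{-1/2}\|_{\HS}=\tfrac{1}{\sqrt2}\|\log(I-S)\|_{\HS}$.

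The main obstacle is establishing the Cartan--Hadamard property in infinite dimensions, where the classical Hopf--Rinow theorem fails. Nonpositive sectional curvature is already shown, and simple connectedness follows from the contractibility of $\Tr(\Hcal,C_0)$ as a star-shaped open subset of a Hilbert space. What remains is metric completeness together with the global diffeomorphism property of $\Exp_A$. I would handle both by explicit inversion: $\Log_A(B)=A^{1/2}\log(A^{-1/2}BA^{-1/2})A^{1/2}$ is well-defined for every $B\in\Tr(\Hcal,C_0)$ because $A^{-1/2}BA^{-1/2}=I-S$ with $S\in\SymHS(\Hcal)_{<I}$, so the functional-calculus logarithm lies in $\Sym(\Hcal)\cap\HS(\Hcal)$; uniqueness of the joining geodesic follows from injectivity of $\exp$ on self-adjoint operators. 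Completeness of the Riemannian distance then reduces, via the change of variable $B\mapsto\log(A^{-1/2}BA^{-1/2})$, to the completeness of the Hilbert-space norm on $\Sym(\Hcal)\cap\HS(\Hcal)$.
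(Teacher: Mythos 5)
Your overall strategy mirrors the paper's (verify symmetry and metric compatibility for the connection, substitute into the curvature definition, exhibit the geodesics explicitly, invoke nonpositive curvature plus simple connectedness plus completeness for Cartan--Hadamard), but there is one step where your proposed route breaks down. For metric compatibility you plan to differentiate $P\mapsto\tfrac12\trace(P^{-1}Y_PP^{-1}Z_P)$ using $D(P\mapsto P^{-1})(P)[X_P]=-P^{-1}X_PP^{-1}$. In this setting $P\in\Tr(\Hcal,C_0)$ is a trace-class covariance operator with $\ker(P)=\{0\}$, so $P^{-1}$ is an \emph{unbounded}, densely defined operator: the map $P\mapsto P^{-1}$ does not land in any Banach space, and neither does $-P^{-1}X_PP^{-1}=-P^{-1/2}\tilde X_PP^{-1/2}$, so the Fr\'echet derivative you invoke is not defined. (The paper flags exactly this issue in the remark after Theorem \ref{theorem:Riemannian-metric}: the expression $\Sigma^{-1}A$ is generally unbounded, which is why the metric must be written in the sandwiched form.) The paper's actual proof of compatibility circumvents this by working with the bounded quantities $\tilde Y_P=P^{-1/2}Y_PP^{-1/2}\in\Sym(\Hcal)\cap\HS(\Hcal)$, computing $D\tilde Y(P)(H)$ via the product rule for $Y=P^{1/2}\tilde YP^{1/2}$, and using the derivative of the square-root map together with the trace identity
\begin{align*}
\trace[D\mysqrt(P)(X)P^{-1/2}Y+P^{-1/2}D\mysqrt(P)(X)Y]=\trace[P^{-1/2}XP^{-1/2}Y]
\end{align*}
(Lemma \ref{lemma:trace-derivative-square-root-product}). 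This is the genuine technical content of part 1, and your proposal as written skips it; the finite-dimensional algebra does give the right answer, but the intermediate objects you manipulate do not exist as bounded operators, so the computation needs to be reorganized along the paper's lines (or some equivalent regularization).

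Two smaller points. First, your completeness argument ``via the change of variable $B\mapsto\log(A^{-1/2}BA^{-1/2})$'' is not correct: the identity $d(A,B)=\tfrac{1}{\sqrt2}\|\Log_A(B)\|_{\HS}$ holds only with basepoint $A$, so $\Log_A$ is not a global isometry onto $(\Sym(\Hcal)\cap\HS(\Hcal),\|\cdot\|_{\HS})$ and Cauchy completeness does not transfer this way. What is actually needed (and what the paper uses, following Lang) is geodesic completeness, which is immediate from your explicit formula $\gamma_V(t)=A^{1/2}\exp(tA^{-1/2}VA^{-1/2})A^{1/2}$ being defined for all $t\in\R$, combined with convexity of $\Tr(\Hcal,C_0)$ for simple connectedness. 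Second, verifying that $\gamma_V$ solves $\ddot\gamma=\dot\gamma\gamma^{-1}\dot\gamma$ shows these curves are geodesics but not that they are the only ones; to get \emph{uniqueness} of the connecting geodesic you must also invoke uniqueness of solutions of the geodesic ODE with given initial data (the paper instead solves the ODE in full via Lemma \ref{lemma:operator-equation}), after which injectivity of the operator exponential on $\Sym(\Hcal)\cap\HS(\Hcal)$ finishes the argument as you indicate.
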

\begin{remark}
	The Levi-Civita connection in Eq.\eqref{equation:Levi-Civita-infnite} is formulated using the fact that
	$\Tr(\Hcal, C_0)$ is an open subset in the Hilbert space $\SymHS_X(\Hcal,C_0)$ under the inner product and norm defined 
	in Eqs.\eqref{equation:HSX-H-C0-inner-product} and \eqref{equation:HSX-H-C0-norm}.
\end{remark}

\begin{remark}
	In the finite-dimensional setting, the Riemannian curvature tensor in Eq.\eqref{equation:Riemann-curvature-tensor} is equivalent to
	\begin{align}
		\label{equation:Riemann-curvature-tensor-finite}
		[R(X,Y)Z](P) = -\frac{1}{4}P[[P^{-1}X_P, P^{-1}Y_P], P^{-1}Z_P].
	\end{align}
	In \cite{lang2012fundamentals} (Theorem 3.9), this formula was given in the case $P=I$, the identity operator, 
	but without the factor $\frac{1}{4}$.
	This formula is also valid on the manifold $\PC_2(\Hcal)$ under the affine-invariant Riemannian metric defined in Eq.\eqref{equation:affine-Riemannian-metric-infinite}
	(see \cite{Larotonda:2007}, Eq.(5)). It is generally not valid in the setting of equivalent infinite-dimensional Gaussian measures, however, since
	$P^{-1}X_P = P^{-1/2}\tilde{X}_PP^{1/2}$, $\tilde{X}_P \in \Sym(\Hcal) \cap\HS(\Hcal)$, is not necessarily bounded.
\end{remark}
{\bf Connection with the Riemannian geometry of positive definite unitized Hilbert-Schmidt operators}.
The following shows that the Fisher-Rao distance in Eq.\eqref{equation:Fisher-Rao-distance-infinite}
can be obtained from the Riemannian distance between positive definite unitized Hilbert-Schmidt operators in Eq.\eqref{equation:d-aiHS}
as $\gamma=\mu \approach 0$. 
\begin{theorem}
	\label{theorem:limit-affineLogHS}
	Let $A,B \in \Tr(\Hcal,C_0)$ with $B = A^{1/2}(I-S)A^{1/2}$. Then
	\begin{align}
		\lim_{\gamma \approach 0^{+}}||\log[(A+ \gamma I)^{-1/2}(B+\gamma I)(A + \gamma I)^{-1/2}]||_{\HS}
		= ||\log(I-S)||_{\HS}.
	\end{align}
\end{theorem}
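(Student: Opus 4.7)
The plan is to reduce the statement to a Hilbert-Schmidt convergence combined with a resolvent-type estimate for the operator logarithm. I will introduce the auxiliary operator $R_\gamma := A^{1/2}(A+\gamma I)^{-1/2}$. Since $A$ and $(A+\gamma I)^{-1/2}$ are simultaneously diagonalizable, $R_\gamma$ is self-adjoint, satisfies $\|R_\gamma\|\leq 1$, and $R_\gamma^2 = A(A+\gamma I)^{-1} = I - \gamma(A+\gamma I)^{-1}$. Because $A\in\Tr(\Hcal,C_0)$ forces $\ker(A)=\{0\}$, $R_\gamma\to I$ strongly as $\gamma\to 0^+$. Substituting $B = A^{1/2}(I-S)A^{1/2}$ and using this commutation produces the key identity
\[
T_\gamma := (A+\gamma I)^{-1/2}(B+\gamma I)(A+\gamma I)^{-1/2} = R_\gamma(I-S)R_\gamma + \gamma(A+\gamma I)^{-1} = I - R_\gamma S R_\gamma.
\]

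Next I will show $T_\gamma \to I-S$ in Hilbert-Schmidt norm together with a uniform spectral gap for $T_\gamma$. Since $R_\gamma \to I$ strongly with $\|R_\gamma\|\leq 1$ and $S\in\Sym(\Hcal)\cap\HS(\Hcal)$, the standard fact that a uniformly bounded strongly convergent sequence acts continuously on the HS ideal by left and right multiplication (seen, e.g., via the decomposition $R_\gamma S R_\gamma - S = (R_\gamma - I)SR_\gamma + S(R_\gamma - I)$ and dominated convergence on the spectral sum for $S$) gives $R_\gamma S R_\gamma \to S$ in $\|\cdot\|_{\HS}$. Moreover, $I-S>0$ forces $s_{\max}:=\sup_k s_k < 1$, where $\{s_k\}$ are the eigenvalues of the compact self-adjoint operator $S$; the estimate
\[
\langle R_\gamma S R_\gamma x, x\rangle = \langle S R_\gamma x, R_\gamma x\rangle \leq s_{\max}\|R_\gamma x\|^2 \leq s_{\max}\|x\|^2
\]
yields $T_\gamma \geq (1-s_{\max})I$ uniformly in $\gamma>0$, while $\|T_\gamma\|\leq 1+\|S\|$ trivially. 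Hence the spectra of $T_\gamma$ (for all $\gamma>0$) and of $I-S$ lie in a common compact interval $[a,b]\subset(0,\infty)$ with $a=1-s_{\max}$.

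Finally I will upgrade HS convergence of the operators to HS convergence of their logarithms via the integral representation $\log T = \int_0^\infty [(1+s)^{-1}I - (T+s)^{-1}]\,ds$, valid for self-adjoint $T>0$. The resolvent identity gives $(I-S+s)^{-1} - (T_\gamma+s)^{-1} = (I-S+s)^{-1}[T_\gamma - (I-S)](T_\gamma+s)^{-1}$, so by the HS ideal inequality $\|XYZ\|_{\HS}\leq \|X\|\,\|Y\|_{\HS}\,\|Z\|$ and the uniform bound $\|(T_\gamma+s)^{-1}\|, \|(I-S+s)^{-1}\| \leq (a+s)^{-1}$,
\[
\|\log T_\gamma - \log(I-S)\|_{\HS} \leq \|T_\gamma - (I-S)\|_{\HS}\int_0^\infty \frac{ds}{(a+s)^2} = \frac{1}{a}\,\|R_\gamma S R_\gamma - S\|_{\HS} \to 0.
\]
The claimed equality of norms then follows from the reverse triangle inequality for $\|\cdot\|_{\HS}$. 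The principal obstacle in this plan is the final step: transferring HS convergence of the operator sequence to HS convergence of the logarithms, which requires both the uniform spectral gap away from $0$ (so the resolvents are uniformly bounded in norm) and the integral representation of $\log$ to obtain the Lipschitz-type bound in HS norm.
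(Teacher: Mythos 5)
Your argument is correct, and its skeleton coincides with the paper's: both start from the same algebraic identity $(A+\gamma I)^{-1/2}(B+\gamma I)(A+\gamma I)^{-1/2} = I - R_\gamma S R_\gamma$ with $R_\gamma = A^{1/2}(A+\gamma I)^{-1/2}$, then combine Hilbert--Schmidt convergence of the conjugated operator with continuity of $\log$ in the $\|\cdot\|_{\HS}$ norm. The difference is that the paper imports both analytic ingredients by citation (Proposition 4.6 of the regularized-divergences paper for $\|R_\gamma S R_\gamma - S\|_{\HS}\to 0$, and Theorem 1 of the Riemannian-estimation paper for the HS-continuity of $\log$), whereas you prove them from scratch: the first via strong convergence $R_\gamma\to I$, the contraction bound $\|R_\gamma\|\le 1$, and dominated convergence over the spectral expansion of $S$; the second via the integral representation $\log T=\int_0^\infty[(1+s)^{-1}I-(T+s)^{-1}]\,ds$ and the resolvent identity, which yields the quantitative Lipschitz bound $\|\log T_\gamma-\log(I-S)\|_{\HS}\le a^{-1}\|T_\gamma-(I-S)\|_{\HS}$ on the set of self-adjoint operators with spectrum in $[a,b]\subset(0,\infty)$. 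This makes your proof self-contained and arguably more informative (explicit modulus of continuity, uniform spectral gap made visible), at the cost of some length. One cosmetic repair: if $S\le 0$ then $s_{\max}=\sup_k s_k\le 0$ and the chain $\la SR_\gamma x,R_\gamma x\ra\le s_{\max}\|R_\gamma x\|^2\le s_{\max}\|x\|^2$ breaks at the second inequality (multiplying $\|R_\gamma x\|^2\le\|x\|^2$ by a nonpositive constant reverses it); simply replace $s_{\max}$ by $\max(s_{\max},0)$, so that $a=1-\max(s_{\max},0)>0$, and the uniform lower bound $T_\gamma\ge aI$ holds in all cases. With that adjustment the proof is complete.
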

Thus, for $A,B \in \Sym^{+}(\Hcal) \cap \Tr(\Hcal)$ and $\gamma \in \R, \gamma > 0$, Eq.\eqref{equation:d-aiHS} can be considered as a regularized version of Eq.\eqref{equation:Fisher-Rao-distance-infinite},
up to the multiplicative factor $\frac{1}{\sqrt{2}}$.
The advantage of the distance in Eq.\eqref{equation:d-aiHS} is that it is always finite for all pairs $A,B \in \Sym^{+}(\Hcal) \cap \Tr(\Hcal)$ and all $\gamma \in \R, \gamma > 0$. Furthermore,
Eq.\eqref{equation:d-aiHS} can be applied for estimating the distance between the infinite-dimensional Gaussian measures corresponding to measurable Gaussian processes with squared integrable paths, using distances between the corresponding finite-dimensional Gaussian measures, with explicit finite sample complexity, see \cite{Minh:2021RiemannianEstimation}.

\section{Proofs of main results}
\label{section:proofs}

\subsection{Preliminary technical results}
\label{section:technical-results}
We first note the following technical results. Let $\Sigma = C_0^{1/2}(I-S)C_0^{1/2}$ for some $S \in \SymHS(\Hcal)_{<I}$. Then $\Ncal(0,\Sigma) \sim \Ncal(0,C_0)$.
By symmetry, $C_0 = \Sigma^{1/2}(I-T)\Sigma^{1/2}$ for some $T \in \SymHS(\Hcal)_{<I}$.


\begin{lemma}
	\label{lemma:range-Sigma}
Let $\Sigma = C_0^{1/2}(I-S)C_0^{1/2}$ for some $S \in \SymHS(\Hcal)_{<I}$.
Then $\range(\Sigma^{1/2}) = \range(C_0^{1/2})$ and $\ker(\Sigma) =\ker(C_0) = \{0\}$.
Thus if $\mu=\Ncal(0,C) \sim \mu_0 = \Ncal(0,C_0)$, then $\mu$ is also necessarily nondegenerate.
\end{lemma}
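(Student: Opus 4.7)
My plan is to reduce the range identity to the polar-decomposition fact $\range((AA^{*})^{1/2})=\range(A)$ applied to a suitable factor of $\Sigma$. First, I would observe that since $S \in \Sym(\Hcal)\cap\HS(\Hcal)$, it is compact with eigenvalues accumulating only at $0$, so strict positivity $I-S>0$ forces $\sup_k \lambda_k(S)<1$; hence $I-S \geq cI$ for some $c>0$, and $(I-S)^{1/2}$ is bounded, self-adjoint, positive, and invertible with bounded inverse $(I-S)^{-1/2}$. This upgrade from ``strictly positive'' to ``bounded below'' is what makes the subsequent range manipulations work.

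Next, I would factor $\Sigma = A A^{*}$ with $A = C_{0}^{1/2}(I-S)^{1/2}$, and invoke the standard identity $\range((AA^{*})^{1/2}) = \range(A)$, which follows from the polar decomposition $A = |A^{*}|U$ together with the relation $|A^{*}| = AU^{*}$ ($U$ the partial isometry with initial space $\overline{\range(|A^{*}|)}$). Since $(I-S)^{1/2}$ maps $\Hcal$ bijectively onto itself, this gives
$$
\range(\Sigma^{1/2}) = \range(A) = C_{0}^{1/2}(I-S)^{1/2}\Hcal = C_{0}^{1/2}\Hcal = \range(C_{0}^{1/2}).
$$

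For the kernel identity, I would argue directly: if $\Sigma x = 0$, then $0 = \la \Sigma x, x\ra = \la (I-S)C_{0}^{1/2}x,\,C_{0}^{1/2}x\ra$, and strict positivity of $I-S$ forces $C_{0}^{1/2}x = 0$; hence $x=0$ since $\ker(C_{0}^{1/2})=\ker(C_{0})=\{0\}$. The final ``nondegeneracy'' assertion then follows by combining this with Theorem~\ref{theorem:Gaussian-equivalent}, which guarantees a representation $C = C_{0}^{1/2}(I-S)C_{0}^{1/2}$ with $S \in \SymHS(\Hcal)_{<I}$ whenever $\mu = \Ncal(0,C)\sim \mu_0$. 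I do not anticipate any serious obstacle here; the only point warranting care is the range-equality step, which depends on the classical polar-decomposition identity rather than anything peculiar to the present Gaussian setting.
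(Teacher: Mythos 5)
Your proof is correct and follows essentially the same route as the paper: the paper also factors $\Sigma = AA^{*}$ with $A = C_0^{1/2}(I-S)^{1/2}$ and invokes the identity $\range((AA^{*})^{1/2}) = \range(A)$ (citing Fillmore--Williams, which you rederive via the polar decomposition), concluding $\range(\Sigma^{1/2}) = \range(C_0^{1/2})$ from the invertibility of $I-S$. Your explicit justification that compactness of $S$ upgrades strict positivity of $I-S$ to a uniform lower bound, and your direct kernel argument, are fine elaborations of steps the paper leaves implicit.
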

\begin{proof}
As shown in \cite{Fillmore:1971Operator}, as a consequence of their Theorem 2.1, for any bounded operator $A \in \Lcal(\Hcal)$ we have
$\range(A) = \range(AA^{*})^{1/2}$.
Let $A = C_0^{1/2}(I-S)^{1/2}$, then $\Sigma = AA^{*}$ and $\range(\Sigma^{1/2}) = \range(AA^{*})^{1/2} = \range(A) = \range(C_0^{1/2})$, since
$I-S$ is invertible. Thus also $\ker(\Sigma) = \ker(C_0) = \{0\}$.
\qed
\end{proof}

\begin{lemma}
\label{lemma:C0-inverse-Sigma-1}
Assume that $\Sigma = C_0^{1/2}(I-S)C_0^{1/2}$, $S \in \SymHS(\Hcal)_{<I}$. 
Then $C_0^{-1/2}\Sigma C_0^{-1/2}$ is a well-defined, bounded operator on $\Hcal$ and $C_0^{-1/2}\Sigma C_0^{-1/2} = I-S$.
\end{lemma}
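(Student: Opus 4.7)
The plan is to make sense of $C_0^{-1/2}\Sigma C_0^{-1/2}$ as a composition of operators, the outer two of which, $C_0^{-1/2}$, are in general unbounded and densely defined on $\mathrm{dom}(C_0^{-1/2}) = \range(C_0^{1/2})$; this is a dense subspace of $\Hcal$ since $\ker(C_0)=\{0\}$. I would proceed in three steps: verify that the composition is meaningful on $\mathrm{dom}(C_0^{-1/2})$, evaluate it explicitly there, and then extend by density.

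For well-definedness, the key observation is that the factorization $\Sigma = C_0^{1/2}(I-S)C_0^{1/2}$ immediately gives $\range(\Sigma) \subset \range(C_0^{1/2}) = \mathrm{dom}(C_0^{-1/2})$. Hence for every $y \in \mathrm{dom}(C_0^{-1/2})$, the vector $\Sigma C_0^{-1/2} y$ lies in $\mathrm{dom}(C_0^{-1/2})$, so the full triple composition acts on the dense subspace $\mathrm{dom}(C_0^{-1/2})$ without leaving allowed domains at any stage.

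For the identification with $I-S$, I would use that $C_0^{1/2}$ is bounded and injective (from $\ker(C_0)=\{0\}$), which makes $C_0^{-1/2} C_0^{1/2} = I$ on all of $\Hcal$. Writing an arbitrary $y \in \mathrm{dom}(C_0^{-1/2})$ as $y = C_0^{1/2} z$ with $z = C_0^{-1/2} y$, a one-line substitution yields
\[
C_0^{-1/2}\Sigma C_0^{-1/2} y \;=\; C_0^{-1/2}\bigl[C_0^{1/2}(I-S)C_0^{1/2} z\bigr] \;=\; (I-S)C_0^{1/2} z \;=\; (I-S) y.
\]
Thus on the dense subspace $\mathrm{dom}(C_0^{-1/2})$, the operator $C_0^{-1/2}\Sigma C_0^{-1/2}$ coincides with the bounded operator $I-S$, and hence extends uniquely by continuity to the bounded operator $I-S$ on all of $\Hcal$.

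The main obstacle is purely domain bookkeeping: one must ensure that $C_0^{-1/2}$ is never applied to a vector outside $\range(C_0^{1/2})$. The symmetric sandwich structure of $\Sigma$, together with Lemma \ref{lemma:range-Sigma} (which gives $\range(\Sigma^{1/2}) = \range(C_0^{1/2})$ and in particular $\range(\Sigma) \subset \range(C_0^{1/2})$), guarantees that every intermediate vector lies in the correct subspace, so no genuine analytic difficulty arises beyond the algebraic cancellation $C_0^{-1/2} C_0^{1/2} = I$.
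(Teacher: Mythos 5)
Your proposal is correct and follows essentially the same route as the paper: evaluate $C_0^{-1/2}\Sigma C_0^{-1/2}$ on the dense subspace $\range(C_0^{1/2})$, where the factorization of $\Sigma$ and the cancellation $C_0^{-1/2}C_0^{1/2}=I$ reduce it to $I-S$, and then extend to all of $\Hcal$ by continuity using the boundedness of $I-S$. The only cosmetic difference is that you make the domain bookkeeping ($\range(\Sigma)\subset\range(C_0^{1/2})$) explicit, which the paper leaves implicit.
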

\begin{proof}
For $x \in \range(C_0^{1/2})$, $C_0^{-1/2}x$ is well-defined, and 
\begin{align*}
C_0^{-1/2}\Sigma C_0^{-1/2}x = C_0^{-1/2}(C_0^{1/2}(I-S)C_0^{1/2})C_0^{-1/2}x = (I-S)x.
\end{align*}
Since $\ker(C_0) = \{0\}$, $\range(C_0^{1/2})$ is dense in $\Hcal$. For each $x \in \Hcal$, let $\{x_n\}_{n \in \Nbb}$ be a Cauchy sequence in $\range(C_0^{1/2})$,
with $\lim_{n \approach \infty}||x_n - x||=0$, then
\begin{align*}
||C_0^{-1/2}\Sigma C_0^{-1/2}x_n - C_0^{-1/2}\Sigma C_0^{-1/2}x_m|| &= ||(I-S)(x_n-x_m)||
\\
& \leq ||I-S||\;||x_n - x_m||.
\end{align*}
Thus $\{C_0^{-1/2}\Sigma C_0^{-1/2}x_n\}$ is also a Cauchy sequence in $\Hcal$, hence must necessarily converge to $(I-S)x$ $\forall x \in \Hcal$.
\qed
\end{proof}

\begin{lemma}
\label{lemma:Sigma-inverse-C0-1}
Assume that $\Sigma = C_0^{1/2}(I-S)C_0^{1/2}$, $S \in \SymHS(\Hcal)_{<I}$. Then
$\Sigma^{-1/2}C_0\Sigma^{-1/2}$ is a well-defined, bounded operator on $\Hcal$, 
with $\Sigma^{-1/2}C_0\Sigma^{-1/2} = I-T$, for some $T \in \SymHS(\Hcal)_{<I}$.	
\end{lemma}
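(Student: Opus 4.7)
The strategy is to reduce the statement to Lemma \ref{lemma:C0-inverse-Sigma-1} by interchanging the roles of $C_0$ and $\Sigma$. The hypothesis $\Sigma = C_0^{1/2}(I-S)C_0^{1/2}$ with $S \in \SymHS(\Hcal)_{<I}$ is, by Theorem \ref{theorem:Gaussian-equivalent}, precisely the sufficient condition for $\Ncal(0,\Sigma) \sim \Ncal(0,C_0)$ (the mean condition is vacuous here). Since equivalence of Gaussian measures is a symmetric relation, $\Ncal(0,C_0) \sim \Ncal(0,\Sigma)$ as well, so applying Theorem \ref{theorem:Gaussian-equivalent} in the reverse direction produces some $T \in \Sym(\Hcal) \cap \HS(\Hcal)$ without eigenvalue $1$ such that $C_0 = \Sigma^{1/2}(I-T)\Sigma^{1/2}$.

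Next I would verify that $T \in \SymHS(\Hcal)_{<I}$, i.e. that $I-T > 0$. This follows because $C_0 \in \Sym^{+}(\Hcal)$ is strictly positive with $\ker(C_0)=\{0\}$, and by Lemma \ref{lemma:range-Sigma} also $\ker(\Sigma) = \{0\}$, so the identity $C_0 = \Sigma^{1/2}(I-T)\Sigma^{1/2}$ forces $\la x, (I-T)x\ra > 0$ for every nonzero $x$ in the dense subspace $\range(\Sigma^{1/2})$, and hence for all nonzero $x \in \Hcal$ by continuity. Self-adjointness of $T$ is inherited from the self-adjointness of $C_0$ and $\Sigma^{1/2}$ together with $\ker(\Sigma) = \{0\}$, which pins $T$ down uniquely.

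With such $T$ in hand, the conclusion follows by invoking Lemma \ref{lemma:C0-inverse-Sigma-1} verbatim after the substitution $(C_0,\Sigma,S) \leftrightarrow (\Sigma,C_0,T)$. Concretely, on the dense subspace $\range(\Sigma^{1/2})$ one computes $\Sigma^{-1/2}C_0\Sigma^{-1/2}x = (I-T)x$ directly, and then the same Cauchy-sequence/continuity argument used in Lemma \ref{lemma:C0-inverse-Sigma-1} (leveraging boundedness of $I-T$) extends this to a bounded operator on all of $\Hcal$, equal to $I-T$.

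The main obstacle, if any, is the symmetry step: one must confirm that the existence of $T$ with the required properties genuinely follows from reversing the roles in Theorem \ref{theorem:Gaussian-equivalent}, rather than trying to construct $T$ algebraically from $S$ (which would require making sense of $(I-S)^{-1}$ conjugated by unbounded factors). Once one accepts the Feldman--H\'ajek symmetry and the previous lemma, the proof is essentially mechanical and introduces no new computation.
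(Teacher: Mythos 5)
Your proposal is correct and follows essentially the same route as the paper: the paper's proof likewise invokes the symmetry of the equivalence $\Ncal(0,C_0)\sim\Ncal(0,\Sigma)$ to obtain $T\in\SymHS(\Hcal)_{<I}$ with $C_0=\Sigma^{1/2}(I-T)\Sigma^{1/2}$ and then applies Lemma \ref{lemma:C0-inverse-Sigma-1} with the roles of $C_0$ and $\Sigma$ interchanged. Your extra verification that $I-T>0$ (positive definiteness, not merely strict positivity, using that $T$ is compact without eigenvalue $1$) is a detail the paper leaves implicit, and it is a welcome addition rather than a deviation.
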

\begin{proof}
	By assumption, we have $\Ncal(0,C_0) \sim \Ncal(0,\Sigma)$. Thus
by symmetry, we have $C_0 = \Sigma^{1/2}(I-T)\Sigma^{1/2}$ for some $T \in \SymHS(\Hcal)_{<I}$. The conclusion then follows from Lemma \ref{lemma:C0-inverse-Sigma-1}.
\qed
\end{proof}

\begin{lemma}
	\label{lemma:C0-Sigma-1/2-inverse-bounded}
	Assume that
	$\Sigma = C_0^{1/2}(I-S)C_0^{1/2}$, $S \in \SymHS(\Hcal)_{<I}$. Then $\Sigma^{-1/2}C_0^{1/2}$, $C_0^{-1/2}\Sigma^{1/2}$, $\Sigma^{1/2}C_0^{-1/2}$, $C_0^{1/2}\Sigma^{-1/2}$, $C_0^{1/2}\Sigma^{-1}C_0^{1/2}$, $\Sigma^{1/2}C_0^{-1}\Sigma^{1/2} \in \Lcal(\Hcal)$,
	with $(\Sigma^{-1/2}C_0^{1/2})^{*} = C_0^{1/2}\Sigma^{-1/2}$, $(\Sigma^{1/2}C_0^{-1/2})^{*} = C_0^{-1/2}\Sigma^{1/2}$. Furthermore, let $T \in \SymHS(\Hcal)_{<I}$
	such that $C_0 = \Sigma^{1/2}(I-T)\Sigma^{1/2}$, then
	\begin{align}
		&C_0^{1/2}\Sigma^{-1}C_0^{1/2} = (I-S)^{-1},
		\;\;\Sigma^{1/2}C_0^{-1}\Sigma^{1/2} = (I-T)^{-1},
		\\
		&||\Sigma^{1/2}C_0^{-1/2}||\leq\sqrt{||I-S||}, \;\; ||C_0^{1/2}\Sigma^{-1/2}||\leq \sqrt{||I-T||},
		\\
		&||\Sigma^{-1/2}C_0^{1/2}|| \leq \sqrt{||(I-S)^{-1}||},
		\;\;
		||C_0^{-1/2}\Sigma^{1/2}||\leq \sqrt{||(I-T)^{-1}||}.
	\end{align}
\end{lemma}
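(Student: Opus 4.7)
The plan is to bootstrap from the earlier Lemmas \ref{lemma:range-Sigma}, \ref{lemma:C0-inverse-Sigma-1}, and \ref{lemma:Sigma-inverse-C0-1}, which already supply $\range(C_0^{1/2}) = \range(\Sigma^{1/2})$, the bounded extensions $C_0^{-1/2}\Sigma C_0^{-1/2} = I-S$ and $\Sigma^{-1/2}C_0\Sigma^{-1/2} = I-T$, and the existence of $T \in \SymHS(\Hcal)_{<I}$ with $C_0 = \Sigma^{1/2}(I-T)\Sigma^{1/2}$. The first step is to establish boundedness of $\Sigma^{1/2}C_0^{-1/2}$ on the dense subspace $\range(C_0^{1/2})$ by the direct quadratic-form estimate: for $x = C_0^{1/2}y$,
\begin{align*}
\|\Sigma^{1/2}C_0^{-1/2}x\|^2 = \la \Sigma y, y\ra = \la (I-S)x, x\ra \leq \|I-S\|\,\|x\|^2,
\end{align*}
and the symmetric computation using $C_0 = \Sigma^{1/2}(I-T)\Sigma^{1/2}$ gives boundedness of $C_0^{1/2}\Sigma^{-1/2}$ on $\range(\Sigma^{1/2})$ with norm at most $\sqrt{\|I-T\|}$. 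Density of these ranges in $\Hcal$ then yields unique bounded extensions to all of $\Hcal$.

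The second step is to establish the adjoint identities. For $x \in \range(C_0^{1/2})$ and $y \in \Hcal$, using that $\Sigma^{1/2}$ is self-adjoint and $\Sigma^{1/2}y \in \range(\Sigma^{1/2}) = \range(C_0^{1/2})$, a direct inner-product manipulation yields $\la \Sigma^{1/2}C_0^{-1/2}x, y\ra = \la x, C_0^{-1/2}\Sigma^{1/2}y\ra$; extending by density identifies $(\Sigma^{1/2}C_0^{-1/2})^* = C_0^{-1/2}\Sigma^{1/2}$, and the analogous pairing gives $(\Sigma^{-1/2}C_0^{1/2})^* = C_0^{1/2}\Sigma^{-1/2}$. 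In particular, $C_0^{-1/2}\Sigma^{1/2}$ and $\Sigma^{-1/2}C_0^{1/2}$ are everywhere defined (by Lemma \ref{lemma:range-Sigma}) and bounded as adjoints of bounded operators.

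The third step is to identify inverses and compute $C_0^{1/2}\Sigma^{-1}C_0^{1/2}$. Direct checks on the dense subspaces show that $(\Sigma^{1/2}C_0^{-1/2})(C_0^{1/2}\Sigma^{-1/2}) = I = (C_0^{1/2}\Sigma^{-1/2})(\Sigma^{1/2}C_0^{-1/2})$, and similarly for the pair $(\Sigma^{-1/2}C_0^{1/2}, C_0^{-1/2}\Sigma^{1/2})$, so each pair consists of mutually inverse bounded operators. Interpreting $C_0^{1/2}\Sigma^{-1}C_0^{1/2}$ as the composition $(C_0^{1/2}\Sigma^{-1/2})(\Sigma^{-1/2}C_0^{1/2}) = (\Sigma^{-1/2}C_0^{1/2})^*(\Sigma^{-1/2}C_0^{1/2})$, I then compute
\begin{align*}
(I-S)\,C_0^{1/2}\Sigma^{-1}C_0^{1/2}
= (C_0^{-1/2}\Sigma^{1/2})(\Sigma^{1/2}C_0^{-1/2})(C_0^{1/2}\Sigma^{-1/2})(\Sigma^{-1/2}C_0^{1/2}) = I,
\end{align*}
using Lemma \ref{lemma:C0-inverse-Sigma-1} to decompose $(I-S)$ and then collapsing the middle pair and the outer pair of mutually inverse bounded operators. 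Since $I-S$ is invertible ($S$ is compact and $1$ is not an eigenvalue, cf.\ Theorem \ref{theorem:Gaussian-equivalent}), this yields $C_0^{1/2}\Sigma^{-1}C_0^{1/2} = (I-S)^{-1}$; the identity $\Sigma^{1/2}C_0^{-1}\Sigma^{1/2} = (I-T)^{-1}$ follows by the fully symmetric argument. The remaining norm bounds $\|\Sigma^{-1/2}C_0^{1/2}\|^2 = \|C_0^{1/2}\Sigma^{-1}C_0^{1/2}\| = \|(I-S)^{-1}\|$ and its $T$-analogue then follow from the $C^*$-identity $\|A\|^2 = \|A^*A\|$.

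The main obstacle I anticipate is not any single estimate but the bookkeeping. Each of the six operators is initially defined only on a dense subspace, so one must be scrupulous about when to pass to the bounded extension, when a product identity is first valid only on a dense subspace before extending by continuity, and when a formal expression such as $C_0^{1/2}\Sigma^{-1}C_0^{1/2}$ must be re-read as the genuine composition $(\Sigma^{-1/2}C_0^{1/2})^*(\Sigma^{-1/2}C_0^{1/2})$ of bounded operators, rather than as a product involving the unbounded $\Sigma^{-1}$.
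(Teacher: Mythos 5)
Your proposal is correct and follows essentially the same route as the paper's proof: the quadratic-form estimate $\|\Sigma^{1/2}C_0^{-1/2}x\|^2 = \la x,(I-S)x\ra$ on the dense range of $C_0^{1/2}$, extension by density, identification of adjoints via inner-product pairings on dense subspaces, and the mutual-inverse identities yielding $C_0^{1/2}\Sigma^{-1}C_0^{1/2}=(I-S)^{-1}$. The only cosmetic differences are that you obtain boundedness of $C_0^{-1/2}\Sigma^{1/2}$ as the adjoint of $\Sigma^{1/2}C_0^{-1/2}$ rather than via the paper's factorization $(I-S)C_0^{1/2}\Sigma^{-1/2}$, and you derive the last two norm bounds from the $C^{*}$-identity rather than from a second quadratic-form computation; both variants are valid.
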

\begin{proof}
	By the assumption that $\ker(C_0) = \{0\}$, we have that $\range(C_0^{1/2})$ is dense in $\Hcal$. For $x \in \range(C_0^{1/2})$, the following is well-defined
	by Lemma \ref{lemma:C0-inverse-Sigma-1}
	\begin{align*}
		&||\Sigma^{1/2}C_0^{-1/2}x||^2 = \la \Sigma^{1/2}C_0^{-1/2}x, \Sigma^{1/2}C_0^{-1/2}x\ra = \la x, C_0^{-1/2}\Sigma C_0^{-1/2}x\ra 
		\\
		&= \la x, (I-S)x\ra \leq ||I-S||\;||x||^2.
	\end{align*}
	For $x \in \Hcal$, let $\{x_n\}_{n\in \Nbb}$ be a Cauchy sequence in $\range(C_0^{1/2})$,
	with $\lim_{n \approach \infty}||x_n-x||=0$.
	Then
	\begin{align*}
		||\Sigma^{1/2}C_0^{-1/2}x_n - \Sigma^{1/2}C_0^{-1/2}x_m|| = ||\Sigma^{1/2}C_0^{-1/2}(x_n-x_m)|| \leq \sqrt{||I-S||}||x_n -x_m||.
	\end{align*}
	Thus $\{\Sigma^{1/2}C_0^{-1/2}x_n\}_{n \in \Nbb}$ is also a Cauchy sequence in $\Hcal$ that must converge to a unique limit, which is necessarily $\Sigma^{1/2}C_0^{-1/2}x$.
	Thus $\Sigma^{1/2}C_0^{-1/2}x$ is well-defined $\forall x \in \Hcal$
	and $\Sigma^{1/2}C_0^{-1/2} \in \Lcal(\Hcal)$, with $||\Sigma^{1/2}C_0^{-1/2}|| \leq \sqrt{||I-S||}$.
	
	By symmetry, $C_0^{1/2}\Sigma^{-1/2} \in \Lcal(\Hcal)$, with $||C_0^{1/2}\Sigma^{-1/2}||\leq \sqrt{||I-T||}$.
	Furthermore, it is invertible, with $(C_0^{1/2}\Sigma^{-1/2})^{-1} = \Sigma^{1/2}C_0^{-1/2} \in \Lcal(\Hcal)$, since $(C_0^{1/2}\Sigma^{-1/2})(\Sigma^{1/2}C_0^{-1/2}) = (\Sigma^{1/2}C_0^{-1/2})(C_0^{1/2}\Sigma^{-1/2}) = I$.
	
	By Lemma \ref{lemma:C0-inverse-Sigma-1}, $C_0^{-1/2}\Sigma C_0^{-1/2} = (I-S) \in \Lcal(\Hcal)$.
	It follows that $C_0^{-1/2}\Sigma^{1/2} = (I-S)(\Sigma^{1/2}C_0^{-1/2})^{-1} = (I-S)C_0^{1/2}\Sigma^{-1/2} \in \Lcal(\Hcal)$.
	
	By symmetry, we have $\Sigma^{-1/2}C_0^{1/2} \in \Lcal(\Hcal)$, with $(C_0^{-1/2}\Sigma^{1/2})^{-1} = 
	\Sigma^{-1/2}C_0^{1/2} \in \Lcal(\Hcal)$. It follows that
	\begin{align*}
		(I-S)^{-1} = C_0^{1/2}\Sigma^{-1/2}(C_0^{-1/2}\Sigma^{1/2})^{-1} = C_0^{1/2}\Sigma^{-1}C_0^{1/2}.
	\end{align*}
	Thus $\forall x \in \Hcal$,
	\begin{align*}
		&	||\Sigma^{-1/2}C_0^{1/2}x||^2 = \la \Sigma^{-1/2}C_0^{1/2}x, \Sigma^{-1/2}C_0^{1/2}x\ra
		= \la x, C_0^{1/2}\Sigma^{-1}C_0^{1/2}x\ra
		\\
		&	 = \la x, (I-S)^{-1}x\ra \leq ||(I-S)^{-1}||\;||x||^2
	\end{align*}
	from which it follows that $||\Sigma^{-1/2}C_0^{1/2}||\leq \sqrt{||(I-S)^{-1}||}$.
	By symmetry,
	\begin{align*}
		(I-T)^{-1} = \Sigma^{1/2}C_0^{-1}\Sigma^{1/2} 
	\end{align*}
	and $||C_0^{-1/2}\Sigma^{1/2}|| \leq \sqrt{||(I-T)^{-1}||}$.
	
	To show that $(\Sigma^{-1/2}C_0^{1/2})^{*} = C_0^{1/2}\Sigma^{-1/2}$, we note that for $y \in \range(\Sigma^{1/2})$, $\forall x \in \Hcal$,
	\begin{align*}
	\la \Sigma^{-1/2}C_0^{1/2}x,y\ra = \la C_0^{1/2}x, \Sigma^{-1/2}y\ra = \la x, C_0^{1/2}\Sigma^{-1/2}y\ra. 
	\end{align*}
Since $\range(\Sigma^{1/2})$ is dense in $\Hcal$, using a limiting argument as before, we have that
$\la \Sigma^{-1/2}C_0^{1/2}x,y\ra = \la x, C_0^{1/2}\Sigma^{-1/2}y\ra$ $\forall x, y \in \Hcal$,
showing that $(\Sigma^{-1/2}C_0^{1/2})^{*} = C_0^{1/2}\Sigma^{-1/2}$.
Similarly, $(\Sigma^{1/2}C_0^{-1/2})^{*} = C_0^{-1/2}\Sigma^{1/2}$.
	\qed
\end{proof}

\begin{corollary}
	\label{corollary:relation-T-S}
	Assume that $\Sigma = C_0^{1/2}(I-S)C_0^{1/2}$, $S \in \SymHS(\Hcal)_{<I}$.
	Then $C_0 = \Sigma^{1/2}(I-T)\Sigma^{1/2}$, for some $T \in \SymHS(\Hcal)_{<I}$. 
	The operators $T$ and $S$ are related as follows. Let $A = \Sigma^{-1/2}C_0^{1/2}$,
	then $I-T = AA^{*}$ and $(I-S)^{-1} = A^{*}A$.
	In particular, the nonzero eigenvalues of $I-T$ and $(I-S)^{-1}$ are the same.
	Consequently, $\dettwo(I-T) = \dettwo[(I-S)^{-1}]$. 
\end{corollary}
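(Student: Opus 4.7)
The plan is to assemble this corollary directly from the preceding lemmas, with the spectral identity and the determinant equality being the only steps that require thought beyond bookkeeping.

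I would first invoke Lemma \ref{lemma:Sigma-inverse-C0-1} to produce an operator $T \in \SymHS(\Hcal)_{<I}$ with $\Sigma^{-1/2}C_0\Sigma^{-1/2} = I-T$, equivalently $C_0 = \Sigma^{1/2}(I-T)\Sigma^{1/2}$, which handles existence. From Lemma \ref{lemma:C0-Sigma-1/2-inverse-bounded} the operator $A = \Sigma^{-1/2}C_0^{1/2}$ is bounded with $A^{*} = C_0^{1/2}\Sigma^{-1/2}$, and the bounded operator identities $\Sigma^{-1/2}C_0\Sigma^{-1/2} = I-T$ and $C_0^{1/2}\Sigma^{-1}C_0^{1/2} = (I-S)^{-1}$ are in hand. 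A direct rewriting then yields
\begin{align*}
AA^{*} &= (\Sigma^{-1/2}C_0^{1/2})(C_0^{1/2}\Sigma^{-1/2}) = \Sigma^{-1/2}C_0\Sigma^{-1/2} = I-T, \\
A^{*}A &= (C_0^{1/2}\Sigma^{-1/2})(\Sigma^{-1/2}C_0^{1/2}) = C_0^{1/2}\Sigma^{-1}C_0^{1/2} = (I-S)^{-1},
\end{align*}
which gives the first pair of claims.

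For the spectral equality I would appeal to the standard fact that for any bounded operator $A$ on a Hilbert space, $AA^{*}$ and $A^{*}A$ share their nonzero spectra with identical multiplicities. In the present setting one can in fact say more: Lemma \ref{lemma:C0-Sigma-1/2-inverse-bounded} shows that $A^{-1} = C_0^{-1/2}\Sigma^{1/2}$ is also bounded, so $A$ is bijective and $AA^{*} = A(A^{*}A)A^{-1}$ is a similarity, giving identical full spectra. In particular the nonzero eigenvalues of $I-T$ and $(I-S)^{-1}$ coincide with multiplicity.

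For the Hilbert-Carleman determinant equality I would use the eigenvalue formula $\dettwo(I+B) = \prod_k (1+\lambda_k(B))\exp(-\lambda_k(B))$ valid for $B \in \HS(\Hcal)$. Both $-T \in \HS(\Hcal)$ and $(I-S)^{-1} - I = S(I-S)^{-1} \in \HS(\Hcal)$, so both $\dettwo(I-T)$ and $\dettwo[(I-S)^{-1}]$ are defined by absolutely convergent products. Writing $\{\alpha_k\}$ for the common positive eigenvalues of $I-T$ and $(I-S)^{-1}$, substitution gives
\begin{equation*}
\dettwo(I-T) = \prod_k \alpha_k\exp(1-\alpha_k) = \dettwo[(I-S)^{-1}],
\end{equation*}
and the two determinants agree. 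The one point requiring mild care is to match multiplicities correctly: factors in the product corresponding to an eigenvalue equal to $1$ of either operator are trivial ($1\cdot e^0 = 1$), and absolute convergence of the Carleman product legitimizes the implicit reordering. This last bookkeeping step is the only mild obstacle; the rest of the argument is essentially a chain of substitutions into the earlier lemmas.
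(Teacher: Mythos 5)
Your proof is correct and follows essentially the same route as the paper, whose own proof is simply the citation ``This follows from Lemma \ref{lemma:C0-Sigma-1/2-inverse-bounded}''; you have filled in exactly the computations that citation compresses, namely $AA^{*}=\Sigma^{-1/2}C_0\Sigma^{-1/2}=I-T$ and $A^{*}A=C_0^{1/2}\Sigma^{-1}C_0^{1/2}=(I-S)^{-1}$, followed by the standard coincidence of the nonzero spectra of $AA^{*}$ and $A^{*}A$ and the Carleman product formula. Your additional observation that $A$ is invertible, so that the two operators are in fact similar and share their full spectra, is a mild strengthening not needed for the stated conclusion but entirely consistent with the paper's lemmas.
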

\begin{proof}
	This follows from Lemma \ref{lemma:C0-Sigma-1/2-inverse-bounded}.
	\qed
\end{proof}


\begin{lemma}
Let $\Sigma = C_0^{1/2}(I-S)C_0^{1/2}$, $S \in \SymHS(\Hcal)_{< I}$. Let $A = C_0^{1/2}TC_0^{1/2}$, $T \in \Sym(\Hcal) \cap \HS(\Hcal)$. Then
$\Sigma^{-1/2}A\Sigma^{-1/2} \in \Sym(\Hcal) \cap \HS(\Hcal)$.
\end{lemma}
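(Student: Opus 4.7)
The plan is to reduce the statement to a direct application of Lemma \ref{lemma:C0-Sigma-1/2-inverse-bounded} combined with the standard fact that the Hilbert--Schmidt class is a two-sided ideal in $\Lcal(\Hcal)$. First I would substitute $A = C_0^{1/2}TC_0^{1/2}$ to rewrite
\begin{equation*}
\Sigma^{-1/2}A\Sigma^{-1/2} = (\Sigma^{-1/2}C_0^{1/2})\,T\,(C_0^{1/2}\Sigma^{-1/2}).
\end{equation*}
By Lemma \ref{lemma:C0-Sigma-1/2-inverse-bounded}, the operator $B := \Sigma^{-1/2}C_0^{1/2}$ is bounded, and its adjoint is $B^{*} = C_0^{1/2}\Sigma^{-1/2} \in \Lcal(\Hcal)$. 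Hence $\Sigma^{-1/2}A\Sigma^{-1/2} = B T B^{*}$.

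Next I would invoke self-adjointness: since $T^{*}=T$, we have $(BTB^{*})^{*} = BT^{*}B^{*} = BTB^{*}$, so the operator belongs to $\Sym(\Hcal)$. The Hilbert--Schmidt property follows from the ideal property: $T \in \HS(\Hcal)$ and $B,B^{*}\in\Lcal(\Hcal)$ together give $BTB^{*} \in \HS(\Hcal)$, with the quantitative bound
\begin{equation*}
\|BTB^{*}\|_{\HS} \leq \|B\|\,\|T\|_{\HS}\,\|B^{*}\| \leq \|(I-S)^{-1}\|\,\|T\|_{\HS},
\end{equation*}
using the estimate $\|\Sigma^{-1/2}C_0^{1/2}\| \leq \sqrt{\|(I-S)^{-1}\|}$ from Lemma \ref{lemma:C0-Sigma-1/2-inverse-bounded}.

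There is no real obstacle here once the previous lemmas are available; the only subtlety is making sure that the two one-sided products $\Sigma^{-1/2}C_0^{1/2}$ and $C_0^{1/2}\Sigma^{-1/2}$ are genuinely bounded operators on all of $\Hcal$ (i.e.\ that the expression is not merely densely defined), but this has already been established, including the adjoint relation between them, in Lemma \ref{lemma:C0-Sigma-1/2-inverse-bounded}. So the proof is essentially a one-line factorisation followed by the ideal property of $\HS(\Hcal)$.
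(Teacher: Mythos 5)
Your proof is correct and follows essentially the same route as the paper: both factor $\Sigma^{-1/2}A\Sigma^{-1/2} = (\Sigma^{-1/2}C_0^{1/2})T(C_0^{1/2}\Sigma^{-1/2})$ and appeal to Lemma \ref{lemma:C0-Sigma-1/2-inverse-bounded} for boundedness and the adjoint relation, then use the ideal property of $\HS(\Hcal)$. The explicit norm bound you add is a harmless extra; the argument is otherwise the paper's own.
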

\begin{proof}
	By Lemma \ref{lemma:C0-Sigma-1/2-inverse-bounded}, we have $\Sigma^{-1/2}C_0^{1/2}\in\Lcal(\Hcal)$, $C_0^{1/2}\Sigma^{-1/2} \in \Lcal(\Hcal)$,
	with $(\Sigma^{-1/2}C_0^{1/2})^{*} = C_0^{1/2}\Sigma^{-1/2}$.
	Thus $\Sigma^{-1/2}A\Sigma^{-1/2} = \Sigma^{-1/2}C_0^{1/2}TC_0^{1/2}\Sigma^{-1/2} \in \Sym(\Hcal) \cap \HS(\Hcal)$.
\qed
\end{proof}

\begin{lemma}
\label{lemma:switch-C0-Sigma}
Assume that 
$\Sigma = C_0^{1/2}(I-S)C_0^{1/2}$, $S \in \SymHS(\Hcal)_{<I}$.
Then $A = C_0^{1/2}B_1C_0^{1/2}$, $B_1 \in \Sym(\Hcal) \cap \HS(\Hcal)$, if and only if
$\exists B_2 \in \Sym(\Hcal) \cap \HS(\Hcal)$ such that $A = \Sigma^{1/2}B_2\Sigma^{1/2}$.
In other words, $\SymHS(\Hcal,C_0) = \SymHS(\Hcal,\Sigma)$.
\end{lemma}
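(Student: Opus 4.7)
The plan is to exploit the boundedness and invertibility results already assembled in Lemma \ref{lemma:C0-Sigma-1/2-inverse-bounded}, which says $\Sigma^{-1/2}C_0^{1/2}$ and $C_0^{1/2}\Sigma^{-1/2}$ both belong to $\Lcal(\Hcal)$ and are adjoints of each other. Once these operators are at our disposal, the statement reduces to the standard fact that conjugation by a bounded operator preserves self-adjointness and maps $\HS(\Hcal)$ into itself, since $\HS(\Hcal)$ is a two-sided ideal in $\Lcal(\Hcal)$.

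Concretely, I would argue as follows. For the forward inclusion, suppose $A = C_0^{1/2} B_1 C_0^{1/2}$ with $B_1 \in \Sym(\Hcal) \cap \HS(\Hcal)$. Set
\begin{equation*}
B_2 = \Sigma^{-1/2} A \Sigma^{-1/2} = (\Sigma^{-1/2}C_0^{1/2})\, B_1 \,(C_0^{1/2}\Sigma^{-1/2}).
\end{equation*}
By Lemma \ref{lemma:C0-Sigma-1/2-inverse-bounded}, $E := \Sigma^{-1/2}C_0^{1/2} \in \Lcal(\Hcal)$ with $E^{*} = C_0^{1/2}\Sigma^{-1/2}$, so $B_2 = E B_1 E^{*}$. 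Self-adjointness of $B_2$ is immediate from $B_1^{*} = B_1$, and the Hilbert-Schmidt ideal property gives $B_2 \in \HS(\Hcal)$. Finally $A = \Sigma^{1/2} B_2 \Sigma^{1/2}$ follows from plugging $B_2$ back in and canceling $\Sigma^{1/2}\Sigma^{-1/2} = I$.

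For the reverse inclusion, I invoke the symmetry that has already been established: by Lemma \ref{lemma:Sigma-inverse-C0-1} there exists $T \in \SymHS(\Hcal)_{<I}$ with $C_0 = \Sigma^{1/2}(I-T)\Sigma^{1/2}$, so the roles of $C_0$ and $\Sigma$ are interchangeable and the previous paragraph applies with $(C_0,\Sigma,S)$ replaced by $(\Sigma,C_0,T)$. This yields the two inclusions and hence $\SymHS(\Hcal,C_0) = \SymHS(\Hcal,\Sigma)$.

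I do not anticipate any serious obstacle: the only subtlety is to make sure the products $\Sigma^{-1/2}C_0^{1/2}$ and $C_0^{1/2}\Sigma^{-1/2}$ are treated as genuine bounded operators on all of $\Hcal$ rather than as formal symbols, but this has been fully justified by Lemma \ref{lemma:C0-Sigma-1/2-inverse-bounded} via the dense-range/Cauchy sequence extension argument. Everything else is a one-line manipulation.
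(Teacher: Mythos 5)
Your proof is correct, but it takes a genuinely different route from the paper. The paper proves this lemma via polar decomposition: writing $C_0 = \Sigma^{1/2}(I-T)\Sigma^{1/2}$ and factoring $(I-T)^{1/2}\Sigma^{1/2} = U(\Sigma^{1/2}(I-T)\Sigma^{1/2})^{1/2}$ with $U$ unitary, it obtains the explicit identity $C_0^{1/2} = \Sigma^{1/2}(I-T)^{1/2}U$ and hence the concrete formula $B_2 = (I-T)^{1/2}UB_1U^{*}(I-T)^{1/2}$. You instead conjugate by the bounded operator $E = \Sigma^{-1/2}C_0^{1/2}$ from Lemma \ref{lemma:C0-Sigma-1/2-inverse-bounded} and invoke the two-sided ideal property of $\HS(\Hcal)$ in $\Lcal(\Hcal)$; this is essentially the argument the paper itself uses in the short unnamed lemma immediately preceding this one (which establishes $\Sigma^{-1/2}A\Sigma^{-1/2} \in \Sym(\Hcal)\cap\HS(\Hcal)$), so your route is arguably the more economical of the two and avoids introducing the partial isometry. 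What the paper's polar-decomposition approach buys is the explicit form of $B_2$, which is recycled verbatim later (e.g.\ in the KL-divergence proof of Theorem \ref{theorem:Riemannian-metric}, where the factorization $\Sigma^{1/2} = C_0^{1/2}(I-S)^{1/2}U$ is needed again). One point you flag but should make fully explicit: the cancellation $\Sigma^{1/2}(\Sigma^{-1/2}C_0^{1/2}) = C_0^{1/2}$ is legitimate precisely because $\range(C_0^{1/2}) = \range(\Sigma^{1/2})$ (Lemma \ref{lemma:range-Sigma}), so $C_0^{1/2}x$ always lies in the domain on which $\Sigma^{-1/2}$ is the genuine inverse of $\Sigma^{1/2}$; with that observation in place the argument is complete.
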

\begin{proof}
	By assumption, $\Ncal(0,\Sigma) \sim \Ncal(0,C_0)$, so that by symmetry, $\exists T \in \SymHS(\Hcal)_{<I}$ such that $C_0 = \Sigma^{1/2}(I-T)\Sigma^{1/2}$.
	Thus if $A = C_0^{1/2}B_1C_0^{1/2}$, then
	\begin{align*}
	A =  (\Sigma^{1/2}(I-T)\Sigma^{1/2})^{1/2}B_1 (\Sigma^{1/2}(I-T)\Sigma^{1/2})^{1/2}.
	\end{align*}
Since $\ker(\Sigma^{1/2}) = \{0\}$ and $I-T > 0$, the operator $(I-T)^{1/2}\Sigma^{1/2}$ admits the polar decomposition
\begin{align*}
(I-T)^{1/2}\Sigma^{1/2} = U(\Sigma^{1/2}(I-T)\Sigma^{1/2})^{1/2},
\end{align*}
where $U$ is unitary. It follows that
\begin{align*}
(\Sigma^{1/2}(I-T)\Sigma^{1/2})^{1/2} = U^{*}(I-T)^{1/2}\Sigma^{1/2} = \Sigma^{1/2}(I-T)^{1/2}U,
\end{align*}
where the second equality follows since $(\Sigma^{1/2}(I-T)\Sigma^{1/2})^{1/2} \in \Sym(\Hcal)$.
Thus
\begin{align*}
	A = \Sigma^{1/2}(I-T)^{1/2}UB_1U^{*}(I-T)^{1/2}\Sigma^{1/2} = \Sigma^{1/2}B_2\Sigma^{1/2},
\end{align*}
where $B_2 = (I-T)^{1/2}UB_1U^{*}(I-T)^{1/2} \in \Sym(\Hcal) \cap \HS(\Hcal)$. The converse follows from symmetry.
\qed
\end{proof}

\subsection{Proofs for the Riemannian metric}
\label{section:Riemannian-metric}

\begin{proof}
	[\textbf{of Lemma \ref{lemma:Hilbert-manifold-SymHS<I}}]
	The set $\SymHS(\Hcal)_{<I}$ is an open subset in the Hilbert space $\Sym(\Hcal) \cap \HS(\Hcal)$ and thus is a Hilbert manifold.
	For $S \in \SymHS(\Hcal)_{<I}$, let $A:\R\mapto \SymHS(\Hcal)_{<I}$ be a differentiable curve, with $A(0) = S$. Then, by definition of the derivative, $\dot{A}(t) \in \Sym(\Hcal) \cap \HS(\Hcal)$ $\forall t \in \R$.	
	
	Conversely, let $S \in \SymHS(\Hcal)_{<I}$ be fixed. Let $V  \in \Sym(\Hcal) \cap \HS(\Hcal)$. 
	Define the curve $B: \R \mapto \Sym(\Hcal) \cap \HS(\Hcal)$ by $B(t) =\sum_{k=1}^{\infty}\frac{(-1)^{k-1}t^k}{k!}V^k 
	= I - \exp(-tV)$.
	We note that $B(t) \in \Sym(\Hcal) \cap \HS(\Hcal)$ since $\Sym(\Hcal) \cap \HS(\Hcal)$ is a Banach algebra.
	Define $A(t) = S + B(t) = I+S - \exp(-tV)$, 
	then
	$A(t)$ is smooth, with $A(t) \in \Sym(\Hcal) \cap \HS(\Hcal)$, $A(0) = S$, $\dot{A}(0) = V$. 
	Since $I-S > 0$, $\exists M_S \in \R$, $M_S > 0$ such that $\la x, (I-S)x\ra \geq M_S||x||^2$ $\forall x \in \Hcal$.
	We have
	\begin{align*}
		\la x, B(t)x\ra \leq \sum_{k=1}^{\infty}\frac{|t|^k||V||^k}{k!}||x||^2 = [\exp(|t|\;||V||) - 1]||x||^2.
	\end{align*}
	Let $\varep = \frac{\log(1+M_S)}{||V||}$, then $M_S - [\exp(|t|\;||V||) - 1] > 0$ $\forall |t| < \varep$ and $\forall x \in \Hcal$,
	\begin{align*}
		\la x, (I-A(t))x\ra = \la x, (I-S-B(t))x\ra \geq (M_S - [\exp(|t|\;||V||) - 1])||x||^2.
	\end{align*}
	This shows that $I-A(t) > 0$ $\forall |t| < \varep$. Thus $A(t) \in \SymHS(\Hcal)_{< I}$
	$\forall |t| < \varep$. Hence $T_S(\SymHS(\Hcal)_{<I}) \cong \Sym(\Hcal) \cap \HS(\Hcal)$
	$\forall S \in \SymHS(\Hcal)_{< I}$.
	\qed
\end{proof}

\begin{proof}
	[\textbf{of Proposition \ref{proposition:tangent-space}}]
	By Lemma \ref{lemma:switch-C0-Sigma}, $V = C_0^{1/2}XC_0^{1/2}$, $X \in \Sym(\Hcal) \cap \HS(\Hcal)$, if and only if
	$V = \Sigma^{1/2}Y\Sigma^{1/2}$ for some $Y \in \Sym(\Hcal) \cap \HS(\Hcal)$, so the two expressions are identical.
	Let $A:\R \mapto \Tr(\Hcal, C_0)$ be a differentiable curve, then it necessarily has the form
	$A(t) = C_0^{1/2}(I-B(t))C_0^{1/2}$, 
	with $B(t) \in \SymHS(\Hcal)_{<I}$.
	Thus $\dot{A}(t) = -C_0^{1/2}\dot{B}(t)C_0^{1/2}$, with $\dot{B}(t) \in \Sym(\Hcal) \cap \HS(\Hcal)$.
	
	Conversely, 
	let $V = \Sigma^{1/2}Y\Sigma^{1/2}$, $Y \in \Sym(\Hcal) \cap \HS(\Hcal)$.
	Consider the following curve $A: \R \mapto \Tr(\Hcal,C_0)$, defined by
	\begin{align*}
		A(t) = \Sigma^{1/2}\exp(t\Sigma^{-1/2}V\Sigma^{-1/2})\Sigma^{1/2} = \Sigma^{1/2}\exp(tY)\Sigma^{1/2}.
	\end{align*}
	Then $A(t) \in \Tr(\Hcal,C_0)$ $\forall t \in \R$, since $\exp(tY) = 
	I + \sum_{k=1}^{\infty}\frac{t^k}{k!}Y^k > 0$ $\forall t \in \R$, 
	with $\sum_{k=1}^{\infty}\frac{t^k}{k!}Y^k \in \Sym(\Hcal)\cap\HS(\Hcal)$, since
	$\HS(\Hcal)$ is a Banach algebra.
	Clearly $A(t)$ is a smooth curve, with $A(0) = \Sigma$, $\dot{A}(0) = \Sigma^{1/2}Y\Sigma^{1/2} = V$. Thus
	$V \in T_{\Sigma}(\Tr(\Hcal,C_0))$.
	\qed
\end{proof}

The following is a special case of Lemma 4 in \cite{Minh:2019AlphaBeta}.
\begin{lemma}
	\label{lemma:derivative-logdet-Fredholm}
	Let $\Omega_1 = \{X \in \Sym(\Hcal) \cap \Tr(\Hcal): I + X > 0\}$. Let $A_0 \in \Omega_1$ be fixed.
	Then $\forall A \in \Sym(\Hcal) \cap \Tr(\Hcal)$,
	\begin{align}
		D\log\det(I+A_0)(A) = \trace[(I+A_0)^{-1}A].
	\end{align}
\end{lemma}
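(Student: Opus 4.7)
The plan is to compute the Fr\'echet derivative directly from the multiplicative property of the Fredholm determinant together with the standard expansion $\log\det(I+B) = \trace\log(I+B)$ valid for trace-class $B$ of sufficiently small norm.

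First I would observe that since $A_0 \in \Omega_1$ satisfies $I+A_0 > 0$ with $A_0$ trace class (hence compact), the operator $I+A_0$ is bounded below by a strictly positive constant, and therefore invertible with $(I+A_0)^{-1} \in \Lcal(\Hcal)$. For any $A \in \Sym(\Hcal)\cap\Tr(\Hcal)$ with $\|A\|_{\tr}$ small enough that $\|(I+A_0)^{-1}A\| < 1$, one can factor
\begin{align*}
I + A_0 + A = (I+A_0)\bigl(I + (I+A_0)^{-1}A\bigr),
\end{align*}
where $B := (I+A_0)^{-1}A \in \Tr(\Hcal)$ because $\Tr(\Hcal)$ is a two-sided ideal in $\Lcal(\Hcal)$.

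Next I would apply the multiplicative property of the Fredholm determinant, $\det((I+A_0)(I+B)) = \det(I+A_0)\det(I+B)$, to obtain
\begin{align*}
\log\det(I+A_0+A) - \log\det(I+A_0) = \log\det(I+B).
\end{align*}
For $\|B\| < 1$, the Neumann series $\log(I+B) = \sum_{k=1}^{\infty}\frac{(-1)^{k-1}}{k}B^k$ converges in trace norm, and the standard identity $\log\det(I+B) = \trace\log(I+B)$ gives
\begin{align*}
\log\det(I+B) = \trace(B) - \tfrac12\trace(B^2) + \cdots
= \trace[(I+A_0)^{-1}A] + R(A),
\end{align*}
where $\|R(A)\| \leq C\|B\|_{\tr}^2 \leq C\|(I+A_0)^{-1}\|^2\|A\|_{\tr}^2$.

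Finally, since $A \mapsto \trace[(I+A_0)^{-1}A]$ is linear and bounded on $\Sym(\Hcal)\cap\Tr(\Hcal)$ (its norm being at most $\|(I+A_0)^{-1}\|$) and the remainder is $O(\|A\|_{\tr}^2) = o(\|A\|_{\tr})$, the definition of the Fr\'echet derivative yields the claimed formula. The only mildly delicate step is justifying $\log\det(I+B)=\trace\log(I+B)$ for trace-class $B$ with $\|B\|<1$, which is standard (see e.g.\ \cite{Simon:1977}) and follows from absolute convergence of $\prod_j(1+\lambda_j(B))$ together with the power series for $\log$; no substantive obstacle arises here.
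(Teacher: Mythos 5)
Your proof is correct. Note that the paper does not actually prove this lemma in the text: it is imported as a special case of Lemma 4 in the cited reference \cite{Minh:2019AlphaBeta}, so there is no in-paper argument to compare against. Your route --- factoring $I+A_0+A=(I+A_0)\bigl(I+(I+A_0)^{-1}A\bigr)$, invoking multiplicativity of the Fredholm determinant, and expanding $\log\det(I+B)=\trace\log(I+B)$ to isolate the linear term $\trace[(I+A_0)^{-1}A]$ with an $O(\|A\|_{\tr}^2)$ remainder --- is the standard and complete way to establish the result, and it supplies a self-contained justification the paper leaves to the reference. Two small points worth making explicit: since $\Omega_1$ is open in the trace norm, $A_0+A\in\Omega_1$ for $\|A\|_{\tr}$ small, so $\log\det(I+A_0+A)$ is genuinely defined on the relevant neighborhood; and because $B=(I+A_0)^{-1}A$ is generally not self-adjoint, the identity $\log\det(I+B)=\trace\log(I+B)$ rests on Lidskii's theorem (eigenvalues with algebraic multiplicity), which you correctly flag and attribute to \cite{Simon:1977}. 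Also, the remainder $R(A)$ is a scalar, so the bound should read $|R(A)|\leq C\|B\|_{\tr}^2$ rather than $\|R(A)\|$.
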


\begin{lemma}
	\label{lemma:Frechet-derivative-logdet2}
	Let $\Omega_2 = \{X \in \Sym(\Hcal) \cap \HS(\Hcal): I + X > 0\}$.
	Let $f: \Omega_2 \mapto \R$ 
	be defined by $f(X) = \log\dettwo(I+X)$. Then $Df(X_0):
	\Sym(\Hcal)\cap \HS(\Hcal) \mapto \R$ is given by, $\forall X_0 \in \Omega_2$,
	\begin{align}
		Df(X_0)(X) = - \trace[(I+X_0)^{-1}X_0X], \;\;\;\forall X \in \Sym(\Hcal) \cap \HS(\Hcal).
	\end{align}
\end{lemma}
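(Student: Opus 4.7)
The plan is to first reduce to the trace class setting, where Lemma~\ref{lemma:derivative-logdet-Fredholm} applies directly, and then extend to Hilbert--Schmidt operators by a density argument together with an explicit second-order remainder bound.

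For $X_0, V \in \Sym(\Hcal)\cap\Tr(\Hcal)$ with $I + X_0 > 0$, one has the identity $\dettwo(I+X) = \det(I+X)\exp(-\trace X)$ for $X \in \Tr(\Hcal)$, so
\begin{align*}
f(X) = \log\det(I+X) - \trace(X), \qquad X \in \Sym(\Hcal)\cap\Tr(\Hcal).
\end{align*}
Applying Lemma~\ref{lemma:derivative-logdet-Fredholm} and linearity of the trace,
\begin{align*}
Df(X_0)(V) = \trace[(I+X_0)^{-1}V] - \trace V = \trace\bigl[\{(I+X_0)^{-1} - I\}V\bigr] = -\trace[(I+X_0)^{-1}X_0 V],
\end{align*}
using $(I+X_0)^{-1} - I = -(I+X_0)^{-1}X_0$. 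The final expression remains well defined for $X_0, V \in \Sym(\Hcal)\cap\HS(\Hcal)$: since both factors are HS, $X_0 V \in \Tr(\Hcal)$, and $(I+X_0)^{-1}$ is bounded, so $(I+X_0)^{-1}X_0 V \in \Tr(\Hcal)$.

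For general $X_0 \in \Omega_2$ and $V \in \Sym(\Hcal)\cap\HS(\Hcal)$, I would approximate $X_0$ and $V$ by their finite-rank spectral truncations $X_0^{(N)} = P_N X_0 P_N$, $V^{(N)} = P_N V P_N$, which are trace class and converge in HS norm. Writing $\phi(t) = f(X_0 + tV)$ and $\phi^{(N)}(t) = f(X_0^{(N)} + tV^{(N)})$, the trace class case gives an explicit formula for $(\phi^{(N)})'(t)$. Since $(I+X_0^{(N)}+tV^{(N)})^{-1} \approach (I+X_0+tV)^{-1}$ in operator norm for $N$ large, and $(X_0^{(N)}+tV^{(N)})V^{(N)} \approach (X_0+tV)V$ in trace norm, passing to the limit yields the Gâteaux derivative $\phi'(t) = -\trace[(I+X_0+tV)^{-1}(X_0+tV)V]$. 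Integrating along the segment and using the resolvent identity $(I+X_0+tH)^{-1} - (I+X_0)^{-1} = -t(I+X_0+tH)^{-1}H(I+X_0)^{-1}$ produces the explicit remainder
\begin{align*}
f(X_0+H) - f(X_0) + \trace[(I+X_0)^{-1}X_0 H] = -\int_0^1 t\,\trace\bigl[(I+X_0+tH)^{-1}H(I+X_0)^{-1}H\bigr]\,dt,
\end{align*}
which is $O(\|H\|_{\HS}^2)$ by $|\trace(T_1T_2)| \leq \|T_1\|_{\HS}\|T_2\|_{\HS}$ applied with $T_1 = (I+X_0+tH)^{-1}H$, $T_2 = (I+X_0)^{-1}H$, uniformly for $\|H\|_{\HS}$ small enough that $I + X_0 + tH > 0$ holds with operator-norm-bounded inverse for all $t \in [0,1]$. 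This upgrades the Gâteaux derivative to a Fréchet derivative.

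The main obstacle is the Gâteaux-to-Fréchet passage: naively, the trace class computation splits into the two formally divergent pieces $\trace[(I+X_0)^{-1}V]$ and $\trace V$ that only combine into a well-defined quantity when $V$ is trace class. The finite-rank truncation bypasses this, since at every approximation level all operators are trace class and the individual traces are meaningful; the limit is taken only on the combined trace class quantity $\trace[(I+X_0)^{-1}X_0 V]$, which then produces the correct formula. The remainder estimate above supplies the quadratic control needed for Fréchet differentiability rather than merely the directional Gâteaux derivative.
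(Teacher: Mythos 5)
Your proposal is correct, and in the Hilbert--Schmidt extension it takes a genuinely different route from the paper. The paper handles general $X_0\in\Omega_2$ by writing $(I+X)\exp(-X)=I+g(X)$ with $g(X)=\sum_{k\geq 1}(-1)^k\frac{k}{(k+1)!}X^{k+1}$, noting that $g$ maps into the trace class, and then pushing the Fredholm-determinant derivative of Lemma~\ref{lemma:derivative-logdet-Fredholm} through the chain rule, resumming the series and using cyclicity of the trace. You instead get the G\^ateaux derivative by trace-class (finite-rank) approximation and then upgrade to Fr\'echet differentiability via the exact remainder identity
\[
f(X_0+H)-f(X_0)+\trace[(I+X_0)^{-1}X_0H]=-\int_0^1 t\,\trace\bigl[(I+X_0+tH)^{-1}H(I+X_0)^{-1}H\bigr]\,dt,
\]
which indeed follows from $(I+A)^{-1}A=I-(I+A)^{-1}$ together with the resolvent identity, and whose right-hand side is $O(\|H\|_{\HS}^2)$ by Cauchy--Schwarz for the Hilbert--Schmidt inner product, uniformly once $\|H\|_{\HS}$ is small enough that $I+X_0+tH$ is uniformly positive definite. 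This buys an explicit quadratic error bound that certifies Fr\'echet (not merely directional) differentiability, and it cleanly isolates the single trace-class quantity $\trace[(I+X_0)^{-1}X_0V]$ without ever splitting it into the divergent pieces $\trace[(I+X_0)^{-1}V]$ and $\trace(V)$; the paper's argument, by contrast, rests on a termwise differentiation of the series for $g$ and a chain rule into the trace-norm topology that it does not fully spell out. The one step you should make explicit is the pointwise convergence $\phi^{(N)}(t)\to\phi(t)$, i.e., continuity of $\log\dettwo$ in $\|\cdot\|_{\HS}$ on sets where $I+X$ is uniformly positive definite, since this is what licenses identifying $\lim_N(\phi^{(N)})'$ with $\phi'$; it is available from the Lipschitz estimate for $\log\dettwo$ that the paper quotes elsewhere (Theorem 12 of \cite{Minh:2022KullbackGaussian}).
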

\begin{proof}
	(i) For $X \in \Omega_1 = \{X \in \Sym(\Hcal) \cap \Tr(\Hcal): I + X > 0\}$,
	\begin{align*}
		&\dettwo(I+X) = \det[(I+X)\exp(-X)] = \det(I+X)\exp(-\trace(X)), 
		\\
		&\log\dettwo(I+X) = \log\det(I+X) - \trace(X).
	\end{align*}
	It follows that $\forall X_0 \in \Omega_1$,
	 $\forall X \in \Sym(\Hcal) \cap\Tr(\Hcal)$, by Lemma \ref{lemma:derivative-logdet-Fredholm},
	\begin{align*}
		D\log\dettwo(I+X_0)(X) = \trace[(I+X_0)^{-1}X] - \trace(X) = -\trace[(I+X_0)^{-1}X_0X].
	\end{align*}
	(ii) Consider the general case $X \in \Omega_2 = \{X \in \Sym(\Hcal) \cap \HS(\Hcal): I + X > 0\}$.
	By definition,
	\begin{align*}
		\log\dettwo(I+X) = \log\det[(I+X)\exp(-X)].
	\end{align*}
	Expanding $(I+X)\exp(-X)$ gives
	\begin{align*}
		&(I+X)\exp(-X) = (I+X)\left(I - X + \frac{X^2}{2!} - \frac{X^3}{3!} + \frac{X^4}{4!} - \cdots\right)
		\\
		& = I - X^2\left(1-\frac{1}{2!}\right) + X^3\left(\frac{1}{2!} - \frac{1}{3!}\right) - X^4\left(\frac{1}{3!} - \frac{1}{4!}\right) 
		\\
		&\quad + \cdots + (-1)^{k}X^{k+1}\left(\frac{1}{k!} - \frac{1}{(k+1)!}\right) + \cdots
		\\
		& =  I + \sum_{k=1}^{\infty}(-1)^{k}X^{k+1}\frac{k}{(k+1)!}  = I + g(X).
	\end{align*}
	By the product rule, for $h(X) = X^k$, $k \in \Nbb$
	\begin{align*}
		Dh(X_0)(X) = XX_0^{k-1} + X_0XX_0 \ldots X_0 + X_0^2XX_0\ldots X_0 + \cdots + X_0^{k-1}X.
	\end{align*}
	By the chain rule, for $f(X) = \log\dettwo(I+X) = \log\det(I+g(X))$, we have
	\begin{align*}
		&Df(X_0)(X) = D\log\det(I+g(X_0))\compose Dg(X_0)(X) 
		\\
		&= \trace[(I+g(X_0))^{-1}Dg(X_0)(X)]
		\\
		& = \trace\left[\exp(X_0)(I+X_0)^{-1}\left(-X_0 +X_0^2  + \cdots + (-1)^{k}X_0^k\frac{1}{(k-1)!} + \cdots\right)X\right]
		\\
		& = - \trace\left[(I+X_0)^{-1}\exp(X_0)\left(I-X_0 + \cdots + (-1)^{k-1}X_0^{k-1}\frac{1}{(k-1)!} + \cdots\right)X_0X\right]
		\\
		& = -\trace[(I+X_0)^{-1}\exp(X_0)\exp(-X_0)X_0X] = - \trace[(I+X_0)^{-1}X_0X].
	\end{align*}
	Here we have used the cyclic property of the trace.
	\qed
\end{proof}
\begin{corollary}
	\label{corollary:first-derivatve-logdet-2}
	Let $A \in \Sym(\Hcal) \cap \HS(\Hcal)$ be fixed. Let $\Omega = \{t \in \R: I + tA > 0\}$.
	Define $f: \Omega \mapto \R$ by $f(t) = \log\dettwo(I+tA)$.
	Then $\frac{d}{dt}\log\dettwo(I+tA) = - t\trace[A^2(I+tA)]$.
\end{corollary}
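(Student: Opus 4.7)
The plan is to derive this as an immediate one-variable specialization of Lemma \ref{lemma:Frechet-derivative-logdet2}. The key observation is that the map $t \mapsto tA$ is a smooth curve from $\Omega \subset \R$ into $\Omega_2 = \{X \in \Sym(\Hcal)\cap \HS(\Hcal): I+X > 0\}$. Indeed, for each $t \in \Omega$ we have $tA \in \Sym(\Hcal)\cap \HS(\Hcal)$ by linearity of these spaces, and $I+tA > 0$ by the very definition of $\Omega$. The velocity of this curve at any point is constant and equal to $A$.

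First I would apply the chain rule: since $f(t) = \log\dettwo(I+tA)$ is the composition of the Fr\'echet-differentiable map $F: \Omega_2 \to \R$, $F(X) = \log\dettwo(I+X)$, with the affine curve $\gamma(t) = tA$, one obtains
\begin{equation*}
\frac{d}{dt}\log\dettwo(I+tA) = DF(tA)(\dot{\gamma}(t)) = DF(tA)(A).
\end{equation*}
Then I would plug in the formula from Lemma \ref{lemma:Frechet-derivative-logdet2}, which yields
\begin{equation*}
DF(tA)(A) = -\trace\bigl[(I+tA)^{-1}(tA)A\bigr] = -t\,\trace\bigl[(I+tA)^{-1}A^2\bigr].
\end{equation*}
Using the cyclic property of the trace (together with the fact that $A^2 \in \Tr(\Hcal)$ since $A \in \HS(\Hcal)$, so all the quantities inside the trace are well defined) this rearranges to the stated expression $-t\,\trace[A^2(I+tA)^{-1}]$ (I read the formula in the statement as having $(I+tA)^{-1}$; the exponent $-1$ appears to be a typographical omission, since without it the trace need not even be finite).

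There is no real obstacle here: the substance has already been done in Lemma \ref{lemma:Frechet-derivative-logdet2}, and the corollary is purely a chain-rule calculation. The only mild point to check is the well-definedness of $(I+tA)^{-1}A^2$ as a trace class operator, which follows from $A^2 \in \Tr(\Hcal)$ and the boundedness of $(I+tA)^{-1}$ guaranteed by $I+tA > 0$ together with $I+tA$ being invertible (a positive operator of the form $I+K$ with $K$ compact and self-adjoint is invertible iff $-1$ is not an eigenvalue of $K$, which holds here since $I+tA > 0$).
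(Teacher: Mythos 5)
Your proof is correct and is essentially identical to the paper's own argument: both apply the chain rule to the composition of $F(X)=\log\dettwo(I+X)$ with the line $t\mapsto tA$ and then substitute the formula $DF(X_0)(X)=-\trace[(I+X_0)^{-1}X_0X]$ from Lemma \ref{lemma:Frechet-derivative-logdet2}. You are also right that the missing exponent $-1$ on $(I+tA)$ in the statement is a typographical error; the paper's proof derives $-t\trace[A^2(I+tA)^{-1}]$, exactly as you do.
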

\begin{proof}
	Using the chain rule and Lemma \ref{lemma:Frechet-derivative-logdet2}, $\forall t_0 \in \Omega$, $\forall h \in \R$,
	\begin{align*}
		Df(t_0)(h) &= D\log\dettwo(I+t_0A)(hA) = hD\log\dettwo(I+t_0A)(A) 
		\\
		&= -h\trace[(I+t_0A)^{-1}(t_0A)A] = -ht_0\trace[A^2(I+t_0A)^{-1}].
	\end{align*}
	It follows that $\frac{d}{dt}\log\dettwo(I+tA) = Df(t)(1) = -t\trace[A^2(I+tA)^{-1}]$.
	\qed
\end{proof}

\begin{corollary}
	\label{corollary:second-derivative-logdet-2-st}
	Let $A,B \in \Sym(\Hcal) \cap \HS(\Hcal)$ be fixed. Let $\Omega = \{(s,t) \in \R^2: I + sA+tB > 0\}$.
	Define $f: \Omega \mapto \R$ by $f(s,t) = \log\dettwo(I+sA+tB)$.
	Then
	\begin{align}
		\frac{\partial f}{\partial s} &= \trace[A((I+sA+tB)^{-1}-I)],
		\\
		\frac{\partial^2f}{\partial s\partial t} &= -\trace[A(I+sA+tB)^{-1}B(I+sA+tB)^{-1}].
		\\
		\frac{\partial^2f}{\partial s\partial t}\big\vert_{s=0,t=0} &= - \trace(AB).
	\end{align}
\end{corollary}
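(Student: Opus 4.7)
The plan is to apply Lemma \ref{lemma:Frechet-derivative-logdet2} via the chain rule with the linear map $(s,t) \mapsto sA + tB$. Set $M = M(s,t) = I + sA + tB$, which belongs to the open set $\Omega_2$ of Lemma \ref{lemma:Frechet-derivative-logdet2} by assumption. Since $A,B \in \Sym(\Hcal)\cap\HS(\Hcal)$, $sA+tB \in \Sym(\Hcal)\cap\HS(\Hcal)$, and $M^{-1}$ is a bounded operator, so products like $M^{-1}(sA+tB)$ are Hilbert--Schmidt and the trace expressions below are all well-defined.

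For the first derivative, the chain rule combined with Lemma \ref{lemma:Frechet-derivative-logdet2} gives
\begin{align*}
\frac{\partial f}{\partial s} = D\log\dettwo(M)(A) = -\trace\!\left[M^{-1}(sA+tB)A\right].
\end{align*}
Then I would rewrite $sA+tB = M - I$ and use the cyclic property of the trace:
\begin{align*}
-\trace[M^{-1}(M-I)A] = -\trace[(I - M^{-1})A] = \trace[A(M^{-1}-I)],
\end{align*}
which is the claimed formula.

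For the mixed second derivative, I would differentiate $\frac{\partial f}{\partial s} = \trace[A(M^{-1}-I)]$ in $t$, using the standard identity $\partial_t M^{-1} = -M^{-1}(\partial_t M)M^{-1} = -M^{-1}BM^{-1}$ (which follows from differentiating $MM^{-1}=I$ and holds in $\Lcal(\Hcal)$ since $M^{-1}$ is bounded and depends smoothly on $t$). This gives
\begin{align*}
\frac{\partial^2 f}{\partial s\partial t} = \trace\!\left[A\cdot(-M^{-1}BM^{-1})\right] = -\trace[A M^{-1} B M^{-1}],
\end{align*}
and evaluating at $s=t=0$ (so $M=I$) immediately yields $-\trace(AB)$.

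There is no substantial obstacle; the only points requiring minor care are (i) verifying that the relevant products are trace class so that each $\trace[\,\cdot\,]$ makes sense (each term can be written as a product of two Hilbert--Schmidt operators times a bounded operator), and (ii) justifying the smoothness of $M^{-1}$ in $(s,t)$, which is standard since inversion is smooth on the open set of invertible bounded operators. The rest is an application of the chain and product rules together with the cyclicity of the trace.
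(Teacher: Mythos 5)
Your proposal is correct and follows essentially the same route as the paper: both apply Lemma \ref{lemma:Frechet-derivative-logdet2} through the chain rule to get $-\trace[M^{-1}(sA+tB)A]$, rewrite it via $sA+tB = M-I$ and cyclicity of the trace as $\trace[A(M^{-1}-I)]$, and then differentiate in $t$ using $\partial_t M^{-1} = -M^{-1}BM^{-1}$. Your added remarks on trace-class well-definedness and smoothness of inversion are left implicit in the paper but are consistent with it.
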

\begin{proof}
	Let $t$ be fixed. Define $g_1(s) = f(s,t)$.
	By the chain rule and Lemma \ref{lemma:Frechet-derivative-logdet2},
	\begin{align*}
		&Dg_1(s_0)(h) = D\log\dettwo(I+s_0A+tB)(hA) = hD\log\dettwo(I+s_0A+tB)(A)
		\\
		&= -h\trace[(I+s_0A+tB)^{-1}(s_0A+tB)A] = h\trace[A((I+s_0A+tB)^{-1}-I)],
	\end{align*}
	$\forall (s_0,t) \in \Omega$, $\forall h \in \R$. Thus $\frac{\partial f}{\partial s} = Dg_1(s)(1) = \trace[A((I+sA+tB)^{-1}-I)]$.
	
	Differentiating this with respect to $t$ gives
	\begin{align*}
		\frac{\partial^2f}{\partial s\partial t} = -\trace[A(I+sA+tB)^{-1}B(I+sA+tB)^{-1}].
	\end{align*}
	Consequently, $\frac{\partial^2f}{\partial s\partial t}\big\vert_{s=0,t=0} = - \trace(AB)$.
	\qed
\end{proof}

\begin{proof}
[\textbf{of Theorem \ref{theorem:Riemannian-metric} via the KL divergence}]
Let $\Sigma = C_0^{1/2}(I-S)C_0^{1/2}$, 
$\mu_{\Sigma} = \Ncal(0, \Sigma)$. 
The assumption $A_1,A_2 \in T_{\Sigma}(\Tr(\Hcal,C_0))$ means $\exists W_1,W_2 \in \Sym(\Hcal)\cap \HS(\Hcal)$ such that
$A_1 = \Sigma^{1/2}W_1\Sigma^{1/2}$, $A_2 = \Sigma^{1/2}W_2\Sigma^{1/2}$.
Let $\mu = \Ncal(0, \Sigma + sA_1 + tA_2)$,
with $\Sigma +sA_1+tA_2 = \Sigma^{1/2}(I+sW_1 + tW_2)\Sigma^{1/2}$,
where $s,t \in \R$ are sufficiently small so that $I+sW_1 + tW_2 > 0$.
By Theorem \ref{theorem:KL-gaussian},
\begin{align*}
\KL(\mu||\mu_{\Sigma}) = -\frac{1}{2}\log\dettwo(I+sW_1+tW_2).
\end{align*}
By Corollary \ref{corollary:second-derivative-logdet-2-st},
\begin{align*}
&\frac{\partial^2}{\partial s \partial t}\KL(\mu||\mu_{\Sigma})\big\vert_{s=0,t=0}
= -\frac{1}{2}\frac{\partial^2}{\partial s \partial t}\log\dettwo(I+sW_1+tW_2)\big\vert_{s=0,t=0}
\\
& = \frac{1}{2}\trace(W_1W_2) = \frac{1}{2}\trace(\Sigma^{-1/2}A_1\Sigma^{-1}A_2\Sigma^{-1/2}) = \frac{1}{2}\la A_1,A_2\ra_{\Sigma}.
\end{align*}
Let us now find $V_1,V_2\in \Sym(\Hcal)\cap\HS(\Hcal)$ so that 
$\Sigma +sA_1 +tA_2 = C_0^{1/2}(I-S +sV_1 +tV_2)C_0^{1/2}$, where $s,t$ are sufficiently small
so that $I-S+sV_1 +tV_2 > 0$. By the polar decomposition, as in the proof of Lemma \ref{lemma:switch-C0-Sigma},
\begin{align*}
\Sigma^{1/2} = (C_0^{1/2}(I-S)C_0^{1/2})^{1/2} = C_0^{1/2}(I-S)^{1/2}U = U^{*}(I-S)^{1/2}C_0^{1/2},
\end{align*}
where $U$ is unitary. It follows that
\begin{align*}
&\Sigma + sA_1 + tA_2 = \Sigma^{1/2}(I+sW_1 + tW_2)\Sigma^{1/2} 
\\
&= C_0^{1/2}(I-S)C_0^{1/2} + sC_0^{1/2}(I-S)^{1/2}UW_1U^{*}(I-S)^{1/2}C_0^{1/2} 
\\
&\quad + tC_0^{1/2}(I-S)^{1/2}UW_2U^{*}(I-S)^{1/2}C_0^{1/2}
\\
& = C_0^{1/2}(I-S)^{1/2}\left[I + sUW_1U^{*} + t UW_2U^{*}\right](I-S)^{1/2}C_0^{1/2}.
\end{align*}
Thus $\Sigma + sA_1 + tA_2 = C_0^{1/2}(I-S +sV_1 +tV_2)C_0^{1/2}$ if and only if
\begin{align*}
V_j = (I-S)^{1/2}UW_jU^{*}(I-S)^{1/2} \equivalent W_j = U^{*}(I-S)^{-1/2}V_j(I-S)^{-1/2}U.
\end{align*}
Substituting into the second derivative of the KL divergence, we obtain
\begin{align*}
g_S(V_1,V_2) &= \frac{\partial^2}{\partial s \partial t}\KL(\mu||\mu_{\Sigma}) \big\vert_{s=0,t=0}
\\
&= \frac{1}{2}\trace(W_1W_2) = \frac{1}{2}\trace[(I-S)^{-1}V_1(I-S)^{-1}V_2].
\end{align*}
\qed
\end{proof}

\subsection{Proofs for the Riemannian connection and curvature tensor}
\label{section:levi-civita-curvature-tensor}

{\bf Levi-Civita connection}.
Let
$\Xfrak(\Tr(\Hcal, C_0))$ denote the set of all smooth vector fields on $\Tr(\Hcal, C_0)$.
Let $X \in \Xfrak(\Tr(\Hcal, C_0))$, which is a mapping $X: \Tr(\Hcal,C_0) \mapto \SymHS(\Hcal,C_0)$. 
For any differentiable function $f: \Tr(\Hcal,C_0) \mapto \R$, we have the corresponding function $Xf:\Tr(\Hcal,C_0)\mapto \R$, defined by
\begin{align}
	\label{equation:def-Xf}
(Xf)(P) = X_P(f) = Df(P)(X_P),
\end{align}
where $Df(P):\SymHS_X(\Hcal,C_0)\mapto \R$ is the Fr\'echet derivative of $f$ at $P \in \Tr(\Hcal,C_0)$, regarded as an open subset of the Hilbert space $\SymHS_X(\Hcal,C_0)$.

Consider $X:\Tr(\Hcal,C_0) \mapto \SymHS(\Hcal,C_0)$, with $DX(P):\SymHS_X(\Hcal,C_0) \mapto \SymHS(\Hcal,C_0)$ denoting its Fr\'echet derivative at $P$.
Then for $Y_P \in \SymHS(\Hcal,C_0)$, $DX(P)(Y_P) \in \SymHS(\Hcal,C_0)$ and thus for $f:\Tr(\Hcal,C_0) \mapto \R$,
\begin{align*}
[DX(P)(Y_P)](f) = Df(P)[DX(P)(Y_P)] \in \R.
\end{align*}


\begin{lemma}
	\label{lemma:Frechet-derivative-Xf}
	Let $X$ be a smooth vector field on $\Tr(\Hcal,C_0)$.
	Let $f \in C^2(\Tr(\Hcal,C_0))$.  Then $\forall P \in \Tr(\Hcal,C_0)$, $\forall H \in \SymHS(\Hcal,C_0)$,
	\begin{align}
	D(Xf)(P)(H) = Df(P)(DX(P)(H)) + D^2f(P)(H)(X(P)).
	\end{align}
For two smooth vector fields $X,Y$ on $\Tr(\Hcal,C_0)$, their Lie bracket is given by
\begin{align}
&	[X,Y](f)(P) = [X,Y]_P(f) = D(Yf)(X_P) - D(Xf)(Y_P)
\nonumber
\\
&= Df(P)(DY(P)(X(P))) - Df(P)(DX(P)(Y(P)))
\nonumber
\\
&= [DY(P)(X_P) - DX(P)(Y_P)](f).
\end{align}
Thus $[X,Y]_P = DY(P)(X_P) - DX(P)(Y_P)$ $\forall P \in \Tr(\Hcal,C_0)$.
\end{lemma}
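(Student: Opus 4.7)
The plan is to treat $Xf$ as a composition of smooth maps and apply the chain rule for Fr\'echet derivatives on Banach spaces, then leverage the symmetry of the second Fr\'echet derivative to get the Lie bracket identity.

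First I would view the map $P \mapsto (Xf)(P) = Df(P)(X_P)$ as the composition
\begin{align*}
P \;\longmapsto\; (Df(P), X(P)) \;\longmapsto\; Df(P)(X(P)),
\end{align*}
where the first arrow goes into $\Lcal(\SymHS_X(\Hcal,C_0), \R) \times \SymHS(\Hcal,C_0)$ and the second is the evaluation map $B(L,v) = L(v)$. Since $f \in C^2$, the first component is differentiable at $P$ with derivative $D^2f(P) \in \Lcal(\SymHS_X(\Hcal,C_0), \Lcal(\SymHS_X(\Hcal,C_0),\R))$, while $X$ smooth provides $DX(P)$. The map $B$ is continuous bilinear, so its Fr\'echet derivative at $(L_0,v_0)$ acts by $(L',v') \mapsto L'(v_0) + L_0(v')$. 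Applying the chain rule in Banach spaces gives, for any $H \in \SymHS(\Hcal,C_0)$,
\begin{align*}
D(Xf)(P)(H) = D^2f(P)(H)(X(P)) + Df(P)(DX(P)(H)),
\end{align*}
which is the first claim.

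For the Lie bracket, by definition $[X,Y]_P(f) = X_P(Yf) - Y_P(Xf) = D(Yf)(P)(X_P) - D(Xf)(P)(Y_P)$. Substituting the formula just derived yields
\begin{align*}
[X,Y]_P(f) &= \bigl[Df(P)(DY(P)(X_P)) + D^2f(P)(X_P)(Y(P))\bigr] \\
&\quad - \bigl[Df(P)(DX(P)(Y_P)) + D^2f(P)(Y_P)(X(P))\bigr].
\end{align*}
By the classical symmetry of the second Fr\'echet derivative for $C^2$ maps on Banach spaces, the second-order bilinear form $D^2f(P)$ is symmetric, so $D^2f(P)(X_P)(Y(P)) = D^2f(P)(Y_P)(X(P))$ and these two cross terms cancel. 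What remains is $Df(P)[DY(P)(X_P) - DX(P)(Y_P)]$, which by the definition of $Wf$ in Eq.\eqref{equation:def-Xf} applied to the vector $DY(P)(X_P) - DX(P)(Y_P) \in \SymHS(\Hcal,C_0)$ equals $[DY(P)(X_P) - DX(P)(Y_P)](f)$. Since this holds for every $f \in C^2(\Tr(\Hcal,C_0))$, we conclude $[X,Y]_P = DY(P)(X_P) - DX(P)(Y_P)$.

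The only substantive ingredient beyond bookkeeping is the symmetry of $D^2f(P)$, which is standard for $C^2$ maps between Banach spaces and is what makes the second-order contributions cancel; everything else is a direct application of the chain and product rules. A minor point to verify is that $DX(P)(H)$ indeed lies in $\SymHS(\Hcal,C_0)$ (so that $Df(P)(DX(P)(H))$ is well-defined), which follows because $X$ takes values in $\SymHS(\Hcal,C_0)$ viewed as the tangent space at each point, and derivatives of such a map land in the same Hilbert subspace.
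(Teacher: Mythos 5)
Your proof is correct and follows essentially the same route as the paper: the paper verifies the product-rule formula for $D(Xf)(P)$ by a direct remainder estimate, which is exactly what your chain-rule-with-bilinear-evaluation argument packages, and the Lie bracket identity is obtained in both cases by cancelling the second-order terms via the symmetry of $D^2f(P)$. The only cosmetic slip is the reference to ``$Wf$'' where you mean $Xf$.
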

\begin{proof}
	For simplicity, here we denote the norm $||\;||_{\HS_X(\Hcal,C_0)}$ by $||\;||$.
By definition, 
for a fixed 
$P \in \Tr(\Hcal,C_0)$, 
$D(Xf)(P): \SymHS_X(\Hcal,C_0) \mapto \R$. For $H \in \SymHS(\Hcal,C_0)$,
$\lim\limits_{H \approach 0}\frac{|(Xf)(P+H) - (Xf)(P) - D(Xf)(P)(H)|}{||H||} = 0$.
We have
\begin{align*}
	&(Xf)(P+H) - (Xf)(P) - Df(P)(DX(P)(H)) - D^2f(P)(H)(X(P))
	\\
	&= Df(P+H)(X(P+H)) - Df(P)(X(P)) - Df(P)(DX(P)(H)) 
	\\
	&\quad -D^2f(P)(H)(X(P))
	\\
	&= [Df(P+H)-Df(P)-D^2f(P)(H)](X(P+H))
	\\
	&\quad + Df(P)(X(P+H) - X(P) - DX(P)(H)) + D^2f(P)(H)(X(P+H) - X(P)).
\end{align*}
Since $\lim\limits_{H \approach 0}\frac{||Df(P+H)-Df(P)-D^2f(P)(H)||}{||H||} = 0$,
$\lim\limits_{H \approach 0}\frac{||X(P+H) - X(P) - DX(P)(H)||}{||H||} = 0$,
$|D^2f(P)(H)(X(P+H) - X(P))| \leq ||D^2f(P)||\;||H||\;||X(P+H)-X(P)||$,
it follows that 
\begin{align*}
\lim_{H\approach 0}\frac{|(Xf)(P+H) - (Xf)(P) - Df(P)(DX(P)(H)) - D^2f(P)(H)(X(P))|}{||H||} = 0.
\end{align*}
This shows that $D(Xf)(P)(H) = Df(P)(DX(P)(H)) + D^2f(P)(H)(X(P))$.
For two smooth vector fields $X,Y$ on $\Tr(\Hcal,C_0)$,
\begin{align*}
D(Xf)(P)(Y(P)) = Df(P)(DX(P)(Y(P))) + D^2f(P)(Y(P))(X(P)),
\\
D(Yf)(P)(X(P)) = Df(P)(DY(P)(X(P))) + D^2f(P)(X(P))(Y(P)).
\end{align*}
Since $D^2f(P): \SymHS_X(\Hcal,C_0) \times \SymHS_X(\Hcal,C_0) \mapto \R$ is a symmetric bilinear form,  $D^2f(P)(Y(P))(X(P)) = D^2f(P)(X(P))(Y(P))$.
It follows that 
\begin{align*}
&D(Xf)(P)(Y(P)) - D(Yf)(P)(X(P))
\\
& = Df(P)(DX(P)(Y(P))) - Df(P)(DY(P)(X(P)))
\\
& = [DX(P)(Y_P) - DY(P)(X_P)](f).
\end{align*}
On the other hand, by definition of the Lie bracket, 
$[X,Y]f = X(Yf) - Y(Xf)$,
where the functions $X(Yf), Y(Xf):\Tr(\Hcal,C_0) \mapto \R$ are given by
\begin{align*}
	[X(Yf)](P) = X_P(Yf) = D(Yf)(P)(X_P) \in \R, \; P \in \Tr(\Hcal,C_0),
	\\
	[Y(Xf)](P) = Y_P(Xf) = D(Xf)(P)(Y_P) \in \R, \; P \in \Tr(\Hcal,C_0).
\end{align*}
It follows that $\forall f \in C^{\infty}(\Tr(\Hcal,C_0))$, $\forall P \in \Tr(\Hcal,C_0)$, by the preceding part,
\begin{align*}
	&([X,Y]f)(P) = [X,Y]_P(f) = X_P(Yf) - Y_P(Xf) = D(Yf)(P)(X_P) - D(Xf)(P)(Y_P)
	\\
	&=	Df(P)(DY(P)(X(P))) - Df(P)(DX(P)(Y(P))) 
	\\
	&	= (D(Y)(P)[X_P] - D(X)(P)[Y_P])(f).
	\end{align*}
Thus $[X,Y]_P = D(Y)(P)[X_P] - D(X)(P)[Y_P]$.
\qed
\end{proof}

The following is Lemma 26 in \cite{Minh2020:EntropicHilbert}.
\begin{lemma}
	\label{lemma:cancellation-product-strictly-positive}
	Let $A,B \in \Lcal(\Hcal)$. Assume that $B \in \Sym^{++}(\Hcal)$ is compact. Then
	\begin{align}
		AB = 0 \equivalent A = 0,\;\;\; BA = 0 \equivalent A = 0.
	\end{align}
\end{lemma}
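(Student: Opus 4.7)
The plan is to reduce both equivalences to two elementary facts about $B \in \Sym^{++}(\Hcal)$: it is injective, and its range is dense in $\Hcal$. Compactness of $B$ will not actually be needed for the argument; self-adjointness and strict positivity suffice.

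First I would establish $\ker(B) = \{0\}$ directly from the definition of $\Sym^{++}(\Hcal)$: if $Bx = 0$ then $\langle x, Bx\rangle = 0$, which forces $x = 0$ by strict positivity. Next I would observe that since $B = B^*$,
\begin{equation*}
\overline{\range(B)} = \ker(B^*)^\perp = \ker(B)^\perp = \{0\}^\perp = \Hcal,
\end{equation*}
so $\range(B)$ is dense in $\Hcal$.

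With these two facts in hand, the two equivalences fall out separately. For the implication $BA = 0 \implies A = 0$, note that $BA = 0$ means $B(Ax) = 0$ for every $x \in \Hcal$, so $Ax \in \ker(B) = \{0\}$ and hence $A = 0$. For $AB = 0 \implies A = 0$, the assumption says $A$ vanishes on $\range(B)$; since $\range(B)$ is dense and $A \in \Lcal(\Hcal)$ is continuous, $A$ vanishes on all of $\Hcal$. Both reverse implications are trivial.

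There is no real obstacle here; the only thing worth flagging is that strict positivity of $B$ is used twice in essentially different guises (injectivity for the $BA$ direction, density of range for the $AB$ direction), and that the compactness hypothesis is not invoked. If one preferred to use compactness, one could instead appeal to the spectral theorem for self-adjoint compact operators to write $B = \sum_k \lambda_k e_k \otimes e_k$ with $\lambda_k > 0$ and $\{e_k\}$ an orthonormal basis, from which both conclusions are immediate; I would favor the density-of-range argument as cleaner and more general.
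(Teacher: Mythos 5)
Your proof is correct. Note that the paper itself gives no proof of this lemma --- it is quoted verbatim as Lemma 26 of the cited reference \cite{Minh2020:EntropicHilbert} --- so there is no in-paper argument to compare against. Your two-pronged reduction (injectivity of $B$ handles $BA=0$, density of $\range(B)$ plus continuity of $A$ handles $AB=0$) is the standard argument, each step is justified, and your observation that compactness of $B$ is not needed --- self-adjointness and strict positivity suffice --- is accurate. The alternative spectral-theorem route you sketch is also valid for the compact case, since $\ker(B)=\{0\}$ guarantees the eigenvectors form an orthonormal basis; your preference for the density-of-range argument as the cleaner and more general one is reasonable.
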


\begin{lemma}
	\label{lemma:trace-derivative-square-root-product}
	Consider the function $\mysqrt(X) = X^{1/2}$ on $\Tr(\Hcal,C_0)$.
	Let $P \in \Tr(\Hcal,C_0)$ be fixed. 
	Consider the Fr\'echet derivative $D\mysqrt(P):\SymHS_X(\Hcal,C_0) \mapto \Sym(\Hcal)\cap\HS(\Hcal)$.
	For $X \in \SymHS(\Hcal,C_0)$, 
	\begin{align}
		D\mysqrt(P)(X) = P^{1/2}ZP^{1/2}  \text{for some $Z \in \Sym(\Hcal)$}.
	\end{align}
	Thus the operators $D\mysqrt(P)(X)P^{-1/2}\in \HS(\Hcal)$, $P^{-1/2}D\mysqrt(P)(X) \in \HS(\Hcal)$,
	$P^{-1/2}D\mysqrt(P)(X)P^{-1/2} \in \Sym(\Hcal)$ are well-defined.
	$\forall Y \in \HS(\Hcal)$,
	\begin{align}
		\trace[D\mysqrt(P)(X)P^{-1/2}Y + P^{-1/2}D\mysqrt(P)(X)Y] = \trace[P^{-1/2}XP^{-1/2}Y].
	\end{align}
\end{lemma}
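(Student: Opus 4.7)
The starting point is to differentiate the defining identity $[\mysqrt(P)]^2 = P$ on the open subset $\Tr(\Hcal,C_0) \subset \SymHS_X(\Hcal,C_0)$. Applying the Leibniz rule for the Fr\'echet derivative of the product in the Banach algebra $\Lcal(\Hcal)$ yields the Sylvester--Lyapunov identity
\begin{align*}
W P^{1/2} + P^{1/2} W = X, \qquad W := D\mysqrt(P)(X) \in \Sym(\Hcal)\cap \HS(\Hcal),
\end{align*}
where the codomain follows because $P^{1/2} \in \HS(\Hcal)$. By Lemma \ref{lemma:switch-C0-Sigma}, the hypothesis $X \in \SymHS(\Hcal,C_0) = \SymHS(\Hcal,P)$ produces some $\tilde X \in \Sym(\Hcal)\cap \HS(\Hcal)$ with $X = P^{1/2}\tilde X P^{1/2}$, which is the representation I would feed into the Lyapunov equation.

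To produce the structural form $W = P^{1/2}Z P^{1/2}$, I would use the Balakrishnan integral representation $P^{1/2} = \frac{1}{\pi}\int_0^{\infty}\lambda^{-1/2}P(\lambda I+P)^{-1}d\lambda$, differentiate under the integral (the resolvent derivative is $D(\lambda I+P)^{-1}(X)=-(\lambda I+P)^{-1}X(\lambda I+P)^{-1}$), and exploit the commutativity of $P^{1/2}$ with $(\lambda I+P)^{-1}$ to pull the outer $P^{1/2}$ factors through, giving
\begin{align*}
W = P^{1/2}\left[\frac{1}{\pi}\int_0^{\infty}\lambda^{1/2}(\lambda I+P)^{-1}\tilde X(\lambda I+P)^{-1}d\lambda\right] P^{1/2} = P^{1/2}ZP^{1/2},
\end{align*}
with $Z$ self-adjoint by symmetry of the integrand. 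Once this structural form is in hand, the operators $WP^{-1/2}=P^{1/2}Z$ and $P^{-1/2}W=ZP^{1/2}$ are defined by cancellation on $\range(P^{1/2})$ (dense since $\ker(P)=\{0\}$) and extend continuously to $\HS(\Hcal)$, while $P^{-1/2}WP^{-1/2}=Z \in \Sym(\Hcal)$.

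The trace identity then drops out almost immediately. Substituting $W = P^{1/2}ZP^{1/2}$ into the Lyapunov equation gives $P^{1/2}(ZP^{1/2}+P^{1/2}Z)P^{1/2} = P^{1/2}\tilde X P^{1/2}$, so cancelling the outer $P^{1/2}$ factors (using injectivity of $P^{1/2}$ on the dense range as in the proof of Lemma \ref{lemma:C0-inverse-Sigma-1}) yields $P^{-1/2}W + WP^{-1/2} = \tilde X = P^{-1/2}XP^{-1/2}$. Pairing with $Y \in \HS(\Hcal)$ and applying the cyclic invariance of the trace delivers the claim.

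I expect the principal obstacle to be the second step: rigorously justifying that the derivative $W$ admits the structural form $W = P^{1/2}ZP^{1/2}$ with bounded $Z$. The Balakrishnan integral needs delicate handling because $0 \in \sigma(P)$, so $(\lambda I+P)^{-1}$ blows up as $\lambda \downarrow 0$; the integral must be interpreted via its sandwich between the outer $P^{1/2}$'s rather than as a Bochner integral in $\Lcal(\Hcal)$. A complementary eigenbasis calculation of $P$, giving the explicit matrix entries $Z_{kl} = \tilde X_{kl}/(\sqrt{\lambda_k}+\sqrt{\lambda_l})$, provides the needed confirmation and makes the $\HS$-boundedness of $P^{1/2}Z$ transparent via the pointwise bound $|(P^{1/2}Z)_{kl}| \leq |\tilde X_{kl}|$.
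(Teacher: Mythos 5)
Your proof follows the same skeleton as the paper's: differentiate the identity $(\mysqrt(X))^2 = X$ to obtain the Lyapunov equation $P^{1/2}W + WP^{1/2} = X$ with $W = D\mysqrt(P)(X)$, invoke Lemma~\ref{lemma:switch-C0-Sigma} to write $X = P^{1/2}\tilde X P^{1/2}$, deduce the structural form $W = P^{1/2}ZP^{1/2}$, cancel the outer factors of $P^{1/2}$ (the paper does this via Lemma~\ref{lemma:cancellation-product-strictly-positive}, you via density of $\range(P^{1/2})$ --- equivalent), and pair with $Y$ under the trace. The one point where you genuinely add content is the structural step: the paper merely asserts that $W$ ``must necessarily have the form'' $P^{1/2}ZP^{1/2}$, whereas you propose deriving it from the Balakrishnan integral together with the eigenbasis formula $Z_{kl} = \tilde X_{kl}/(\sqrt{\lambda_k}+\sqrt{\lambda_l})$. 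That formula is correct, and your entrywise bound $|(P^{1/2}Z)_{kl}| \le |\tilde X_{kl}|$ is precisely what makes $P^{1/2}Z$ and $ZP^{1/2}$ Hilbert--Schmidt, which is all the trace identity actually uses; this is a cleaner justification than the paper supplies. One caution, which you inherit from the lemma statement rather than introduce: the same formula shows that $Z$ itself need not be bounded, since the denominator $\sqrt{\lambda_k}+\sqrt{\lambda_l}$ tends to zero and one can choose a diagonal $\tilde X \in \Sym(\Hcal)\cap\HS(\Hcal)$ with $\tilde X_{kk}/\sqrt{\lambda_k}$ unbounded (e.g.\ $\lambda_k = e^{-k}$, $\tilde X_{kk} = k^{-1}$). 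So the assertion $Z \in \Sym(\Hcal)$, which the paper also makes without proof, should really be read as ``$Z$ is a symmetric, possibly unbounded operator''; your integral representation gives symmetry but not boundedness, and nothing downstream (the Levi-Civita computation) requires more than $P^{1/2}Z,\, ZP^{1/2} \in \HS(\Hcal)$.
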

\begin{proof}
	For the function $\mysqrt(X) = X^{1/2}$ on $\Tr(\Hcal,C_0)$, which is an open subset 
	in $\SymHS_X(\Hcal,C_0)$, we have $\mysqrt:\Tr(\Hcal,C_0) \mapto \Sym^{+}(\Hcal)\cap \HS(\Hcal)$. Thus $D\mysqrt(P): \SymHS_X(\Hcal,C_0) \mapto \Sym(\Hcal) \cap \HS(\Hcal)$
	$\forall P \in \Tr(\Hcal,C_0)$.
	Using
	the identity $(\mysqrt(X))^2 = X$ and the chain rule, we have $\forall P \in \Tr(\Hcal,C_0)$, $\forall X \in \SymHS_X(\Hcal,C_0)$,
	\begin{align*}
		P^{1/2}D\mysqrt(P)(X) + D\mysqrt(P)(X)P^{1/2} = X.
	\end{align*}
	In particular, for $X \in \SymHS(\Hcal,C_0) = \SymHS(\Hcal,P)$ by Lemma \ref{lemma:switch-C0-Sigma}, we have $X = P^{1/2}\tilde{X}P^{1/2}$
	with $\tilde{X}\in \Sym(\Hcal) \cap \HS(\Hcal)$.
	Thus $D\mysqrt(P)(X)$ must necessarily have the form $D\mysqrt(P)(X) = P^{1/2}ZP^{1/2}$ for some
	$Z\in \Sym(\Hcal)$. Thus $P^{-1/2}D\mysqrt(P)(X) = ZP^{1/2} \in \HS(\Hcal)$, $D\mysqrt(P)(X)P^{-1/2} = P^{1/2}Z\in \HS(\Hcal)$,
	and $P^{-1/2}D\mysqrt(P)(X)P^{-1/2} = Z \in \Sym(\Hcal)$ are all
	well-defined. 
	By Lemma \ref{lemma:cancellation-product-strictly-positive}, pre- and post-canceling $P^{1/2}$ on both sides gives
	\begin{align*}
		D\mysqrt(P)(X)P^{-1/2} + P^{-1/2}D\mysqrt(P)(X) = \tilde{X}  = P^{-1/2}XP^{-1/2}.
	\end{align*}
	By multiplying both sides with $Y \in \HS(\Hcal)$ and taking the trace,
	\begin{align*}
		\trace[D\mysqrt(P)(X)P^{-1/2}Y + P^{-1/2}D\mysqrt(P)(X)Y] = \trace[P^{-1/2}XP^{-1/2}Y].
	\end{align*}
	\qed
\end{proof}

\begin{lemma}
	\label{lemma:Frechet-derivative-product-Banach-algebra}
	Let $(W, ||\;||_W)$ be a Banach algebra and $\Omega \subset W$ be an open subset.
	Let $g,h: \Omega \mapto W$ be Fr\'echet differentiable at $X_0 \in \Omega$.
	Let $f(X) = g(X)h(X)$. Then $f$ is Fr\'echet differentiable at $X_0$ and
	$Df(X_0)(X) = Dg(X_0)(X)h(X_0) + g(X_0)Dh(X_0)(X)$ $\forall X \in W$.
\end{lemma}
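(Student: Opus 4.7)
The plan is to adapt the classical product rule for Fréchet derivatives to the Banach algebra setting, using only submultiplicativity $\|ab\|_W \leq \|a\|_W\|b\|_W$ and the boundedness of the linear maps $Dg(X_0), Dh(X_0) \in \Lcal(W, W)$.

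First, I will invoke the definition of Fréchet differentiability at $X_0$ to write
\begin{align*}
g(X_0 + X) &= g(X_0) + Dg(X_0)(X) + r_g(X), \\
h(X_0 + X) &= h(X_0) + Dh(X_0)(X) + r_h(X),
\end{align*}
where the remainders satisfy $\|r_g(X)\|_W / \|X\|_W \to 0$ and $\|r_h(X)\|_W / \|X\|_W \to 0$ as $X \to 0$. Multiplying these expansions, distributing, and subtracting both $f(X_0) = g(X_0)h(X_0)$ and the candidate derivative $Dg(X_0)(X)h(X_0) + g(X_0)Dh(X_0)(X)$ leaves six remainder terms, each of which is the product of two factors where at least one factor is either a Fréchet remainder $r_g(X)$, $r_h(X)$, or a bounded linear image $Dg(X_0)(X), Dh(X_0)(X)$.

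Next, I will bound each of the six remainder terms by the submultiplicative Banach algebra inequality. The terms of type $g(X_0)\cdot r_h(X)$ and $r_g(X)\cdot h(X_0)$ are directly $o(\|X\|_W)$ by the remainder estimates. The cross term $Dg(X_0)(X)\cdot Dh(X_0)(X)$ is bounded by $\|Dg(X_0)\|_{\mathrm{op}}\|Dh(X_0)\|_{\mathrm{op}}\|X\|_W^2 = O(\|X\|_W^2)$, which is also $o(\|X\|_W)$. The mixed terms $Dg(X_0)(X)\cdot r_h(X)$, $r_g(X)\cdot Dh(X_0)(X)$, and $r_g(X)\cdot r_h(X)$ combine one linear (hence $O(\|X\|_W)$) factor with one $o(\|X\|_W)$ remainder, or two remainders, so each is $o(\|X\|_W)$.

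Finally, I will combine these bounds by the triangle inequality to conclude
\[
\frac{\|f(X_0 + X) - f(X_0) - [Dg(X_0)(X)h(X_0) + g(X_0)Dh(X_0)(X)]\|_W}{\|X\|_W} \xrightarrow[X \to 0]{} 0,
\]
and observe that the candidate $X \mapsto Dg(X_0)(X)h(X_0) + g(X_0)Dh(X_0)(X)$ is a bounded linear map from $W$ to $W$, again by submultiplicativity. This proves Fréchet differentiability and identifies the derivative. The argument is entirely routine; the only step requiring any care is the bookkeeping of the cross terms and systematically invoking submultiplicativity for each, rather than any genuine analytic obstacle.
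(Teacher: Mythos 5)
Your proof is correct and follows essentially the same route as the paper's: both hinge on adding and subtracting the candidate derivative, using submultiplicativity of the Banach algebra norm to bound each error term by a product in which at least one factor is $o(\|X\|_W)$ or $O(\|X\|_W)$. The only cosmetic difference is that the paper telescopes the error into three terms (keeping $g(X_0+tX)$ as a factor) while you expand fully into six; your version also explicitly notes the boundedness of the candidate linear map and works with a general increment rather than one of the form $tX$, which if anything is slightly more careful.
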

\begin{proof}
	For a fixed $X \in W$, let $t$ be sufficiently close to zero so that $X_0+tX \in \Omega$.
	With $f(X_0 +tX) = g(X_0+tX)h(X_0+tX)$, $f(X_0) = g(X_0)h(X_0)$, we have
	\begin{align*}
		&||f(X_0+tX) - f(X_0) - (Dg(X_0)(tX)h(X_0) + g(X_0)Dh(X_0)(tX)||_W
		\\
		& \leq ||g(X_0+tX) - g(X_0) - tDg(X_0)(X)||_W\;||h(X_0)||_W
		\\
		& \quad + ||g(X_0+tX)||h(X_0+tX) - h(X_0) - tDh(X_0)(X)||_W
		\\
		&\quad + ||g(X_0+tX) - g(X_0)||_W|t|||Dh(X_0)||\;||X||_W.
	\end{align*}
	Thus $\lim_{t \approach 0}\frac{||f(X_0+tX) - f(X_0) - (Dg(X_0)(tX)h(X_0) + g(X_0)Dh(X_0)(tX)||_W}{|t|\;||X||_W} = 0$.
	\qed
\end{proof}

\begin{proposition}
	[Levi-Civita connection]
	\label{proposition:Levi-Civita}
	For $\Tr(\Hcal,C_0)$ under the Riemannian metric in Theorem \ref{theorem:Riemannian-metric}, the Levi-Civita connection is, 
	$\forall P \in \Tr(\Hcal,C_0)$,
	\begin{align}
		\label{equation:Levi-Civita}
	(\nabla_{X}Y)_P = D(Y)(P)[X_P] - \frac{1}{2}[X_PP^{-1}Y_P + Y_PP^{-1}X_P].
	\end{align}
Here $D(Y)(P)$ denotes the Fr\'echet derivative of $Y$ at $P$ in the open subset $\Tr(\Hcal,C_0)$ of the Hilbert space $\SymHS_X(\Hcal,C_0)$.
\end{proposition}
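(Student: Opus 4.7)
The plan is to invoke uniqueness of the Levi-Civita connection on a Hilbert Riemannian manifold (see e.g. \cite{lang2012fundamentals}): it suffices to check that the right-hand side of Eq.\eqref{equation:Levi-Civita} defines a smooth vector field on $\Tr(\Hcal,C_0)$ and that the resulting map $(X,Y)\mapsto \nabla_X Y$ is both torsion-free and compatible with the Fisher-Rao metric of Theorem \ref{theorem:Riemannian-metric}.

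First, I verify well-definedness. By Lemma \ref{lemma:switch-C0-Sigma}, write $X_P = P^{1/2}\tilde{X}_P P^{1/2}$ and $Y_P = P^{1/2}\tilde{Y}_P P^{1/2}$ with $\tilde{X}_P,\tilde{Y}_P \in \Sym(\Hcal)\cap\HS(\Hcal)$. The formal product $X_P P^{-1} Y_P$ is defined on the dense subspace $\range(P^{1/2})$ as $P^{1/2}\tilde{X}_P\tilde{Y}_P P^{1/2}$, and extends by continuity to a bounded operator on all of $\Hcal$, in the spirit of Lemma \ref{lemma:C0-inverse-Sigma-1}. Consequently
\begin{align*}
\tfrac{1}{2}(X_PP^{-1}Y_P + Y_PP^{-1}X_P) = P^{1/2}\cdot\tfrac{1}{2}(\tilde{X}_P\tilde{Y}_P + \tilde{Y}_P\tilde{X}_P)\cdot P^{1/2} \in \SymHS(\Hcal,P),
\end{align*}
since $\Sym(\Hcal)\cap\HS(\Hcal)$ is closed under the Jordan product (as $\HS(\Hcal)$ is a Banach algebra). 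The remaining term $D(Y)(P)[X_P]$ lies in $\SymHS(\Hcal,C_0) = \SymHS(\Hcal,P)$ because $Y:\Tr(\Hcal,C_0)\mapto\SymHS(\Hcal,C_0)$ by hypothesis.

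Torsion-freeness is immediate: the symmetric term in Eq.\eqref{equation:Levi-Civita} is invariant under $X \leftrightarrow Y$ and hence cancels in $\nabla_X Y - \nabla_Y X$, leaving $D(Y)(P)[X_P] - D(X)(P)[Y_P] = [X,Y]_P$ by Lemma \ref{lemma:Frechet-derivative-Xf}. For metric compatibility, set $h(P) = \la Y,Z\ra_P$, which by Theorem \ref{theorem:Riemannian-metric} equals $\tfrac{1}{2}\trace(\tilde{Y}_P\tilde{Z}_P)$ or equivalently $\tfrac{1}{2}\trace(P^{-1}Y(P)P^{-1}Z(P))$. Applying the product rule to $Dh(P)[X_P]$ together with the formal differentiation rule $D(P^{-1})(H) = -P^{-1}HP^{-1}$ yields four trace expressions, two involving $DY(P)[X_P]$ and $DZ(P)[X_P]$, and two of the schematic form $\trace[P^{-1}X_PP^{-1}(\cdot)P^{-1}(\cdot)]$. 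On the other hand, substituting Eq.\eqref{equation:Levi-Civita} into $\la \nabla_X Y,Z\ra_P + \la Y,\nabla_X Z\ra_P$ produces six traces; after using cyclicity to identify two of them, the remaining four exactly match those coming from $Dh(P)[X_P]$.

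The main obstacle is the rigorous handling of these manipulations in infinite dimensions, where neither $P^{-1}$ nor the formal derivative $D(P^{-1})$ is a bounded operator on its own. The resolution is that every trace arising in the computation has the symmetric sandwich form $\trace[(P^{-1/2}AP^{-1/2})(P^{-1/2}BP^{-1/2})]$ with each factor in $\HS(\Hcal)$, so the product is genuinely in $\Tr(\Hcal)$ and cyclicity of the trace is valid. Each chain of the form $P^{-1}(\cdots)P^{-1}$ is systematically split into a Hilbert-Schmidt pair $P^{-1/2}(\cdots)P^{-1/2}$ and $P^{-1/2}(\cdots)P^{-1/2}$ before any trace is taken, using the boundedness results of Lemma \ref{lemma:C0-Sigma-1/2-inverse-bounded}. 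Once this bookkeeping is in place, the algebraic cancellation is identical to the finite-dimensional affine-invariant case, and uniqueness of the Levi-Civita connection on the Hilbert manifold $\Tr(\Hcal,C_0)$ yields Eq.\eqref{equation:Levi-Civita}.
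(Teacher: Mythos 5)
Your overall strategy coincides with the paper's: verify torsion-freeness and metric compatibility for the candidate formula and conclude by uniqueness of the Levi-Civita connection. The well-definedness discussion and the torsion-freeness argument (cancellation of the symmetric term plus $[X,Y]_P = DY(P)(X_P) - DX(P)(Y_P)$ from Lemma \ref{lemma:Frechet-derivative-Xf}) match the paper and are fine.

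The gap is in metric compatibility. You propose to differentiate $h(P) = \tfrac{1}{2}\trace(P^{-1}Y(P)P^{-1}Z(P))$ via the ``formal differentiation rule $D(P^{-1})(H) = -P^{-1}HP^{-1}$'' and then repair the resulting traces by re-sandwiching. But in this setting $P \mapsto P^{-1}$ does not map $\Tr(\Hcal,C_0)$ into any Banach space of bounded operators (the paper's remark after Theorem \ref{theorem:Riemannian-metric} stresses exactly that $\Sigma^{-1}A$ need not be bounded), so $D(P^{-1})$ has no meaning as a Fr\'echet derivative and the product rule cannot be invoked for it. Your proposed fix addresses only the \emph{final} trace expressions — it explains why they are finite and why cyclicity applies — but it does not supply the derivative computation itself, which is the actual content of the compatibility check. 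The paper's resolution is different and is the key technical device you are missing: it never differentiates $P^{-1}$; instead it writes $\la Y,Z\ra_P = \tfrac{1}{2}\trace(\tilde{Y}_P\tilde{Z}_P)$ with $\tilde{Y}_P = P^{-1/2}Y_PP^{-1/2}$, differentiates the well-defined map $\mysqrt(P)=P^{1/2}$ into $\Sym(\Hcal)\cap\HS(\Hcal)$, and exploits the implicit identity $P^{1/2}D\mysqrt(P)(X) + D\mysqrt(P)(X)P^{1/2} = X$ together with the cancellation Lemma \ref{lemma:cancellation-product-strictly-positive} to prove the trace identity
\begin{align*}
\trace[D\mysqrt(P)(X)P^{-1/2}W + P^{-1/2}D\mysqrt(P)(X)W] = \trace[P^{-1/2}XP^{-1/2}W],
\end{align*}
(Lemma \ref{lemma:trace-derivative-square-root-product}), from which $D\tilde{Y}(P)(H)$ is obtained via the Banach-algebra product rule and the unwanted $D\mysqrt$ terms cancel in the trace. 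To close your argument you would need to replace the formal $D(P^{-1})$ step with this (or an equivalent rigorous computation of $D\tilde{Y}(P)$); as written, the central step of the compatibility verification is asserted rather than proved.
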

We first note that for $X_P,Y_P,Z_P \in T_P(\Tr(\Hcal,C_0)) \cong \SymHS(\Hcal,C_0) = \SymHS(\Hcal,P)$, one can write
$X_P = P^{1/2}\tilde{X}_PP^{1/2}$, $Y_P = P^{1/2}\tilde{Y}_PP^{1/2}$, $Z_P = P^{1/2}\tilde{Z}_PP^{1/2}$ for $\tilde{X}_P,\tilde{Y}_P, \tilde{Z}_P \in \Sym(\Hcal)\cap\HS(\Hcal)$.
Thus the following expressions are well-defined
\begin{align}
&X_PP^{-1}Y_P + Y_PP^{-1}X_P = P^{1/2}(\tilde{X}_P\tilde{Y}_P+\tilde{Y}_P\tilde{X}_P)P^{1/2} \in \SymTr(\Hcal,P),
\\
&\trace(P^{-1/2}X_PP^{-1}Y_PP^{-1/2}) = \trace(\tilde{X}_P\tilde{Y}_P),
\\
&\trace(P^{-1/2}XP^{-1}Y_PP^{-1}Z_PP^{-1/2}) = \trace(\tilde{X}_P\tilde{Y}_P\tilde{Z}_P).
\end{align}
Here $\SymTr(\Hcal,P) = \{V = P^{1/2}XP^{1/2}, \; X \in \Sym(\Hcal) \cap \Tr(\Hcal)\}$.
\begin{proof}
	We need to either check the symmetry and compatibility with the Riemannian metric or verify that the Koszul formula is satisfied.
	
	(i) {\it Symmetry}, i.e. $(\nabla_XY) - (\nabla_YX) = [X,Y]$ $\forall X,Y\in \Xfrak(\Tr(\Hcal,C_0))$. From Eq.\eqref{equation:Levi-Civita}, $\forall P \in \Tr(\Hcal,C_0)$,
	\begin{align*}
	(\nabla_{X}Y)_P = D(Y)(P)[X_P] - \frac{1}{2}[X_PP^{-1}Y_P + Y_PP^{-1}X_P],
	\\
	(\nabla_{Y}X)_P = D(X)(P)[Y_P] - \frac{1}{2}[Y_PP^{-1}X_P + X_PP^{-1}Y_P],
	\\
	(\nabla_XY)_P - (\nabla_YX)_P = D(Y)(P)[X_P] - D(X)(P)[Y_P] = [X,Y]_P.
	\end{align*}
Here the last equality comes from Lemma \ref{lemma:Frechet-derivative-Xf}.

(ii) {\it Compatibility with the Riemannian metric}, i.e. $\forall X,Y,Z \in \Xfrak(\Tr(\Hcal,C_0))$, $X\la Y,Z\ra = \la \nabla_XY,Z\ra + \la Y, \nabla_XZ\ra$. From the expressions for $\nabla_XY, \nabla_XZ$, namely
\begin{align*}
	(\nabla_{X}Y)_P = D(Y)(P)(X_P) - \frac{1}{2}[X_PP^{-1}Y_P + Y_PP^{-1}X_P],
	\\
	(\nabla_{X}Z)_P = D(Z)(P)(X_P) - \frac{1}{2}[X_PP^{-1}Z_P + Z_PP^{-1}X_P],
\end{align*}
and the expression for the Riemannian metric $\la,\ra_P$, it follows that
({\it for simplicity, we skip the multiplicative factor $\frac{1}{2}$ in the metric here})
\begin{align*}
	&\la \nabla_XY,Z\ra_P = \trace(P^{-1/2}(\nabla_XY)_PP^{-1}Z_PP^{-1/2}) 
	\\
	&=\trace[P^{-1/2}D(Y)(P)(X_P)P^{-1}Z_PP^{-1/2}] 
	\\
	&\quad - \frac{1}{2}\trace[P^{-1/2}(X_PP^{-1}Y_P + Y_PP^{-1}X_P)P^{-1}Z_PP^{-1/2}],
	\\
	&\la \nabla_XZ,Y\ra_P
	=\trace[P^{-1/2}D(Z)(P)(X_P)P^{-1}Y_PP^{-1/2}] 
	\\
	&\quad - \frac{1}{2}\trace[P^{-1/2}(X_PP^{-1}Z_P + Z_PP^{-1}X_P)P^{-1}Y_PP^{-1/2}],
	\\
	&\la \nabla_XY,Z\ra_P + \la \nabla_XZ,Y\ra_P 
	\\
	&= \trace[P^{-1/2}D(Y)(P)(X_P)P^{-1}Z_PP^{-1/2}] 
	+\trace[P^{-1/2}D(Z)(P)(X_P)P^{-1}Y_PP^{-1/2}] 
	\\
	& \quad- \trace[P^{-1/2}(X_PP^{-1}Y_P + Y_PP^{-1}X_P)P^{-1}Z_PP^{-1/2}].
\end{align*}
where the last equality follows from the cyclic property of the trace operation.

To calculate $X\la Y,Z\ra$, define $\tilde{Y} = P^{-1/2}YP^{-1/2}$, $\tilde{Z}= P^{-1/2}ZP^{-1/2}$, $\tilde{Y}, \tilde{Z} \in \Sym(\Hcal) \cap \HS(\Hcal)$, so that $Y = P^{1/2}\tilde{Y}P^{1/2}$, $Z = P^{1/2}\tilde{Z}P^{1/2}$. Then $\la Y,Z\ra_P = \trace(P^{-1/2}Y_PP^{-1}Z_PP^{-1/2}) = \trace(\tilde{Y}_P\tilde{Z}_P)$. 
By Lemma \ref{lemma:Frechet-derivative-product-Banach-algebra}, applied to the Banach algebra $(\HS(\Hcal), ||\;||_{\HS})$,
\begin{align*}
&(X\la Y,Z\ra)(P) = X_P(\la Y,Z\ra) = D(\la Y,Z\ra)(P)(X_P) 
\\
& = \trace[D\tilde{Y}(P)(X_P)\tilde{Z}_P + \tilde{Y}_PD\tilde{Z}(P)(X_P)].
\end{align*}
With $Y = P^{1/2}\tilde{Y}P^{1/2}$, by Lemma \ref{lemma:Frechet-derivative-product-Banach-algebra},
for $H \in \SymHS(\Hcal,C_0) = \SymHS(\Hcal,P)$
\begin{align*}
DY(P)(H)=D\mysqrt(P)(H)\tilde{Y}P^{1/2} + P^{1/2}D\tilde{Y}(P)(H)P^{1/2} + P^{1/2}\tilde{Y}D\mysqrt(P)(H).
\end{align*}
It follows from Lemma \ref{lemma:trace-derivative-square-root-product} that for $H \in \SymHS(\Hcal,C_0) = \SymHS(\Hcal,P)$
\begin{align*}
D\tilde{Y}(P)(H) &= P^{-1/2}DY(P)(H)P^{-1/2}
\\
& \quad - P^{-1/2}D\mysqrt(P)(H)\tilde{Y} - \tilde{Y}D\mysqrt(P)(H)P^{-1/2},
\end{align*}
Similarly,
\begin{align*}
D\tilde{Z}(P)(H) & = 
P^{-1/2}DZ(P)(H)P^{-1/2}
\\
& \quad - P^{-1/2}D\mysqrt(P)(H)\tilde{Z} - \tilde{Z}D\mysqrt(P)(H)P^{-1/2}.
\end{align*}
Combining these two expressions, letting $H = X_P$ we obtain
\begin{align*}
&(X\la Y,Z\ra)(P) = \trace[D\tilde{Y}(P)(X_P)\tilde{Z}_P + D\tilde{Z}(P)(X_P)\tilde{Y}_P]
\\
&=\trace[P^{-1/2}DY(P)(X_P)P^{-1}Z_PP^{-1/2} + P^{-1/2}DZ(P)(X_P)P^{-1}Y_PP^{-1/2}]
\\
&\quad - \trace[P^{-1/2}D\mysqrt(P)(X_P)(\tilde{Y}_P\tilde{Z}_P +\tilde{Z}_P\tilde{Y}_P)+ D\mysqrt(P)(X_P)P^{-1/2}(\tilde{Y}_P\tilde{Z}_P + \tilde{Z}_P\tilde{Y}_P)].
\end{align*}
By Lemma \ref{lemma:trace-derivative-square-root-product},
\begin{align*}
&\trace[P^{-1/2}D\mysqrt(P)(X_P)(\tilde{Y}_P\tilde{Z}_P +\tilde{Z}_P\tilde{Y}_P)+ D\mysqrt(P)(X_P)P^{-1/2}(\tilde{Y}_P\tilde{Z}_P + \tilde{Z}_P\tilde{Y}_P)]
\\
&=\trace[P^{-1/2}X_PP^{-1/2}(\tilde{Y}_P\tilde{Z}_P + \tilde{Z}_P\tilde{Y}_P)]
\\
&= \trace[P^{-1/2}X_PP^{-1/2}(P^{-1/2}Y_PP^{-1}Z_PP^{-1/2} + P^{-1/2}Z_PP^{-1}Y_PP^{-1/2})]
\\
& = \trace[P^{-1/2}(X_PP^{-1}Y_P + Y_PP^{-1}X_P)P^{-1}Z_PP^{-1/2}].
\end{align*}
Combining all expressions, we obtain $(X\la Y,Z\ra)(P) =\la \nabla_XY,Z\ra_P + \la \nabla_XZ,Y\ra_P$
$\forall P \in \Tr(\Hcal,C_0)$.
\qed
\end{proof}

\begin{lemma}
	\label{lemma:derivative-composition-vector-fields}
	Let $X,Z \in \Xfrak(\Tr(\Hcal,C_0))$. Let $Y:\Tr(\Hcal,C_0) \mapto \SymHS(\Hcal,C_0)$ be defined by
	$Y(P) = DZ(P)(X(P))$. Then $\forall H \in \SymHS(\Hcal,C_0)$,
	\begin{align}
		DY(P)(H) = D^2Z(P)(H)(X(P)) + DZ(P)(DX(P)(H)).
	\end{align}
\end{lemma}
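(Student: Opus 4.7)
The plan is to prove this as a direct application of the product/chain rule for Fréchet derivatives in Banach spaces, viewing $Y(P) = DZ(P)(X(P))$ as a composition that involves both the differentiable map $P \mapsto X(P) \in \SymHS(\Hcal,C_0)$ and the differentiable map $P \mapsto DZ(P) \in \Lcal(\SymHS_X(\Hcal,C_0),\SymHS(\Hcal,C_0))$, together with the bilinear evaluation map. Since $X,Z$ are smooth vector fields on the open subset $\Tr(\Hcal,C_0)$ of the Hilbert space $\SymHS_X(\Hcal,C_0)$, in particular $Z$ is twice Fréchet differentiable, so $D^2Z(P)$ makes sense as an element of $\Lcal(\SymHS_X(\Hcal,C_0),\Lcal(\SymHS_X(\Hcal,C_0),\SymHS(\Hcal,C_0)))$.

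First I would form the increment
\begin{align*}
Y(P+H) - Y(P) &= DZ(P+H)(X(P+H)) - DZ(P)(X(P))
\\
& = [DZ(P+H) - DZ(P)](X(P+H)) + DZ(P)(X(P+H)-X(P)).
\end{align*}
Then I would add and subtract $D^2Z(P)(H)(X(P+H))$ in the first term and $DZ(P)(DX(P)(H))$ in the second to obtain the decomposition
\begin{align*}
&Y(P+H) - Y(P) - D^2Z(P)(H)(X(P)) - DZ(P)(DX(P)(H))
\\
& = [DZ(P+H) - DZ(P) - D^2Z(P)(H)](X(P+H))
\\
& \quad + D^2Z(P)(H)(X(P+H) - X(P))
\\
& \quad + DZ(P)(X(P+H) - X(P) - DX(P)(H)).
\end{align*}

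Next I would bound each of the three error terms in the norm $\|\;\|_{\HS_X(\Hcal,C_0)}$. The first is $o(\|H\|)$ because $Z$ is twice Fréchet differentiable and $\|X(P+H)\| $ stays bounded as $H\to 0$. The second is $O(\|H\|^2)$ because $\|D^2Z(P)(H)\| \leq \|D^2Z(P)\|\,\|H\|$ and $\|X(P+H)-X(P)\| = O(\|H\|)$ by continuous differentiability of $X$. The third is $o(\|H\|)$ because $X$ is Fréchet differentiable at $P$ and $DZ(P)$ is a bounded linear operator. Dividing by $\|H\|$ and letting $H \to 0$ then yields the claimed identity, which by the uniqueness of the Fréchet derivative identifies $DY(P)(H) = D^2Z(P)(H)(X(P)) + DZ(P)(DX(P)(H))$.

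The main obstacle is purely bookkeeping: ensuring the split into three remainder terms is algebraically correct and that each remainder is genuinely $o(\|H\|)$ under the assumptions. There is no geometric subtlety specific to $\Tr(\Hcal,C_0)$ here; the result is a general fact about smooth maps between open subsets of Hilbert (or Banach) spaces and uses only that $X$ is $C^1$ and $Z$ is $C^2$ as maps into $\SymHS(\Hcal,C_0) \subset \SymHS_X(\Hcal,C_0)$, together with the boundedness of the bilinear evaluation $(L,v)\mapsto L(v)$ on $\Lcal(\SymHS_X(\Hcal,C_0),\SymHS(\Hcal,C_0)) \times \SymHS_X(\Hcal,C_0)$.
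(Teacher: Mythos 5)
Your proposal is correct and uses essentially the same argument as the paper: the identical three-term decomposition of the remainder into $[DZ(P+H)-DZ(P)-D^2Z(P)(H)](X(P+H))$, $DZ(P)(X(P+H)-X(P)-DX(P)(H))$, and $D^2Z(P)(H)(X(P+H)-X(P))$, followed by the same $o(\|H\|)$ estimates on each. No substantive difference.
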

\begin{proof}
	For simplicity, here we denote the norm $||\;||_{\HS_X(\Hcal,C_0)}$ by $||\;||$.
	From the definition $Y(P) = DZ(P)(X(P))$,
	\begin{align*}
		&Y(P+H) - Y(P) - D^2Z(P)(H)(X(P)) - DZ(P)(DX(P)(H))
		\\
		&= [DZ(P+H) - DZ(P) - D^2Z(P)(H)](X(P+H))  
		\\
		& \quad+ DZ(P)[X(P+H) - X(P)-DX(P)(H)] + D^2Z(P)(H)[X(P+H)-X(P)].
	\end{align*}
	The result then follows from the limits $\lim\limits_{H \approach 0}\frac{||DZ(P+H) - DZ(P) - D^2Z(P)(H)||}{||H||} = 0$,
	$\lim\limits_{H \approach 0}\frac{||X(P+H) - X(P)-DX(P)(H)||}{||H||} = 0$, 
	and $\frac{||D^2Z(P)(H)[X(P+H)-X(P)]||}{||H||}\leq ||D^2Z(P)||\;\;\;$
	$||X(P+H)-X(P)|| \approach 0$ as $H \approach 0$.
	\qed
\end{proof}

\begin{lemma}
	\label{lemma:derivative-Levi-Civita}
	Let $\nabla$ be the Riemannian connection given in Proposition \ref{proposition:Levi-Civita}.
	Then $\forall X,Y,Z \in \Xfrak(\Tr(\Hcal,C_0))$, $\forall P \in \Tr(\Hcal,C_0)$,
\begin{align}
	&D(\nabla_XZ)(P)(Y_P) = D^2Z(P)(X_P,Y_P) + DZ(P)(DX(P)(Y_P))
	\nonumber
	\\
	&-\frac{1}{2}\left(DX(P)(Y_P)P^{-1}{Z}_P 
	+ {X}_PP^{-1}DZ(P)(Y_P) - X_PP^{-1}Y_PP^{-1}Z_P\right)
	\nonumber
	\\
	&-\frac{1}{2}\left(DZ(P)(Y_P)P^{-1}{X}_P 
	+ {Z}_PP^{-1}DX(P)(Y_P) - Z_PP^{-1}Y_PP^{-1}X_P\right).
\end{align}
\end{lemma}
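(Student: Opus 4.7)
The plan is to differentiate the three summands of $(\nabla_X Z)_P = DZ(P)(X_P) - \tfrac{1}{2} X_P P^{-1} Z_P - \tfrac{1}{2} Z_P P^{-1} X_P$ separately, viewing each as a smooth map $\Tr(\Hcal,C_0) \to \SymHS(\Hcal,C_0) \subset \SymHS_X(\Hcal,C_0)$ and applying the Fr\'echet product rule from Lemma \ref{lemma:Frechet-derivative-product-Banach-algebra} together with Lemma \ref{lemma:derivative-composition-vector-fields}.

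First, the term $P \mapsto DZ(P)(X(P))$ is exactly the object treated in Lemma \ref{lemma:derivative-composition-vector-fields}, which yields
\begin{align*}
D\bigl(P\mapsto DZ(P)(X_P)\bigr)(P)(Y_P) = D^2Z(P)(Y_P)(X_P) + DZ(P)(DX(P)(Y_P)).
\end{align*}
The symmetry of the second Fr\'echet derivative, $D^2Z(P)(Y_P)(X_P) = D^2Z(P)(X_P,Y_P)$, gives the first line of the claimed formula.

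Next, I treat $P \mapsto X_P P^{-1} Z_P$ as a triple product in the Banach algebra framework. The inversion map $P \mapsto P^{-1}$ satisfies $D(\cdot^{-1})(P)(Y_P) = -P^{-1} Y_P P^{-1}$, and for $X_P, Z_P \in \SymHS(\Hcal, P)$ the products $X_P P^{-1} Z_P$ and $X_P P^{-1} Y_P P^{-1} Z_P$ are well-defined elements of $\SymTr(\Hcal, P)$, since writing $X_P = P^{1/2}\tilde X_P P^{1/2}$, $Y_P = P^{1/2}\tilde Y_P P^{1/2}$, $Z_P = P^{1/2}\tilde Z_P P^{1/2}$ with $\tilde X_P, \tilde Y_P, \tilde Z_P \in \Sym(\Hcal)\cap\HS(\Hcal)$ collapses everything to bounded expressions sandwiched by $P^{1/2}$. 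Applying Lemma \ref{lemma:Frechet-derivative-product-Banach-algebra} twice (once to split the triple product, once to the resulting pair) yields
\begin{align*}
D(X(\cdot)(\cdot)^{-1}Z(\cdot))(P)(Y_P) = DX(P)(Y_P)P^{-1}Z_P - X_P P^{-1} Y_P P^{-1} Z_P + X_P P^{-1} DZ(P)(Y_P),
\end{align*}
and an analogous identity for $P \mapsto Z_P P^{-1} X_P$ obtained by swapping the roles of $X$ and $Z$. Multiplying each by $-\tfrac{1}{2}$ and combining with the first line produces exactly the stated formula, where the sign of the cross term $X_P P^{-1} Y_P P^{-1} Z_P$ reverses from the inversion rule and the outer factor $-\tfrac{1}{2}$.

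There is no real obstacle beyond bookkeeping: the product rule in Lemma \ref{lemma:Frechet-derivative-product-Banach-algebra} is stated for Banach algebras, so one has to justify that the intermediate products live in a Banach algebra where the rule applies. The cleanest way is to work with the factorization $X_P = P^{1/2}\tilde X_P P^{1/2}$ etc., so that each triple product lies in $P^{1/2}(\HS(\Hcal))P^{1/2}$ and the differentiation reduces to the product rule in $(\HS(\Hcal), \|\cdot\|_{\HS})$. With this identification every term appearing in the formula is manifestly an element of $\SymHS(\Hcal,C_0) = \SymHS(\Hcal,P)$, so the computation stays inside the tangent space throughout and the stated identity holds as an equality of bounded operators.
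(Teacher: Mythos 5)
Your overall strategy (differentiate the three summands of $(\nabla_XZ)_P$ separately, handle $P\mapsto DZ(P)(X_P)$ via Lemma \ref{lemma:derivative-composition-vector-fields}, and treat the remaining terms by a product rule) is the same as the paper's, and the formula you arrive at is correct. However, there is a genuine gap in the way you differentiate the terms $X_PP^{-1}Z_P$ and $Z_PP^{-1}X_P$. You invoke the inversion rule $D(\cdot^{-1})(P)(Y_P) = -P^{-1}Y_PP^{-1}$ and then apply the Banach-algebra product rule of Lemma \ref{lemma:Frechet-derivative-product-Banach-algebra} to the triple product. But in this setting $P \in \Tr(\Hcal,C_0)$ is a trace-class operator with trivial kernel, so $P^{-1}$ is an \emph{unbounded} densely defined operator: the map $P \mapsto P^{-1}$ does not take values in $\Lcal(\Hcal)$, let alone in a Banach algebra, and the standard Fr\'echet derivative of inversion is not available. (Note also that the putative derivative $-P^{-1}Y_PP^{-1} = -P^{-1/2}\tilde{Y}_PP^{-1/2}$ is itself unbounded in general, so the intermediate object you would be differentiating does not live in any of the spaces where Lemma \ref{lemma:Frechet-derivative-product-Banach-algebra} applies.) Only the full sandwiched products such as $X_PP^{-1}Y_PP^{-1}Z_P = P^{1/2}\tilde{X}_P\tilde{Y}_P\tilde{Z}_PP^{1/2}$ are bounded, and that is why the formal triple-product computation happens to produce a meaningful answer — but it does not constitute a proof.

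You do acknowledge this and propose to repair it by writing $X_P = P^{1/2}\tilde{X}_PP^{1/2}$ etc.\ and ``reducing to the product rule in $\HS(\Hcal)$,'' but that reduction is exactly where the substance of the paper's proof lies, and your proposal does not carry it out. After the factorization, the $P$-dependence sits in the maps $P \mapsto P^{1/2}$ and $P \mapsto \tilde{X}_P = P^{-1/2}X_PP^{-1/2}$, and differentiating these requires the derivative of the square-root map, which is only characterized implicitly by the Sylvester-type identity $P^{1/2}D\mysqrt(P)(H) + D\mysqrt(P)(H)P^{1/2} = H$ (Lemma \ref{lemma:trace-derivative-square-root-product}). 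The paper's computation of $D(P^{1/2}\tilde{X}_P\tilde{Z}_PP^{1/2})(P)(H)$ uses precisely this identity to make the $D\mysqrt(P)(H)$ terms cancel and collapse into the single term $-X_PP^{-1}HP^{-1}Z_P$; without that step your argument has no rigorous route to the cross term. To close the gap you should replace the appeal to the inversion rule by the expressions for $D\tilde{X}(P)(H)$ and $D\tilde{Z}(P)(H)$ obtained from Lemmas \ref{lemma:trace-derivative-square-root-product} and \ref{lemma:Frechet-derivative-product-Banach-algebra}, and then verify the cancellation explicitly as in the paper.
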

\begin{proof}
	Let $X_P = P^{1/2}\tilde{X}_PP^{1/2}$, $Z_P = P^{1/2}\tilde{Z}_PP^{1/2}$. Then for $H \in \SymHS(\Hcal,C_0)$,
	\begin{align*}
	DX(P)(H) = D\mysqrt(P)(H)\tilde{X}_PP^{1/2} + P^{1/2}D\tilde{X}(P)(H)P^{1/2} + P^{1/2}\tilde{X}_PD\mysqrt(P)(H),
	\end{align*}
from which it follows that for $H \in \SymHS(\Hcal,C_0) = \SymHS(\Hcal,P)$
\begin{align*}
D\tilde{X}(P)(H) &= P^{-1/2}DX(P)(H)P^{-1/2}
\\
&\quad - P^{-1/2}D\mysqrt(P)(H)\tilde{X}_P 
- \tilde{X}_PD\mysqrt(P)(H)P^{-1/2}.
\end{align*}
Similarly, for $H \in \SymHS(\Hcal,C_0) = \SymHS(\Hcal,P)$,
\begin{align*}
D\tilde{Z}(P)(H) &= P^{-1/2}DZ(P)(H)P^{-1/2}
\\
&\quad - P^{-1/2}D\mysqrt(P)(H)\tilde{Z}_P 
- \tilde{Z}_PD\mysqrt(P)(H)P^{-1/2}.
\end{align*}
Recalling the expression for the Riemannian connection,
\begin{align*}
&	(\nabla_{X}Z)_P = D(Z)(P)(X_P) - \frac{1}{2}[X_PP^{-1}Z_P + Z_PP^{-1}X_P]
\\
&	 = D(Z)(P)(X_P) - \frac{1}{2}[P^{1/2}\tilde{X}_P\tilde{Z}_PP^{1/2} + P^{1/2}\tilde{Z}_P\tilde{X}_PP^{1/2}].
\end{align*}
Differentiating this, invoking Lemma \ref{lemma:derivative-composition-vector-fields}, gives
\begin{align*}
	D(\nabla_XZ)(P)(Y_P) &= D^2Z(P)(X_P,Y_P) + DZ(P)(DX(P)(Y_P))
	\\
	&\quad- \frac{1}{2}D(P^{1/2}\tilde{X}_P\tilde{Z}_PP^{1/2})(P)(Y_P) -\frac{1}{2} D(P^{1/2}\tilde{Z}_P\tilde{X}_PP^{1/2})(P)(Y_P).
\end{align*}
By the product rule, for $H \in \SymHS(\Hcal,C_0) = \SymHS(\Hcal,P)$,
\begin{align*}
&D(P^{1/2}\tilde{X}_P\tilde{Z}_PP^{1/2})(P)(H) 
\\
&= D\mysqrt(P)(H)\tilde{X}_P\tilde{Z}_PP^{1/2} + P^{1/2}D\tilde{X}(P)(H)\tilde{Z}_PP^{1/2}
\\
&\quad + P^{1/2}\tilde{X}_PD\tilde{Z}(P)(H)P^{1/2} + P^{1/2}\tilde{X}_P\tilde{Z}_PD\mysqrt(P)(H)
\\
& =  D\mysqrt(P)(H)\tilde{X}_P\tilde{Z}_PP^{1/2} + P^{1/2}\tilde{X}_P\tilde{Z}_PD\mysqrt(P)(H)
\\
&\quad + P^{1/2}\left(P^{-1/2}DX(P)(H)P^{-1/2} - P^{-1/2}D\mysqrt(P)(H)\tilde{X}_P \right.
\\
&\quad\quad \quad\quad \left.- \tilde{X}_PD\mysqrt(P)(H)P^{-1/2}\right)\tilde{Z}_PP^{1/2} 
\\
&\quad+ P^{1/2}\tilde{X}_P\left(P^{-1/2}DZ(P)(H)P^{-1/2}
- P^{-1/2}D\mysqrt(P)(H)\tilde{Z}_P \right.
\\
&\quad\quad\quad \quad\left.- \tilde{Z}_PD\mysqrt(P)(H)P^{-1/2}\right)P^{1/2} 
\\
& = DX(P)(H)P^{-1/2}\tilde{Z}_PP^{1/2} - P^{1/2}\tilde{X}_PD\mysqrt(P)(H)P^{-1/2}\tilde{Z}_PP^{1/2}
\\
&\quad + P^{1/2}\tilde{X}_PP^{-1/2}DZ(P)(H) - P^{1/2}\tilde{X}_PP^{-1/2}D\mysqrt(P)(H)\tilde{Z}_PP^{1/2}
\\
& = DX(P)(H)P^{-1}{Z}_P - {X}_PP^{-1/2}D\mysqrt(P)(H)P^{-1}{Z}_P
\\
&\quad + {X}_PP^{-1}DZ(P)(H) - {X}_PP^{-1}D\mysqrt(P)(H)P^{-1/2}{Z}_P
\\
& = DX(P)(H)P^{-1}{Z}_P + {X}_PP^{-1}DZ(P)(H) 
\\
&\quad - X_PP^{-1}[P^{1/2}D\mysqrt(P)(H) + D\mysqrt(P)(H)P^{1/2}]P^{-1}Z_P
\\
& = DX(P)(H)P^{-1}{Z}_P + {X}_PP^{-1}DZ(P)(H) -X_PP^{-1}HP^{-1}Z_P.
\end{align*}
Similarly, for $H \in \SymHS(\Hcal,C_0) = \SymHS(\Hcal,P)$,
\begin{align*}
&D(P^{1/2}\tilde{Z}_P\tilde{X}_PP^{1/2})(P)(H) 
\\
& = DZ(P)(H)P^{-1}{X}_P +{Z}_PP^{-1}DX(P)(H)-Z_PP^{-1}HP^{-1}X_P.
\end{align*}
Combining all expressions, letting $H=Y_P$, we obtain
\begin{align*}
&D(\nabla_XZ)(P)(Y_P) = D^2Z(P)(X_P,Y_P) + DZ(P)(DX(P)(Y_P))
\\
&-\frac{1}{2}\left(DX(P)(Y_P)P^{-1}{Z}_P 
+ {X}_PP^{-1}DZ(P)(Y_P) - X_PP^{-1}Y_PP^{-1}Z_P\right)
\\
&-\frac{1}{2}\left(DZ(P)(Y_P)P^{-1}{X}_P 
 + {Z}_PP^{-1}DX(P)(Y_P) - Z_PP^{-1}Y_PP^{-1}X_P\right).
\end{align*}
\qed
\end{proof}

\begin{proposition}
[Riemannian curvature tensor]
\label{proposition:curvature-tensor}
For the manifold $\Tr(\Hcal,C_0)$ under the Riemannian metric in Theorem \ref{theorem:Riemannian-metric}, the Riemannian curvature tensor is given by
\begin{align}
[R(X,Y)Z](P) = -\frac{1}{4}P^{1/2}[[P^{-1/2}{X}_PP^{-1/2}, P^{-1/2}{Y}_PP^{-1/2}],P^{-1/2}{Z}_PP^{-1/2}]P^{1/2}
\end{align}
$\forall X,Y,Z \in \Xfrak(\Tr(\Hcal,C_0))$, $\forall P \in \Tr(\Hcal,C_0)$. Here $[,]$ denotes the operator commutator
$[A,B] = AB-BA$.
\end{proposition}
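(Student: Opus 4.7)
\emph{Plan of proof.} The plan is to compute $R(X,Y)Z = \nabla_X\nabla_YZ - \nabla_Y\nabla_XZ - \nabla_{[X,Y]}Z$ directly from the formula for the Levi-Civita connection in Proposition~\ref{proposition:Levi-Civita}, combined with the derivative of $\nabla_XZ$ provided in Lemma~\ref{lemma:derivative-Levi-Civita} and the Lie bracket formula $[X,Y]_P = DY(P)(X_P)-DX(P)(Y_P)$ from Lemma~\ref{lemma:Frechet-derivative-Xf}. Throughout I shall work in the tilde variables $\tilde{X}_P = P^{-1/2}X_PP^{-1/2}$, etc., so that $X_P = P^{1/2}\tilde{X}_PP^{1/2}$ with $\tilde{X}_P \in \Sym(\Hcal)\cap\HS(\Hcal)$ by Lemma~\ref{lemma:switch-C0-Sigma}; this ensures that every cubic product is bounded (indeed Hilbert-Schmidt after the enveloping $P^{\pm 1/2}$'s), sidestepping the difficulty that $P^{-1}X_P$ need not be bounded.

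First I would expand $(\nabla_Y\nabla_XZ)_P$ and $(\nabla_X\nabla_YZ)_P$ by substituting the Levi-Civita formula twice, invoking Lemma~\ref{lemma:derivative-Levi-Civita} for the Fr\'echet derivative of the vector field $\nabla_XZ$ respectively $\nabla_YZ$ at $P$. The difference $\nabla_X\nabla_YZ - \nabla_Y\nabla_XZ$ splits into three groups: (i) the second-order terms $D^2Z(P)(X_P,Y_P)$ and $D^2Z(P)(Y_P,X_P)$ cancel by symmetry of $D^2Z(P)$; (ii) the first-order terms in $DZ$, $DX$, $DY$ assemble into $DZ(P)(DY(P)(X_P)-DX(P)(Y_P)) = DZ(P)([X,Y]_P)$ together with $[X,Y]_PP^{-1}Z_P + Z_PP^{-1}[X,Y]_P$, while the cross terms such as $X_PP^{-1}DZ(P)(Y_P)$ from Lemma~\ref{lemma:derivative-Levi-Civita} exactly cancel their counterparts coming from the brackets $Y_PP^{-1}(\nabla_XZ)_P$ and $(\nabla_XZ)_PP^{-1}Y_P$ and symmetrically in $X\leftrightarrow Y$; (iii) a residue of eight purely algebraic cubic terms in $X_P,Y_P,Z_P$ joined by $P^{-1}$.

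Next I would expand $(\nabla_{[X,Y]}Z)_P$ by a single application of Proposition~\ref{proposition:Levi-Civita}. Its $DZ(P)([X,Y]_P)$ term and its $\tfrac{1}{2}([X,Y]_PP^{-1}Z_P + Z_PP^{-1}[X,Y]_P)$ bracket precisely cancel group (ii). The remaining group (iii) is then handled by the tilde substitution, which converts each $X_PP^{-1}Y_PP^{-1}Z_P$ into $P^{1/2}\tilde{X}_P\tilde{Y}_P\tilde{Z}_PP^{1/2}$, and similarly for the other seven monomials. The four terms coming from Lemma~\ref{lemma:derivative-Levi-Civita} collapse to $-\tfrac{1}{2}P^{1/2}[[\tilde{X}_P,\tilde{Y}_P],\tilde{Z}_P]P^{1/2}$, while the four terms coming from the brackets $Y_PP^{-1}(\nabla_XZ)_P + (\nabla_XZ)_PP^{-1}Y_P - X_PP^{-1}(\nabla_YZ)_P - (\nabla_YZ)_PP^{-1}X_P$ collapse to $+\tfrac{1}{4}P^{1/2}[[\tilde{X}_P,\tilde{Y}_P],\tilde{Z}_P]P^{1/2}$. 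Summing yields $-\tfrac{1}{4}P^{1/2}[[\tilde{X}_P,\tilde{Y}_P],\tilde{Z}_P]P^{1/2}$, which is exactly the claimed formula.

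The main obstacle is not conceptual but purely organisational: tracking the eight cubic monomials through several expansions and verifying the cancellations in group (ii) demands methodical bookkeeping. The only subtlety that could cause genuine trouble, namely that $P^{-1}X_P$ need not be bounded in the present infinite-dimensional setting, is circumvented entirely once one commits to the tilde variables, since every expression in sight is then manifestly of the form $P^{1/2}(\cdots)P^{1/2}$ with the centre in $\Sym(\Hcal)\cap\HS(\Hcal)$, a Banach algebra, so the triple commutator $[[\tilde{X}_P,\tilde{Y}_P],\tilde{Z}_P]$ lies in $\Sym(\Hcal)\cap\HS(\Hcal)$ and the output $R(X,Y)Z_P$ lies in $\SymHS(\Hcal,P) = T_P(\Tr(\Hcal,C_0))$ as required.
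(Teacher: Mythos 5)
Your proposal is correct and follows essentially the same route as the paper's proof: expand $\nabla_X\nabla_YZ-\nabla_Y\nabla_XZ-\nabla_{[X,Y]}Z$ via Proposition \ref{proposition:Levi-Civita}, Lemma \ref{lemma:derivative-Levi-Civita}, and the Lie bracket identity from Lemma \ref{lemma:Frechet-derivative-Xf}, observe the cancellation of the second-order and first-order derivative terms, and collapse the residual cubic monomials in the tilde variables to $-\tfrac{1}{4}P^{1/2}[[\tilde{X}_P,\tilde{Y}_P],\tilde{Z}_P]P^{1/2}$. The bookkeeping you describe, including the $-\tfrac12+\tfrac14=-\tfrac14$ split between the Lemma terms and the bracket terms, matches the paper's computation exactly.
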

\begin{proof}
By definition of the Riemannian curvature tensor, for $X,Y,Z \in \Xfrak(\Tr(\Hcal,C_0))$,
\begin{align*}
R(X,Y)Z= \nabla_X\nabla_YZ - \nabla_Y\nabla_XZ - \nabla_{[X,Y]}Z
\end{align*}
with $\nabla$ being the Riemannian connection, as given in Proposition \ref{proposition:Levi-Civita}.
Recalling
\begin{align*}
(\nabla_{X}Z)_P = D(Z)(P)(X_P) - \frac{1}{2}[X_PP^{-1}Z_P + Z_PP^{-1}X_P],
\\
(\nabla_{Y}Z)_P = D(Z)(P)(Y_P) - \frac{1}{2}[Y_PP^{-1}Z_P + Z_PP^{-1}Y_P].
\end{align*}
It follows that $\nabla_Y\nabla_XZ$ is given by
\begin{align*}
&(\nabla_Y\nabla_XZ)(P) = D(\nabla_XZ)(P)(Y_P) - \frac{1}{2}[Y_PP^{-1}(\nabla_XZ)(P) + (\nabla_XZ)(P)P^{-1}Y_P]
\\
& =  D(\nabla_XZ)(P)(Y_P)-\frac{1}{2}Y_PP^{-1}\left( D(Z)(P)(X_P) - \frac{1}{2}[X_PP^{-1}Z_P + Z_PP^{-1}X_P]\right)
\\
& \quad - \frac{1}{2}\left( D(Z)(P)(X_P) - \frac{1}{2}[X_PP^{-1}Z_P + Z_PP^{-1}X_P]\right)P^{-1}Y_P
\\
& = D(\nabla_XZ)(P)(Y_P) - \frac{1}{2}Y_PP^{-1}D(Z)(P)(X_P) + \frac{1}{4}Y_PP^{-1}X_PP^{-1}Z_P 
\\
&\quad + \frac{1}{4}Y_PP^{-1}Z_PP^{-1}X_P - \frac{1}{2} D(Z)(P)(X_P)P^{-1}Y_P + \frac{1}{4}X_PP^{-1}Z_PP^{-1}Y_P
\\
&\quad + \frac{1}{4}Z_PP^{-1}X_PP^{-1}Y_P.
\end{align*}
Similarly, $\nabla_X\nabla_YZ$ is given by
\begin{align*}
&(\nabla_X\nabla_YZ)(P) = D(\nabla_YZ)(P)(X_P)-\frac{1}{2}[X_PP^{-1}(\nabla_YZ)(P) + (\nabla_YZ)(P)P^{-1}X_P]
\\
& =  D(\nabla_YZ)(P)(X_P) -\frac{1}{2}X_PP^{-1}\left(D(Z)(P)(Y_P) - \frac{1}{2}[Y_PP^{-1}Z_P + Z_PP^{-1}Y_P]\right)
\\
&\quad-\frac{1}{2}\left(D(Z)(P)(Y_P) - \frac{1}{2}[Y_PP^{-1}Z_P + Z_PP^{-1}Y_P]\right)P^{-1}X_P
\\
& = D(\nabla_YZ)(P)(X_P) - \frac{1}{2}X_PP^{-1}D(Z)(P)(Y_P) + \frac{1}{4}X_PP^{-1}Y_PP^{-1}Z_P 
\\
&\quad + \frac{1}{4}X_PP^{-1}Z_PP^{-1}Y_P - \frac{1}{2}D(Z)(P)(Y_P)P^{-1}X_P + \frac{1}{4}Y_PP^{-1}Z_PP^{-1}X_P 
\\
&\quad + \frac{1}{4} Z_PP^{-1}Y_PP^{-1}X_P.
\end{align*}
By Lemma \ref{lemma:derivative-Levi-Civita},
\begin{align*}
	&D(\nabla_XZ)(P)(Y_P) = D^2Z(P)(X_P,Y_P) + DZ(P)(DX(P)(Y_P))
	\\
	&-\frac{1}{2}\left(DX(P)(Y_P)P^{-1}{Z}_P 
	+ {X}_PP^{-1}DZ(P)(Y_P) - X_PP^{-1}Y_PP^{-1}Z_P\right)
	\\
	&-\frac{1}{2}\left(DZ(P)(Y_P)P^{-1}{X}_P 
	+ {Z}_PP^{-1}DX(P)(Y_P) - Z_PP^{-1}Y_PP^{-1}X_P\right).
\end{align*}
\begin{align*}
	&D(\nabla_YZ)(P)(X_P) = D^2Z(P)(Y_P,X_P) + DZ(P)(DY(P)(X_P))
	\\
	&-\frac{1}{2}\left(DY(P)(X_P)P^{-1}{Z}_P 
	+ {Y}_PP^{-1}DZ(P)(X_P) - Y_PP^{-1}X_PP^{-1}Z_P\right)
	\\
	&-\frac{1}{2}\left(DZ(P)(X_P)P^{-1}{Y}_P 
	+ {Z}_PP^{-1}DY(P)(X_P) - Z_PP^{-1}X_PP^{-1}Y_P\right).
\end{align*}
Combining the last two expressions gives
\begin{align*}
	&D(\nabla_YZ)(P)(X_P) - D(\nabla_XZ)(P)(Y_P)
	\\
	&=
	DZ(P)(DY(P)(X_P)) - DZ(P)(DX(P)(Y_P))
	\\
	&\quad + \frac{1}{2}\left(Y_PP^{-1}X_PP^{-1}Z_P + Z_PP^{-1}X_PP^{-1}Y_P\right)
	\\
	&\quad-\frac{1}{2}\left(DY(P)(X_P)P^{-1}{Z}_P + {Z}_PP^{-1}DY(P)(X_P)\right)
	\\
	&\quad -\frac{1}{2}\left({Y}_PP^{-1}DZ(P)(X_P) + DZ(P)(X_P)P^{-1}{Y}_P\right)
	\\
	& \quad- \frac{1}{2}\left(X_PP^{-1}Y_PP^{-1}Z_P + Z_PP^{-1}Y_PP^{-1}X_P\right)
	\\
	&\quad + \frac{1}{2}\left(DX(P)(Y_P)P^{-1}{Z}_P + {Z}_PP^{-1}DX(P)(Y_P) \right)
	\\
	&\quad + \frac{1}{2}\left({X}_PP^{-1}DZ(P)(Y_P) + DZ(P)(Y_P)P^{-1}{X}_P\right).
\end{align*}
Combining the previous expressions, we obtain
\begin{align*}
&(\nabla_X\nabla_YZ - \nabla_Y\nabla_XZ)(P) = D(\nabla_YZ)(P)(X_P)-D(\nabla_XZ)(P)(Y_P)
\\
&\quad -  \frac{1}{2}X_PP^{-1}D(Z)(P)(Y_P)-\frac{1}{2}D(Z)(P)(Y_P)P^{-1}X_P
\\
&\quad +\frac{1}{2} Y_PP^{-1}D(Z)(P)(X_P)+\frac{1}{2}D(Z)(P)(X_P)P^{-1}Y_P
\\
& \quad + \frac{1}{4}X_PP^{-1}Y_PP^{-1}Z_P - \frac{1}{4}Z_PP^{-1}X_PP^{-1}Y_P
\\
& \quad + \frac{1}{4}Z_PP^{-1}Y_PP^{-1}X_P -\frac{1}{4}Y_PP^{-1}X_PP^{-1}Z_P
\\
& = DZ(P)(DY(P)(X_P)) - DZ(P)(DX(P)(Y_P))
\\
& \quad + \frac{1}{4}Y_PP^{-1}X_PP^{-1}Z_P + \frac{1}{4}Z_PP^{-1}X_PP^{-1}Y_P
- \frac{1}{4}X_PP^{-1}Y_PP^{-1}Z_P
\\
&\quad-\frac{1}{4}Z_PP^{-1}Y_PP^{-1}X_P
 -\frac{1}{2}\left(DY(P)(X_P)P^{-1}{Z}_P + {Z}_PP^{-1}DY(P)(X_P)\right)
\\
&\quad + \frac{1}{2}\left(DX(P)(Y_P)P^{-1}{Z}_P + {Z}_PP^{-1}DX(P)(Y_P) \right).
\end{align*}
For the quantity $\nabla_{[X,Y]}Z$, by definition
\begin{align*}
(\nabla_{[X,Y]}Z)(P) = D(Z)(P)([X,Y]_P) - \frac{1}{2}[[X,Y]_PP^{-1}Z_P + Z_PP^{-1}[X,Y]_P].
\end{align*}
By Lemma \ref{lemma:Frechet-derivative-Xf},
$[X,Y]_P = DY(P)(X_P) - DX(P)(Y_P)$.
It follows that
\begin{align*}
&(\nabla_{[X,Y]}Z)(P) = D(Z)(P)[DY(P)(X_P) - DX(P)(Y_P)] 
\\
&- \frac{1}{2}\left([DY(P)(X_P) - DX(P)(Y_P)]P^{-1}Z_P + Z_PP^{-1}[DY(P)(X_P) - DX(P)(Y_P)] \right).
\end{align*}
Combining this with the expression for $(\nabla_X\nabla_YZ - \nabla_Y\nabla_XZ)(P)$, we get
\begin{align*}
&(\nabla_X\nabla_YZ - \nabla_Y\nabla_XZ)(P) - (\nabla_{[X,Y]}Z)(P)
\\
&= \frac{1}{4}Y_PP^{-1}X_PP^{-1}Z_P + \frac{1}{4}Z_PP^{-1}X_PP^{-1}Y_P - \frac{1}{4}X_PP^{-1}Y_PP^{-1}Z_P
\\
&\quad -\frac{1}{4}Z_PP^{-1}Y_PP^{-1}X_P
\\
& = \frac{1}{4}P^{1/2}\tilde{Y}_P\tilde{X}_P\tilde{Z}_PP^{1/2}+\frac{1}{4}P^{1/2}\tilde{Z}_P\tilde{X}_P\tilde{Y}_PP^{1/2}
- \frac{1}{4}P^{1/2}\tilde{X}_P\tilde{Y}_P\tilde{Z}_PP^{1/2}
\\
&\quad - \frac{1}{4}P^{1/2}\tilde{Z}_P\tilde{Y}_P\tilde{X}_PP^{1/2}
\\
& = \frac{1}{4}P^{1/2}[\tilde{Z}_P,\tilde{X}_P\tilde{Y}_P]P^{1/2}+\frac{1}{4}P^{1/2}[\tilde{Y}_P\tilde{X}_P, \tilde{Z}_P]P^{1/2}
\\
& = -\frac{1}{4}P^{1/2}[\tilde{X}_P\tilde{Y}_P - \tilde{Y}_P\tilde{X}_P, \tilde{Z}_P]P^{1/2}
= -\frac{1}{4}P^{1/2}[[\tilde{X}_P, \tilde{Y}_P],\tilde{Z}_P]P^{1/2}.
\end{align*}
\qed
\end{proof}

\begin{proposition}
[Sectional curvature]
\label{proposition:sectional-curvature}
Under the Riemannian metric in Theorem \ref{theorem:Riemannian-metric}, the manifold $\Tr(\Hcal,C_0)$ has everywhere nonpositive sectional curvature.
Let $P\in \Tr(\Hcal,C_0)$, then the sectional curvature at $P$ is given by
\begin{align}
S_P(X,Y) =
-\frac{\trace[(P^{-1/2}XP^{-1/2})^2(P^{-1/2}YP^{-1/2})^2 - (P^{-1/2}XP^{-1}YP^{-1/2})^2]}{\trace(P^{-1/2}XP^{-1/2})^2\trace(P^{-1/2}YP^{-1/2})^2 -\trace(P^{-1/2}XP^{-1}YP^{-1/2})^2}
%
\end{align}
where $X,Y$ are any two linearly independent operators in $\SymHS(\Hcal,C_0) = \SymHS(\Hcal,P)$.
\end{proposition}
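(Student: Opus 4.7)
The plan is to apply the standard sectional curvature formula
\[
S_P(X,Y) = \frac{\la R(X,Y)Y, X\ra_P}{\la X,X\ra_P\la Y,Y\ra_P - \la X,Y\ra_P^2}
\]
using the explicit Riemannian metric of Theorem \ref{theorem:Riemannian-metric} and the curvature tensor of Proposition \ref{proposition:curvature-tensor}. Throughout, denote $\tilde{X} = P^{-1/2}XP^{-1/2}$ and $\tilde{Y} = P^{-1/2}YP^{-1/2}$, which lie in $\Sym(\Hcal)\cap\HS(\Hcal)$ by Lemma \ref{lemma:switch-C0-Sigma}, so every trace that appears below is well-defined via the Hilbert--Schmidt property.

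First I would compute the numerator. Substituting $Z = Y$ into Proposition \ref{proposition:curvature-tensor} gives $R(X,Y)Y = -\tfrac14 P^{1/2}[[\tilde{X},\tilde{Y}],\tilde{Y}]P^{1/2}$. Pairing this with $X$ through the metric \eqref{equation:Riemannian-metric-Tr-H-C0-2}, the outer $P^{\pm 1/2}$'s telescope and one obtains
\[
\la R(X,Y)Y, X\ra_P = -\tfrac18 \trace\bigl([[\tilde{X},\tilde{Y}],\tilde{Y}]\tilde{X}\bigr).
\]
Expanding $[[\tilde{X},\tilde{Y}],\tilde{Y}] = \tilde{X}\tilde{Y}^2 - 2\tilde{Y}\tilde{X}\tilde{Y} + \tilde{Y}^2\tilde{X}$ and using the cyclic property of the trace on Hilbert--Schmidt operators yields
\[
\la R(X,Y)Y, X\ra_P = -\tfrac14\bigl[\trace(\tilde{X}^2\tilde{Y}^2) - \trace((\tilde{X}\tilde{Y})^2)\bigr].
\]
The denominator is immediate from the metric: $\la X,X\ra_P\la Y,Y\ra_P - \la X,Y\ra_P^2 = \tfrac14[\trace(\tilde{X}^2)\trace(\tilde{Y}^2) - \trace(\tilde{X}\tilde{Y})^2]$. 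Dividing yields the stated formula.

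For nonpositivity I would separately analyze both factors. The denominator is strictly positive whenever $X,Y$ are linearly independent by the Cauchy--Schwarz inequality applied to the positive definite inner product $\la\cdot,\cdot\ra_P$ (equivalently, applied to $\tilde{X},\tilde{Y}$ in the Hilbert--Schmidt inner product, noting that $X,Y$ are linearly independent iff $\tilde{X},\tilde{Y}$ are). For the numerator, since $\tilde{X},\tilde{Y}$ are self-adjoint, $[\tilde{X},\tilde{Y}]^* = -[\tilde{X},\tilde{Y}]$, so
\[
0 \le \|[\tilde{X},\tilde{Y}]\|_{\HS}^2 = \trace([\tilde{X},\tilde{Y}]^*[\tilde{X},\tilde{Y}]) = -\trace([\tilde{X},\tilde{Y}]^2) = 2\bigl[\trace(\tilde{X}^2\tilde{Y}^2) - \trace((\tilde{X}\tilde{Y})^2)\bigr],
\]
after expanding $[\tilde{X},\tilde{Y}]^2$ and using the cyclic property. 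Hence the numerator of $S_P(X,Y)$ is nonpositive while the denominator is positive, giving $S_P(X,Y) \leq 0$ as claimed.

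I expect no serious obstacle; the main point requiring care is keeping track of the factors of $P^{\pm 1/2}$ when transferring between $X,Y$ and $\tilde{X},\tilde{Y}$, and justifying use of the cyclic trace identity in the infinite-dimensional setting, which is valid here because products such as $\tilde{X}^2\tilde{Y}^2$, $(\tilde{X}\tilde{Y})^2$, and $[\tilde{X},\tilde{Y}]^*[\tilde{X},\tilde{Y}]$ all lie in $\Tr(\Hcal)$ as products of Hilbert--Schmidt operators.
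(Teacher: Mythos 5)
Your proposal is correct and follows essentially the same route as the paper: substitute $Z=Y$ into the curvature tensor, pair with $X$ through the metric so the $P^{\pm 1/2}$ factors cancel, obtain $-\tfrac14\trace(\tilde{X}^2\tilde{Y}^2-(\tilde{X}\tilde{Y})^2)$ for the numerator and $\tfrac14[\trace(\tilde{X}^2)\trace(\tilde{Y}^2)-\trace(\tilde{X}\tilde{Y})^2]$ for the denominator. The only (cosmetic) difference is in the final sign argument: you derive $\trace(\tilde{X}^2\tilde{Y}^2)-\trace((\tilde{X}\tilde{Y})^2)=\tfrac12\|[\tilde{X},\tilde{Y}]\|_{\HS}^2\ge 0$, whereas the paper gets the same inequality from Cauchy--Schwarz in the Hilbert--Schmidt inner product; both are equivalent one-line verifications.
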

\begin{proof}
By definition, the sectional curvature at $P$ is
given by
\begin{align}
S_P(X,Y) = \frac{\la R(X,Y)Y, X\ra_P}{||X||^2_P||Y||^2_P - \la X,Y\ra_P^2}.
\end{align}
Let $X = P^{1/2}\tilde{X}P^{1/2}$, $Y = P^{1/2}\tilde{Y}P^{1/2}$. 
By Proposition \ref{proposition:curvature-tensor}, 
\begin{align*}
R(X,Y)Y = -\frac{1}{4}P^{1/2}[[\tilde{X},\tilde{Y}],\tilde{Y}]P^{1/2}.
\end{align*}
It follows that
\begin{align*}
&\la R(X,Y)Y,X\ra_P 
 = \frac{1}{2}\trace(P^{-1/2}R(X,Y)YP^{-1}XP^{-1/2}) 
\\
&= -\frac{1}{8}\trace([[\tilde{X},\tilde{Y}],\tilde{Y}]\tilde{X}) = -\frac{1}{8}\trace([\tilde{X}\tilde{Y}-\tilde{Y}\tilde{X},\tilde{Y}]\tilde{X})
\\
& = -\frac{1}{4}\trace([\tilde{X}\tilde{Y}^2 - \tilde{Y}\tilde{X}\tilde{Y}]\tilde{X}) = -\frac{1}{4}\trace(\tilde{X}^2\tilde{Y}^2 - (\tilde{X}\tilde{Y})^2).
\end{align*}
Since $||X||^2_P = \frac{1}{2}\trace(\tilde{X}^2)$, $||Y||^2_P = \frac{1}{2}\trace(\tilde{Y}^2)$, $\la X,Y\ra_P = \frac{1}{2}\trace(\tilde{X}\tilde{Y})$, we have
\begin{align*}
S_P(X,Y)= -\frac{\trace(\tilde{X}^2\tilde{Y}^2 - (\tilde{X}\tilde{Y})^2)}{\trace(\tilde{X}^2)\trace(\tilde{Y}^2) -[\trace(\tilde{X}\tilde{Y})]^2}.
\end{align*}
By the Cauchy-Schwarz inequality,
\begin{align*}
\trace(AB)^2 = \la BA, AB\ra_{\HS}\leq ||BA||_{\HS}||AB||_{\HS} &= \sqrt{\trace(ABBA)}\sqrt{\trace(BAAB)} 
\\
&= \trace(A^2B^2).
\end{align*}
Thus we always have $S_P(X,Y) \leq 0$.
\qed
\end{proof}

\begin{proposition}[Simply connectedness]
	\label{proposition:convex}
	The sets $\SymHS(\Hcal)_{< I}$ 
	and $\Tr(\Hcal,C_0)$ 
	are convex and hence simply connected. Thus $\SymHS(\Hcal)_{< I}$ and $\Tr(\Hcal,C_0)$ are simply connected Hilbert manifolds.
\end{proposition}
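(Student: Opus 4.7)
The plan is to verify convexity directly in each case and then invoke the standard fact that a convex subset of a Banach space is contractible, hence simply connected.

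For $\SymHS(\Hcal)_{<I}$, let $S_1,S_2 \in \SymHS(\Hcal)_{<I}$ and $t \in [0,1]$. Since $\Sym(\Hcal)\cap \HS(\Hcal)$ is a closed linear subspace of $\HS(\Hcal)$, the convex combination $tS_1 + (1-t)S_2$ lies in $\Sym(\Hcal)\cap \HS(\Hcal)$. The key observation is that
\begin{align*}
I - (tS_1 + (1-t)S_2) = t(I-S_1) + (1-t)(I-S_2),
\end{align*}
and for any $x \in \Hcal$, $x \neq 0$, we have $\la x, (I-S_i)x\ra > 0$ for $i=1,2$, so the convex combination of these two strictly positive quadratic forms is itself strictly positive. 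Hence $I - (tS_1 + (1-t)S_2) > 0$ and $tS_1 + (1-t)S_2 \in \SymHS(\Hcal)_{<I}$, establishing convexity.

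For $\Tr(\Hcal,C_0)$, the map $\varphi:\Tr(\Hcal,C_0) \mapto \SymHS(\Hcal)_{<I}$, $\varphi(C) = I - C_0^{-1/2}CC_0^{-1/2}$, is a bijection (as noted after Lemma \ref{lemma:Hilbert-manifold-SymHS<I}) whose inverse $\varphi^{-1}(S) = C_0^{1/2}(I-S)C_0^{1/2}$ is affine with respect to the Hilbert space structure on $\SymHS_X(\Hcal,C_0)$. Explicitly, if $C_i = C_0^{1/2}(I-S_i)C_0^{1/2}$ for $S_i \in \SymHS(\Hcal)_{<I}$, then
\begin{align*}
tC_1 + (1-t)C_2 = C_0^{1/2}\bigl(I - (tS_1 + (1-t)S_2)\bigr)C_0^{1/2},
\end{align*}
which lies in $\Tr(\Hcal,C_0)$ by the first part. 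Thus $\Tr(\Hcal,C_0)$ is convex as a subset of $\SymHS_X(\Hcal,C_0)$.

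Finally, a convex subset $U$ of any Banach space is contractible: fixing any $x_0 \in U$, the straight-line homotopy $H(x,s) = (1-s)x + sx_0$ maps $U \times [0,1]$ continuously into $U$ and contracts $U$ to $\{x_0\}$. A contractible space is simply connected, so both $\SymHS(\Hcal)_{<I}$ and $\Tr(\Hcal,C_0)$ are simply connected. Combined with Lemma \ref{lemma:Hilbert-manifold-SymHS<I} and Proposition \ref{proposition:tangent-space}, which already establish the Hilbert manifold structures, this completes the proof. The only subtlety worth flagging is the preservation of \emph{strict} positivity (as opposed to mere positivity) under convex combinations, which is immediate from linearity of the inner product but is what makes the argument go through cleanly.
\qed
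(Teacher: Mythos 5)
Your proof is correct and takes essentially the same route as the paper: verify convexity of $\SymHS(\Hcal)_{<I}$ via the affine identity $I-(tS_1+(1-t)S_2)=t(I-S_1)+(1-t)(I-S_2)$, transport it to $\Tr(\Hcal,C_0)$ through the affine correspondence $C=C_0^{1/2}(I-S)C_0^{1/2}$, and conclude simple connectedness from convexity. The only cosmetic difference is that the paper verifies the uniform lower bound $\la x,(I-S)x\ra\geq M_S\|x\|^2$ for the convex combination, whereas you argue pointwise strict positivity; since $S$ is compact these notions coincide for $I-S$, so both arguments are valid.
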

\begin{proof}
	(i) Let $S_1,S_2 \in \SymHS(\Hcal)_{<I}$ and $0 < \alpha < 1$. The assumption
	$I - S_1 > 0$, $I - S_2 > 0$ means that $\exists M_{S_1}, M_{S_2} \in \R$, $M_{S_1} > 0, M_{S_2} > 0$
	such that $\la x, (I-S_1)x\ra \geq M_{S_1}||x||^2$, $\la x, (I-S_2)x\ra \geq M_{S_2}||x||^2$ $\forall x \in \Hcal$.
	Then the convex combination $(1-\alpha)S_1 + \alpha S_2 \in \Sym(\Hcal)\cap \HS(\Hcal)$ and 
	\begin{align*}
		&\la x, [I-(1-\alpha)S_1 - \alpha S_2]x\ra = (1-\alpha)\la x, (I-S_1)x\ra + \alpha \la x, (I- S_2)x\ra  
		\\
		&\geq [(1-\alpha)M_{S_1}+ \alpha M_{S_2}]||x||^2
	\end{align*}
	$\forall x \in \Hcal$. Thus $(1-\alpha)S_1 + \alpha S_2 \in \SymHS(\Hcal)_{< I}$. Hence $\SymHS(\Hcal)_{< I}$ is a convex set in the Hilbert space
	$\Sym(\Hcal) \cap \HS(\Hcal)$ and thus is simply connected.
	
	(ii) Let $C_1 = C_0^{1/2}(I-S_1)C_0^{1/2}$, $C_2 = C_0^{1/2}(I-S_2)C_0^{1/2}$ $\in \Tr(\Hcal,C_0)$. For $0 < \alpha < 1$, we have
	$(1-\alpha)S_1 + \alpha S_2 \in \SymHS(\Hcal)_{< I}$ by part (i) and thus
	\begin{align*}
		&(1-\alpha)C_1 + \alpha C_2 =  (1-\alpha)C_0^{1/2}(I-S_1)C_0^{1/2} + \alpha C_0^{1/2}(I-S_2)C_0^{1/2}
		\\
		&= C_0^{1/2}[I - (1-\alpha)S_1 - \alpha S_2]C_0^{1/2} \in \Tr(\Hcal,C_0).
	\end{align*}
	Thus $\Tr(\Hcal,C_0)$ is a convex set in the Hilbert space $\SymHS_X(\Hcal,C_0)$ and hence is simply connected.
	\qed
\end{proof}

\subsection{Geodesic and Riemannian distance}
\label{section:geodesic}

\begin{lemma}
	\label{lemma:positive-definite-AA*-invertible}
	Let $A \in \Lcal(\Hcal)$ be invertible. Then $A^{*}A$ is positive definite.
\end{lemma}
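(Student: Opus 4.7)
The plan is to unpack the paper's working definition of positive definiteness and then exploit the invertibility of $A$ via a coercivity bound on $\|Ax\|$. Recall the convention used in the excerpt: $A^{*}A \in \Sym(\Hcal)$ is called positive definite iff there exists $M > 0$ with $\la x, A^{*}Ax\ra \geq M\|x\|^{2}$ for all $x \in \Hcal$, equivalently $A^{*}A$ is strictly positive and invertible.

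First I would observe that $A^{*}A$ is obviously self-adjoint, and rewrite the inner product quadratic form as $\la x, A^{*}Ax\ra = \la Ax, Ax\ra = \|Ax\|^{2}$. The one nontrivial ingredient is that $A \in \Lcal(\Hcal)$ invertible implies $A^{-1} \in \Lcal(\Hcal)$, so $\|A^{-1}\|$ is finite and strictly positive. Then for any $x \in \Hcal$ we have the identity $x = A^{-1}(Ax)$, hence $\|x\| \leq \|A^{-1}\|\,\|Ax\|$, which rearranges to the coercivity estimate $\|Ax\| \geq \|A^{-1}\|^{-1}\|x\|$.

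Combining these gives
\begin{equation*}
\la x, A^{*}Ax\ra = \|Ax\|^{2} \geq \frac{1}{\|A^{-1}\|^{2}}\,\|x\|^{2} \qquad \forall x \in \Hcal,
\end{equation*}
so setting $M_{A^{*}A} = \|A^{-1}\|^{-2} > 0$ verifies the positive definiteness condition directly from the definition given just before the statement of Theorem~\ref{theorem:Gaussian-equivalent}.

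I do not expect any genuine obstacle here; the statement is a standard functional-analytic fact, and the only thing to be careful about is making sure we use the paper's definition of \emph{positive definite} (coercivity via a uniform lower bound, i.e. strictly positive and invertible) rather than merely \emph{strictly positive}. The proof should be three lines and does not require any of the preceding technical lemmas about $C_0, \Sigma, S$.
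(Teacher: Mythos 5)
Your proof is correct and follows exactly the same route as the paper: bound $\|x\| = \|A^{-1}Ax\| \leq \|A^{-1}\|\,\|Ax\|$, rearrange to get the coercivity estimate, and identify $\la x, A^{*}Ax\ra = \|Ax\|^{2} \geq \|A^{-1}\|^{-2}\|x\|^{2}$. No differences worth noting.
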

\begin{proof} $A$ is invertible means that $A^{-1}\in \Lcal(\Hcal)$ and we have $||x|| = ||A^{-1}Ax|| \leq ||A^{-1}||\;||Ax|| \imply ||Ax|| \geq \frac{1}{||A^{-1}||}||x||$ $\forall x \in \Hcal$.
	Thus $\la x, A^{*}Ax\ra = ||Ax||^2 \geq \frac{1}{||A^{-1}||^2}||x||^2$ $\forall x \in \Hcal$, showing
	that $A^{*}A$ is positive definite.
	\qed
\end{proof}

\begin{lemma}
	Let $A\in \Sym(\Hcal)$ be positive definite. Let $B \in \Lcal(\Hcal)$ be invertible. Then $B^{*}AB$ is positive definite.
\end{lemma}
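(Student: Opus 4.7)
The plan is to verify the two defining conditions for positive definiteness as recalled in the introduction, namely that $B^{*}AB$ is self-adjoint and that there exists a constant $M > 0$ with $\langle x, B^{*}ABx\rangle \geq M\|x\|^2$ for all $x \in \Hcal$. Self-adjointness is immediate from $(B^{*}AB)^{*} = B^{*}A^{*}B = B^{*}AB$, using that $A \in \Sym(\Hcal)$.

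For the lower bound, I would first invoke the hypothesis that $A$ is positive definite to obtain a constant $M_A > 0$ such that $\langle y, Ay\rangle \geq M_A\|y\|^2$ for all $y \in \Hcal$. Applying this with $y = Bx$ yields
\begin{align*}
\langle x, B^{*}ABx\rangle = \langle Bx, ABx\rangle \geq M_A\|Bx\|^2.
\end{align*}

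Next I would exploit the invertibility of $B$ exactly as in the proof of Lemma \ref{lemma:positive-definite-AA*-invertible}: since $B^{-1} \in \Lcal(\Hcal)$, the estimate $\|x\| = \|B^{-1}Bx\| \leq \|B^{-1}\|\,\|Bx\|$ gives $\|Bx\| \geq \|B^{-1}\|^{-1}\|x\|$ for all $x \in \Hcal$. Combining with the previous display produces
\begin{align*}
\langle x, B^{*}ABx\rangle \geq \frac{M_A}{\|B^{-1}\|^2}\|x\|^2 \qquad \forall x \in \Hcal,
\end{align*}
which, together with $B^{*}AB \in \Sym(\Hcal)$, is exactly the definition of $B^{*}AB$ being positive definite recalled in Section \ref{section:finite}. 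There is no real obstacle here; the only point to be careful about is to rely on the characterization of positive definiteness that requires a uniform coercivity constant (rather than merely strict positivity), so that the bound $\|Bx\| \geq \|B^{-1}\|^{-1}\|x\|$ furnished by boundedness of $B^{-1}$ can be plugged in directly.
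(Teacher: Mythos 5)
Your proposal is correct and follows essentially the same route as the paper: the paper bounds $M_A\|Bx\|^2 = M_A\langle x, B^{*}Bx\rangle \geq M_A M_B\|x\|^2$ by citing Lemma \ref{lemma:positive-definite-AA*-invertible} for the coercivity of $B^{*}B$, whereas you unfold that lemma inline to get the explicit constant $M_A/\|B^{-1}\|^2$; these are the same argument. Your explicit check of self-adjointness is a small addition the paper leaves implicit.
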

\begin{proof}
	Since $B$ is invertible, $B^{*}B$ is positive definite by Lemma \ref{lemma:positive-definite-AA*-invertible}, i.e. $\exists M_B > 0$ such that $\la x, B^{*}Bx\ra \geq M_B||x||^2$ $\forall x \in \Hcal$.
	$A$ is positive definite means that $\exists M_A > 0$ such that $\la x, Ax\ra \geq M_A||x||^2$ $\forall x \in \Hcal$. Thus $\forall x \in \Hcal$,
	\begin{align*}
		\la x, B^{*}ABx\ra = \la Bx, ABx\ra \geq M_A||Bx||^2 = M_A\la x, B^{*}Bx\ra \geq M_AM_B||x||^2,
	\end{align*}
	showing that $B^{*}AB \in \Sym(\Hcal)$ is positive definite.
	\qed
\end{proof}

\begin{lemma}
	\label{lemma:A-inverse-1/2-B}
	Let $A,B \in \Tr(\Hcal,C_0)$. Then $A^{-1/2}BA^{-1/2} = I-S$ for some $S\in \SymHS(\Hcal)_{<I}$.
	Thus $\log(A^{-1/2}BA^{-1/2}) \in \SymHS(\Hcal)$ is well-defined.
\end{lemma}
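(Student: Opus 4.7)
The plan is to combine transitivity of measure equivalence with Feldman--H\'ajek (Theorem \ref{theorem:Gaussian-equivalent}) and then invoke Lemma \ref{lemma:C0-inverse-Sigma-1}. By definition of $\Tr(\Hcal,C_0)$, the zero-mean Gaussian measures $\Ncal(0,A)$ and $\Ncal(0,B)$ are both equivalent to $\mu_0=\Ncal(0,C_0)$, and equivalence of measures is transitive, so $\Ncal(0,A)\sim\Ncal(0,B)$. By Lemma \ref{lemma:range-Sigma}, $\ker(A)=\ker(B)=\{0\}$, so Theorem \ref{theorem:Gaussian-equivalent} applied to this pair yields some $S\in\Sym(\Hcal)\cap\HS(\Hcal)$, without eigenvalue $1$, such that $B=A^{1/2}(I-S)A^{1/2}$.

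Next I would verify that $I-S$ is positive definite, so that $S\in\SymHS(\Hcal)_{<I}$. Since $B\geq 0$, we have $\la A^{1/2}x,(I-S)A^{1/2}x\ra=\la x,Bx\ra\geq 0$ for all $x\in\Hcal$; because $\range(A^{1/2})$ is dense in $\Hcal$ and $I-S$ is bounded, this extends to $\la y,(I-S)y\ra\geq 0$ for all $y\in\Hcal$, i.e.\ $I-S\geq 0$. Since $S$ is compact (Hilbert--Schmidt) with eigenvalues $\lambda_k(S)\to 0$ and none equal to $1$, the spectrum of $I-S$ is $\{1-\lambda_k(S)\}\cup\{1\}$, which is bounded below by some $\delta>0$. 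Thus $\la x,(I-S)x\ra\geq\delta\|x\|^2$ for all $x\in\Hcal$, so $I-S$ is positive definite in the sense of \cite{Petryshyn:1962} and $S\in\SymHS(\Hcal)_{<I}$. Applying Lemma \ref{lemma:C0-inverse-Sigma-1} with $C_0$ replaced by $A$ and $\Sigma$ replaced by $B$, we conclude that $A^{-1/2}BA^{-1/2}$ is a well-defined bounded operator on $\Hcal$ and $A^{-1/2}BA^{-1/2}=I-S$.

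For the final assertion, $\log(I-S)$ is defined via the spectral theorem, since $I-S$ is a bounded self-adjoint operator whose spectrum lies in $[\delta,\|I-S\|]$; its eigenvalues are $\log(1-\lambda_k(S))$. The only step that requires a short estimate is verifying Hilbert--Schmidt membership: since $\lambda_k(S)\to 0$, there exists $N$ with $|\lambda_k(S)|\leq 1/2$ for $k>N$, and then $|\log(1-\lambda_k(S))|\leq 2|\lambda_k(S)|$. Hence
\begin{align*}
\sum_{k=1}^{\infty}[\log(1-\lambda_k(S))]^2 \leq \sum_{k=1}^{N}[\log(1-\lambda_k(S))]^2 + 4\sum_{k>N}\lambda_k(S)^2 < \infty,
\end{align*}
so $\log(A^{-1/2}BA^{-1/2})=\log(I-S)\in\Sym(\Hcal)\cap\HS(\Hcal)=\SymHS(\Hcal)$. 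The main subtlety, though modest, is upgrading $I-S\geq 0$ and invertible to $I-S>0$ in the positive-definite sense; this is where compactness of $S$ and the absence of eigenvalue $1$ are jointly used.
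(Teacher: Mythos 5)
Your proof is correct and follows essentially the same route as the paper's: transitivity of equivalence, the Feldman--H\'ajek characterization to produce $S$ with $B=A^{1/2}(I-S)A^{1/2}$, and Lemma \ref{lemma:C0-inverse-Sigma-1} applied with $A$ in place of $C_0$. The only difference is that you spell out two steps the paper leaves implicit — upgrading $I-S\geq 0$ plus the absence of eigenvalue $1$ to strict positive definiteness via compactness, and checking $\log(I-S)\in\HS(\Hcal)$ via the bound $|\log(1-\lambda_k)|\leq 2|\lambda_k|$ — both of which are carried out correctly.
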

\begin{proof}
	By assumption, 
	the Gaussian measures $\Ncal(0,A)$ and $\Ncal(0,B)$ are equivalent.
	Thus $\exists S \in \SymHS(\Hcal)_{<I}$ such that $B = A^{1/2}(I-S)A^{1/2}$.
	By Lemma \ref{lemma:C0-inverse-Sigma-1}, $A^{-1/2}BA^{-1/2}$ is well-defined and $A^{-1/2}BA^{-1/2} = I-S$. It follows 
	that $\log(A^{-1/2}BA^{-1/2}) = \log(I-S) \in \SymHS(\Hcal)$ is well-defined.
	\qed
\end{proof}
The expression $A^{-1/2}BA^{-1/2}$ in Lemma \ref{lemma:A-inverse-1/2-B} can be equivalently expressed via
$C_0$ as follows. By assumption, 
$B = C_0^{1/2}(I-S_B)C_0^{1/2}$, $C_0 = A^{1/2}(I-T_A)C_0^{1/2}$, where
$S_B,T_A \in \SymHS(\Hcal)_{<I}$. By Lemma \ref{lemma:C0-Sigma-1/2-inverse-bounded}, $A^{-1/2}C_0^{1/2}\in \Lcal(\Hcal)$ and $(A^{-1/2}C_0^{1/2})^{*} = C_0^{1/2}A^{-1/2}$.
By Lemma \ref{lemma:Sigma-inverse-C0-1}, $A^{-1/2}C_0A^{-1/2} = I-T_A$.
Thus
\begin{align*}
	A^{-1/2}BA^{-1/2} &= (A^{-1/2}C_0^{1/2})(I-S_B)(C_0^{1/2}A^{-1/2})
	\\
	& = A^{-1/2}C_0A^{-1/2} - 
	(A^{-1/2}C_0^{1/2})S_B(A^{-1/2}C_0^{1/2})^{*}
	\\
	&= I-T_A - (A^{-1/2}C_0^{1/2})S_B(A^{-1/2}C_0^{1/2})^{*}.
\end{align*}
Thus in Lemma \ref{lemma:A-inverse-1/2-B}, $S = T_A + (A^{-1/2}C_0^{1/2})S_B(A^{-1/2}C_0^{1/2})^{*} \in \SymHS(\Hcal)_{<I}$.

\begin{proposition}
	[Geodesic curve and exponential map]
	\label{proposition:geodesic-curve}
	The Riemannian manifold $(\Tr(\Hcal,C_0), \la \;\ra_{\Sigma})$ is geodesically complete and is consequently a Cartan-Hadamard manifold.
	There is a unique geodesic curve $\gamma_{AB}:[0,1] \mapto \Tr(\Hcal,C_0)$ connecting $A,B \in \Tr(\Hcal,C_0)$, with $\gamma_A(0) = A$, $\gamma_B(1) = B$, given by
	\begin{align}
		\gamma_{AB}(t) = A^{1/2}\exp[t\log(A^{-1/2}BA^{-1/2})]A^{1/2}.
	\end{align}
	Equivalently, with $V \in T_A(\Tr(\Hcal,C_0)) = \SymHS(\Hcal,A)$,
	\begin{align}
		\gamma_V(t) = A^{1/2}\exp[tA^{-1/2}VA^{-1/2}]A^{1/2}, \;\; \gamma_V(0) = A, \;\; \dot{\gamma}_V(0) = V.
	\end{align}
The exponential map $\Exp_{A}: T_{A}(\Tr(\Hcal,C_0)) \mapto \Tr(\Hcal,C_0)$ is given by
\begin{align}
	\Exp_{A}(V) = \gamma_V(1) = A^{1/2}\exp(A^{-1/2}VA^{-1/2})A^{1/2},
\end{align}
Its inverse, the logarithm map $\Log_A: \Tr(\Hcal,C_0) \mapto T_{A}(\Tr(\Hcal,C_0))$ is given by
\begin{align}
	\Log_A(B) =  A^{1/2}\log(A^{-1/2}BA^{-1/2})A^{1/2}.
\end{align}
\end{proposition}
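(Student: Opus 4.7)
My plan is twofold: (i) verify directly that, for $V \in T_A(\Tr(\Hcal,C_0))$ written as $V = A^{1/2}WA^{1/2}$ with $W \in \Sym(\Hcal)\cap\HS(\Hcal)$ (which is possible by Lemma~\ref{lemma:switch-C0-Sigma}), the curve $\gamma_V(t) = A^{1/2}\exp(tW)A^{1/2}$ solves the geodesic equation of the Levi-Civita connection from Proposition~\ref{proposition:Levi-Civita}, from which the explicit formula for $\Exp_A$ and the all-$t$ existence of $\gamma_V$ follow; and (ii) combine this geodesic completeness with simple connectedness (Proposition~\ref{proposition:convex}) and everywhere nonpositive sectional curvature (Proposition~\ref{proposition:sectional-curvature}) to invoke the Cartan-Hadamard theorem in the Hilbert-manifold form, yielding that $\Exp_A$ is a diffeomorphism, hence uniqueness of the joining geodesic and the formula $\Log_A = \Exp_A^{-1}$.

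For step (i), the Banach-algebra structure of $\HS(\Hcal)$ makes $\exp(tW) - I \in \Sym(\Hcal)\cap\HS(\Hcal)$ and $\exp(tW) > 0$, so $\gamma_V(t) \in \Tr(\Hcal,C_0)$ for all $t \in \R$ with $\gamma_V(0) = A$, $\dot\gamma_V(0) = A^{1/2}WA^{1/2} = V$. Reading Proposition~\ref{proposition:Levi-Civita} along the curve with $X = Y = \dot\gamma$, the geodesic equation $\nabla_{\dot\gamma}\dot\gamma = 0$ becomes
\begin{equation*}
\ddot\gamma(t) = \dot\gamma(t)\,\gamma(t)^{-1}\,\dot\gamma(t).
\end{equation*}
Using $[W,\exp(tW)] = 0$ one computes $\dot\gamma_V = A^{1/2}W\exp(tW)A^{1/2}$, $\ddot\gamma_V = A^{1/2}W^2\exp(tW)A^{1/2}$, $\gamma_V^{-1} = A^{-1/2}\exp(-tW)A^{-1/2}$, so $\dot\gamma_V\gamma_V^{-1}\dot\gamma_V = A^{1/2}W^2\exp(tW)A^{1/2} = \ddot\gamma_V$ at once. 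For the two-point form, Lemma~\ref{lemma:A-inverse-1/2-B} shows $A^{-1/2}BA^{-1/2} = I - S$ with $S \in \SymHS(\Hcal)_{<I}$, so $W := \log(I - S) \in \Sym(\Hcal)\cap\HS(\Hcal)$ is well-defined via the functional calculus, and $\gamma_V(1) = B$ for the corresponding $V = A^{1/2}WA^{1/2}$.

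For step (ii), with $\Exp_A$ now defined on the whole tangent space $T_A(\Tr(\Hcal,C_0))$, the three hypotheses of the Cartan-Hadamard theorem (geodesic completeness, simple connectedness, sectional curvature $\leq 0$) are all in place, and its Hilbert-manifold formulation, as used e.g.\ in \cite{Larotonda:2007,lang2012fundamentals}, forces $\Exp_A : T_A(\Tr(\Hcal,C_0)) \mapto \Tr(\Hcal,C_0)$ to be a diffeomorphism. Uniqueness of the joining geodesic follows, and the explicit formula for $\Log_A = \Exp_A^{-1}$ is read off from step (i). The most delicate part I expect is the clean invocation of Cartan-Hadamard in this infinite-dimensional Hilbert-manifold setting: one must confirm that the standard Jacobi-field argument for injectivity of $\Exp_A$ goes through on $\Tr(\Hcal,C_0) \subset \SymHS_X(\Hcal,C_0)$ and that no unbounded intermediate expressions of the form $P^{-1/2}V$ (rather than the symmetric sandwich $P^{-1/2}VP^{-1/2}$) are produced; once this is in place, all remaining checks collapse to the identity $[W,\exp(tW)] = 0$.
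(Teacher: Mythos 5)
Your proposal is correct, but it routes the argument differently from the paper in two places. For existence, the paper does not merely verify the candidate curve: it writes an arbitrary geodesic as $\gamma(t)=A^{1/2}(I-R(t))A^{1/2}$, cancels $A^{1/2}$ on both sides of the geodesic equation (via the cancellation lemma for strictly positive compact operators), reduces to the operator ODE $\ddot R+\dot R(I-R)^{-1}\dot R=0$, integrates it to $\dot R(I-R)^{-1}=C$ constant, and solves $I-R(t)=\exp(-tC)D$; the boundary conditions then pin down $C$ and $D$, so the paper obtains existence \emph{and} uniqueness of the joining geodesic in one stroke, without appealing to Cartan--Hadamard for the latter. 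Your verification route ($[W,\exp(tW)]=0$, then check $\ddot\gamma=\dot\gamma\gamma^{-1}\dot\gamma$) is computationally lighter and perfectly valid, but it only exhibits \emph{a} geodesic; to get geodesic completeness from it you should say explicitly that local uniqueness of solutions of the geodesic initial value problem (standard Picard--Lindel\"of on the Hilbert manifold, since the connection coefficients are smooth in the $\|\cdot\|_{\HS_X(\Hcal,C_0)}$ topology) forces the maximal geodesic with data $(A,V)$ to coincide with your explicit $\gamma_V$, and for uniqueness of the \emph{joining} geodesic you then genuinely need the infinite-dimensional Cartan--Hadamard theorem (Lang/McAlpin) to conclude $\Exp_A$ is a diffeomorphism -- a dependence the paper's direct ODE solution avoids at this stage, although the paper does invoke the Cartan--Hadamard structure later for the distance formula, so the external input is ultimately the same. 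Your concern about unbounded intermediates is the right one and is resolved exactly as in the paper: the only expressions that occur are symmetric sandwiches of the form $A^{1/2}(\cdot)A^{1/2}$ and the product $\dot\gamma\,\gamma^{-1}\dot\gamma = A^{1/2}W\exp(tW)WA^{1/2}\cdot(\text{etc.})$ in which the $A^{\pm 1/2}$ factors cancel pairwise, so no unbounded operator survives.
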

\begin{proof}
	By definition, a smooth curve $\gamma :\R\mapto\Tr(\Hcal,C_0)$ is a geodesic if and only if
	$\nabla_{\dot{\gamma}}\dot{\gamma}  = 0$. 
Assume that $\gamma$ is nontrivial, so that it is necessarily regular. For each fixed $t_0 \in \R$, let $\ep > 0$ be sufficiently small so that for $J = (t_0-\ep, t_0 + \ep)$ there exists an open set $U \subset \Tr(\Hcal,C_0)$ and a smooth vector field $\xi$ on $U$ with $\xi(\gamma(t)) = \dot{\gamma}(t)$ for $t \in  J$.
%
By Proposition \ref{proposition:Levi-Civita},
\begin{align*}
	(\nabla_{\dot{\gamma}}\xi)(\gamma(t_0)) = D\xi(\gamma(t_0))[\dot{\gamma}(t_0)] - \dot{\gamma}(t_0)\gamma(t_0)^{-1}\dot{\gamma}(t_0).
\end{align*}	
Consider the function $g: J \mapto \SymHS(\Hcal,C_0)$ defined by $g(t) = \xi({\gamma(t)}) = \dot{\gamma}(t):J \mapto \Tr(\Hcal,C_0) \mapto \SymHS(\Hcal,C_0)$.
For any $h \in \R$, by the chain rule,
\begin{align*}
	Dg(t_0)(h) = D\xi(\gamma(t_0)) \compose D\gamma(t_0)(h) = D\xi(\gamma(t_0))(D\gamma(t_0)(h)).
\end{align*}
Here $Dg(t_0): \R\mapto \SymHS(\Hcal,C_0)$, $D\xi(\gamma(t_0)): \SymHS(\Hcal,C_0) \mapto \SymHS(\Hcal,C_0)$, $D\gamma(t_0):\R \mapto \SymHS(\Hcal,C_0)$. We have, $\forall h \in \R$, $Dg(t_0)(h) = h\dot{g}(t_0) = h\ddot{\gamma}(t_0)$,
$D\gamma(t_0)(h) = h\dot{\gamma}(t_0)$. Thus
\begin{align*}
	D\xi(\gamma(t_0))[\dot{\gamma}(t_0)] = D\xi(\gamma(t_0))(D\gamma(t_0)(1)) = Dg(t_0)(1) = \ddot{\gamma}(t_0).
\end{align*}
It follows that the equation for the geodesic curve is
\begin{align}
	\label{equation:geodesic-curve}
\ddot{\gamma}(t) - \dot{\gamma}(t)\gamma(t)^{-1}\dot{\gamma}(t) = 0.
\end{align}
%
Since $\gamma(t) \in \Tr(\Hcal,C_0) = \Tr(\Hcal,A)$, it must have the form
$\gamma(t) = A^{1/2}(I-R(t))A^{1/2}$ for some $R(t) \in \SymHS(\Hcal)_{< I}$. The conditions $\gamma(0) = A$, $\gamma(1) = B$ means that
$R(0) = 0$ and $I-R(1) = A^{-1/2}BA^{-1/2}$.
Substituting $\dot{\gamma}(t) = -A^{1/2}\dot{R}(t)A^{1/2}$ and $\ddot{\gamma}(t) = -A^{1/2}\ddot{R}(t)A^{1/2}$
into Eq.\eqref{equation:geodesic-curve} gives
\begin{align*}
	-A^{1/2}\ddot{R}(t)A^{1/2} - A^{1/2}\dot{R}(t)(I-R(t))^{-1}\dot{R}(t)A^{1/2} = 0.
\end{align*}
Since $A \in \Sym^{++}(\Hcal)$, by Lemma \ref{lemma:cancellation-product-strictly-positive},
\begin{align*}
	\ddot{R}(t) +\dot{R}(t)(I-R(t))^{-1}\dot{R}(t) = 0.
\end{align*}
Right multiplying by $(I-R(t))^{-1}$ gives
\begin{align*}
	\ddot{R}(t)(I-R(t))^{-1} + \dot{R}(t)(I-R(t))^{-1}\dot{R}(t)(I-R(t))^{-1} &= 0
	\\ \equivalent \frac{d}{dt}[\dot{R}(t)(I-R(t))^{-1}] &= 0.
\end{align*}
This means there exists a constant operator $C \in \Lcal(\Hcal)$ such that
\begin{align*}
	\dot{R}(t)(I-R(t))^{-1} = C.
\end{align*}
By Lemma \ref{lemma:operator-equation} applied to $P(t) = I-R(t)$, so that $\dot{P}(t)P(t)^{-1} = - C$,
this equation has solution, with $D \in \Lcal(\Hcal)$ being constant and invertible,
\begin{align*}
	I-R(t) = \exp(-tC)D \equivalent R(t) = I-\exp(-tC)D.
\end{align*}
The first condition $R(0) = 0$ implies that $D = I$ and thus $I-R(t) = \exp(-tC)$.
The second condition $I-R(1) = \exp(-C) = A^{-1/2}BA^{-1/2}$ implies that
$C = - \log(A^{-1/2}BA^{-1/2})$. Thus $I-R(t) =  \exp[t\log(A^{-1/2}BA^{-1/2})]$
and finally $\gamma(t) = A^{1/2} \exp[t\log(A^{-1/2}BA^{-1/2})]A^{1/2}$.

Differentiating $\gamma_{AB}(t) = \gamma(t)$ gives 
$\dot{\gamma}_{AB}(0) = A^{1/2}\log(A^{-1/2}BA^{-1/2})A^{1/2}$ $\in T_A(\Tr(\Hcal,C_0)) = \SymHS(\Hcal,A)$.
Thus letting $V = A^{1/2}\log(A^{-1/2}BA^{-1/2})A^{1/2}$ gives the following equivalent expression for the geodesic curve
\begin{align*}
\gamma_{V}(t) = A^{1/2}\exp(tA^{-1/2}VA^{-1/2})A^{1/2}, \;\;\; \gamma_V(0) = A, \dot{\gamma}_V(0) = V.
\end{align*}
The exponential map $\Exp_{A}: T_{A}(\Tr(\Hcal,C_0)) \mapto \Tr(\Hcal,C_0)$ is then given by
\begin{align*}
\Exp_{A}(V) = \gamma_V(1) = A^{1/2}\exp(A^{-1/2}VA^{-1/2})A^{1/2},
\end{align*}
which is clearly well-defined over all $T_{A}(\Tr(\Hcal,C_0))$ and is moreover surjective.
Its inverse, the logarithm map $\Log_A: \Tr(\Hcal,C_0) \mapto T_{A}(\Tr(\Hcal,C_0))$ is given by
\begin{align*}
\Log_A(B) =  A^{1/2}\log(A^{-1/2}BA^{-1/2})A^{1/2}.
\end{align*}
The Riemannian manifold $(\Tr(\Hcal,C_0), \la \; \ra_{\Sigma})$ is thus geodesically complete. It is simply connected by Proposition \ref{proposition:convex} and with nonpositive sectional curvature everywhere by Proposition \ref{proposition:sectional-curvature}.
Thus it is a Cartan-Hadamard manifold.
\qed
\end{proof}

\begin{lemma}
\label{lemma:operator-equation}
	Let $\GL(\Hcal)$ be the set of invertible operators on $\Hcal$. Let $C \in \Lcal(\Hcal)$ be constant.
	Let $P:\R \mapto \GL(\Hcal)$. 
The operator equation
\begin{align}
\dot{P}(t)P(t)^{-1} = C
\end{align}
has solution of the form, with $D \in \Lcal(\Hcal)$ being constant and invertible,
\begin{align}
P(t) = \exp(tC)D.
\end{align}
\end{lemma}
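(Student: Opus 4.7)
The plan is to verify that $P(t) = \exp(tC)D$ solves the equation, then to establish uniqueness via an integrating factor argument. Since $C \in \Lcal(\Hcal)$ is bounded, the series $\exp(tC) = \sum_{k=0}^{\infty}\frac{(tC)^k}{k!}$ converges absolutely in operator norm for every $t \in \R$, and term-by-term differentiation yields $\frac{d}{dt}\exp(tC) = C\exp(tC) = \exp(tC)C$, the commutativity following because $C$ commutes with each partial sum. In particular $\exp(tC)$ is invertible with inverse $\exp(-tC)$.

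For existence, if $P(t) = \exp(tC)D$ with $D \in \GL(\Hcal)$, then $\dot{P}(t) = C\exp(tC)D = CP(t)$ and $P(t)^{-1} = D^{-1}\exp(-tC) \in \Lcal(\Hcal)$, so $\dot{P}(t)P(t)^{-1} = C$, as required.

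For uniqueness, given any differentiable $P:\R \mapto \GL(\Hcal)$ solving $\dot{P}(t) = CP(t)$, define $Q(t) = \exp(-tC)P(t)$. The product rule together with the commutativity of $C$ and $\exp(-tC)$ yields
\begin{align*}
\dot{Q}(t) &= -C\exp(-tC)P(t) + \exp(-tC)\dot{P}(t)
\\
&= -C\exp(-tC)P(t) + \exp(-tC)CP(t) = 0,
\end{align*}
so $Q(t) \equiv D$ for some constant $D \in \Lcal(\Hcal)$, and thus $P(t) = \exp(tC)D$. Since $\exp(tC)$ and $P(t)$ are both invertible, $D = \exp(-tC)P(t)$ is invertible. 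The argument is largely mechanical; the only nontrivial point is justifying term-by-term differentiation of the operator exponential and the commutativity of $C$ with $\exp(-tC)$, both of which are standard facts in Banach-space calculus. I do not anticipate any real obstacle.
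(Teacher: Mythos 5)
Your proposal is correct and follows essentially the same route as the paper's own proof: verify that $P(t)=\exp(tC)D$ solves the equation, then show uniqueness by differentiating $\exp(-tC)P(t)$ and concluding it is constant. The only differences are cosmetic — you spell out the standard facts about differentiating the operator exponential and note explicitly why $D$ is invertible, which the paper leaves implicit.
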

\begin{proof}
	First, $P(t) = \exp(tC)D$ is clearly a solution, since $\dot{P}(t) = C\exp(tC)D$, $P(t)^{-1} = D^{-1}\exp(-tC)$, so that $\dot{P}(t)P(t)^{-1} = C$.
 Second, let $S(t)$ be any solution, i.e., $\dot{S}(t) = CS(t)$. Define $R(t) = \exp(-tC)S(t)$, then
 \begin{align*}
 \dot{R}(t) = -\exp(-tC)CS(t) + \exp(-tC)\dot{S}(t) = 0.
 \end{align*}
Thus we must have $R(t) = \exp(-tC)S(t) = D$ for a constant, invertible operator $D \in \Lcal(\Hcal)$, so that $S(t) = \exp(tC)D$.
	\qed
\end{proof}

\begin{proposition}
	[Riemannian distance]
	\label{proposition:Riemannian distance}
	The Riemannian distance between $A,B \in (\Tr(\Hcal,C_0), \la \;\ra_{\Sigma})$ is given by
	\begin{align}
	d(A,B) = \frac{1}{\sqrt{2}}||\log(A^{-1/2}BA^{-1/2})||_{\HS}.
	\end{align}
\end{proposition}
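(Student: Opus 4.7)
The plan is to invoke Proposition \ref{proposition:geodesic-curve}, which establishes that $(\Tr(\Hcal,C_0), \la\;\ra_{\Sigma})$ is a Cartan-Hadamard manifold and identifies the unique geodesic $\gamma_{AB}(t) = A^{1/2}\exp[t\log(A^{-1/2}BA^{-1/2})]A^{1/2}$ joining $A$ to $B$. On a Cartan-Hadamard manifold, any two points are connected by a unique geodesic and this geodesic is distance-minimizing, so $d(A,B)$ equals the length of $\gamma_{AB}$, namely $\int_0^1 ||\dot{\gamma}_{AB}(t)||_{\gamma_{AB}(t)}\,dt$.

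First I would compute the initial velocity $V := \dot{\gamma}_{AB}(0) = A^{1/2}\log(A^{-1/2}BA^{-1/2})A^{1/2}$, which lies in $T_A(\Tr(\Hcal,C_0)) = \SymHS(\Hcal,A)$ by Lemma \ref{lemma:A-inverse-1/2-B} and Proposition \ref{proposition:tangent-space}. Applying the explicit formula for the Riemannian metric at $A$ from Theorem \ref{theorem:Riemannian-metric},
\begin{align*}
||V||_A^2 \;=\; \tfrac{1}{2}||A^{-1/2}VA^{-1/2}||_{\HS}^2 \;=\; \tfrac{1}{2}||\log(A^{-1/2}BA^{-1/2})||_{\HS}^2.
\end{align*}

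The final step exploits the constant-speed property of geodesics: by the geodesic equation $\nabla_{\dot{\gamma}_{AB}}\dot{\gamma}_{AB} = 0$ together with the compatibility of the Levi-Civita connection (Proposition \ref{proposition:Levi-Civita}) with the Riemannian metric, the quantity $\la \dot{\gamma}_{AB}(t), \dot{\gamma}_{AB}(t)\ra_{\gamma_{AB}(t)}$ has zero derivative in $t$ and is therefore constant. Hence the length of $\gamma_{AB}$ equals $||V||_A = \frac{1}{\sqrt{2}}||\log(A^{-1/2}BA^{-1/2})||_{\HS}$, giving the claimed distance formula.

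I do not anticipate a real obstacle at this stage: the substantive analytic work, namely constructing the unique geodesic, identifying the tangent space, and establishing the Cartan-Hadamard structure, has already been carried out in Propositions \ref{proposition:tangent-space}, \ref{proposition:convex}, \ref{proposition:sectional-curvature}, and \ref{proposition:geodesic-curve}. The only mild subtlety worth flagging explicitly in the write-up is to justify that the constant-speed argument applies: one should note that $\dot{\gamma}_{AB}(t)$ indeed lies in $\SymHS(\Hcal,\gamma_{AB}(t))$ for every $t$, so that the norm $||\dot{\gamma}_{AB}(t)||_{\gamma_{AB}(t)}$ is well-defined via Theorem \ref{theorem:Riemannian-metric} and the differentiation leading to $\frac{d}{dt}||\dot{\gamma}_{AB}(t)||_{\gamma_{AB}(t)}^2 = 2\la \nabla_{\dot{\gamma}_{AB}}\dot{\gamma}_{AB}, \dot{\gamma}_{AB}\ra_{\gamma_{AB}(t)} = 0$ is legitimate.
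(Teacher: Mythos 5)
Your proposal is correct and follows the same overall route as the paper: both identify $d(A,B)$ with the length of the unique geodesic $\gamma_{AB}$ supplied by Proposition \ref{proposition:geodesic-curve}, justified by the Cartan--Hadamard property. The only divergence is in how the integrand $\|\dot{\gamma}_{AB}(t)\|_{\gamma_{AB}(t)}$ is evaluated. You compute the speed once, at $t=0$, and then appeal to the abstract constant-speed property of geodesics via metric compatibility of the connection. The paper instead computes the speed explicitly for every $t$: writing $A^{-1/2}BA^{-1/2}=I-S$ and using the polar decomposition $(I-S)^{t/2}A^{1/2}=U(A^{1/2}(I-S)^tA^{1/2})^{1/2}$, it shows $\dot{\gamma}(t)=\gamma(t)^{1/2}\,U^{*}\log(I-S)U\,\gamma(t)^{1/2}$, whence $\|\dot{\gamma}(t)\|^2_{\gamma(t)}=\tfrac{1}{2}\trace[U^{*}(\log(I-S))^2U]=\tfrac{1}{2}\|\log(I-S)\|^2_{\HS}$ for all $t$. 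The explicit route has the advantage of simultaneously exhibiting $\dot{\gamma}(t)$ as an element of $\SymHS(\Hcal,\gamma(t))$ for each $t$ --- exactly the well-definedness point you flag but defer --- so if you use the constant-speed argument you should supply that verification (e.g.\ by the same polar-decomposition identity) rather than leave it as a remark. Your computation of $\|V\|_A$ and the final constant $1/\sqrt{2}$ agree with the paper.
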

\begin{proof}
	By the assumption $A,B \in \Tr(\Hcal,C_0)$, $\exists S \in \SymHS(\Hcal)_{<I}$ 
	such that $B = A^{1/2}(I-S)A^{1/2}$, with $A^{-1/2}BA^{-1/2} = I-S$.
	Consider the geodesic curve $\gamma(t) = A^{1/2}\exp[t\log(A^{-1/2}BA^{-1/2})]A^{1/2} = A^{1/2}\exp(t\log(I-S))A^{1/2} = A^{1/2}(I-S)^tA^{1/2}$
	connecting $A,B$, with
	$\gamma(0) = A$, $\gamma(1) = B$. 
	As in the proof of Lemma \ref{lemma:switch-C0-Sigma}, consider the polar decomposition $(I-S)^{t/2}A^{1/2} = U(A^{1/2}(I-S)^tA^{1/2})^{1/2}$, where
	$U$ is unitary since $\ker(A) = \{0\}$. Then
	\begin{align*}
		\gamma(t)^{1/2} = (A^{1/2}(I-S)^tA^{1/2})^{1/2} = U^{*}(I-S)^{t/2}A^{1/2} = A^{1/2}(I-S)^{t/2}U.
	\end{align*}
The tangent curve $\dot{\gamma}(t) \in T_{\gamma(t)}(\Tr(\Hcal,C_0))$ is then given by
\begin{align*}
&\dot{\gamma}(t) = A^{1/2}(I-S)^t\log(I-S)A^{1/2} = A^{1/2}\log(I-S)(I-S)^tA^{1/2}
\\
& = A^{1/2}(I-S)^{t/2}\log(I-S)(I-S)^{t/2}A^{1/2}
=\gamma(t)^{1/2}U^{*}\log(I-S)U\gamma(t)^{1/2}.
\end{align*}
Here $U^{*}\log(I-S)U \in \Sym(\Hcal)\cap \HS(\Hcal)$, since $\log(I-S) \in \Sym(\Hcal) \cap\HS(\Hcal)$.
Since $(\Tr(\Hcal,C_0), \la \; \ra_{\Sigma})$ is a Cartan-Hadamard manifold by Proposition \ref{proposition:geodesic-curve},
the Riemannian distance between $A$ and $B$ is the length of the geodesic curve $\gamma(t)$ joining them (see e.g. \cite{lang2012fundamentals}, Corollary IX.3.11)
\begin{align*}
&	d(A,B) = L(\gamma(t)) = \int_{0}^1||\dot{\gamma}(t)||_{\gamma(t)}dt = \int_{0}^1\sqrt{\frac{1}{2}\trace[(\gamma(t)^{-1/2}\dot{\gamma}(t)\gamma(t)^{-1/2})^2]}dt
\\
& 	= \frac{1}{\sqrt{2}}\sqrt{\trace[U^{*}(\log(I-S))^2U]} = \frac{1}{\sqrt{2}}||\log(I-S)||_{\HS}.
\end{align*}
\qed
\end{proof}

\subsection{Proof of the connection with the Riemannian distance of positive definite Hilbert-Schmidt operators}
\label{section:positive-definite-HS-operators}


The following is a special case of Theorem 1 in \cite{Minh:2021RiemannianEstimation}.
\begin{lemma}
	\label{lemma:logHS-convergence}
Let $\{A_k\}$, $A$ $\in \HS(\Hcal$) be  such
that $I+A, I+A_k > 0$ $\forall k \in \Nbb$.
Assume that $\lim\limits_{k \approach \infty}||A_k - A||_{\HS} = 0$. Then
\begin{align}
	\lim_{k \approach \infty}||\log(I+A_k) - \log(I+A)||_{\HS} = 0.
\end{align}
\end{lemma}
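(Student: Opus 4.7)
The plan is to reduce the problem to an integral representation of the operator logarithm and control it via a standard resolvent identity.

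First I would establish a uniform spectral lower bound. Since $A\in\HS(\Hcal)$ is self-adjoint and compact, its eigenvalues $\{\lambda_n(A)\}$ converge to $0$, so only finitely many can lie below $-1/2$. Combined with $I+A>0$, this forces $m \doteq \inf\sigma(I+A) = 1 + \min_n \lambda_n(A) > 0$. Since $\|A_k - A\|_{\HS}\to 0$ implies $\|A_k - A\|_{\mathrm{op}}\to 0$, the standard perturbation bound $\inf\sigma(I+A_k)\geq \inf\sigma(I+A) - \|A_k - A\|_{\mathrm{op}}$ gives $\inf\sigma(I+A_k)\geq m/2$ for all $k$ sufficiently large.

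Second I would use the scalar identity $\log(x) = \int_0^\infty \left(\tfrac{1}{1+t} - \tfrac{1}{x+t}\right)dt$, valid for $x>0$, lifted via the bounded Borel functional calculus to the self-adjoint operators $B\doteq I+A$ and $B_k \doteq I+A_k$, whose spectra lie in $[m/2,\infty)$. Subtracting and applying the resolvent identity yields
\begin{align}
\log(I+A_k) - \log(I+A) = \int_0^\infty (B+tI)^{-1}(A_k-A)(B_k+tI)^{-1}\,dt,
\end{align}
interpreted as a Bochner integral with values in $\HS(\Hcal)$. Using $\|XYZ\|_{\HS}\leq \|X\|_{\mathrm{op}}\|Y\|_{\HS}\|Z\|_{\mathrm{op}}$ together with the spectral bounds $\|(B+tI)^{-1}\|_{\mathrm{op}}\leq (t+m)^{-1}$ and $\|(B_k+tI)^{-1}\|_{\mathrm{op}}\leq (t+m/2)^{-1}$, the integrand is dominated by $(t+m)^{-1}(t+m/2)^{-1}\|A_k-A\|_{\HS}$, an integrable function of $t$ on $(0,\infty)$.

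Third I would conclude: integrating the pointwise bound gives
\begin{align}
\|\log(I+A_k)-\log(I+A)\|_{\HS} \leq C\,\|A_k-A\|_{\HS}, \qquad C = \int_0^\infty \frac{dt}{(t+m)(t+m/2)} < \infty,
\end{align}
for all $k$ large enough, and the right-hand side tends to $0$ by hypothesis.

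The main obstacle is the Bochner integrability/convergence of the resolvent integral in $\HS(\Hcal)$: one needs to verify that the integrand $(B+tI)^{-1}(A_k-A)(B_k+tI)^{-1}$ is strongly measurable as a map $(0,\infty)\to \HS(\Hcal)$ and that the identity above genuinely equals $\log(B_k)-\log(B)$ in $\HS(\Hcal)$ (rather than merely in operator norm). Strong measurability follows from norm-continuity of resolvents in $t$, while the identification with the spectral logarithm follows from scalar Fubini applied in the spectral representation of the pair $(B,B_k)$ (or equivalently from the fact that both sides have the same matrix elements in any orthonormal basis). Everything else is routine.
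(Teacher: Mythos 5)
Your argument is correct, but it is worth noting that the paper does not actually prove Lemma \ref{lemma:logHS-convergence} at all: it is imported verbatim as ``a special case of Theorem 1 in \cite{Minh:2021RiemannianEstimation}'', so there is no in-paper proof to compare against. What you supply is a self-contained replacement, and the route you choose -- the integral representation $\log x=\int_0^\infty\bigl(\tfrac{1}{1+t}-\tfrac{1}{x+t}\bigr)dt$ combined with the resolvent identity -- is sound: the uniform lower bound $\inf\sigma(I+A_k)\ge m/2$ for large $k$ is justified exactly as you say (compactness of $A$ forces $m=\inf\sigma(I+A)>0$, and the quadratic-form estimate $\la x,(I+A_k)x\ra\ge(m-\|A_k-A\|)\|x\|^2$ transfers it to $A_k$), the identity
\begin{align*}
\log(I+A_k)-\log(I+A)=\int_0^\infty (B+tI)^{-1}(A_k-A)(B_k+tI)^{-1}\,dt
\end{align*}
holds with the truncated integrals converging in operator norm by functional calculus and absolutely in $\HS$-norm by your domination bound, and the two-sided ideal inequality $\|XYZ\|_{\HS}\le\|X\|\,\|Y\|_{\HS}\|Z\|$ closes the estimate. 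Two small points: you should state explicitly that $A,A_k\in\Sym(\Hcal)$ (the paper's convention is that $I+A>0$ already presupposes self-adjointness, and this is how the lemma is used, on operators of the form $-S$ and $-(A+\gamma I)^{-1/2}A^{1/2}SA^{1/2}(A+\gamma I)^{-1/2}$); and $\min_n\lambda_n(A)$ need not exist when $A$ has no negative eigenvalues, though $\inf\sigma(I+A)>0$ still follows since only finitely many eigenvalues lie below any $-\varep<0$. Your approach in fact buys more than the stated lemma: it gives the local Lipschitz estimate $\|\log(I+A_k)-\log(I+A)\|_{\HS}\le C\|A_k-A\|_{\HS}$ with an explicit constant $C$ depending only on $m$, which is stronger than the bare continuity assertion being cited.
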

\begin{proof}
	[\textbf{of Theorem \ref{theorem:limit-affineLogHS}}]
	For $B = A^{1/2}(I-S)A^{1/2} = A - A^{1/2}SA^{1/2}$, we have $B+ \gamma I = A+\gamma I - A^{1/2}SA^{1/2}$. It follows that
	\begin{align*}
	(A+\gamma I)^{-1/2}(B+\gamma I)(A+\gamma I)^{-1/2} = I - (A+\gamma I)^{-1/2}A^{1/2}SA^{1/2}(A+\gamma I)^{-1/2}.
	\end{align*}
	By Proposition 4.6 in \cite{Minh:2020regularizedDiv}, we have
\begin{align*}
	\lim_{\gamma \approach 0^{+}} ||(A + \gamma I)^{-1/2}A^{1/2}SA^{1/2}(A + \gamma I)^{-1/2} - S||_{\HS} = 0.
\end{align*}
By Lemma \ref{lemma:logHS-convergence}, it follows that
\begin{align*}
\lim_{\gamma \approach 0^{+}}||\log[I - (A+\gamma I)^{-1/2}A^{1/2}SA^{1/2}(A+\gamma I)^{-1/2}]-\log(I-S)||_{\HS} = 0,
\end{align*}
from which the desired result follows.
\qed
\end{proof}





\subsection{Direct computation of the Fisher-Rao metric}
\label{section:computation-Fisher-metric}

This section
presents the direct proof of Theorem \ref{theorem:Riemannian-metric}, i.e. the
direct computation of the Fisher-Rao metric on $\Gauss(\Hcal,\mu_0)$, as defined in 
Eq.\eqref{equation:Fisher-metric-definition}. To this end, we first prove Theorem \ref{theorem:derivative-log-RN-general}, that is $\forall \mu \in \Gauss(\Hcal,\mu_0)$,
the Fr\'echet logarithmic derivative $D\log\frac{d\mu}{d\mu_0}$ exists, and present its explicit expression.

For the expression of the Radon-Nikodym density between two equivalent Gaussian measures on 
$\Hcal$, 
we utilize the concept of {\it white noise mapping}, see e.g. \cite{DaPrato:2006,DaPrato:PDEHilbert}.
For $\mu = \Ncal(m, Q)$, $\ker(Q) = \{0\}$, we  define
$\Lcal^2(\Hcal, \mu) = \Lcal^2(\Hcal, \Bsc(\Hcal),\mu) = \Lcal^2(\Hcal, \Bsc(\Hcal), \Ncal(m,Q))$.
Consider the following mapping
\begin{align}
	&W:Q^{1/2}(\Hcal) \subset \Hcal \mapto \Lcal^2(\Hcal,\mu), \;\; z  \in Q^{1/2}(\Hcal) \mapto W_z \in \Lcal^2(\Hcal, \mu),
	\\
	&W_z(x) = \la x -m, Q^{-1/2}z\ra,  \;\;\; z \in Q^{1/2}(\Hcal), x \in \Hcal.
\end{align}
For any pair $z_1, z_2 \in Q^{1/2}(\Hcal)$, it follows from the definition of covariance operator
that $\la W_{z_1}, W_{z_2}\ra_{\Lcal^2(\Hcal,\mu)} = \la z_1, z_2\ra_{\Hcal}$ 
so that the map $W:Q^{1/2}(\Hcal) \mapto \Lcal^2(\Hcal, \mu)$ is an isometry, that is
	$||W_z||_{\Lcal^2(\Hcal,\mu)} = ||z||_{\Hcal}$, $z \in Q^{1/2}(\Hcal)$.
Since $\ker(Q) = \{0\}$, the subspace $Q^{1/2}(\Hcal)$ is dense in $\Hcal$ and the map $W$ can be uniquely extended to all of $\Hcal$, as follows.
For any $z \in \Hcal$, let $\{z_n\}_{n\in \Nbb}$ be a sequence in $Q^{1/2}(\Hcal)$ with $\lim_{n \approach \infty}||z_n -z||_{\Hcal} = 0$.
Then $\{z_n\}_{n \in \Nbb}$ is a Cauchy sequence in $\Hcal$, so that by isometry, $\{W_{z_n}\}_{n\in \Nbb}$ is also
a Cauchy sequence in $\Lcal^2(\Hcal, \mu)$, thus converging to a unique element in $\Lcal^2(\Hcal, \mu)$.
Thus
we can define
\begin{align}
	W: \Hcal \mapto \Lcal^2(\Hcal, \mu),  \;\;\; z \in \Hcal \mapto \Lcal^2(\Hcal, \mu)
\end{align}
by the following unique limit in $\Lcal^2(\Hcal, \mu)$
\begin{align}
	W_z(x) = \lim_{n \approach \infty}W_{z_n}(x) = \lim_{n \approach \infty}\la x-m, Q^{-1/2}z_n\ra.
\end{align}
The map $W: \Hcal \mapto \Lcal^2(\Hcal, \mu)$ is called the {\it white noise mapping}
associated with the measure $\mu = \Ncal(m,Q)$.

Let $\mu = \Ncal(m_1, Q)$, $\nu = \Ncal(m_2,R)$ be equivalent, with $R = Q^{1/2}(I-S)Q^{1/2}$, $S \in \SymHS(\Hcal)_{<I}$.
Let $\{\alpha_k\}_{k \in \Nbb}$ be the eigenvalues of $S$, with corresponding orthonormal eigenvectors $\{\phi_k\}_{k \in \Nbb}$, which form an orthonormal basis in $\Hcal$.
The following result expresses the Radon-Nikodym density $\frac{d\nu}{d\mu}$ in terms of the $\alpha_k$'s and
$\phi_k$'s. Here the white noise mapping is $W: \Hcal \mapto \Lcal^2(\Hcal, \mu) = \Lcal^2(\Hcal, \Ncal(m_1,Q))$,
with $\{W_{\phi_k}\}_{k=1}^{\infty}$ forming an orthonormal sequence in $\Lcal^2(\Hcal,\mu)$.


\begin{theorem}
	[\cite{Minh:2020regularizedDiv}, Theorem 11 and Corollary 2]
	\label{theorem:radon-nikodym-infinite}
	Let $\mu = \Ncal(m_1, Q)$, $\nu = \Ncal(m_2,R)$, with $m_2 - m_1 \in \range(Q^{1/2})$, $R = Q^{1/2}(I-S)Q^{1/2}$, $S \in \SymHS(\Hcal)_{<I}$.
	The Radon-Nikodym density $\frac{d\nu}{d\mu}$
	is given by
	\begin{align}
		\label{equation:RN-infinite}
		\frac{d\nu}{d\mu}(x) = \exp\left[-\frac{1}{2}\sum_{k=1}^{\infty}\Phi_k(x)\right]\exp\left[-\frac{1}{2}||(I-S)^{-1/2}Q^{-1/2}(m_2 - m_1)||^2\right],
	\end{align}
	where for each $k \in \Nbb$
	\begin{align}
		\label{equation:Phik}
		\Phi_k = \frac{\alpha_k}{1-\alpha_k}W^2_{\phi_k} - \frac{2}{1-\alpha_k}\la Q^{-1/2}(m_2-m_1), \phi_k\ra W_{\phi_k}+ \log(1-\alpha_k).
	\end{align}
	The series $\sum_{k=1}^{\infty}\Phi_k$ converges in 
	$\Lcal^2(\Hcal,\mu)$ and the function $s(x) = \exp\left[-\frac{1}{2}\sum_{k=1}^{\infty}\Phi_k(x)\right] \in \Lcal^1(\Hcal, \mu)$.
\end{theorem}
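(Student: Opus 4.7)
My plan is to diagonalize $S$ via its spectral decomposition, compute the finite-dimensional Radon-Nikodym density along projections onto the spectral subspaces, and pass to the limit in $\Lcal^2(\Hcal,\mu)$. Write $S = \sum_{k=1}^{\infty}\alpha_k\phi_k\otimes\phi_k$; since $I-S$ is positive definite (bounded below by some $M_S>0$) we have $\sup_k\alpha_k<1$, while $S\in\HS(\Hcal)$ gives $\sum_k\alpha_k^2<\infty$. I would first handle the centered case $m_1=m_2=0$, then reduce the general case to it via a Cameron-Martin translation.

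For the centered case, for each $N$ consider the measurable map $\Pi_N:\Hcal\mapto\R^N$, $\Pi_N(x)=(W_{\phi_1}(x),\ldots,W_{\phi_N}(x))$. From $\la W_{\phi_i},W_{\phi_j}\ra_{\Lcal^2(\Hcal,\mu)}=\la\phi_i,\phi_j\ra=\delta_{ij}$ one obtains $(\Pi_N)_{*}\mu=\Ncal(0,I_N)$. Analogously, the covariance matrix of $\Pi_N$ under $\nu$ has entries $\la\phi_i,Q^{-1/2}RQ^{-1/2}\phi_j\ra=\la\phi_i,(I-S)\phi_j\ra=(1-\alpha_i)\delta_{ij}$, so $(\Pi_N)_{*}\nu=\Ncal(0,\mathrm{diag}(1-\alpha_1,\ldots,1-\alpha_N))$. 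The standard finite-dimensional Gaussian density ratio, pulled back along $\Pi_N$, yields
\begin{align*}
F_N(x)=\prod_{k=1}^N(1-\alpha_k)^{-1/2}\exp\Bigl(-\tfrac{1}{2}\tfrac{\alpha_k}{1-\alpha_k}W_{\phi_k}(x)^2\Bigr)=\exp\Bigl(-\tfrac{1}{2}\sum_{k=1}^N\psi_k(x)\Bigr),
\end{align*}
where $\psi_k=\tfrac{\alpha_k}{1-\alpha_k}W_{\phi_k}^2+\log(1-\alpha_k)$. Since $\sigma(W_{\phi_1},\ldots,W_{\phi_N})$ increases to $\Bsc(\Hcal)$ as $N\approach\infty$, a martingale argument identifies the $\mu$-a.s./$\Lcal^1(\Hcal,\mu)$ limit of $F_N$ with $d\nu/d\mu$.

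The main obstacle is proving $\Lcal^2(\Hcal,\mu)$-convergence of $\sum_k\psi_k$: because $S$ is only Hilbert-Schmidt, $\sum_k|\alpha_k|$ may diverge, so neither piece of $\psi_k$ is individually summable. The rescue is a compensation between the two pieces. Since $E_\mu[W_{\phi_k}^2]=1$, one has $E_\mu[\psi_k]=\tfrac{\alpha_k}{1-\alpha_k}+\log(1-\alpha_k)=O(\alpha_k^2)$ as $\alpha_k\to 0$, and the centered parts $\tfrac{\alpha_k}{1-\alpha_k}(W_{\phi_k}^2-1)$ are mutually orthogonal in $\Lcal^2(\Hcal,\mu)$ (as $\{W_{\phi_k}\}$ are independent standard normals under $\mu$) with squared norms $\tfrac{2\alpha_k^2}{(1-\alpha_k)^2}$. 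Summability of both the mean series and the centered series then follows from $\sum_k\alpha_k^2<\infty$ together with $\sup_k(1-\alpha_k)^{-1}<\infty$. Once the $\Lcal^2$-limit of $\sum_{k=1}^N\psi_k$ is established, $F_N$ converges in measure to $\exp(-\tfrac{1}{2}\sum_k\psi_k)$, and equi-integrability (each $F_N$ is itself a probability density under $\mu$) upgrades this to $\Lcal^1$-convergence, identifying the limit with $d\nu/d\mu$.

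For the general means, set $h=Q^{-1/2}(m_2-m_1)\in\Hcal$; since $m_2-m_1\in\range(Q^{1/2})$ lies in the Cameron-Martin space of $\mu$, the translation $\tau_{m_2-m_1}:x\mapto x-(m_2-m_1)$ admits an explicit Cameron-Martin density between $\Ncal(m_1,Q)$ and $\Ncal(m_2,Q)$. Combining this factor with the centered-case density between $\Ncal(m_2,Q)$ and $\Ncal(m_2,R)$, and expanding $-\tfrac{1}{2}(x-m_2)^{\top}R^{-1}(x-m_2)$ in the eigenbasis $\{\phi_k\}$ of $S$, produces the linear terms $-\tfrac{2}{1-\alpha_k}\la h,\phi_k\ra W_{\phi_k}$ inside $\Phi_k$ together with the exterior constant $\sum_k\tfrac{1}{1-\alpha_k}\la h,\phi_k\ra^2=\|(I-S)^{-1/2}h\|^2$, yielding exactly \eqref{equation:Phik} and \eqref{equation:RN-infinite}.
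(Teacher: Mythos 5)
Your proposal is essentially correct, but note that the paper does not prove this statement at all: it is imported verbatim from \cite{Minh:2020regularizedDiv} (Theorem 11 and Corollary 2), so there is no in-paper argument to compare against. Your route is the classical one (finite-dimensional marginals along the eigenbasis of $S$ plus martingale convergence, then a Cameron--Martin translation for the means), and the computations check out: the marginal covariances $(1-\alpha_k)\delta_{ij}$ under $\nu$, the compensation $\frac{\alpha_k}{1-\alpha_k}+\log(1-\alpha_k)=O(\alpha_k^2)$ together with the orthogonality of $\{W_{\phi_k}^2-1\}$ giving $\Lcal^2(\Hcal,\mu)$-convergence of the exponent, and the recombination $\frac{\alpha_k}{1-\alpha_k}+1=\frac{1}{1-\alpha_k}$ that merges the Cameron--Martin linear term with the shifted quadratic term to produce exactly $\Phi_k$ and the constant $\|(I-S)^{-1/2}Q^{-1/2}(m_2-m_1)\|^2$. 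Two small points deserve tightening. First, your uniform-integrability step is stated too weakly: each $F_N$ having $\mu$-integral $1$ gives only $\Lcal^1$-boundedness, not equi-integrability; the clean argument is that $F_N=E_\mu\bigl[\tfrac{d\nu}{d\mu}\,\big|\,\sigma(W_{\phi_1},\dots,W_{\phi_N})\bigr]$ is a closed martingale (using $\nu\ll\mu$ from Feldman--Hajek), hence automatically uniformly integrable and convergent a.s.\ and in $\Lcal^1(\Hcal,\mu)$ to $\tfrac{d\nu}{d\mu}$; the $\Lcal^2$-convergence of the exponent then serves only to identify that limit with $\exp(-\tfrac12\sum_k\Phi_k)$. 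Second, you should record why $\sigma(W_{\phi_k}:k\in\Nbb)$ exhausts $\Bsc(\Hcal)$ modulo $\mu$-null sets: every continuous linear functional $\la\cdot,v\ra$ equals $W_{Q^{1/2}v}$ up to a constant and is an $\Lcal^2(\Hcal,\mu)$-limit of finite linear combinations of the $W_{\phi_k}$, and such functionals generate $\Bsc(\Hcal)$ on a separable Hilbert space. With these repairs the argument is complete and matches the standard proofs in the literature (cf.\ Proposition 1.3.11 in \cite{DaPrato:PDEHilbert} and Corollary 6.4.11 in \cite{Bogachev:Gaussian}).
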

For the case $m_1 = m_2 = 0$, this is Corollary 6.4.11 in \cite{Bogachev:Gaussian}.
In particular, for the case $S \in \Sym(\Hcal) \cap \Tr(\Hcal)$, $I -S > 0$, we have
\begin{align}
	\label{equation:RN-traceclass}
	\frac{d\nu}{d\mu}(x) &= [\det(I-S)]^{-1/2}
	\\
	& \times \exp\left\{-\frac{1}{2}\la Q^{-1/2}(x-m_1), S(I-S)^{-1}Q^{-1/2}(x-m_1)\ra \right\}
	\nonumber
	\\
	&\times \exp(\la Q^{-1/2}(x-m_1), (I-S)^{-1}Q^{-1/2}(m_2 - m_1)\ra)
	\nonumber
	\\
	&\times \exp\left[-\frac{1}{2}||(I-S)^{-1/2}Q^{-1/2}(m_2 - m_1)||^2\right].
	\nonumber
\end{align}
Let $P_N = \sum_{k=1}^Ne_k \otimes e_k$, $N \in \Nbb$, be the orthogonal projection onto the $N$-dimensional subspace of $\Hcal$ spanned
by $\{e_k\}_{k=1}^N$, where $\{e_k\}_{k=1}^{\infty}$ are the orthonormal eigenvectors of $Q$.
In the above expression,
\begin{align}
	&\la Q^{-1/2}(x-m_1), S(I-S)^{-1}Q^{-1/2}(x-m_1)\ra 
	\nonumber
	\\
	& \doteq \lim_{N \approach \infty} \la Q^{-1/2}P_N(x-m_1), S(I-S)^{-1}Q^{-1/2}P_N(x-m_1)\ra 
	\\
	&
	\la Q^{-1/2}(x-m_1), (I-S)^{-1}Q^{-1/2}(m_2 - m_1)\ra 
	\nonumber
	\\
	&\doteq \lim_{N \approach \infty} \la Q^{-1/2}P_N(x-m_1), (I-S)^{-1}Q^{-1/2}(m_2 - m_1)\ra,
\end{align}
with the limits being in the 
$\Lcal^2(\Hcal, \mu)$ sense (see Proposition \ref{proposition:RN-S-trace-class-quadratic-form-convergence}).
For the case $m_1=m_2 = 0$, we obtain the expression given in Proposition 1.3.11 in \cite{DaPrato:PDEHilbert}
\begin{align}
	\label{equation:RN-m1m2-0}
	\frac{d\nu}{d\mu}(x) = [\det(I-S)]^{-1/2}\exp\left\{-\frac{1}{2}\la Q^{-1/2}x, S(I-S)^{-1}Q^{-1/2}x\ra \right\}.
\end{align}
It follows that for $S \in \SymHS(\Hcal)_{<I}$,
\begin{align}
	\log\left\{{\frac{d\nu}{d\mu}(x)}\right\} = -\frac{1}{2}\sum_{k=1}^{\infty}\Phi_k(x)-\frac{1}{2}||(I-S)^{-1/2}Q^{-1/2}(m_2 - m_1)||^2.
\end{align}
In the case $m_1 = m_2 = 0$,
\begin{align}
\label{equation:log-Radon-Nikodym-S-HS-zero-mean}
\log\left\{{\frac{d\nu}{d\mu}(x)}\right\} = -\frac{1}{2}\sum_{k=1}^{\infty}\Phi_k(x) = -\frac{1}{2}\sum_{k=1}^{\infty}\left[\frac{\alpha_k}{1-\alpha_k}W^2_{\phi_k}(x) + \log(1-\alpha_k)\right].
\end{align}
In particular, for $S \in \SymTr(\Hcal)_{<I}$, with $m_1 = m_2 = 0$,
\begin{align}
\label{equation:log-Radon-Nikodym-S-trace-class-zero-mean}
\log\left\{{\frac{d\nu}{d\mu}(x)}\right\} = - \frac{1}{2}\log\det(I-S) - \frac{1}{2}\la Q^{-1/2}x, S(I-S)^{-1}Q^{-1/2}x\ra.
\end{align}

The following is a stronger and more general version of Lemma 17 and Corollary 2 in \cite{Minh:2020regularizedDiv}, which are obtained by setting $T = S(I-S)^{-1}$, $S \in \SymTr(\Hcal)_{< I}$, with convergence in $\Lcal^1(\Hcal,\mu_0)$ being strengthened to convergence in $\Lcal^2(\Hcal,\mu_0)$.

\begin{proposition}
	\label{proposition:RN-S-trace-class-quadratic-form-convergence}
	Let $\mu_0 = \Ncal(m_0, C_0)$ be fixed, with the associated white noise mapping $W:\Hcal \mapto \Lcal^2(\Hcal,\mu_0)$.
	Assume that $T \in \Sym(\Hcal) \cap \Tr(\Hcal)$, with eigenvalues $\{\alpha_k\}_{k \in \Nbb}$ and
	corresponding orthonormal eigenvectors $\{\phi_k\}_{k \in \Nbb}$. Then in the $\Lcal^2(\Hcal,\mu_0)$ sense,
	\begin{align}
		\lim_{N \approach \infty}\sum_{k=1}^{\infty}{\alpha_k}W^2_{P_N\phi_k} = \sum_{k=1}^{\infty}{\alpha_k}W^2_{\phi_k}.
	\end{align}
	Let $P_N = \sum_{j=1}^Ne_j \otimes e_j$, with $\{e_j\}_{j\in \Nbb}$ being the orthonormal eigenvectors of $C_0$, then, consequently, with the limit being taken in the $\Lcal^2(\Hcal,\mu_{0})$ sense, 
	\begin{align*}
		\sum_{k=1}^{\infty}{\alpha_k}W^2_{\phi_k}(x)
		& = \lim_{N \approach \infty}\la C_0^{-1/2}P_N(x-m_0), TC_0^{-1/2}P_N(x-m_0)\ra
		\\
		&\doteq\la C_0^{-1/2}(x-m_0), TC_0^{-1/2}(x-m_0)\ra.
	\end{align*}
\end{proposition}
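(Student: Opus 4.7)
The plan is to prove the $\Lcal^2(\mu_0)$-limit by direct calculation of $||f_N - f||^2_{\Lcal^2}$ via the Isserlis/Wick formula for Gaussian fourth moments, followed by dominated convergence on two resulting trace quantities. Here $f_N = \sum_k \alpha_k W^2_{P_N\phi_k}$ and $f = \sum_k \alpha_k W^2_{\phi_k}$. First, since $\{\phi_k\}$ is orthonormal, the white-noise isometry makes $\{W_{\phi_k}\}$ an i.i.d.\ $\Ncal(0,1)$ family under $\mu_0$, so the centered partial sums $\sum_{k=1}^M \alpha_k(W_{\phi_k}^2 - 1)$ are orthogonal in $\Lcal^2(\mu_0)$ with tail squared norm $2\sum_{k>M}\alpha_k^2 \to 0$, and $\sum_k\alpha_k = \trace T$ converges absolutely, placing $f \in \Lcal^2(\mu_0)$ with mean $\trace T$ and variance $2||T||^2_{\HS}$. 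For $f_N$, observe $P_N\phi_k \in \mathrm{span}\{e_1,\ldots,e_N\} \subset \range(C_0^{1/2})$, so $z_N := C_0^{-1/2}P_N(x-m_0) \in P_N\Hcal$ is literally defined $\mu_0$-a.s.\ and $W_{P_N\phi_k}(x) = \la z_N,\phi_k\ra$. Pointwise, $\sum_k\alpha_k\la z_N,\phi_k\ra^2$ converges absolutely (bounded by $||T||_{\tr}||z_N||^2$) to $\la z_N, Tz_N\ra$; since $||z_N||^2$ is a finite sum of squared independent Gaussians and hence has all $\mu_0$-moments, dominated convergence upgrades this to $\Lcal^2(\mu_0)$-convergence, giving
\begin{align*}
f_N(x) = \la C_0^{-1/2}P_N(x-m_0),\,T\,C_0^{-1/2}P_N(x-m_0)\ra
\end{align*}
in $\Lcal^2$ — this identity is exactly the ``consequence'' formula. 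Equivalently, $f_N = \sum_j \beta_j^N W^2_{\eta_j^N}$ from the finite spectral decomposition $P_NTP_N = \sum_j \beta_j^N \eta_j^N \otimes \eta_j^N$, whence $\mathbb{E}[f_N] = \trace(P_NTP_N)$ and $\mathrm{Var}(f_N) = 2||P_NTP_N||^2_{\HS}$.

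Next, $(W_{P_N\phi_k}, W_{\phi_j})$ is centered jointly Gaussian with covariance $\la P_N\phi_k,\phi_j\ra$, so Isserlis gives $\mathbb{E}[W^2_{P_N\phi_k}W^2_{\phi_j}] = ||P_N\phi_k||^2 + 2\la P_N\phi_k,\phi_j\ra^2$. The double sum $\sum_{k,j}|\alpha_k\alpha_j|(||P_N\phi_k||^2 + 2\la P_N\phi_k,\phi_j\ra^2) \leq 3||T||^2_{\tr} < \infty$ justifies Fubini and yields $\la f_N, f\ra_{\Lcal^2} = \trace(P_NT)\trace(T) + 2\trace(TP_NTP_N)$. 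Combined with $||f||^2_{\Lcal^2} = (\trace T)^2 + 2\trace(T^2)$ and $||f_N||^2_{\Lcal^2} = (\trace(P_NT))^2 + 2\trace((P_NTP_N)^2) = (\trace(P_NT))^2 + 2\trace(TP_NTP_N)$ (using $P_N^2 = P_N$ and trace cyclicity), the expansion $||f_N-f||^2 = ||f_N||^2 + ||f||^2 - 2\la f_N,f\ra_{\Lcal^2}$ collapses to
\begin{align*}
||f_N - f||^2_{\Lcal^2} = [\trace(P_NT) - \trace T]^2 + 2\bigl[\trace(T^2) - \trace(TP_NTP_N)\bigr].
\end{align*}
Expanding in the $T$-eigenbasis, $\trace(P_NT) = \sum_k\alpha_k||P_N\phi_k||^2$ and $\trace(TP_NTP_N) = \sum_{k,j}\alpha_k\alpha_j\la\phi_k,P_N\phi_j\ra^2$; the pointwise limits $||P_N\phi_k||^2\to 1$ and $\la\phi_k,P_N\phi_j\ra^2\to\delta_{kj}$, together with the dominants $|\alpha_k|$ and $|\alpha_k\alpha_j|$ (summable since $T\in\Tr(\Hcal)$), give $\trace(P_NT)\to\trace T$ and $\trace(TP_NTP_N)\to\sum_k\alpha_k^2 = \trace(T^2)$ by dominated convergence. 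Hence $||f_N - f||^2_{\Lcal^2}\to 0$.

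The principal technical hurdle is the $\Lcal^2$-identification of the a priori non-orthogonal infinite sum $f_N = \sum_k\alpha_k W^2_{P_N\phi_k}$ with the finite-rank quadratic form $\la z_N, Tz_N\ra$: once one observes that $z_N$ lives in the finite-dimensional subspace $P_N\Hcal$, the norm $||z_N||^2$ has all $\mu_0$-moments and serves as the integrable dominant needed to pull the pointwise absolutely convergent sum into $\Lcal^2(\mu_0)$. After this identification, the remaining work is a single Isserlis computation and two routine applications of dominated convergence driven by $\sum_k|\alpha_k| = ||T||_{\tr} < \infty$.
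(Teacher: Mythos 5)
Your proof is correct, but it reaches the conclusion by a genuinely different route than the paper. The paper estimates each term separately: using the fourth-moment identities it bounds $\int_{\Hcal}|W^2_{P_N\phi_k}-W^2_{\phi_k}|^2 d\mu_0 \leq 6\sum_{j=N+1}^{\infty}\la \phi_k,e_j\ra^2$, then applies Cauchy--Schwarz to the double series to get $\|f_N-f\|^2_{\Lcal^2(\mu_0)} \leq 6\|T\|_{\tr}\sum_{j>N}\sum_k|\alpha_k|\la\phi_k,e_j\ra^2 \approach 0$, and only afterwards identifies $f_N$ with the quadratic form in part (ii). You instead do the identification $f_N(x)=\la z_N,Tz_N\ra = \la z_N, P_NTP_Nz_N\ra$ first (exploiting that $z_N$ lives in the finite-dimensional range of $P_N$, where $C_0^{-1/2}$ is literal and all moments exist), and then compute $\|f_N-f\|^2_{\Lcal^2(\mu_0)}$ \emph{exactly} via Isserlis, obtaining the closed form $[\trace(P_NT)-\trace T]^2+2[\trace(T^2)-\trace(TP_NTP_N)]$ and finishing with dominated convergence on two scalar trace quantities. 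The ingredients (joint Gaussianity of the $W_z$'s, the identities $\int W_a^2W_b^2\,d\mu_0=\|a\|^2\|b\|^2+2\la a,b\ra^2$ and $\int W_a^4\,d\mu_0=3\|a\|^4$, and the trace-class summability $\sum_k|\alpha_k|<\infty$) are the same, but your global second-moment computation buys an exact rate-type expression and avoids the termwise inequality and the Cauchy--Schwarz step, at the cost of having to justify the Fubini/Tonelli interchange in $\la f_N,f\ra$ and the $\Lcal^2$-identification of the a priori non-orthogonal sum $\sum_k\alpha_kW^2_{P_N\phi_k}$ with $\la z_N,Tz_N\ra$ -- both of which you handle correctly (nonnegativity of $W^2_{P_N\phi_k}W^2_{\phi_j}$ plus $\sum_{k,j}|\alpha_k\alpha_j|(\|P_N\phi_k\|^2+2\la P_N\phi_k,\phi_j\ra^2)\leq 3\|T\|_{\tr}^2$ for the former, the dominant $\|T\|_{\tr}\|z_N\|^2\in\Lcal^2(\Hcal,\mu_0)$ for the latter). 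One cosmetic remark: your argument delivers the "consequence" formula of the proposition as the first step rather than the last, which is a legitimate reorganization since that identity concerns only the fixed finite-rank objects $P_NTP_N$ and does not depend on the limit statement.
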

\begin{proof}
	(i) For the first part, we first note that, since $T \in \Sym(\Hcal) \cap \Tr(\Hcal)$, 
	\begin{align*}
		&||T||_{\tr} = \sum_{j=1}^{\infty}\la e_j, |T|e_j\ra 
		= \sum_{j=1}^{\infty}\la e_j, \sum_{k=1}^{\infty}\left|{\alpha_k}\right| (\phi_k \otimes \phi_k) e_j\ra
		\\
		& = \sum_{j=1}^{\infty}\sum_{k=1}^{\infty}\left|{\alpha_k}\right| \la \phi_k, e_j\ra^2 
		= \sum_{k=1}^{\infty}\left|{\alpha_k}\right|< \infty
		\\
		&\imply \sum_{j=N+1}^{\infty}\sum_{k=1}^{\infty}\left|{\alpha_k}\right| \la \phi_k, e_j\ra^2 \approach 0 \; \text{as} \;  N \approach \infty.
	\end{align*}
	Furthermore, with $T \in \Tr(\Hcal)$, we can rewrite the series
	\begin{align*}
		\sum_{k=1}^{\infty}{\alpha_k}W^2_{\phi_k} &= \sum_{k=1}^{\infty}{\alpha_k}(W^2_{\phi_k}-1)
		+ \sum_{k=1}^{\infty}{\alpha_k} 
		= \sum_{k=1}^{\infty}{\alpha_k}(W^2_{\phi_k}-1) + \trace[T].
	\end{align*}
	By Proposition 9 in \cite{Minh:2020regularizedDiv}, the functions $\{\frac{1}{\sqrt{2}}(W^2_{\phi_k}-1)\}_{k \in \Nbb}$ are orthonormal in $\Lcal^2(\Hcal,\mu_0)$. Thus
	\begin{align*}
		\left\|\sum_{k=1}^{\infty}{\alpha_k}(W^2_{\phi_k}-1)\right\|^2_{\Lcal^2(\Hcal,\mu_0)} = 2 \sum_{k=1}^{\infty}{\alpha_k^2} = 2||T||^2_{\HS} < \infty.
	\end{align*}
	It follows that $\sum_{k=1}^{\infty}{\alpha_k}W^2_{\phi_k} \in \Lcal^2(\Hcal,\mu_0)$.
	By Lemma 13 in \cite{Minh:2020regularizedDiv}, $\forall a,b \in \Hcal$,
	\begin{align}
		\int_{\Hcal}W^2_a(x)W^2_b(x)\mu_0(dx) &= ||a||^2||b||^2 + 2\la a,b\ra^2,
		\\
		\int_{\Hcal}W^4_a(x)\mu_0(dx) &= 3||a||^4.
	\end{align}
	Applying these identities, noting that $\la P_N\phi_k, \phi_k\ra = ||P_N\phi_k||^2 \leq ||\phi_k||^2 =1$,
	\begin{align*}
		&\int_{\Hcal}|W^2_{P_N\phi_k}(x) - W^2_{\phi_k}(x)|^2\mu_0(dx) = \int_{\Hcal}[W^4_{P_N\phi_k} - 2 W^2_{P_N\phi_k}W^2_{\phi_k} + W^4_{\phi_k}]\mu_0(dx)
		\\
		& = 3||P_N\phi_k||^4 - 2[||P_N\phi_k||^2||\phi_k||^2 + 2\la P_N\phi_k, \phi_k\ra^2] + 3||\phi_k||^4
		\\
		& = 3||\phi_k||^4 - 2||P_N\phi_k||^2||\phi_k||^2 - ||P_N\phi_k||^4 \leq 3(||\phi_k||^4 - ||P_N\phi_k||^4)
		\\
		&  =3(||\phi_k||^2 - ||P_N\phi_k||^2)(||\phi_k||^2 + ||P_N\phi_k||^2) \leq 6(||\phi_k||^2 - ||P_N\phi_k||^2)
		\\
		& = 6\sum_{j=N+1}^{\infty}\la \phi_k, e_j\ra^2.
	\end{align*}
	It follows that
	\begin{align*}
		&\int_{\Hcal}\left|\sum_{k=1}^{\infty}{\alpha_k}W^2_{P_N\phi_k}(x)- \sum_{k=1}^{\infty}{\alpha_k}W^2_{\phi_k}(x)\right|^2\mu_0(dx) 
		\\
		&\leq \int_{\Hcal}\left(\sum_{k=1}^{\infty}\left|{\alpha_k}\right|\left|[W^2_{P_N\phi_k}(x) - W^2_{\Phi_k}(x)]\right|\right)^2\mu_0(dx)
		\\
		&\leq \sum_{k=1}^{\infty}\left|{\alpha_k}\right|\sum_{k=1}^{\infty}\left|{\alpha_k}\right|\int_{\Hcal}\left|[W^2_{P_N\phi_k}(x) - W^2_{\Phi_k}(x)]\right|^2\mu_0(dx)
		\\
		&
		\leq 6\sum_{k=1}^{\infty}\left|{\alpha_k}\right|\sum_{k=1}^{\infty}\left|{\alpha_k}\right|\;
		\sum_{j=N+1}^{\infty}\la \phi_k,e_j\ra^2
		\\
		& = 6||T||_{\tr}\sum_{j=N+1}^{\infty}\sum_{k=1}^{\infty}|\alpha_k|\la \phi_k, e_j\ra^2 \approach 0 \;\text{as $N \approach \infty$}.
	\end{align*}
	(ii) For the second part, 
	from 
	$T = \sum_{k=1}^{\infty}{\alpha_k} \phi_k \otimes \phi_k$,
	we have $\forall N \in \Nbb$,
	\begin{align*}
		& \la C_0^{-1/2}P_N(x-m_0), TC_0^{-1/2}P_N(x-m_0)\ra 
		\\
		&= \sum_{k=1}^{\infty}{\alpha_k}\la C_0^{-1/2}P_N(x-m_0), \phi_k\ra^2
		\\
		& = \sum_{k=1}^{\infty}{\alpha_k}\la x- m_0, C_0^{-1/2}P_N\phi_k\ra^2 
		= \sum_{k=1}^{\infty}{\alpha_k}W^2_{P_N\phi_k}(x).
	\end{align*}
	By the first part,
	taking limit as $N \approach \infty$ gives, where the limit is in $\Lcal^2(\Hcal,\mu_0)$, 
	\begin{align*}
		&\sum_{k=1}^{\infty}{\alpha_k}W^2_{\phi_k}(x)
		= \lim_{N \approach \infty} \sum_{k=1}^{\infty}{\alpha_k}W^2_{P_N\phi_k}(x)
		\\
		& = \lim_{N \approach \infty}\la C_0^{-1/2}P_N(x-m_0), TC_0^{-1/2}P_N(x-m_0)\ra
		\\
		&\doteq\la C_0^{-1/2}(x-m_0), TC_0^{-1/2}(x-m_0)\ra.
	\end{align*}
	\qed
\end{proof}

In the case $m_0=0$, the following generalizes Proposition \ref{proposition:RN-S-trace-class-quadratic-form-convergence}
so that the convergence is in $\Lcal^2(\Hcal, \mu_{*})$ for any Gaussian measure $\mu_{*}$ equivalent to $\mu_0$.
\begin{proposition}
	\label{proposition:RN-S-trace-class-quadratic-form-convergence-L2-mu-star}
	Let $\mu_0= \Ncal(0, C_0)$ be fixed, with the associated white noise mapping $W:\Hcal \mapto \Lcal^2(\Hcal,\mu_0)$.
	Let $S_{*} \in \SymHS(\Hcal)_{<I}$ be fixed but arbitrary, with corresponding Gaussian measure $\mu_{*} = \Ncal(0, C_{*})$, with $C_{*} = C_0^{1/2}(I-S_{*})C_0^{1/2}$.
	Assume that $T \in \Sym(\Hcal) \cap \Tr(\Hcal)$, with eigenvalues $\{\alpha_k\}_{k \in \Nbb}$ and
	corresponding orthonormal eigenvectors $\{\phi_k\}_{k \in \Nbb}$. Then in the $\Lcal^2(\Hcal,\mu_{*})$ sense,
	\begin{align}
		\lim_{N \approach \infty}\sum_{k=1}^{\infty}{\alpha_k}W^2_{P_N\phi_k} = \sum_{k=1}^{\infty}{\alpha_k}W^2_{\phi_k}.
	\end{align}
	Let $P_N = \sum_{j=1}^Ne_j \otimes e_j$, with $\{e_j\}_{j\in \Nbb}$ being the orthonormal eigenvectors of $C_0$, then, consequently, with the limit being taken in the $\Lcal^2(\Hcal,\mu_{*})$ sense,
	\begin{align*}
		\sum_{k=1}^{\infty}{\alpha_k}W^2_{\phi_k}(x)
		& = \lim_{N \approach \infty}\la C_0^{-1/2}P_Nx, TC_{0}^{-1/2}P_Nx\ra
		\doteq\la C_0^{-1/2}x, TC_0^{-1/2}x\ra.
	\end{align*}
\end{proposition}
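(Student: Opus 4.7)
The strategy is to mirror the proof of Proposition \ref{proposition:RN-S-trace-class-quadratic-form-convergence} but work with Gaussian moments under $\mu_{*}$ rather than $\mu_0$, exploiting the fact that, since $\mu_{*} \sim \mu_0$, the white noise mapping $W$ still produces a family of centered Gaussian random variables under $\mu_{*}$, only with a modified covariance structure controlled by the bounded operator $I-S_{*}$.

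First I would verify the following bilinear identity for the white noise under $\mu_{*}$: for any $a,b \in C_0^{1/2}(\Hcal)$,
\begin{align*}
	\int_{\Hcal} W_a(x) W_b(x) d\mu_{*}(x) = \la C_0^{-1/2}a, C_{*}C_0^{-1/2}b\ra = \la a, (I-S_{*})b\ra,
\end{align*}
where the last equality uses Lemma \ref{lemma:C0-inverse-Sigma-1}. Since $I-S_{*}$ is bounded, the estimate $||W_a||^2_{\Lcal^2(\Hcal,\mu_{*})} \leq ||I-S_{*}||\;||a||^2$ lets the map $a \mapsto W_a$ extend by density from $C_0^{1/2}(\Hcal)$ to all of $\Hcal$ as a continuous linear map into $\Lcal^2(\Hcal,\mu_{*})$. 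A subsequence/a.e. argument, together with $\mu_0 \sim \mu_{*}$, shows that this extension agrees $\mu_{*}$-a.e.\ with the one already defined in $\Lcal^2(\Hcal,\mu_0)$, so no ambiguity arises.

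Since each $W_a$ is a centered Gaussian under $\mu_{*}$, the Isserlis/Wick identity gives $\int W_a^4 d\mu_{*} = 3\la a,(I-S_{*})a\ra^2 \leq 3||I-S_{*}||^2||a||^4$. The key pointwise-in-$k$ estimate then follows by writing
\begin{align*}
	W^2_{P_N\phi_k} - W^2_{\phi_k} = -W_{(I-P_N)\phi_k}\cdot W_{P_N\phi_k+\phi_k}
\end{align*}
and applying Cauchy--Schwarz:
\begin{align*}
	\int |W^2_{P_N\phi_k} - W^2_{\phi_k}|^2 d\mu_{*} \leq 12\,||I-S_{*}||^2\,||(I-P_N)\phi_k||^2 = 12\,||I-S_{*}||^2 \sum_{j=N+1}^{\infty}\la \phi_k, e_j\ra^2.
\end{align*}
This bound has exactly the same shape as the one appearing in the proof of Proposition \ref{proposition:RN-S-trace-class-quadratic-form-convergence}, with $||I-S_{*}||^2$ as an extra multiplicative constant.

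Finally, applying Cauchy--Schwarz in the summation over $k$ with weights $|\alpha_k|$, together with $\sum_k |\alpha_k| = ||T||_{\tr} < \infty$, yields
\begin{align*}
	\int_{\Hcal}\left|\sum_{k=1}^{\infty}\alpha_k(W^2_{P_N\phi_k} - W^2_{\phi_k})\right|^2 d\mu_{*} \leq 12\,||I-S_{*}||^2\,||T||_{\tr} \sum_{j=N+1}^{\infty}\sum_{k=1}^{\infty}|\alpha_k|\la \phi_k,e_j\ra^2,
\end{align*}
and the right-hand side tends to $0$ as $N \approach \infty$ by the same trace-class argument used in Proposition \ref{proposition:RN-S-trace-class-quadratic-form-convergence}. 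The second statement, identifying the limit with $\la C_0^{-1/2}x, TC_0^{-1/2}x\ra$, follows by the same spectral expansion $\la C_0^{-1/2}P_Nx, TC_0^{-1/2}P_Nx\ra = \sum_k \alpha_k W^2_{P_N\phi_k}(x)$ as before. The main technical obstacle is the first step: justifying that the extension of $W$ to all of $\Hcal$ produces the same $\mu_{*}$-measurable functional as the original $\mu_0$-based extension, which requires carefully invoking the equivalence $\mu_0 \sim \mu_{*}$ together with an a.e.\ subsequence limit; once this is in place, the moment calculations proceed essentially by analogy with the original proof.
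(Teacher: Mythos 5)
Your proposal is correct and follows essentially the same route as the paper: the same fourth-moment identities for $W$ under $\mu_{*}$, the same per-$k$ bound $12\,\|I-S_{*}\|^2\sum_{j>N}\la\phi_k,e_j\ra^2$, the same Cauchy--Schwarz over $k$ against the trace-class tail, and the same spectral expansion for the second claim. The only cosmetic difference is that you obtain the per-$k$ estimate via the factorization $W^2_{P_N\phi_k}-W^2_{\phi_k}=-W_{(I-P_N)\phi_k}W_{P_N\phi_k+\phi_k}$ and Cauchy--Schwarz, whereas the paper expands the square directly and invokes the algebraic inequality of Lemma \ref{lemma:fourth-power-quadratic-bound}; both yield the identical constant.
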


\begin{proof}
	(i) As in the proof of Proposition \ref{proposition:RN-S-trace-class-quadratic-form-convergence},  
$\sum_{j=N+1}^{\infty}\sum_{k=1}^{\infty}\left|{\alpha_k}\right| \la \phi_k, e_j\ra^2 \approach 0$ as $N \approach \infty$
	for $T \in \Sym(\Hcal) \cap \Tr(\Hcal)$.
By Lemma 22 in \cite{Minh:2020regularizedDiv}, for any $a,b \in \Hcal$,
\begin{align}
	&\int_{\Hcal}W^2_a(x)W^2_b(x)\mu_{*}(dx) = \la a, (I-S_{*})a\ra \la b, (I-S_{*})b\ra + 2 \la a, (I-S_{*})b\ra^2, 
	\nonumber
	\\
	& = ||(I-S_{*})^{1/2}a||^2||(I-S_{*})^{1/2}b||^2 + 2\la (I-S_{*})^{1/2}a,(I-S_{*})^{1/2}b\ra^2,
	\label{equation:W2aW2b-mu-star}
	\\
&	\int_{\Hcal}W^4_a(x)\mu(dx) = 3||(I-S_{*})^{1/2}a||^4.
\label{equation:W4a-mu-star}
\end{align}
Applying Eq. \eqref{equation:W4a-mu-star} and noting that $||\phi_k||=1$, we obtain
\begin{align*}
\left\|\sum_{k=1}^{\infty}\alpha_kW^2_{\phi_k}\right\|_{\Lcal^2(\Hcal,\mu_{*})} &\leq 3\sum_{k=1}^{\infty}|\alpha_k|\;||(I-S_{*})^{1/2}\phi_k||^4\leq 3||I-S_{*}||^2\sum_{k=1}^{\infty}|\alpha_k|\;||\phi_k||^4
\\
& = 3||I-S_{*}||^2||T||_{\trace} < \infty.
\end{align*}
Thus  $\sum_{k=1}^{\infty}\alpha_kW^2_{\phi_k} \in \Lcal^2(\Hcal,\mu_{*})$.
Applying Eqs.\eqref{equation:W2aW2b-mu-star} and \eqref{equation:W4a-mu-star} and Lemma \ref{lemma:fourth-power-quadratic-bound},
noting that $||P_N\phi_k|| \leq ||\phi_k|| =1$, we obtain
\begin{align*}
	&\int_{\Hcal}|W^2_{P_N\phi_k}(x) - W^2_{\phi_k}(x)|^2\mu_{*}(dx) = \int_{\Hcal}[W^4_{P_N\phi_k} - 2 W^2_{P_N\phi_k}W^2_{\phi_k} + W^4_{\phi_k}]\mu_{*}(dx)
	\\
	& = 3||(I-S_{*})^{1/2}P_N\phi_k||^4 + 3||(I-S_{*})^{1/2}\phi_k||^4
	\\
	& \quad - 2[||(I-S_{*})^{1/2}P_N\phi_k||^2||(I-S_{*})^{1/2}\phi_k||^2 + 2\la (I-S_{*})^{1/2}P_N\phi_k, (I-S_{*})^{1/2}\phi_k\ra^2] 
	\\
	& \leq 3[||(I-S_{*})^{1/2}P_N\phi_k||^4 + ||(I-S_{*})^{1/2}\phi_k||^4 - 2\la (I-S_{*})^{1/2}P_N\phi_k, (I-S_{*})^{1/2}\phi_k\ra^2]
	\\
	& \leq 3||(I-S_{*})^{1/2}P_N\phi_k - (I-S_{*})^{1/2}\phi_k||^2[||(I-S_{*})^{1/2}P_N\phi_k|| + ||(I-S_{*})^{1/2}\phi_k||]^2
	\\
	& \leq 3||I-S_{*}||^2||P_N\phi_k - \phi_k||^2[||P_N\phi_k|| + ||\phi_k||]^2
	\\
	& \leq 12 ||I-S_{*}||^2||P_N\phi_k - \phi_k||^2 = 12||I-S_{*}||^2\sum_{j=N+1}^{\infty}\la \phi_k, e_j\ra^2.
\end{align*}
It follows that
\begin{align*}
	&\int_{\Hcal}\left|\sum_{k=1}^{\infty}{\alpha_k}W^2_{P_N\phi_k}(x)- \sum_{k=1}^{\infty}{\alpha_k}W^2_{\phi_k}(x)\right|^2\mu_{*}(dx) 
	\\
	&\leq \int_{\Hcal}\left(\sum_{k=1}^{\infty}\left|{\alpha_k}\right|\left|[W^2_{P_N\phi_k}(x) - W^2_{\Phi_k}(x)]\right|\right)^2\mu_{*}(dx)
	\\
	&\leq \sum_{k=1}^{\infty}\left|{\alpha_k}\right|\sum_{k=1}^{\infty}\left|{\alpha_k}\right|\int_{\Hcal}\left|[W^2_{P_N\phi_k}(x) - W^2_{\Phi_k}(x)]\right|^2\mu_{*}(dx)
	\\
	&
	\leq 12||I-S_{*}||^2\sum_{k=1}^{\infty}\left|{\alpha_k}\right|\sum_{k=1}^{\infty}\left|{\alpha_k}\right|\;
	\sum_{j=N+1}^{\infty}\la \phi_k,e_j\ra^2
	\\
	& = 12||I-S_{*}||^2||T||_{\tr}\sum_{j=N+1}^{\infty}\sum_{k=1}^{\infty}|\alpha_k|\la \phi_k, e_j\ra^2 \approach 0 \;\text{as $N \approach \infty$}.
\end{align*}
	(ii) For the second part, as in the proof of Proposition \ref{proposition:RN-S-trace-class-quadratic-form-convergence},
	taking limit in $\Lcal^2(\Hcal,\mu_{*})$ as $N \approach \infty$ gives
	\begin{align*}
		&\sum_{k=1}^{\infty}{\alpha_k}W^2_{\phi_k}(x)
		= \lim_{N \approach \infty} \sum_{k=1}^{\infty}{\alpha_k}W^2_{P_N\phi_k}(x)
		\\
		& = \lim_{N \approach \infty}\la C_0^{-1/2}P_Nx, TC_0^{-1/2}P_Nx\ra
		\doteq\la C_0^{-1/2}x, TC_0^{-1/2}x\ra.
	\end{align*}
	\qed
\end{proof}

We first show two cases in which $D\log\left\{\frac{d\mu}{d\mu_0}(x) \right\}(S)(V)$ admits an explicit expression
in terms of $S$ and $V$, namely
\begin{enumerate}
	\item $S \in \SymTr(\Hcal)_{<I}$, $V \in \Sym(\Hcal) \cap \Tr(\Hcal)$.
	\item $S \in \SymHS(\Hcal)_{<I}$, $V \in \Sym(\Hcal) \cap \HS(\Hcal)$, and $S$ and $V$ commute.
\end{enumerate}

Consider the first case, namely $S \in \SymTr(\Hcal)_{<I}$, $V \in \Sym(\Hcal) \cap \Tr(\Hcal)$.

\begin{proposition}
	\label{proposition:derivative-log-Radon-Nikodym-S-trace-class}
	Let $\mu =\Ncal(0,C), \mu_0 = \Ncal(0,C_0)$, with $C = C_0^{1/2}(I-S)C_0^{1/2}$, $S \in \SymTr(\Hcal)_{<I}$. Then
	$\forall V \in \Sym(\Hcal) \cap \Tr(\Hcal)$,
	\begin{align}
		\label{equation:derivative-log-Radon-Nikodym-S-trace-class}
		&D\log\left\{\frac{d\mu}{d\mu_0}(x) \right\}(S)(V) 
		\nonumber
		\\
		&= \frac{1}{2}\trace\left[(I-S)^{-1}V\right] - \frac{1}{2}\la C_0^{-1/2}x, (I-S)^{-1}V{(I-S)^{-1}}C_0^{-1/2}x\ra.
	\end{align}
Here, with the limit being taken in $\Lcal^2(\Hcal,\mu_0)$
\begin{align}
&\la C_0^{-1/2}x, (I-S)^{-1}V{(I-S)^{-1}}C_0^{-1/2}x\ra 
\nonumber
\\
&\doteq \lim_{N \approach \infty}\la C_0^{-1/2}P_Nx, (I-S)^{-1}V{(I-S)^{-1}}C_0^{-1/2}P_Nx\ra.
\end{align}
\end{proposition}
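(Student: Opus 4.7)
The plan is to differentiate the explicit closed-form expression for $\log\frac{d\mu}{d\mu_0}$ in the trace-class setting, namely Eq.\eqref{equation:log-Radon-Nikodym-S-trace-class-zero-mean}, term by term with respect to $S$, viewing $\log\frac{d\mu}{d\mu_0}(x)$ as a map from an open neighborhood of $S$ in $\Sym(\Hcal)\cap\Tr(\Hcal)$ into $\Lcal^2(\Hcal,\mu_0)$. Writing
\begin{equation*}
\log\left\{\frac{d\mu}{d\mu_0}(x)\right\} = -\frac{1}{2}\log\det(I-S) - \frac{1}{2}\la C_0^{-1/2}x, S(I-S)^{-1}C_0^{-1/2}x\ra,
\end{equation*}
the first, deterministic summand is handled directly by Lemma \ref{lemma:derivative-logdet-Fredholm}: applying the chain rule to $S\mapto \log\det(I-S)$ yields derivative $-\trace[(I-S)^{-1}V]$ in the direction $V\in\Sym(\Hcal)\cap\Tr(\Hcal)$, so the contribution to $D\log\{d\mu/d\mu_0\}(S)(V)$ is $\frac{1}{2}\trace[(I-S)^{-1}V]$.

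For the quadratic form in $x$, I would use the algebraic identity $S(I-S)^{-1} = (I-S)^{-1}-I$, which removes the product rule from the computation and reduces everything to differentiating the operator inverse $S \mapto (I-S)^{-1}$. A Neumann-series expansion $(I-S-tV)^{-1} - (I-S)^{-1} = t(I-S)^{-1}V(I-S)^{-1} + O(t^2)$ in operator norm, valid for $|t|$ small since $(I-S)^{-1}$ is bounded, gives the Fréchet derivative of the operator-valued map as $V\mapto (I-S)^{-1}V(I-S)^{-1}$. Both $(I-S)^{-1}V(I-S)^{-1}$ and its perturbation remainder lie in $\Sym(\Hcal)\cap\Tr(\Hcal)$ with trace-norm remainder $O(t^2)$, since trace class is a two-sided ideal.

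To pass from operator-level differentiation to differentiation of the $x$-dependent quadratic form in the $\Lcal^2(\Hcal,\mu_0)$ sense, I would approximate via $P_N$, apply Proposition \ref{proposition:RN-S-trace-class-quadratic-form-convergence} to the trace-class operators $S(I-S)^{-1}$, $(I-S)^{-1}V(I-S)^{-1}$, and the quadratic-form remainder (which is also trace class and bounded in trace norm by $O(t^2)$), and take limits. Proposition \ref{proposition:RN-S-trace-class-quadratic-form-convergence} then yields the $\Lcal^2(\Hcal,\mu_0)$-convergence of each piece, and in particular the statement that the remainder is $o(\|V\|_{\tr})$ in $\Lcal^2(\Hcal,\mu_0)$ norm, which certifies Fréchet differentiability. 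Combining the two contributions gives Eq.\eqref{equation:derivative-log-Radon-Nikodym-S-trace-class}.

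The main obstacle will be the last step: justifying that the pointwise operator-level remainder estimate translates into an $\Lcal^2(\Hcal,\mu_0)$-norm bound on the quadratic-form remainder. Concretely, I must check that the map from $\Sym(\Hcal)\cap\Tr(\Hcal)$ to $\Lcal^2(\Hcal,\mu_0)$ sending a trace-class operator $T$ to the random variable $\la C_0^{-1/2}x, T C_0^{-1/2}x\ra$ (defined via the $P_N$ limit) is continuous with an explicit modulus controlled by $\|T\|_{\tr}$ or $\|T\|_{\HS}$; this is essentially a quantitative strengthening of Proposition \ref{proposition:RN-S-trace-class-quadratic-form-convergence} applied to the remainder operator, and it is precisely what makes the Fréchet derivative, rather than just a directional derivative, well-defined in this setting.
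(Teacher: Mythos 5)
Your proposal is correct and follows essentially the same route as the paper: the same decomposition of $\log\{d\mu/d\mu_0\}$ via Eq.\eqref{equation:log-Radon-Nikodym-S-trace-class-zero-mean}, Lemma \ref{lemma:derivative-logdet-Fredholm} for the deterministic term, and the derivative $V \mapto (I-S)^{-1}V(I-S)^{-1}$ of $S \mapto S(I-S)^{-1}$ for the quadratic term (the paper obtains this by the product rule rather than by your identity $S(I-S)^{-1}=(I-S)^{-1}-I$ plus a Neumann series, but the two computations are interchangeable). The one substantive difference is how the operator-level remainder is transferred to $\Lcal^2(\Hcal,\mu_0)$. The paper fixes $N$, bounds the projected remainder by $\sqrt{N^2+2N}\,\|f(S+tV)-f(S)-tDf(S)(V)\|$, concludes $Dg_N(S)(V)=h_N(S)(V)$ for each $N$, and only then lets $N\approach\infty$; since the constant $\sqrt{N^2+2N}$ diverges, the final interchange of limits is the delicate point there. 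You instead propose a single $N$-uniform bound: continuity of $T\mapto\la C_0^{-1/2}x,TC_0^{-1/2}x\ra$ from $(\Sym(\Hcal)\cap\Tr(\Hcal),\|\cdot\|_{\tr})$ into $\Lcal^2(\Hcal,\mu_0)$, applied to a remainder operator that is $O(t^2)$ in trace norm because $\Tr(\Hcal)$ is a two-sided ideal. The ``obstacle'' you flag is not actually an obstacle: Lemma \ref{lemma:gaussian-integral-double-quadratic-form}, combined with Proposition \ref{proposition:RN-S-trace-class-quadratic-form-convergence} to pass $N\approach\infty$ in the second moment, gives exactly
\begin{align*}
\left\|\la C_0^{-1/2}x, TC_0^{-1/2}x\ra\right\|^2_{\Lcal^2(\Hcal,\mu_0)} = [\trace(T)]^2 + 2\|T\|^2_{\HS} \leq 3\|T\|^2_{\tr},
\end{align*}
so the quadratic-form map is bounded linear in trace norm and your $O(t^2)$ trace-norm remainder becomes an $o(|t|)$ remainder in $\Lcal^2(\Hcal,\mu_0)$. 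This makes your version of the final step arguably cleaner than the paper's. One small caution: both you and the paper verify the limit along a fixed direction $V$ as $t\approach 0$, which is a directional estimate; to claim Fr\'echet (rather than G\^ateaux) differentiability you should note that the Neumann remainder is bounded by a constant times $t^2\|V\|_{\tr}^2$ uniformly over $V$ in bounded sets, which it is.
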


\begin{proof}
	[\textbf{of Proposition \ref{proposition:derivative-log-Radon-Nikodym-S-trace-class}}]
For $S \in \SymTr(\Hcal)_{< I}$, by Eq.\eqref{equation:log-Radon-Nikodym-S-trace-class-zero-mean},
	$\log\left\{{\frac{d\mu}{d\mu_0}(x)}\right\} = - \frac{1}{2}\log\det(I-S) - \frac{1}{2}\la C_0^{-1/2}x, S(I-S)^{-1}C_0^{-1/2}x\ra$,
where $\la C_0^{-1/2}x, S(I-S)^{-1}C_0^{-1/2}x\ra$ $\doteq \lim\limits_{N \approach \infty}
		\la C_0^{-1/2}P_Nx, S(I-S)^{-1}C_0^{-1/2}P_Nx\ra$, 	with the limit being taken in $\Lcal^2(\Hcal,\mu_0)$.
	By Lemma \ref{lemma:derivative-logdet-Fredholm}, for $h:\SymTr(\Hcal)_{<I} \mapto \R$ defined by $h(S) = \log\det(I-S)$, 
	$Dh(S)(V) = -\trace[(I-S)^{-1}V]$ $\forall V \in \Sym(\Hcal) \cap \Tr(\Hcal)$.
	This gives the first term in Eq.\eqref{equation:derivative-log-Radon-Nikodym-S-trace-class}.
	
		Let $\Omega = \{S \in \Lcal(\Hcal), I-S \text{ is invertible}\}$.
By the product rule, for $f: \Omega \mapto \Lcal(\Hcal)$ defined by $f(S) = S(I-S)^{-1}$,
$Df(S)(V) = V(I-S)^{-1} + S(I-S)^{-1}V(I-S)^{-1} = (I-S)^{-1}V(I-S)^{-1}$ $\forall V \in \Lcal(\Hcal)$.

For each fixed $N \in \Nbb$, define $g_N: \SymTr(\Hcal)_{<I} \mapto \Lcal^2(\Hcal, \mu_0)$ by
$g_N(S)(x) = \la C_0^{-1/2}P_Nx, S(I-S)^{-1}C_0^{-1/2}P_Nx\ra$.
By Lemma \ref{lemma:gaussian-integral-double-quadratic-form},
\begin{align*}
&\int_{\Hcal}|g_N(S)(x)|^2d\mu_0(x) = \int_{\Hcal}\la C_0^{-1/2}P_Nx, S(I-S)^{-1}C_0^{-1/2}P_Nx\ra^2 d\Ncal(0,C_0)(x)
\\
& = \int_{\Hcal}\la x, C_0^{-1/2}P_NS(I-S)^{-1}C_0^{-1/2}P_Nx\ra^2 d\Ncal(0,C_0)(x)
\\
&=[\trace(C_0A_N)]^2 + 2\trace[(C_0A_N)^2],
\end{align*}
where $A_N = C_0^{-1/2}P_NS(I-S)^{-1}C_0^{-1/2}P_N$. Since $C_0^{-1/2}P_NC_0^{1/2} = P_N$,
\begin{align*}
&\trace(C_0A_N) = \trace[C_0^{1/2}P_NS(I-S)^{-1}C_0^{-1/2}P_N]
\\
& = \trace[P_NS(I-S)^{-1}C_0^{-1/2}P_NC_0^{1/2}] = \trace[P_NS(I-S)^{-1}P_N],
\\
&\trace[(C_0A)^2] = \trace[(C_0^{1/2}P_NS(I-S)^{-1}C_0^{-1/2}P_N)(C_0^{1/2}P_NS(I-S)^{-1}C_0^{-1/2}P_N)]
\\
& = \trace[P_NS(I-S)^{-1}P_NS(I-S)^{-1}P_N].
\end{align*}
Thus $g_N(S) \in \Lcal^2(\Hcal,\mu_0)$ 
$\forall S \in \SymTr(\Hcal)_{<I}$.

Let us show that for a fixed $S \in \SymTr(\Hcal)_{< I}$, $Dg_N(S):\Sym(\Hcal) \cap \Tr(\Hcal) \mapto \Lcal^2(\Hcal,\mu_0)$ is given by $Dg_N(S)(V)=h_N(S)(V)$, $V \in \Sym(\Hcal) \cap \Tr(\Hcal)$, where $h_N(S)(V)(x) = \la C_0^{-1/2}P_Nx, (I-S)^{-1}V{(I-S)^{-1}}C_0^{-1/2}P_Nx\ra$.
For each fixed pair $(S,V)$, by Lemma \ref{lemma:gaussian-integral-double-quadratic-form}, similar to $g_N$ before,
\begin{align*}
&\int_{\Hcal}\la C_0^{-1/2}P_Nx, (I-S)^{-1}V{(I-S)^{-1}}C_0^{-1/2}P_Nx\ra^2 d\Ncal(0,C_0)(x)
\\
&= [\trace(P_N(I-S)^{-1}V(I-S)^{-1}P_N)]^2 
\\
&\quad + 2\trace[P_N(I-S)^{-1}V(I-S)^{-1}P_N (I-S)^{-1}V(I-S)^{-1}P_N] < \infty.
\end{align*}
Thus $h_N(S)(V) \in \Lcal^2(\Hcal,\mu_0)$.
It follows that
\begin{align*}
&||g_N(S+tV) - g_N(S) - th_N(S)(V)||^2_{\Lcal^2(\Hcal,\mu_0)}
\\
&\int_{\Hcal}|g_N(S+tV)(x) - g_N(S)(x) - th_N(S)(V)(x)|^2d\Ncal(0,C_0)(x)
\\
&=\int_{\Hcal} |\la C_0^{-1/2}P_Nx, [f(S+tV) - f(S) - tDf(S)(V)]C_0^{-1/2}P_Nx\ra|^2d\Ncal(0,C_0)(x)
\\
& \leq ||f(S+tV) - f(S) - tDf(S)(V)||^2\int_{\Hcal}||C_0^{-1/2}P_Nx||^4d\Ncal(0,C_0)(x)
\\
& = ||f(S+tV) - f(S) - tDf(S)(V)||^2
\\
&\quad \times (\trace[C_0(C_0^{-1/2}P_N)^2] \trace[C_0(C_0^{-1/2}P_N)^2] + 2\trace[(C_0(C_0^{-1/2}P_N)^2)^2])
\\
& = ||f(S+tV) - f(S) - tDf(S)(V)||^2[(\trace(P_N))^2 + 2\trace(P_N)]
\\
& = (N^2+2N) ||f(S+tV) - f(S) - tDf(S)(V)||^2.
\end{align*}
It follows that for each fixed $N \in \Nbb$, since $||V||_{\HS} \geq ||V||$,
\begin{align*}
&\lim_{t \approach 0}\frac{||g_N(S+tV) - g_N(S) - th_N(S)(V)||_{\Lcal^2(\Hcal,\mu_0)}}{|t|\;||V||_{\HS}} 
\\
&\leq \lim_{t \approach 0}\sqrt{N^2+2N}\frac{||f(S+tV) - f(S) - tDf(S)(V)||}{|t|\;||V||} = 0
\end{align*}
Thus $Dg_N(S)(V) = h_N(S)(V)$ by definition of the Fr\'echet derivative.
Taking limits as $N \approach \infty$ of $g_N$ and $h_N$ in $\Lcal^2(\Hcal,\mu_0)$ gives the second term in 
Eq.\eqref{equation:derivative-log-Radon-Nikodym-S-trace-class}.
\qed
\end{proof}

We next present the explicit expression for $D\log\left\{\frac{d\mu}{d\mu_0}(x)\right\}(S)(V)$ 
in the case both $S$ and $V$ are Hilbert-Schmidt, with the additional assumption that $V$ commutes with $S$.
In this case $S$ and $V$ have the same eigenspaces.

\begin{proposition}
	Let $S \in \SymHS(\Hcal)_{<I}$ be fixed, with eigenvalues $\{\alpha_k\}_{k \in \Nbb}$ corresponding to orthonormal eigenvectors
	$\{\phi_k\}_{k \in \Nbb}$.
	Assume further that $V \in \Sym(\Hcal)\cap \HS(\Hcal)$ commutes with $S$. Let the eigenvalues of $V$ corresponding to $\{\phi_k\}_{k \in \Nbb}$ be $\{\beta_k\}_{k \in \Nbb}$. Then
	\begin{align}
		&D\log\left\{\frac{d\mu}{d\mu_0}(x)\right\}(S)(V)
		= -\frac{1}{2}\sum_{k=1}^{\infty}\frac{\beta_k}{1-\alpha_k} \left[\frac{1}{1-\alpha_k}W^2_{\phi_k} - 1\right] \in \Lcal^2(\Hcal,\mu).
	\end{align} 
	Let $V_1,V_2 \in \Sym(\Hcal) \cap \HS(\Hcal)$ be commuting with $S$. Then
	\begin{align}
		&\int_{\Hcal}D\log\left\{\frac{d\mu}{d\mu_0}(x)\right\}(S)(V_1)D\log\left\{\frac{d\mu}{d\mu_0}(x)\right\}(S)(V_2)d\mu(x)
		\nonumber
		\\
		&= \frac{1}{2}\trace[(I-S)^{-1}V_1(I-S)^{-1}V_2].
	\end{align}
	
\end{proposition}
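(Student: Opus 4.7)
The plan is to exploit the fact that the commutativity of $S$ and $V$ places them in a common orthonormal eigenbasis $\{\phi_k\}_{k \in \Nbb}$, so the perturbed operator $S+tV$ shares this eigenbasis with eigenvalues $\alpha_k+t\beta_k$. This reduces the computation of the Fr\'echet derivative of $\log\{d\mu/d\mu_0\}$, given in diagonal form by Eq.~\eqref{equation:log-Radon-Nikodym-S-HS-zero-mean}, to termwise differentiation of a scalar series of functions of the shape $c_k(t)W^2_{\phi_k}+d_k(t)$ in $\Lcal^2(\Hcal,\mu)$.

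Since $S \in \SymHS(\Hcal)_{<I}$ there exists $M_S>0$ with $1-\alpha_k \geq M_S$ for all $k$, so for $|t|<M_S/(2\|V\|)$ one has $S+tV \in \SymHS(\Hcal)_{<I}$ with $1-\alpha_k-t\beta_k \geq M_S/2$ uniformly in $k$, and Eq.~\eqref{equation:log-Radon-Nikodym-S-HS-zero-mean} applies with $\alpha_k$ replaced by $\alpha_k+t\beta_k$ while the eigenvectors $\phi_k$ are unchanged. Termwise differentiation of $t\mapsto(\alpha_k+t\beta_k)/(1-\alpha_k-t\beta_k)$ at $t=0$ produces $\beta_k/(1-\alpha_k)^2$, while differentiating $\log(1-\alpha_k-t\beta_k)$ produces $-\beta_k/(1-\alpha_k)$, and combining and rearranging yields the claimed formula.

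To rigorously identify this formal derivative as the Fr\'echet derivative in $\Lcal^2(\Hcal,\mu)$, I introduce the functions $\psi_k := \frac{1}{\sqrt{2}}\bigl[W^2_{\phi_k}/(1-\alpha_k) - 1\bigr]$. Using the moment identities Eqs.~\eqref{equation:W2aW2b-mu-star} and \eqref{equation:W4a-mu-star} together with $\int W^2_{\phi_k}\,d\mu = \la\phi_k,(I-S)\phi_k\ra = 1-\alpha_k$, a direct computation shows that $\{\psi_k\}_{k\in\Nbb}$ is orthonormal in $\Lcal^2(\Hcal,\mu)$: the diagonal norms give $(3-2+1)/2 = 1$ and the off-diagonal inner products give $(1-1-1+1)/2 = 0$. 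Since $V \in \HS(\Hcal)$ gives $\sum_k \beta_k^2 = \|V\|_{\HS}^2 < \infty$ and $1-\alpha_k \geq M_S$, the proposed series $-\sum_k (\beta_k/(\sqrt{2}(1-\alpha_k)))\psi_k$ converges in $\Lcal^2(\Hcal,\mu)$ with squared norm bounded by $\|V\|_{\HS}^2/(2M_S^2)$. The Fr\'echet-derivative claim then follows by estimating the remainder in the difference quotient mode-by-mode: each coefficient of $W^2_{\phi_k}$ (and the additive constants) has a Taylor remainder $O(t^2\beta_k^2)$ controlled uniformly in $k$ by the lower bound $M_S/2$, and summing via orthonormality of $\{\psi_k\}$ delivers $\Lcal^2(\Hcal,\mu)$-vanishing.

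Finally, orthonormality of $\{\psi_k\}$ immediately yields, for $V_1,V_2$ commuting with $S$ with eigenvalues $\beta_k^{(1)},\beta_k^{(2)}$,
\begin{align*}
\left\la D\log\!\left\{\tfrac{d\mu}{d\mu_0}\right\}(S)(V_1),\, D\log\!\left\{\tfrac{d\mu}{d\mu_0}\right\}(S)(V_2)\right\ra_{\Lcal^2(\Hcal,\mu)} = \frac{1}{2}\sum_{k=1}^{\infty}\frac{\beta_k^{(1)}\beta_k^{(2)}}{(1-\alpha_k)^2},
\end{align*}
and since $S,V_1,V_2$ share the eigenbasis $\{\phi_k\}$, the operator $(I-S)^{-1}V_1(I-S)^{-1}V_2$ is diagonal with eigenvalues $\beta_k^{(1)}\beta_k^{(2)}/(1-\alpha_k)^2$, so the right-hand side equals $\tfrac{1}{2}\trace[(I-S)^{-1}V_1(I-S)^{-1}V_2]$. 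The principal technical obstacle is the uniform-in-$k$ Taylor remainder control in step three; the positive lower bound on $1-\alpha_k-t\beta_k$ combined with $\ell^2$-summability of $\{\beta_k\}$ make this estimate tractable but require some care.
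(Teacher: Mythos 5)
Your proposal is correct and follows essentially the same route as the paper: reduce to the diagonal series via the common eigenbasis of $S+tV$, identify the formal termwise derivative, justify it as a Fr\'echet derivative in $\Lcal^2(\Hcal,\mu)$ using the orthonormality of $\bigl\{\tfrac{1}{\sqrt{2}}\bigl(\tfrac{1}{1-\alpha_k}W^2_{\phi_k}-1\bigr)\bigr\}$ together with uniform control of the per-mode Taylor remainders, and conclude the inner-product formula from orthonormality plus commutativity. The only cosmetic differences are that you verify the orthonormality directly from the moment identities rather than citing it, and that your remainder estimate should be stated with the small extra step of splitting each term $c_kW^2_{\phi_k}+d_k$ into a multiple of $\psi_k$ plus a scalar (handled by absolute summability), which is exactly the splitting the paper performs via its $W^2$-part and scalar-part sums.
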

\begin{proof}
	Let $t \in \R$ be sufficiently small so that  $I-(S\pm tV) > 0$. The eigenvalues of
	$S \pm tV$ corresponding to $\{\phi_k\}_{k \in \Nbb}$ are $\{\alpha_k \pm  t\beta_k\}_{k \in \Nbb}$
	and $\alpha_k \pm t\beta_k < 1$ $\forall k \in \Nbb$.
	Define $
	f(S) = -\frac{1}{2}\sum_{k=1}^{\infty}\left[\frac{\alpha_k}{1-\alpha_k}W^2_{\phi_k}(x) + \log(1-\alpha_k)\right]
	$.
	Since $1-\alpha_k > 0$ and $\alpha_k \pm t\beta_k < 1$ $\forall k \in \Nbb$, we have $\frac{|t\beta_k|}{1-\alpha_k} < 1$.
	By using the absolutely convergent series $\log(1-x) = - x - \frac{x^2}{2} - \frac{x^3}{3} - \cdots $ for $|x|<1$, 
	\begin{align*}
		&f(S+tV) - f(S) 
		\\
		&=
		-\frac{1}{2}\sum_{k=1}^{\infty}\left[\left(\frac{(\alpha_k + t\beta_k)}{1-(\alpha_k + t\beta_k)} - \frac{\alpha_k}{1-\alpha_k}\right)W^2_{\phi_k} + \log(1-(\alpha_k +t\beta_k)) - \log(1-\alpha_k)\right]
		\\
		& = -\frac{1}{2}\sum_{k=1}^{\infty}\left[\frac{t\beta_k}{[1-(\alpha_k + t\beta_k)](1-\alpha_k)}W^2_{\phi_k}+\log\left(1-\frac{t\beta_k}{1-\alpha_k}\right)\right]
		\\
		& = -\frac{1}{2}\sum_{k=1}^{\infty}\left[\frac{t\beta_k}{[1-(\alpha_k + t\beta_k)](1-\alpha_k)}W^2_{\phi_k} - \frac{t\beta_k}{1-\alpha_k} - \frac{t^2\beta_k^2}{2(1-\alpha_k)^2} - \frac{t^3\beta_k^3}{3(1-\alpha_k)^3} - \cdots  \right]
		\\
		& = -\frac{t}{2} \sum_{k=1}^{\infty}\left[\frac{\beta_k}{[1-(\alpha_k + t\beta_k)](1-\alpha_k)}W^2_{\phi_k} - \frac{\beta_k}{1-\alpha_k} - \frac{t\beta_k^2}{2(1-\alpha_k)^2} - \frac{t^2\beta_k^3}{3(1-\alpha_k)^3} - \cdots  \right].
	\end{align*}
	Define $g(S)(V) = -\frac{1}{2}\sum_{k=1}^{\infty}\left[\frac{\beta_k}{(1-\alpha_k)^2}W^2_{\phi_k} - \frac{\beta_k}{1-\alpha_k}\right]
	= -\frac{1}{2}\sum_{k=1}^{\infty}\frac{\beta_k}{1-\alpha_k} \left[\frac{1}{1-\alpha_k}W^2_{\phi_k} - 1\right]
	$. By Proposition 9 in \cite{Minh:2020regularizedDiv}, the functions $\left\{\frac{1}{\sqrt{2}}\left(\frac{1}{1-\alpha_k}W^2_{\phi_k}-1\right)\right\}_{k \in \Nbb}$
	are orthonormal in $\Lcal^2(\Hcal,\mu)$. Thus 
	\begin{align*}
		||g(S)(V)||^2_{\Lcal^2(\Hcal, \mu)} = \frac{1}{2}\sum_{k=1}^{\infty}\frac{\beta_k^2}{(1-\alpha_k)^2} =\frac{1}{2} ||(I-S)^{-1}V||^2_{\HS} < \infty
		.
	\end{align*}
	Therefore $g(S)(V) \in \Lcal^2(\Hcal,\mu)$ and
	\begin{align*}
		&\frac{1}{t}[f(S+tV) - f(S) - tg(S)(V)] = 
		-\frac{t}{2}\sum_{k=1}^{\infty}\frac{\beta_k^2}{[1-(\alpha_k + t\beta_k)](1-\alpha_k)^2}W^2_{\phi_k}
		\\
		& + \frac{t}{2}\sum_{k=1}^{\infty}\left[\frac{\beta_k^2}{2(1-\alpha_k)^2} + \frac{t\beta_k^3}{3(1-\alpha_k)^3} + \cdots \right].
	\end{align*}
	By the orthonormality of $\left\{\frac{1}{\sqrt{2}}\left(\frac{1}{1-\alpha_k}W^2_{\phi_k}-1\right)\right\}_{k \in \Nbb}$
	in $\Lcal^2(\Hcal,\mu)$,
	\begin{align*}
		\left\|\frac{1}{(1-\alpha_k)}W^2_{\phi_k}\right\|_{\Lcal^2(\Hcal,\mu)} \leq
		\sqrt{2}\left\|\frac{1}{\sqrt{2}}\left(\frac{1}{(1-\alpha_k)}W^2_{\phi_k}-1\right)\right\|_{\Lcal^2(\Hcal,\mu)} + 1
		= 1+\sqrt{2},
	\end{align*}
	from which it follows that
	\begin{align*}
		&\left\|\sum_{k=1}^{\infty}\frac{\beta_k^2}{[1-(\alpha_k + t\beta_k)](1-\alpha_k)^2}W^2_{\phi_k}\right\|_{\Lcal^2(\Hcal,\mu)}
		\leq (1+\sqrt{2})\sum_{k=1}^{\infty}\frac{\beta_k^2}{[1-(\alpha_k + t\beta_k)](1-\alpha_k)}.
	\end{align*}
	By the Monotone Convergence Theorem,
	\begin{align*}
		&\lim_{t \approach 0}\sum_{k=1}^{\infty}\frac{\beta_k^2}{[1-(\alpha_k + t\beta_k)](1-\alpha_k)}
		= \sum_{k=1}^{\infty}\lim_{t \approach 0}\frac{\beta_k^2}{[1-(\alpha_k + t\beta_k)](1-\alpha_k)}
		\\
		&=\sum_{k=1}^{\infty}\frac{\beta_k^2}{(1-\alpha_k)^2} = ||(I-S)^{-1}V||^2_{\HS} < \infty.
	\end{align*}
	Since $S+|V| \in \HS(\Hcal)$, we have $\alpha_k + |\beta_k| < 1 \equivalent\frac{|\beta_k|}{1-\alpha_k} < 1$ for $k$ sufficiently large. Without loss of generality, assume that $\frac{|\beta_k|}{1-\alpha_k} < 1$ $\forall k \in \Nbb$.
	For $|t| < 1$,
	\begin{align*}
		&	\sum_{k=1}^{\infty}\left|\frac{\beta_k^2}{2(1-\alpha_k)^2} + \frac{t\beta_k^3}{3(1-\alpha_k)^3} + \cdots \right|
		\leq\sum_{k=1}^{\infty} \frac{|\beta_k|^2}{2(1-\alpha_k)^2} + \frac{|\beta_k|^3}{3(1-\alpha_k)^3} + \cdots 
		\\
		& = \sum_{k=1}^{\infty}-\frac{|\beta_k|}{(1-\alpha_k)} - \log\left(1-\frac{|\beta_k|}{(1-\alpha_k)}\right)
		= - \log\dettwo\left(I - (I-S)^{-1}|V|\right) < \infty.
	\end{align*}
	Thus by the Lebesgue Dominated Convergence Theorem
	\begin{align*}
		&\lim_{t \approach 0}\sum_{k=1}^{\infty}\left[\frac{\beta_k^2}{2(1-\alpha_k)^2} + \frac{t\beta_k^3}{3(1-\alpha_k)^3} + \cdots \right]
		\\
		&
		= \sum_{k=1}^{\infty}\lim_{t \approach 0}\left[\frac{\beta_k^2}{2(1-\alpha_k)^2} + \frac{t\beta_k^3}{3(1-\alpha_k)^3} + \cdots \right]
		\\
		&
		= \sum_{k=1}^{\infty}\frac{\beta_k^2}{2(1-\alpha_k)^2} = \frac{1}{2}||(I-S)^{-1}V||^2_{\HS} < \infty.
	\end{align*}
	Combining the previous expressions, we obtain that
	\begin{align*}
		\lim_{t \approach 0}\frac{1}{|t|}||f(S+tV) - f(S) - tg(S)(V)||_{\Lcal^2(\Hcal,\mu)} = 0.
	\end{align*}
	It thus follows by definition of the Fr\'echet derivative that
	\begin{align*}
		&Df(S)(V) = g(S)(V)
		\\
		& = -\frac{1}{2}\sum_{k=1}^{\infty}\left[\frac{\beta_k}{(1-\alpha_k)^2}W^2_{\phi_k} - \frac{\beta_k}{1-\alpha_k}\right]
		= -\frac{1}{2}\sum_{k=1}^{\infty}\frac{\beta_k}{1-\alpha_k} \left[\frac{1}{1-\alpha_k}W^2_{\phi_k} - 1\right].
	\end{align*}
	For $V_1, V_2 \in \Sym(\Hcal) \cap \HS(\Hcal)$, with eigenvalues $\{\beta_k\}_{k \in \Nbb}$, $\{\gamma_k\}_{k \in \Nbb}$, respectively, corresponding to 
	the eigenvectors
	$\{\phi_k\}_{k \in \Nbb}$, by the orthonormality of $\left\{\frac{1}{\sqrt{2}}\left(\frac{1}{1-\alpha_k}W^2_{\phi_k}-1\right)\right\}_{k \in \Nbb}$
	in $\Lcal^2(\Hcal,\mu)$,
	\begin{align*}
		&\int_{\Hcal}Df(S)(V_1)Df(S)(V_2)d\mu(x) 
		\\
		&=\frac{1}{4}\int_{\Hcal}\left(\sum_{k=1}^{\infty}\frac{\beta_k}{1-\alpha_k} \left[\frac{1}{1-\alpha_k}W^2_{\phi_k} - 1\right]\right)
		\left(\sum_{k=1}^{\infty}\frac{\gamma_k}{1-\alpha_k} \left[\frac{1}{1-\alpha_k}W^2_{\phi_k} - 1\right]\right)d\mu(x)
		\\
		&= \frac{1}{2}\sum_{k=1}^{\infty}\frac{\beta_k\gamma_k}{(1-\alpha_k)^2} = \frac{1}{2}\trace[(I-S)^{-2}V_1V_2] = \frac{1}{2}\trace[(I-S)^{-1}V_1(I-S)^{-1}V_2],
	\end{align*}
	where the last equality follows since $S,V_1,V_2$ all commute.
	%
	\qed
\end{proof}

\begin{lemma}
\label{lemma:relation-T-S-2}
Let $A,T,S \in \Lcal(\Hcal)$ be as in Corollary \ref{corollary:relation-T-S}, with $I-T = AA^{*}$ and $(I-S)^{-1} = A^{*}A$.
Then $||S||_{\HS}  = ||T(I-T)^{-1}||_{\HS}$, $||T||_{\HS} = ||S(I-S)^{-1}||_{\HS}$,
$\trace[S^2(I-S)^{-1}] = ||S(I-S)^{-1/2}||^2_{\HS} = ||T(I-T)^{-1/2}||^2_{\HS} = \trace[T^2(I-T)^{-1}]$.
\end{lemma}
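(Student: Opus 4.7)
My approach is to exhibit all four quantities as traces/Hilbert--Schmidt norms of operators that are mutually unitarily similar, via a single unitary arising from the polar decomposition of $A$. First I would invoke Lemma \ref{lemma:C0-Sigma-1/2-inverse-bounded} to note that $A = \Sigma^{-1/2}C_0^{1/2}$ is bounded with bounded inverse $A^{-1} = C_0^{-1/2}\Sigma^{1/2}$, hence $A \in \GL(\Hcal)$. The polar decomposition $A = U|A|$ with $|A| = (A^{*}A)^{1/2}$ then furnishes a genuine \emph{unitary} $U \in \Lcal(\Hcal)$, since both $\ker(A)$ and $\ker(A^{*})$ are trivial. From $AA^{*} = U|A|^2 U^{*} = U(A^{*}A)U^{*}$ and the hypotheses $A^{*}A = (I-S)^{-1}$, $AA^{*} = I-T$, I would derive
\begin{align*}
I - T = U(I-S)^{-1}U^{*}, \qquad (I-T)^{-1} = U(I-S)U^{*},
\end{align*}
and consequently $T = -U\,S(I-S)^{-1}U^{*}$ (using the identity $I - (I-S)^{-1} = -S(I-S)^{-1}$).

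Next I would multiply these to extract two key unitary similarities. The product $T(I-T)^{-1} = \bigl[-U S(I-S)^{-1}U^{*}\bigr]\bigl[U(I-S)U^{*}\bigr] = -USU^{*}$ immediately yields $\|T(I-T)^{-1}\|_{\HS} = \|S\|_{\HS}$, and rearranging the same identity gives $S(I-S)^{-1} = -U^{*}TU$, whence $\|S(I-S)^{-1}\|_{\HS} = \|T\|_{\HS}$. Squaring the first similarity produces $T^2 = US^2(I-S)^{-2}U^{*}$, so
\begin{align*}
T^2(I-T)^{-1} = US^2(I-S)^{-2}U^{*} \cdot U(I-S)U^{*} = U\,S^2(I-S)^{-1}\,U^{*},
\end{align*}
and taking the trace delivers $\trace[T^2(I-T)^{-1}] = \trace[S^2(I-S)^{-1}]$.

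Finally, for the two inner equalities I would use the functional calculus (since $S$ commutes with $(I-S)^{-1/2}$) together with the cyclic property of the trace:
\begin{align*}
\|S(I-S)^{-1/2}\|_{\HS}^2 = \trace[(I-S)^{-1/2}S^2(I-S)^{-1/2}] = \trace[S^2(I-S)^{-1}],
\end{align*}
and identically for the $T$-version. Chaining these gives the full four-way equality.

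The only real obstacle is bookkeeping: I must check that each operator appearing inside a Hilbert--Schmidt norm or trace is in the trace class, but this follows painlessly from $S,T \in \SymHS(\Hcal)\cap\SymHS(\Hcal)_{<I}$ combined with the boundedness of $(I-S)^{\pm 1/2}$ and $(I-T)^{\pm 1/2}$ (guaranteed since $I-S, I-T$ are positive definite and invertible by Lemma \ref{lemma:C0-Sigma-1/2-inverse-bounded}), so every product of an \HS\ factor with bounded factors remains Hilbert--Schmidt, and each squared-\HS\ expression is trace class. The key conceptual input is simply that the invertibility of $A$ upgrades the standard partial-isometry $U$ in the polar decomposition to a full unitary, which is what allows the two sides of each identity to coincide rather than merely agree on nonzero spectrum.
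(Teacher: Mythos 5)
Your proof is correct and follows essentially the same route as the paper: the paper's proof substitutes $S = I-(A^{*}A)^{-1}$ and $I-T = AA^{*}$ and then swaps $A^{*}A$ for $AA^{*}$ inside the Hilbert--Schmidt norm and the trace, which is precisely the unitary equivalence you make explicit via the polar decomposition of the invertible operator $A$. The only difference is presentational: you derive the conjugation identities such as $T(I-T)^{-1} = -USU^{*}$ explicitly, whereas the paper performs the $A^{*}A \leftrightarrow AA^{*}$ exchange silently.
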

\begin{proof}
With $(I-S)^{-1} = A^{*}A$, we  have $I-S = (A^{*}A)^{-1}$, $S = I - (A^{*}A)^{-1}$,
$||S||_{\HS} = ||I - (A^{*}A)^{-1}||_{\HS} = ||I- (AA^{*})^{-1}||_{\HS} = ||I-(I-T)^{-1}||_{\HS} = ||T(I-T)^{-1}||_{\HS}$.
By symmetry, $||T||_{\HS} = ||S(I-S)^{-1}||_{\HS}$.

Similarly, $\trace[S^2(I-S)^{-1}] = \trace[(I-(A^{*}A)^{-1})^2A^{*}A] = \trace[(I-(AA^{*})^{-1})^2AA^{*}]$ 
$=\trace[(I-(I-T)^{-1})^2(I-T)]= \trace[T^2(I-T)^{-1}] = ||T(I-T)^{-1/2}||^2_{\HS}$.
\qed
\end{proof}

\begin{lemma}
	\label{lemma:HS-inner-expansion}
	Let $S,T \in \Sym(\Hcal) \cap \HS(\Hcal)$, with eigenvalues $\{\alpha_k\}_{k \in \Nbb}$, $\{\beta_k\}_{k \in \Nbb}$
	and corresponding orthonormal eigenvectors $\{\phi_k\}_{k \in \Nbb}$, $\{\psi_k\}_{k \in \Nbb}$, respectively.
	Let $A \in \Sym(\Hcal)$.
	Then
	\begin{align}
		\trace(ASAT) =\trace(SATA) = \sum_{k,j=1}^{\infty}\alpha_k \beta_j \la A\phi_k, \psi_j\ra^2.
	\end{align}
If, in addition, $A \in \Sym(\Hcal)\cap \HS(\Hcal)$, then $\trace(AS) = \sum_{k=1}^{\infty}\alpha_k\la \phi_k, A\phi_k\ra$.
\end{lemma}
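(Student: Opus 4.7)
The proof will proceed by spectral decomposition and continuity of the trace. Write $S_N = \sum_{k=1}^N \alpha_k\,\phi_k\otimes\phi_k$ and $T_N = \sum_{j=1}^N \beta_j\,\psi_j\otimes\psi_j$, so that $S_N \to S$ and $T_N \to T$ in $\HS(\Hcal)$. Since $A$ is bounded, I first observe that $AS, AT \in \HS(\Hcal)$, and hence $(AS)(AT) = ASAT \in \Tr(\Hcal)$, with $\trace(AS_NAT_N)\to \trace(ASAT)$ because $\|ASAT - AS_NAT_N\|_{\tr}\le \|A\|^2(\|S-S_N\|_{\HS}\|T\|_{\HS}+\|S_N\|_{\HS}\|T-T_N\|_{\HS})\to 0$, and the trace is continuous on $\Tr(\Hcal)$.

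Next, I will carry out the algebraic identification on the finite-rank truncations. Using the rule $(u\otimes v)w = \la v,w\ra u$, one computes directly
\begin{equation*}
A(\phi_k\otimes\phi_k)A(\psi_j\otimes\psi_j) = \la \phi_k, A\psi_j\ra\,(A\phi_k)\otimes\psi_j,
\end{equation*}
whose trace equals $\la \phi_k,A\psi_j\ra\la\psi_j,A\phi_k\ra$. Summing over $k,j\le N$ and invoking the self-adjointness of $A$ gives $\la\phi_k,A\psi_j\ra = \la A\phi_k,\psi_j\ra$, hence
\begin{equation*}
\trace(AS_NAT_N) = \sum_{k,j=1}^{N}\alpha_k\beta_j\,\la A\phi_k,\psi_j\ra^2.
\end{equation*}
Passing to the limit $N\to\infty$ on both sides yields the claimed identity for $\trace(ASAT)$, while $\trace(SATA)=\trace(ASAT)$ follows from the cyclicity of the trace (valid here because both $SA$ and $TA$ belong to $\HS(\Hcal)$).

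For the second identity, assume additionally $A\in\Sym(\Hcal)\cap\HS(\Hcal)$, so that $AS\in \Tr(\Hcal)$ as a product of two Hilbert--Schmidt operators. A similar truncation argument with $A(\phi_k\otimes\phi_k)=A\phi_k\otimes\phi_k$, whose trace equals $\la\phi_k,A\phi_k\ra$, and continuity of the trace on $\Tr(\Hcal)$ give $\trace(AS)=\lim_N\trace(AS_N)=\sum_{k=1}^\infty\alpha_k\la\phi_k,A\phi_k\ra$.

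The only technical point requiring care is the justification of taking the limit inside the trace, but this is immediate from the elementary inequality $|\trace(B)|\le\|B\|_{\tr}$ together with submultiplicativity of $\|\cdot\|_{\tr}$ under the factorisation into two Hilbert--Schmidt factors; no finer estimate is needed. Absolute convergence of the resulting double series follows from $\sum_{k,j}\alpha_k\beta_j\la A\phi_k,\psi_j\ra^2 \le \|A\|^2 \sum_k|\alpha_k|\,\|\phi_k\|^2\,\sum_j|\beta_j|\,\|\psi_j\|^2$ combined with Cauchy--Schwarz, though one can also argue it directly from $\sum_{k,j}\la A\phi_k,\psi_j\ra^2 = \|A\|^2_{\HS}$ when $A\in\HS(\Hcal)$; for general bounded $A$ one uses $\sum_j\la A\phi_k,\psi_j\ra^2 = \|A\phi_k\|^2\le\|A\|^2$.
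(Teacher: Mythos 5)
Your proof is correct and reaches the same formula, but it is organized differently from the paper's. The paper computes the trace directly in the eigenbasis of $S$: $\trace(SATA)=\sum_k\la \phi_k, SATA\phi_k\ra=\sum_k\alpha_k\la A\phi_k,TA\phi_k\ra$, and then expands $TA\phi_k$ in the eigenbasis of $T$ to get the double sum; the interchange of summations is left implicit. You instead truncate $S$ and $T$ to finite rank, evaluate the trace of the resulting rank-one products exactly, and pass to the limit using $\|\cdot\|_{\tr}$-continuity of the trace and the factorization of $ASAT$ into two Hilbert--Schmidt factors. Your route is longer but makes the limit interchange explicit, which is the one point the paper glosses over. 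Two small remarks. First, your displayed bound $\sum_{k,j}|\alpha_k\beta_j|\la A\phi_k,\psi_j\ra^2\le\|A\|^2\sum_k|\alpha_k|\sum_j|\beta_j|$ requires $S,T$ trace class, which is not assumed; the correct justification for $S,T$ merely Hilbert--Schmidt is the Cauchy--Schwarz splitting you allude to, e.g. $\sum_{k,j}|\alpha_k\beta_j|\la A\phi_k,\psi_j\ra^2\le\bigl(\sum_{k,j}\alpha_k^2\la A\phi_k,\psi_j\ra^2\bigr)^{1/2}\bigl(\sum_{k,j}\beta_j^2\la A\phi_k,\psi_j\ra^2\bigr)^{1/2}\le\|A\|^2\|S\|_{\HS}\|T\|_{\HS}$, so you should make that the primary argument rather than an aside. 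Second, your justification of $\trace(ASAT)=\trace(SATA)$ via ``$SA$ and $TA$ are Hilbert--Schmidt'' actually yields $\trace(SATA)=\trace(TASA)$; the identity you want follows more directly from $\trace(XY)=\trace(YX)$ with $X=A$ bounded and $Y=SAT$ trace class. Neither issue affects the validity of the result.
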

\begin{proof}
	Since $\{\phi_k\}_{k \in \Nbb}$ and $\{\psi_j\}_{j \in \Nbb}$ are both orthonormal bases in $\Hcal$,
	\begin{align*}
		&
		\trace(ASAT) = \trace(SATA)
		\\
		&= \sum_{k=1}^{\infty}\la \phi_k, SATA\phi_k\ra = \sum_{k=1}^{\infty}\alpha_k \la \phi_k, ATA\phi_k\ra 
		= \sum_{k=1}^{\infty}
		\alpha_k\la A\phi_k, TA\phi_k\ra 
		\\
		&= \sum_{k=1}^{\infty}\alpha_k \sum_{j=1}^{\infty}\la A\phi_k, \psi_j\ra \la TA\phi_k, \psi_j\ra =  \sum_{k=1}^{\infty}\alpha_k \sum_{j=1}^{\infty}\la A\phi_k, \psi_j\ra \la A\phi_k, T\psi_j\ra
		\\
		& = \sum_{k,j=1}^{\infty}\alpha_k\beta_j\la A\phi_k, \psi_j\ra^2.
	\end{align*}
	If we assume in addition that $A \in \HS(\Hcal)$, then
		$\trace(AS) = \sum_{k=1}^{\infty}\la \phi_k, AS\phi_k\ra = \sum_{k=1}^{\infty}\alpha_k \la \phi_k, A\phi_k\ra$.
		\qed
	\end{proof}
\begin{lemma}
	\label{lemma:logdet-2-inverse}
	Let $A \in \SymHS(\Hcal)_{< I}$ with eigenvalues $\{\lambda_k\}_{k=1}^{\infty}$. Then
	\begin{align}
		\log\dettwo[(I-A)^{-1}] &= -\sum_{k=1}^{\infty}\left[\frac{\alpha_k}{1-\alpha_k} + \log(1-\alpha_k)\right]
		\\
		&= - [\log\dettwo(I-A) + \trace(A^2(I-A)^{-1})].
	\end{align}
	In particular, for $A \in \SymTr(\Hcal)_{< I}$, $\log\dettwo[(I-A)^{-1}] = -\log\det(I-A) - \trace[A(I-A)^{-1}]$.
\end{lemma}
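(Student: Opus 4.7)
The plan is to reduce everything to a spectral computation on the eigenvalues $\{\alpha_k\}_{k=1}^{\infty}$ of $A$ and then verify the two stated identities by elementary series manipulation.

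First, since $A \in \SymHS(\Hcal)_{<I}$, we have $\sup_k \alpha_k < 1$, so $(I-A)^{-1}$ is a well-defined bounded operator with $(I-A)^{-1} = I + A(I-A)^{-1}$. The operator $B = A(I-A)^{-1}$ lies in $\Sym(\Hcal)\cap\HS(\Hcal)$ (bounded times Hilbert--Schmidt), with eigenvalues $\{\alpha_k/(1-\alpha_k)\}_{k=1}^{\infty}$. Hence the Hilbert--Carleman determinant $\dettwo[(I-A)^{-1}] = \dettwo(I+B)$ is defined through $\det[(I+B)\exp(-B)]$, and by the spectral mapping theorem together with the product formula for the Fredholm determinant applied to the trace class operator $(I+B)\exp(-B) - I$, I would obtain
\begin{align*}
\dettwo[(I-A)^{-1}] = \prod_{k=1}^{\infty}\frac{1}{1-\alpha_k}\exp\!\left(-\frac{\alpha_k}{1-\alpha_k}\right).
\end{align*}
Taking logarithms (the convergence of the resulting series being guaranteed because, for small $\alpha_k$, $-\log(1-\alpha_k) - \alpha_k/(1-\alpha_k) = O(\alpha_k^2)$ and $\sum \alpha_k^2 < \infty$) yields the first identity.

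For the second identity, I would expand each of the two pieces on the right-hand side separately. By the same Plemelj--Smithies-type expansion applied to $\dettwo(I-A)$, one has $\log\dettwo(I-A) = \sum_{k=1}^{\infty}[\log(1-\alpha_k)+\alpha_k]$, and by Lemma \ref{lemma:HS-inner-expansion} (applied to the trace class operator $A^2(I-A)^{-1} = A\cdot A(I-A)^{-1}$) one has $\trace[A^2(I-A)^{-1}] = \sum_{k=1}^{\infty}\alpha_k^2/(1-\alpha_k)$. Adding these two series and using the algebraic identity $\alpha_k + \alpha_k^2/(1-\alpha_k) = \alpha_k/(1-\alpha_k)$ produces exactly the negative of the series obtained in the first step, which gives the desired equality.

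Finally, for the case $A \in \SymTr(\Hcal)_{<I}$, the series $\sum_k \alpha_k = \trace(A)$ now converges absolutely, so $\log\dettwo(I-A) = \log\det(I-A) + \trace(A)$ by the definition of $\dettwo$ in terms of the Fredholm determinant. Substituting this into the second identity and using $\trace(A) + \trace[A^2(I-A)^{-1}] = \trace[A(I-A)^{-1}]$ (which follows from $A + A^2(I-A)^{-1} = A(I-A)^{-1}$) yields the trace class formula. No real obstacle is anticipated; the only point requiring care is justifying term-by-term manipulation of conditionally-defined series, which is handled by grouping terms into the manifestly summable combination $\log(1-\alpha_k) + \alpha_k + \alpha_k^2/(1-\alpha_k)$.
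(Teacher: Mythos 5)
Your proposal is correct and follows essentially the same route as the paper: the first identity via the eigenvalue product formula for $\dettwo[(I-A)^{-1}]=\det[(I-A)^{-1}\exp(-A(I-A)^{-1})]$, the second by the termwise algebraic identity $\alpha_k+\log(1-\alpha_k)+\alpha_k^2/(1-\alpha_k)=\alpha_k/(1-\alpha_k)+\log(1-\alpha_k)$, and the trace class case from $\log\dettwo(I-A)=\log\det(I-A)+\trace(A)$. Your added remarks on summability of the grouped terms are a welcome bit of extra care, but the argument is the same.
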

\begin{proof}
	Since $(I-A)^{-1} = I+ A(I-A)^{-1}$, by definition $\dettwo[(I-A)^{-1}] = \det[(I-A)^{-1}\exp(-A(I-A)^{-1})] 
	= \prod_{k=1}^{\infty}(1-\alpha_k)^{-1}\exp(-\alpha_k(1-\alpha_k)^{-1})$, from which the first identity follows.
The second identity follows from	
$\log\dettwo(I-A) + \trace[A^2(I-A)^{-1}] = \alpha_k + \log(1-\alpha_k) + \frac{\alpha_k^2}{1-\alpha_k}
= \frac{\alpha_k}{1-\alpha_k} + \log(1-\alpha_k)$.
	\qed
\end{proof}

\begin{proposition}
	\label{proposition:inner-logRN-mu-star}
	Let $S_{*} \in \SymHS(\Hcal)_{<I}$ be fixed but arbitrary. 
	Let $\mu_{*} = \Ncal(0, C_{*})$, where $C_{*} = C_0^{1/2}(I-S_{*})C_0^{1/2}$.
	Let $S,T \in \SymHS(\Hcal)_{< I}$. Let
	$\mu = \Ncal(0,Q)$, $Q = C_0^{1/2}(I-S)C_0^{1/2}$, $\nu = \Ncal(0,R)$, $R = C_0^{1/2}(I-T)C_0^{1/2}$.
	Then 
	\begin{align}
		&\left\la \log\left\{{\frac{d\mu}{d\mu_0}(x)}\right\}, \log\left\{{\frac{d\nu}{d\mu_0}(x)}\right\}\right\ra_{\Lcal^2(\Hcal,\mu_{*})}
		\nonumber
		\\
		& = \frac{1}{2}\trace[(I-S_{*})S(I-S)^{-1}(I-S_{*})T(I-T)^{-1}]
		\nonumber
		\\
		& \quad + \frac{1}{4}(\log\dettwo[(I-S)^{-1}] + \trace[S_{*}S(I-S)^{-1}])
		\nonumber
		\\
		& \quad \quad \times (\log\dettwo[(I-T)^{-1}] + \trace[S_{*}T(I-T)^{-1}]).
		\\
		&\left\| \log\left\{{\frac{d\mu}{d\mu_0}(x)}\right\}\right\|^2_{\Lcal^2(\Hcal,\mu_{*})}=\frac{1}{2} ||(I-S_{*})^{1/2}S(I-S)^{-1}(I-S_{*})^{1/2}||^2_{\HS}
		\nonumber
		\\
		&\quad \quad \quad \quad \quad \quad +\frac{1}{4}(\log\dettwo[(I-S)^{-1}] + \trace[S_{*}S(I-S)^{-1}])^2.
		\\
		&\left\| \log\left\{{\frac{d\mu}{d\mu_0}(x)}\right\}- \log\left\{{\frac{d\nu}{d\mu_0}(x)}\right\}\right\|^2_{\Lcal^2(\Hcal,\mu_{*})}
		\label{equation:norm-difference-logRN-mu-star}
		\\
		&= \frac{1}{2}||(I-S_{*})^{1/2}[S(I-S)^{-1}-T(I-T)^{-1}](I-S_{*})^{1/2}||^2_{\HS}
		\nonumber
		\\
		&  + \frac{1}{4}(\log\dettwo(I-S)^{-1}- \log\dettwo(I-T)^{-1} + \trace[S_{*}[S(I-S)^{-1}-T(I-T)^{-1}]])^2.
		\nonumber
	\end{align}
	Furthermore,
	\begin{align}
		\label{equation:norm-difference-bound-logRN-mu-star}
		&\left\| \log\left\{{\frac{d\mu}{d\mu_0}(x)}\right\}- \log\left\{{\frac{d\nu}{d\mu_0}(x)}\right\}\right\|^2_{\Lcal^2(\Hcal,\mu_{*})}
		\nonumber
		\\
		&\leq \frac{1}{2}\left(||I-S_{*}||^2 + ||S_{*}||_{\HS}^2+ [||S||_{\HS} + ||T||_{\HS} + ||ST||_{\HS}]^2\right)
		\nonumber
		\\
		& \quad \times ||(I-S)^{-1}||^2||(I-T)^{-1}||^2||S-T||^2_{\HS}. 
	\end{align}
\end{proposition}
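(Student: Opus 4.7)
The plan is to derive all four formulas from the white-noise series representation of $\log\{d\mu/d\mu_0\}$ in Eq.~\eqref{equation:log-Radon-Nikodym-S-HS-zero-mean} together with the fourth-moment identities for $W_a, W_b$ under $\mu_{*}$ given in Eqs.~\eqref{equation:W2aW2b-mu-star}--\eqref{equation:W4a-mu-star}. Let $\{(\alpha_k,\phi_k)\}_{k\in\Nbb}$ and $\{(\beta_j,\psi_j)\}_{j\in\Nbb}$ denote the spectral data of $S$ and $T$, so that
\begin{align*}
\log\left\{\frac{d\mu}{d\mu_0}\right\} = -\frac{1}{2}\sum_{k=1}^{\infty}\left[\frac{\alpha_k}{1-\alpha_k}W^2_{\phi_k} + \log(1-\alpha_k)\right]
\end{align*}
and analogously for $\nu$ in terms of the $\beta_j, \psi_j$. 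Expanding the inner product as a double sum and integrating term-by-term, I apply Eq.~\eqref{equation:W2aW2b-mu-star} to obtain, for each pair $(k,j)$,
\begin{align*}
\int_{\Hcal} W^2_{\phi_k}W^2_{\psi_j}\,d\mu_{*} = \la\phi_k,(I-S_{*})\phi_k\ra\la\psi_j,(I-S_{*})\psi_j\ra + 2\la\phi_k,(I-S_{*})\psi_j\ra^2,
\end{align*}
together with $\int W^2_{\phi_k}\,d\mu_{*} = 1 - \la\phi_k,S_{*}\phi_k\ra$.

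The key step is to separate the $2\la\phi_k,(I-S_{*})\psi_j\ra^2$ cross-terms from the remaining three product pieces. The cross-terms, summed against $\tfrac{\alpha_k}{1-\alpha_k}\tfrac{\beta_j}{1-\beta_j}$, yield via Lemma~\ref{lemma:HS-inner-expansion} applied with $A=I-S_{*}$ exactly $2\trace[(I-S_{*})S(I-S)^{-1}(I-S_{*})T(I-T)^{-1}]$. The remaining three pieces factor as $\big(\sum_k u_k\big)\big(\sum_j v_j\big)$, where
\begin{align*}
u_k = \left[\frac{\alpha_k}{1-\alpha_k}+\log(1-\alpha_k)\right] - \frac{\alpha_k}{1-\alpha_k}\la\phi_k,S_{*}\phi_k\ra.
\end{align*}
By Lemma~\ref{lemma:logdet-2-inverse} and the second identity in Lemma~\ref{lemma:HS-inner-expansion}, $\sum_k u_k = -\log\dettwo[(I-S)^{-1}] - \trace[S_{*}S(I-S)^{-1}]$, and similarly for $\sum_j v_j$. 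Multiplying everything by the leading $\tfrac{1}{4}$ produces the inner-product formula; the squared-norm formula is the specialization $T=S$ with the trace rewritten cyclically as $||(I-S_{*})^{1/2}S(I-S)^{-1}(I-S_{*})^{1/2}||^2_{\HS}$, and Eq.~\eqref{equation:norm-difference-logRN-mu-star} follows by polarization $||A-B||^2 = ||A||^2 - 2\la A,B\ra + ||B||^2$, with the trace pieces collapsing into $\tfrac{1}{2}||(I-S_{*})^{1/2}[S(I-S)^{-1}-T(I-T)^{-1}](I-S_{*})^{1/2}||^2_{\HS}$ and the scalar pieces into the square of the stated scalar difference.

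For the bound Eq.~\eqref{equation:norm-difference-bound-logRN-mu-star}, I use the resolvent identity $S(I-S)^{-1} - T(I-T)^{-1} = (I-S)^{-1}(S-T)(I-T)^{-1}$ to bound
\begin{align*}
||(I-S_{*})^{1/2}[S(I-S)^{-1}-T(I-T)^{-1}](I-S_{*})^{1/2}||_{\HS} \leq ||I-S_{*}||\cdot||(I-S)^{-1}||\cdot||(I-T)^{-1}||\cdot||S-T||_{\HS};
\end{align*}
Cauchy--Schwarz in $\HS(\Hcal)$ controls the $\trace[S_{*}(\cdot)]$ term by $||S_{*}||_{\HS}$ times the same resolvent-difference norm; the $\log\dettwo$ difference is controlled by the Lipschitz estimate Theorem~12 of~\cite{Minh:2022KullbackGaussian} already invoked elsewhere in the paper, producing the factor $||S||_{\HS}+||T||_{\HS}+||ST||_{\HS}$. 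Combining via $(a+b)^2\leq 2a^2+2b^2$ gives the stated inequality.

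The main obstacle is justifying the interchange of the infinite double summation with integration against $d\mu_{*}$: Theorem~\ref{theorem:radon-nikodym-infinite} establishes $\Lcal^2$-convergence of the series only with respect to $\mu_0$, whereas the computation above is performed in $\Lcal^2(\Hcal,\mu_{*})$. I would address this by estimating the truncated tails $\int_{\Hcal}\big|\sum_{k>N}\alpha_k[(1-\alpha_k)^{-1}W^2_{\phi_k}-1]\big|^2\,d\mu_{*}$ using Eqs.~\eqref{equation:W2aW2b-mu-star}--\eqref{equation:W4a-mu-star} in the same spirit as the proof of Proposition~\ref{proposition:RN-S-trace-class-quadratic-form-convergence-L2-mu-star}, after first isolating the scalar tail $\sum_{k>N}[\alpha_k/(1-\alpha_k)+\log(1-\alpha_k)]$ which converges by Lemma~\ref{lemma:logdet-2-inverse}. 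Once $\Lcal^2(\Hcal,\mu_{*})$-convergence is established, Fubini's theorem legitimizes the termwise integration and the remaining manipulations are bookkeeping.
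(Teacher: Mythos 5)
Your proposal is correct and follows essentially the same route as the paper's proof: the same white-noise series expansion, the same pairwise integration via the fourth-moment identities under $\mu_{*}$, the same separation into the $2\la \phi_k,(I-S_{*})\psi_j\ra^2$ cross-terms (identified through Lemma \ref{lemma:HS-inner-expansion} with $A=I-S_{*}$) and a factored scalar product (identified through Lemma \ref{lemma:logdet-2-inverse}), and the same resolvent-identity/Cauchy--Schwarz/Lipschitz-$\log\dettwo$ bounds for the final inequality. The only cosmetic difference is in justifying the termwise integration: the paper invokes the $\Lcal^2(\Hcal,\mu_{*})$-convergence of the truncated series from Proposition 10 of \cite{Minh:2020regularizedDiv} followed by H\"older, whereas you propose re-deriving the tail estimates directly, which would also work.
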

\begin{proof}[\textbf{of Proposition \ref{proposition:inner-logRN-mu-star}}]
	Let $\{\alpha_k\}_{k \in \Nbb}$ and $\{\beta_k\}_{k \in \Nbb}$ be the eigenvalues of $S$ and $T$, with corresponding orthonormal eigenvectors
	$\{\phi_k\}_{k \in \Nbb}$ and $\{\psi_k\}_{k \in \Nbb}$, respectively.
	Consider the white noise mapping $W: \Hcal \mapto \Lcal^2(\Hcal, \mu_0)$.
	For the Radon-Nikodym densities $\frac{d\mu}{d\mu_0}$ and $\frac{d\nu}{d\mu_0}$,
	\begin{align*}
		\log\left\{{\frac{d\mu}{d\mu_0}(x)}\right\} = -\frac{1}{2}\sum_{k=1}^{\infty}\left[\frac{\alpha_k}{1-\alpha_k}W^2_{\phi_k}(x) + \log(1-\alpha_k)\right],
		\\
		\log\left\{{\frac{d\nu}{d\mu_0}(x)}\right\} = -\frac{1}{2}\sum_{k=1}^{\infty}\left[\frac{\beta_k}{1-\beta_k}W^2_{\psi_k}(x) + \log(1-\beta_k)\right].
	\end{align*}
By Lemma 19 in \cite{Minh:2020regularizedDiv}, for any $a \in \Hcal$,
\begin{align*}
\int_{\Hcal}W^2_{a}(x)d\mu_{*}(x) = \la a, (I-S_{*})a\ra.
\end{align*}
	By Lemma 22 in \cite{Minh:2020regularizedDiv}, for any pair $a,b \in \Hcal$,
	\begin{align*}
		\int_{\Hcal}W^2_a(x)W^2_b(x)d\mu_{*}(x) =\la a, (I-S_{*})a\ra \la b, (I-S_{*})b\ra + 2 \la a,(I-S_{*})b\ra^2.
	\end{align*}
	Thus for each pair $k, j \in \Nbb$,
	\begin{align*}
		&\int_{\Hcal}\left[\frac{\alpha_k}{1-\alpha_k}W^2_{\phi_k}(x) + \log(1-\alpha_k)\right]\left[\frac{\beta_j}{1-\beta_j}W^2_{\psi_j}(x) + \log(1-\beta_j)\right]d\mu_{*}(x)
		\\
		& = \frac{\alpha_k}{1-\alpha_k}\frac{\beta_j}{1-\beta_j}[\la \phi_k, (I-S_{*})\phi_k\ra \la \psi_j, (I-S_{*})\psi_j\ra + 2 \la \phi_k,(I-S_{*})\psi_j\ra^2] 
		\\
		&+ \frac{\alpha_k}{1-\alpha_k}\log(1-\beta_j)\la \phi_k, (I-S_{*})\phi_k\ra
		+ \frac{\beta_j}{1-\beta_j}\log(1-\alpha_k) \la \psi_j, (I-S_{*})\psi_j\ra
		\\
		&\quad + \log(1-\alpha_k)\log(1-\beta_j)
		\\
		& = 2\frac{\alpha_k}{1-\alpha_k}\frac{\beta_j}{1-\beta_j}[\la \phi_k, (I-S_{*})\psi_j\ra^2] 
		\\
		&\quad + \left[\frac{\alpha_k}{1-\alpha_k}\la \phi_k, (I-S_{*})\phi_k\ra + \log(1-\alpha_k)\right]
		\left[\frac{\beta_j}{1-\beta_j}\la \psi_j, (I-S_{*})\psi_j\ra + \log(1-\beta_j)\right]
		\\
		& = 2\frac{\alpha_k}{1-\alpha_k}\frac{\beta_j}{1-\beta_j}[\la \phi_k, (I-S_{*})\psi_j\ra^2] 
		\\
		&\quad + \left[- \frac{\alpha_k}{1-\alpha_k}\la \phi_k, S_{*}\phi_k\ra + \frac{\alpha_k}{1-\alpha_k} + \log(1-\alpha_k)\right]
		\\
		& \quad \quad \times \left[- \frac{\beta_j}{1-\beta_j}\la \psi_j, S_{*}\psi_j\ra + \frac{\beta_j}{1-\beta_j} + \log(1-\beta_j)\right].
	\end{align*}
For each fixed pair $N,M \in \Nbb$, define
\begin{align*}
f = \frac{1}{2}\sum_{k=1}^{\infty}\left[\frac{\alpha_k}{1-\alpha_k}W^2_{\phi_k}(x) + \log(1-\alpha_k)\right],\;
f_N = \frac{1}{2}\sum_{k=1}^{N}\left[\frac{\alpha_k}{1-\alpha_k}W^2_{\phi_k}(x) + \log(1-\alpha_k)\right],
\\
g = \frac{1}{2}\sum_{k=1}^{\infty}\left[\frac{\beta_k}{1-\beta_k}W^2_{\psi_k}(x) + \log(1-\beta_k)\right],\;
g_M = \frac{1}{2}\sum_{k=1}^{M}\left[\frac{\beta_k}{1-\beta_k}W^2_{\psi_k}(x) + \log(1-\beta_k)\right].
\end{align*}
By Proposition 10 in \cite{Minh:2020regularizedDiv}, $f,g \in \Lcal^2(\Hcal,\mu_{*})$, with $\lim\limits_{N \approach \infty}||f_N - f||_{\Lcal^2(\Hcal,\mu_{*})} = 0$, $\lim\limits_{N \approach \infty}||g_M - g||_{\Lcal^2(\Hcal,\mu_{*})} = 0$. By H\"older's inequality,
$\lim\limits_{N,M \approach \infty}||f_Ng_M - fg||_{\Lcal^1(\Hcal,\mu_{*})} = 0$.
	It follows that
	\begin{align*}
		&\left\la \log\left\{{\frac{d\mu}{d\mu_0}(x)}\right\}, \log\left\{{\frac{d\nu}{d\mu_0}(x)}\right\}\right\ra_{\Lcal^2(\Hcal,\mu_{*})}
		= \int_{\Hcal}f(x)g(x)\mu_{*}(dx) 
		\\
		&= \lim_{N, M \approach \infty}\int_{\Hcal}f_N(x)g_M(x)\mu_{*}(dx)
		= \frac{1}{2}\sum_{k,j=1}^{\infty}\frac{\alpha_k}{1-\alpha_k}\frac{\beta_j}{1-\beta_j}\la \phi_k, (I-S_{*})\psi_j\ra^2
		\\
		&\quad + \frac{1}{4}\sum_{k=1}^{\infty}\left[- \frac{\alpha_k}{1-\alpha_k}\la \phi_k, S_{*}\phi_k\ra + \frac{\alpha_k}{1-\alpha_k} + \log(1-\alpha_k)\right]
		\\
		&\quad \quad \times \sum_{j=1}^{\infty}\left[- \frac{\beta_j}{1-\beta_j}\la \psi_j, S_{*}\psi_j\ra +\frac{\beta_j}{1-\beta_j} + \log(1-\beta_j)\right]
		\\
		& = \frac{1}{2}\trace[(I-S_{*})S(I-S)^{-1}(I-S_{*})T(I-T)^{-1}]
		\\
		 & \quad + \frac{1}{4}(\log\dettwo[(I-S)^{-1}] + \trace[S^{*}S(I-S)^{-1}])
		 \\
		 & \quad \quad \times (\log\dettwo[(I-T)^{-1}] + \trace[S^{*}T(I-T)^{-1}]).
	\end{align*}
	where the last line follows from Lemmas \ref{lemma:HS-inner-expansion} and \ref{lemma:logdet-2-inverse}. Thus
	\begin{align*}
		&\left\| \log\left\{{\frac{d\mu}{d\mu_0}(x)}\right\}- \log\left\{{\frac{d\nu}{d\mu_0}(x)}\right\}\right\|^2_{\Lcal^2(\Hcal,\mu_{*})}
		\\
		&= \frac{1}{2}||(I-S_{*})^{1/2}[S(I-S)^{-1}-T(I-T)^{-1}](I-S_{*})^{1/2}||^2_{\HS}
		\\
		&  + \frac{1}{4}(\log\dettwo(I-S)^{-1}- \log\dettwo(I-T)^{-1} + \trace[S^{*}[S(I-S)^{-1}-T(I-T)^{-1}]])^2
		\\
		& \leq \frac{1}{2}||(I-S_{*})^{1/2}[S(I-S)^{-1}-T(I-T)^{-1}](I-S_{*})^{1/2}||^2_{\HS}
		\\
		&\quad + \frac{1}{2}(\log\dettwo(I-S)^{-1}- \log\dettwo(I-T)^{-1})^2 
		+\frac{1}{2}(\trace[S^{*}[S(I-S)^{-1}-T(I-T)^{-1}]])^2.
	\end{align*}
	Using $S(I-S)^{-1} - T(I-T)^{-1} = (I-S)^{-1}(S-T)(I-T)^{-1}$, we obtain
	\begin{align*}
		&||(I-S_{*})^{1/2}[S(I-S)^{-1} - T(I-T)^{-1}](I-S_{*})^{1/2}||_{\HS}
		\\ 
		&\leq ||I-S_{*}||\;||(I-S)^{-1}||\;||(I-T)^{-1}||\;||S-T||_{\HS},
		\\
		&|\trace[S_{*}[S(I-S)^{-1}-T(I-T)^{-1}]| = |\la S_{*}, S(I-S)^{-1}-T(I-T)^{-1}\ra_{\HS}|
		\\
		& \leq ||S_{*}||_{\HS}||(I-S)^{-1}||\;||(I-T)^{-1}||\;||S-T||_{\HS}.
	\end{align*}
	By Theorem 12 in \cite{Minh:2022KullbackGaussian},
	\begin{align*}
		&|\log\dettwo[(I-S)^{-1}] - \log\dettwo[(I-T)^{-1}]| 
		\\
		&\leq ||(I-S)^{-1}||\;||(I-T)^{-1}||[||S||_{\HS} + ||T||_{\HS} + ||ST||_{\HS}]||S-T||_{\HS}.
	\end{align*}
	Combining the last four expressions, we obtain
	\begin{align*}
		&\left\| \log\left\{{\frac{d\mu}{d\mu_0}(x)}\right\}- \log\left\{{\frac{d\nu}{d\mu_0}(x)}\right\}\right\|^2_{\Lcal^2(\Hcal,\mu_{*})}
		\\
		&\leq \frac{1}{2}\left(||I-S_{*}||^2 + ||S_{*}||_{\HS}^2+ [||S||_{\HS} + ||T||_{\HS} + ||ST||_{\HS}]^2\right)
		\\
		& \quad \times ||(I-S)^{-1}||^2||(I-T)^{-1}||^2||S-T||^2_{\HS}.
	\end{align*}
	\qed
\end{proof}

\begin{corollary}
	\label{corollary:inner-logRN-mu0}
	Let $S,T \in \SymHS(\Hcal)_{< I}$. Let
	$\mu = \Ncal(0,Q)$, $Q = C_0^{1/2}(I-S)C_0^{1/2}$, $\nu = \Ncal(0,R)$, $R = C_0^{1/2}(I-T)C_0^{1/2}$.
	Then 
	\begin{align}
		\label{equation:norm-logRN-mu0}
		&\left\la \log\left\{{\frac{d\mu}{d\mu_0}(x)}\right\}, \log\left\{{\frac{d\nu}{d\mu_0}(x)}\right\}\right\ra_{\Lcal^2(\Hcal,\mu_0)} =  \frac{1}{2}\la S(I-S)^{-1}, T(I-T)^{-1}\ra_{\HS}
		\nonumber
		\\
		& \quad \quad \quad \quad \quad \quad + \frac{1}{4}\log\dettwo[(I-S)^{-1}]\log\dettwo[(I-T)^{-1}].
		\\
		&\left\| \log\left\{{\frac{d\mu}{d\mu_0}(x)}\right\}\right\|^2_{\Lcal^2(\Hcal,\mu_0)}=
		\frac{1}{2}||S(I-S)^{-1}||^2_{\HS}
		+ \frac{1}{4}(\log\dettwo[(I-S)^{-1}])^2.
		\\
		&\left\| \log\left\{{\frac{d\mu}{d\mu_0}(x)}\right\}- \log\left\{{\frac{d\nu}{d\mu_0}(x)}\right\}\right\|^2_{\Lcal^2(\Hcal,\mu_0)}=
		\frac{1}{2}||S(I-S)^{-1}-T(I-T)^{-1}||^2_{\HS}
		\nonumber
		\\
		& \quad \quad \quad \quad \quad \quad + \frac{1}{4}(\log\dettwo[(I-S)^{-1}] - \log\dettwo[(I-T)^{-1}])^2.
		\label{equation:norm-difference-logRN-mu-0}
	\end{align}
	Furthermore,
	\begin{align}
		\label{equation:norm-difference-bound-logRN-mu-0}
		&\left\| \log\left\{{\frac{d\mu}{d\mu_0}(x)}\right\}- \log\left\{{\frac{d\nu}{d\mu_0}(x)}\right\}\right\|^2_{\Lcal^2(\Hcal,\mu_0)}
		\\
		&\leq \left(\frac{1}{2} + \frac{1}{4}[||S||_{\HS} + ||T||_{\HS} + ||ST||_{\HS}]^2\right)||(I-S)^{-1}||^2||(I-T)^{-1}||^2||S-T||^2_{\HS}.
		\nonumber
	\end{align}
\end{corollary}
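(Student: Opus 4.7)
The plan is to derive the corollary as the special case $S_{*} = 0$ of Proposition~\ref{proposition:inner-logRN-mu-star}, exploiting the fact that when $S_{*} = 0$ we have $C_{*} = C_0^{1/2}(I-S_{*})C_0^{1/2} = C_0$ and hence $\mu_{*} = \mu_0$. Under this substitution, $I - S_{*}$ reduces to the identity operator and the $\trace[S_{*}(\cdot)]$ terms vanish identically. First I would verify the identity for the inner product: setting $S_{*} = 0$ in the first display of Proposition~\ref{proposition:inner-logRN-mu-star}, the trace expression collapses to $\frac{1}{2}\trace[S(I-S)^{-1}T(I-T)^{-1}] = \frac{1}{2}\la S(I-S)^{-1}, T(I-T)^{-1}\ra_{\HS}$ (since both operators are self-adjoint Hilbert--Schmidt), and the product of logarithmic determinant corrections reduces to $\frac{1}{4}\log\dettwo[(I-S)^{-1}]\log\dettwo[(I-T)^{-1}]$, giving Eq.~\eqref{equation:norm-logRN-mu0}. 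The norm formula $\|\log\{d\mu/d\mu_0\}\|^2_{\Lcal^2(\Hcal,\mu_0)}$ then follows by taking $S = T$.

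Next I would handle the difference norm in Eq.~\eqref{equation:norm-difference-logRN-mu-0}. Applying Eq.~\eqref{equation:norm-difference-logRN-mu-star} of Proposition~\ref{proposition:inner-logRN-mu-star} with $S_{*} = 0$, the Hilbert--Schmidt term simplifies immediately to $\frac{1}{2}\|S(I-S)^{-1} - T(I-T)^{-1}\|^2_{\HS}$, and the squared term becomes
\[
\tfrac{1}{4}\bigl(\log\dettwo(I-S)^{-1} - \log\dettwo(I-T)^{-1} + \trace[0 \cdot (\cdot)]\bigr)^2,
\]
which is exactly $\frac{1}{4}(\log\dettwo[(I-S)^{-1}] - \log\dettwo[(I-T)^{-1}])^2$.

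For the upper bound in Eq.~\eqref{equation:norm-difference-bound-logRN-mu-0}, the key observation is that the derivation in Proposition~\ref{proposition:inner-logRN-mu-star} used the inequality $(a+b)^2 \leq 2a^2 + 2b^2$ to split the squared log-determinant term from the $\trace[S_{*}(\cdot)]$ term, producing the factor $\frac{1}{2}$ in front of the bracketed expression $[\|S\|_{\HS}+\|T\|_{\HS}+\|ST\|_{\HS}]^2$. Since in the present case $S_{*} = 0$, this splitting is unnecessary---the $\trace[S_{*}(\cdot)]$ term vanishes outright---so one retains the cleaner coefficient $\frac{1}{4}$. Applying the identity $S(I-S)^{-1} - T(I-T)^{-1} = (I-S)^{-1}(S-T)(I-T)^{-1}$ to bound the Hilbert--Schmidt term by $\|(I-S)^{-1}\|^2\|(I-T)^{-1}\|^2\|S-T\|^2_{\HS}$, and invoking Theorem~12 of \cite{Minh:2022KullbackGaussian} to bound the log-determinant difference by $\|(I-S)^{-1}\|\|(I-T)^{-1}\|[\|S\|_{\HS}+\|T\|_{\HS}+\|ST\|_{\HS}]\|S-T\|_{\HS}$, one arrives at the stated bound.

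The main technical point, though routine, is tracking the precise constant: the improvement from $\frac{1}{2}$ to $\frac{1}{4}$ in the coefficient of the bracketed norm-product term is the only substantive deviation from a purely mechanical substitution $S_{*} = 0$ in Proposition~\ref{proposition:inner-logRN-mu-star}, and it is essential to highlight this subtlety rather than simply invoking the proposition as a black box.
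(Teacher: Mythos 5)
Your proposal is correct and matches the paper's own proof exactly: the paper likewise obtains the corollary by setting $S_{*}=0$ in Proposition \ref{proposition:inner-logRN-mu-star}, and notes, just as you do, that the improved constant $\frac{1}{4}$ (rather than the $\frac{1}{2}$ a blind substitution would give) comes from bounding Eq.\eqref{equation:norm-difference-logRN-mu-0} directly, since the $\trace[S_{*}(\cdot)]$ term vanishes and no splitting of the square is needed.
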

\begin{proof}
	This follows from Proposition \ref{proposition:inner-logRN-mu-star} by letting $S_{*} = 0$. Here,
		the inequality in \eqref{equation:norm-difference-bound-logRN-mu-0} has a smaller constant compared to that in \eqref{equation:norm-difference-bound-logRN-mu-star}. This is obtained 
	by directly bounding the expression in Eq.\eqref{equation:norm-difference-logRN-mu-0}, using the same technique 
	as in the proof of Proposition \ref{proposition:inner-logRN-mu-star}.
	\qed
\end{proof}

\begin{corollary}
\label{corollary:norm-logRN-mu}
Let $S \in \SymHS(\Hcal)_{< I}$ be fixed.
Let $\mu = \Ncal(0,C)$, $C = C_0^{1/2}(I-S)C_0^{1/2}$. Then
\begin{align}
&\left\| \log\left\{{\frac{d\mu}{d\mu_0}(x)}\right\}\right\|^2_{\Lcal^2(\Hcal,\mu)}=\frac{1}{2} ||S||^2_{\HS}
 +\frac{1}{4}[\log\dettwo(I-S)]^2.
%
\end{align}
\end{corollary}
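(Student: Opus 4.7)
The plan is to obtain this identity as a direct specialization of Proposition \ref{proposition:inner-logRN-mu-star}, taking the reference measure $\mu_{*}$ to coincide with $\mu$ itself (so $S_{*} = S$) and taking $\nu = \mu$ (so $T = S$). Under these identifications, the general formula
\begin{align*}
\left\| \log\left\{\frac{d\mu}{d\mu_0}(x)\right\}\right\|^2_{\Lcal^2(\Hcal,\mu_{*})}
&= \tfrac{1}{2} \|(I-S_{*})^{1/2}S(I-S)^{-1}(I-S_{*})^{1/2}\|^2_{\HS} \\
&\quad + \tfrac{1}{4}\bigl(\log\dettwo[(I-S)^{-1}] + \trace[S_{*}S(I-S)^{-1}]\bigr)^{2}
\end{align*}
collapses, and the remaining work is purely algebraic.

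For the Hilbert-Schmidt term, I would exploit the fact that $S$, $I-S$, and $(I-S)^{-1}$ are mutually commuting self-adjoint operators (they share the same spectral resolution). Consequently
\begin{align*}
(I-S)^{1/2}S(I-S)^{-1}(I-S)^{1/2} = S(I-S)^{-1}(I-S) = S,
\end{align*}
so the Hilbert-Schmidt norm equals $\tfrac{1}{2}\|S\|^{2}_{\HS}$. For the determinant term, I would invoke Lemma \ref{lemma:logdet-2-inverse}, which gives
\begin{align*}
\log\dettwo[(I-S)^{-1}] = -\log\dettwo(I-S) - \trace\bigl(S^{2}(I-S)^{-1}\bigr).
\end{align*}
Adding $\trace[S \cdot S(I-S)^{-1}] = \trace[S^{2}(I-S)^{-1}]$ cancels the trace term exactly, leaving $-\log\dettwo(I-S)$. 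Squaring removes the sign, yielding $\tfrac{1}{4}[\log\dettwo(I-S)]^{2}$.

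No real obstacle is anticipated here: the result is a one-line consequence of the more general Proposition \ref{proposition:inner-logRN-mu-star} once the two cancellations above are performed. The only mild subtlety worth double-checking is the commutation argument in the Hilbert-Schmidt simplification, which relies on the fact that $I-S$ and $(I-S)^{-1}$ are bounded self-adjoint operators obtained by functional calculus from $S$, hence commute with $S$ and with their own square roots; this is immediate from the spectral theorem applied to $S \in \Sym(\Hcal)\cap\HS(\Hcal)$.
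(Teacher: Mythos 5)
Your proposal is correct and follows essentially the same route as the paper: specialize Proposition \ref{proposition:inner-logRN-mu-star} to $S_{*}=S$ (so $\mu_{*}=\mu$) and $T=S$, simplify the Hilbert--Schmidt term by commutativity of $S$ with $(I-S)^{\pm 1/2}$, and cancel the trace term against $\log\dettwo[(I-S)^{-1}]$ via Lemma \ref{lemma:logdet-2-inverse}. The paper additionally records an alternative derivation by symmetry through Corollary \ref{corollary:inner-logRN-mu0} with $C_0=C^{1/2}(I-T)C^{1/2}$, but its primary argument is the one you give.
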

\begin{proof}	
	This is obtained by letting $S_{*}= S$ in Proposition \ref{proposition:inner-logRN-mu-star}
	and noting that $-\log\dettwo(I-S) = \log\dettwo[(I-S)^{-1}] + \trace[S^2(I-S)^{-1}]$ by Lemma
	\ref{lemma:logdet-2-inverse}.
	
	Alternatively, by symmetry, let $T \in \SymHS(\Hcal)_{< I}$ be such that $C_0 = C^{1/2}(I-T)C^{1/2}$.
	Then by Eq.\eqref{equation:norm-logRN-mu0} in Corollary \ref{corollary:inner-logRN-mu0},
	\begin{align*}
&\left\| \log\left\{{\frac{d\mu}{d\mu_0}(x)}\right\}\right\|^2_{\Lcal^2(\Hcal,\mu)} = \left\| \log\left\{{\frac{d\mu_0}{d\mu}(x)}\right\}\right\|^2_{\Lcal^2(\Hcal,\mu)}
\\
&	= \frac{1}{2}||T(I-T)^{-1}||^2_{\HS} + \frac{1}{4}(\log\dettwo[(I-T)^{-1}])^2 
= \frac{1}{2}||S||^2_{\HS} + \frac{1}{4}[\log\dettwo(I-S)]^2,
	\end{align*}
where the last equality follows from Corollary \ref{corollary:relation-T-S} and Lemma \ref{lemma:relation-T-S-2}.
			\qed
\end{proof}

\begin{corollary}
	\label{corollary:logRN-convergence-HS-norm}
	Let $\mu = \mu(S) = \Ncal(0,C)$, where $C = C_0^{1/2}(I-S)C_0^{1/2}$, $S \in \SymHS(\Hcal)_{<I}$.
	Let $S_{*} \in \SymHS(\Hcal)_{<I}$ be fixed but arbitrary. 
	Let $\mu_{*} = \Ncal(0,C_{*})$, $C_{*} = C_0^{1/2}(I-S_{*})C_0^{1/2}$, be the corresponding Gaussian measure.
	Let $f: \SymHS(\Hcal)_{<I} \mapto \Lcal^2(\Hcal,\mu_{*})$ be defined by
	$f(S)(x) = \log\left\{\frac{d\mu}{d\mu_0}(x)\right\}$.
	Let $T \in \SymHS(\Hcal)_{< I}$ be fixed.
	Let $\{T_k\}_{k \in \Nbb}$ in $\SymHS(\Hcal)_{<I}$ be 
	such that $\lim\limits_{k \approach \infty}||T_k - T||_{\HS} = 0$. Then
	\begin{align}
		&||f(T_k) - f(T)||^2_{\Lcal^2(\Hcal,\mu_{*})} 
		\\
		&\leq \frac{1}{2}\left(||I-S_{*}||^2 + ||S_{*}||_{\HS}^2+ [||T_k||_{\HS} + ||T||_{\HS} + ||T_kT||_{\HS}]^2\right)
		\nonumber
		\\
		& \quad \times ||(I-T_k)^{-1}||^2||(I-T)^{-1}||^2||T_k-T||^2_{\HS}.
	\end{align}
In particular, $\lim\limits_{k \approach \infty}||f(T_k) - f(T)||_{\Lcal^2(\Hcal,\mu_{*})} = 0$.
\end{corollary}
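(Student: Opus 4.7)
The plan is to obtain this corollary essentially for free from Proposition~\ref{proposition:inner-logRN-mu-star}. First I would substitute the pair $(T_k,T)$ (both lying in $\SymHS(\Hcal)_{<I}$) into the roles of $(S,T)$ in the bound \eqref{equation:norm-difference-bound-logRN-mu-star}. Since by definition $f(T_k)(x) = \log\{d\mu(T_k)/d\mu_0(x)\}$ and $f(T)(x) = \log\{d\mu(T)/d\mu_0(x)\}$, this substitution yields the displayed inequality of the corollary verbatim, with no additional calculation. So the first assertion is immediate.

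For the second assertion, that the left-hand side tends to zero, I would analyze the right-hand side factor by factor. The decisive factor is $\|T_k-T\|_{\HS}^2$, which vanishes as $k\to\infty$ by hypothesis. The quantities $\|I-S_{*}\|$ and $\|S_{*}\|_{\HS}$ are fixed constants; $\|T_k\|_{\HS}\to\|T\|_{\HS}$ by the reverse triangle inequality (so this sequence is bounded), and $\|T_kT\|_{\HS}\le \|T_k\|_{\HS}\|T\|$ is bounded by the same reasoning. The only factor that is not obviously controlled is $\|(I-T_k)^{-1}\|$.

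The one step requiring care is therefore the uniform boundedness of $\|(I-T_k)^{-1}\|$. Since $\|T_k-T\|\le \|T_k-T\|_{\HS}\to 0$, one has operator-norm convergence, and the standard identity
\begin{align*}
I-T_k = (I-T)\bigl[I - (I-T)^{-1}(T_k-T)\bigr]
\end{align*}
together with a Neumann-series argument (valid once $\|(I-T)^{-1}(T_k-T)\|<1$, i.e.\ for all $k$ large enough) gives $\|(I-T_k)^{-1}\|\le 2\|(I-T)^{-1}\|$. Consequently, for all sufficiently large $k$, the right-hand side of the corollary's inequality is dominated by $C\,\|T_k-T\|_{\HS}^2$ for a constant $C<\infty$ depending only on $T$, $S_{*}$, and $\|(I-T)^{-1}\|$, and this upper bound vanishes as $k\to\infty$.

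I do not expect any genuine obstacle. The entire proof reduces to citing \eqref{equation:norm-difference-bound-logRN-mu-star} for the bound, and then to the routine perturbation estimate controlling $\|(I-T_k)^{-1}\|$ uniformly in $k$. Both arguments fit in a few lines.
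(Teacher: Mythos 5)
Your proposal is correct and matches the paper's proof: both reduce the displayed inequality to Proposition \ref{proposition:inner-logRN-mu-star} applied to the pair $(T_k,T)$, and both then observe that the only factor needing control is $\|(I-T_k)^{-1}\|$, which is bounded uniformly for large $k$. The paper obtains that uniform bound from the quadratic-form estimate $\la x,(I-T_k)x\ra \geq (M_T-\varepsilon)\|x\|^2$ (giving $\|(I-T_k)^{-1}\|\leq (M_T-\varepsilon)^{-1}$) rather than your Neumann-series factorization, but this is the same perturbation idea in a different guise.
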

\begin{proof}
We first note that given any fixed $T \in \SymHS(\Hcal)_{<I}$, there always exists a sequence  $\{T_k\}_{k \in \Nbb}$ in $\SymHS(\Hcal)_{<I}$
such that $\lim\limits_{k \approach \infty}||T_k - T||_{\HS} = 0$. We can first take, for example, $\{T_k\}_{k \in \Nbb}$ to be a sequence of finite-rank self-adjoint operators that approximate $T$ in the $||\;||_{\HS}$ norm.

Since $I-T > 0$, there exists $M_T > 0$ such that $\la x, (I-T)x\ra \geq M_T||x||^2$ $\forall x \in \Hcal$.
Let $0 < \ep < M_T$ be fixed. Since $\lim\limits_{k \approach \infty}||T_k-T||_{\HS} = 0$,
$\exists N(\ep) \in \Nbb$ such that $||T_k - T||\leq ||T_k - T||_{\HS} < \ep$ $\forall k \geq N(\ep)$.
Thus 
$|\la x, (T_k-T)x\ra| \leq ||T_k-T||\;||x||^2 \leq \epsilon ||x||^2$ $\forall x \in \Hcal$.
Then $\la x, (I-T_k)x\ra = \la x, (I-T)x\ra - \la x, (T_k - T)x\ra \geq (M_T-\ep)||x||^2$ $\forall k \geq N(\ep)$,
so that $T_k \in \SymHS(\Hcal)_{<I}$ $\forall k \geq N(\ep)$.
The result then follows from Proposition \ref{proposition:inner-logRN-mu-star}.
\qed
\end{proof}

\begin{lemma}
	\label{lemma:integral-quadratic-S-star}
	Let $S_{*} \in \SymHS(\Hcal)_{<I}$ be fixed. 
	Let $C_{*} = C_0^{1/2}(I-S_{*})C_0^{1/2}$ and 
	$\mu_{*} = \Ncal(0,C_{*})$ be the corresponding Gaussian measure. For
	any $T \in \Sym(\Hcal) \cap \Tr(\Hcal)$, let $ \la C_0^{-1/2}x, TC_0^{-1/2}x\ra \doteq \lim\limits_{N \approach \infty}\la C_0^{-1/2}P_Nx, TC_0^{-1/2}P_Nx\ra$ in $\Lcal^2(\Hcal,\mu_{*})$. Then
\begin{align}
\int_{\Hcal}\la C_0^{-1/2}x, TC_0^{-1/2}x\ra d\mu_{*}(x) &= \trace[(I-S_{*})T],
\\
\int_{\Hcal}\la C_0^{-1/2}x, TC_0^{-1/2}x\ra^2 d\mu_{*}(x)
&= [\trace((I-S_{*})T)]^2 + 2\trace[((I-S_{*})T)^2].
\end{align}
For any pair $T_1, T_2 \in \Sym(\Hcal) \cap \Tr(\Hcal)$,
\begin{align}
&\int_{\Hcal}\la C_0^{-1/2}x, T_1C_0^{-1/2}x\ra \la C_0^{-1/2}x, T_2C_0^{-1/2}x\ra d\mu_{*}(x)
\nonumber
\\
& = \trace((I-S_{*})T_1)\trace((I-S_{*})T_2) + 2\trace[(I-S_{*})T_1(I-S_{*})T_2].
\end{align}
\end{lemma}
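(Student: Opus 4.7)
The plan is to reduce each integral to a finite-$N$ computation where the operator $P_N C_0^{-1/2} T C_0^{-1/2} P_N$ is bounded (indeed finite rank) and classical Gaussian integration applies, and then pass to the limit using the $\Lcal^2(\Hcal,\mu_{*})$ definition of the quadratic form together with trace-class approximation.

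First I would set $A_N = P_N C_0^{-1/2} T C_0^{-1/2} P_N$, which is well-defined, self-adjoint, and finite rank since $\{e_k\}_{k \in \Nbb}$ are eigenvectors of $C_0$ (so $C_0^{-1/2} P_N = P_N C_0^{-1/2}$ on $\range(C_0^{1/2})$ and extends continuously). Then $\la C_0^{-1/2}P_N x, T C_0^{-1/2}P_N x\ra = \la x, A_N x\ra$. By the standard Gaussian quadratic-form identities for $x \sim \Ncal(0, C_{*})$ (which is the formula already invoked in the paper via Lemma \ref{lemma:gaussian-integral-double-quadratic-form}), one has
\begin{align*}
\int_{\Hcal} \la x, A_N x\ra \, d\mu_{*}(x) &= \trace(C_{*} A_N),
\\
\int_{\Hcal} \la x, A_N x\ra^2 \, d\mu_{*}(x) &= [\trace(C_{*} A_N)]^2 + 2\trace[(C_{*}A_N)^2].
\end{align*}
Using the commutation $C_0^{\pm 1/2} P_N = P_N C_0^{\pm 1/2}$ together with $C_0^{-1/2} P_N C_0^{1/2} = P_N$ and the cyclic property of the trace, these reduce to $\trace(C_{*} A_N) = \trace[(I-S_{*}) P_N T P_N]$ and $\trace[(C_{*}A_N)^2] = \trace\{[(I-S_{*}) P_N T P_N]^2\}$.

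Next I would take $N \approach \infty$. On the probabilistic side, the hypothesis that $\la C_0^{-1/2}x, T C_0^{-1/2}x\ra$ is defined as the $\Lcal^2(\Hcal,\mu_{*})$ limit of $\la x, A_N x\ra$ gives at once $\lim_N\int \la x, A_N x\ra \, d\mu_{*} = \int \la C_0^{-1/2}x, T C_0^{-1/2}x\ra \, d\mu_{*}$ (by Cauchy--Schwarz in $\Lcal^2(\Hcal,\mu_{*})$), and likewise $\lim_N \int \la x, A_N x\ra^2 \, d\mu_{*} = \int \la C_0^{-1/2}x, T C_0^{-1/2}x\ra^2 \, d\mu_{*}$ (since $\|f_N\|_{\Lcal^2} \approach \|f\|_{\Lcal^2}$ when $f_N \approach f$ in $\Lcal^2$). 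On the operator side, since $T \in \Tr(\Hcal)$ and $I-S_{*}$ is bounded, $P_N T P_N \approach T$ in the trace norm (a standard fact), so $\trace[(I-S_{*})P_N T P_N] \approach \trace[(I-S_{*})T]$ and $\trace\{[(I-S_{*})P_N T P_N]^2\} \approach \trace\{[(I-S_{*})T]^2\}$. Equating the two limits yields the first two identities.

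For the third identity I would polarize: since $T \mapto \la C_0^{-1/2}x, T C_0^{-1/2}x\ra$ is linear in $T$ and the limit defining it is in $\Lcal^2(\Hcal,\mu_{*})$ for any $T \in \Sym(\Hcal) \cap \Tr(\Hcal)$,
\begin{align*}
4 \la C_0^{-1/2}x, T_1 C_0^{-1/2}x\ra \la C_0^{-1/2}x, T_2 C_0^{-1/2}x\ra &= \la C_0^{-1/2}x, (T_1+T_2) C_0^{-1/2}x\ra^2
\\
&\quad - \la C_0^{-1/2}x, (T_1-T_2) C_0^{-1/2}x\ra^2.
\end{align*}
Applying the second identity to each term and using bilinearity of $(A,B)\mapto \trace[(I-S_{*})A(I-S_{*})B]$ yields the stated covariance formula. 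The only nontrivial technical point I anticipate is the trace-norm convergence $P_N T P_N \approach T$ and the interchange of limit and trace, but both follow from standard spectral theorem arguments applied to $T \in \Sym(\Hcal)\cap \Tr(\Hcal)$; everything else is bookkeeping.
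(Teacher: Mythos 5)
Your proposal is correct and follows essentially the same route as the paper: truncate with $A_N = C_0^{-1/2}P_N T C_0^{-1/2}P_N$, apply the Gaussian quadratic-form moment identities (Lemma \ref{lemma:gaussian-integral-double-quadratic-form}), use $C_0^{-1/2}P_N C_0^{1/2}=P_N$ to reduce the traces to $\trace[(I-S_{*})P_N T P_N]$, and pass to the limit using the $\Lcal^2(\Hcal,\mu_{*})$ definition on one side and trace-norm convergence $P_N T P_N \to T$ on the other. The only (minor) deviation is that you obtain the mixed identity by polarization of the quadratic case, whereas the paper invokes the two-operator moment formula directly; both are valid and rest on the same lemma.
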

\begin{proof}
(i) 
We have
$\lim\limits_{N \approach \infty}||\la C_0^{-1/2}P_Nx, TC_0^{-1/2}P_Nx\ra - \la C_0^{-1/2}x, TC_0^{-1/2}x\ra||_{\Lcal^1(\Hcal,\mu_{*})} = 0$ by H\"older's inequality.
It follows that
\begin{align*}
&\int_{\Hcal}\la C_0^{-1/2}x, TC_0^{-1/2}x\ra d\mu_{*}(x)
\\
& = \lim_{N \approach\infty}
\int_{\Hcal}\la C_0^{-1/2}P_Nx, TC_0^{-1/2}P_Nx\ra d\Ncal(0,C_{*})(x)
\\
& = \lim_{N \approach\infty}
\int_{\Hcal}\la x,C_0^{-1/2}P_NTC_0^{-1/2}P_Nx\ra d\Ncal(0,C_{*})(x)
\\
& = \lim_{N \approach \infty}\trace[C_0^{1/2}(I-S_{*})C_0^{1/2}(C_0^{-1/2}P_NTC_0^{-1/2}P_N)]
\\
& = \lim_{N \approach \infty}\trace[(I-S_{*})P_NTP_N]
= \trace[(I-S_{*})T].
\end{align*}

(ii) Let $A_N = C_0^{-1/2}P_NTC_0^{-1/2}P_N$. By Lemma \ref{lemma:gaussian-integral-double-quadratic-form},
\begin{align*}
&||\la C_0^{-1/2}P_Nx, TC_0^{-1/2}P_Nx\ra||^2_{\Lcal^2(\Hcal,\mu_{*})}
= \int_{\Hcal}\la C_0^{-1/2}P_Nx, TC_0^{-1/2}P_Nx\ra^2 d\Ncal(0,C_{*})(x)
\\
& = \int_{\Hcal}\la x, A_Nx\ra^2 d\Ncal(0,C_{*})(x)
= [\trace(C_{*}A_N)]^2 + 2\trace[(C_{*}A_N)^2] 
\\
&= [\trace((I-S_{*})P_NTP_N)]^2 + 2\trace[(I-S_{*})P_NTP_N(I-S_{*})P_NTP_N],
\end{align*}
where we have used the identity $C_0^{-1/2}P_NC_0^{1/2} = P_N$. It follows that
\begin{align*}
&||\la C_0^{-1/2}x, TC_0^{-1/2}x\ra||^2_{\Lcal^2(\Hcal,\mu_{*})} = \lim_{N \approach \infty}||\la C_0^{-1/2}P_Nx, TC_0^{-1/2}P_Nx\ra||^2_{\Lcal^2(\Hcal,\mu_{*})}
\\
& = [\trace((I-S_{*})T)]^2 + 2\trace[((I-S_{*})T)^2].
\end{align*}
For a pair $T_1, T_2 \in \Sym(\Hcal) \cap \Tr(\Hcal)$, since
$\lim\limits_{N \approach \infty}||\la C_0^{-1/2}P_Nx, T_jC_0^{-1/2}P_Nx\ra - \la C_0^{-1/2}x, T_jC_0^{-1/2}x\ra||_{\Lcal^2(\Hcal,\mu_{*})} = 0$, $j=1,2$, by H\"older's inequality, we have in the $\Lcal^1(\Hcal,\mu_{*})$ sense,
$\lim\limits_{N \approach \infty}\la C_0^{-1/2}P_Nx, T_1C_0^{-1/2}P_Nx\ra \la C_0^{-1/2}P_Nx, T_2C_0^{-1/2}P_Nx\ra = \la C_0^{-1/2}x, T_1C_0^{-1/2}x\ra \la C_0^{-1/2}x, T_2C_0^{-1/2}x\ra$.
Let $A_N = C_0^{-1/2}P_NT_1C_0^{-1/2}P_N$, $B_N = C_0^{-1/2}P_NT_2C_0^{-1/2}P_N$. By Lemma \ref{lemma:gaussian-integral-double-quadratic-form}, we then have
\begin{align*}
&\int_{\Hcal}\la C_0^{-1/2}x, T_1C_0^{-1/2}x\ra \la C_0^{-1/2}x, T_2C_0^{-1/2}x\ra d\mu_{*}(x)
\\
&= \lim_{N\approach \infty}\int_{\Hcal}\la C_0^{-1/2}P_Nx, T_1C_0^{-1/2}P_Nx\ra \la C_0^{-1/2}P_Nx, T_2C_0^{-1/2}P_Nx\ra d\mu_{*}(x)
\\
& = \lim_{N \approach \infty}\int_{\Hcal}\la x, A_Nx\ra \la x, B_Nx\ra d\Ncal(0,C_{*})(x)
\\
& = \lim_{N \approach \infty}[\trace(C_{*}A_N)\trace(C_{*}B_N) + 2\trace(C_{*}A_NC_{*}B_N)]
\\
& = \lim_{N\approach \infty}[\trace((I-S_{*})P_NT_1P_N)\trace((I-S_{*})P_NT_2P_N)
\\
&
\quad +\lim_{N \approach \infty}2 \trace((I-S_{*})P_NT_1P_N(I-S_{*})P_NT_2P_N)
\\
& = \trace((I-S_{*})T_1)\trace((I-S_{*})T_2) + 2\trace[(I-S_{*})T_1(I-S_{*})T_2].
\end{align*}
\qed
\end{proof}

\begin{lemma}
	\label{lemma:square-difference-DRadonNikodym-derivative}
	Let $\mu = \mu(S) = \Ncal(0, C)$, where $C = C_0^{1/2}(I-S)C_0^{1/2}$, $S \in \SymHS(\Hcal)_{< I}$.
	Let $S_{*} \in \SymHS(\Hcal)_{< I}$ be fixed but arbitrary.
	Let $\mu_{*} = \Ncal(0,C_{*})$, $C_{*} = C_0^{1/2}(I-S_{*})C_0^{1/2}$, be the corresponding Gaussian measure.
	Let $f: \SymHS(\Hcal)_{< I} \mapto \Lcal^2(\Hcal,\mu_{*})$ be defined by $f(S) = \log\left\{\frac{d\mu}{d\mu_0}(x)\right\}$.
	Let $\{S_k\}_{k \in \Nbb} \in \SymTr(\Hcal)_{<I}$. Then $\forall V \in \Sym(\Hcal) \cap \Tr(\Hcal)$, $\forall k \in \Nbb$, 
	$Df(S_k)(V) \in \Lcal^2(\Hcal,\mu_{*})$ and $\forall i,j \in \Nbb$,
	\begin{align}
		&||Df(S_i)(V) - Df(S_j)(V)||^2_{\Lcal^2(\Hcal,\mu_{*})}
		\nonumber
		\\
		&\leq \left(\frac{1}{2} + ||I-S_{*}||^2||(I-S_i)^{-1}||^2||(I-S_j)^{-1}||^2\right)
		\nonumber
		\\
		&\quad \times ||(I-S_i)^{-1}||^2||(I-S_j)^{-1}||^2||S_i-S_j||^2_{\HS}||V||^2_{\HS}.
	\end{align}
	
\end{lemma}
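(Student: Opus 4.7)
The plan is to exploit the explicit formula for $Df(S_k)(V)$ furnished by Proposition \ref{proposition:derivative-log-Radon-Nikodym-S-trace-class}, which applies in our setting since $\{S_k\}_{k \in \Nbb} \subset \SymTr(\Hcal)_{<I}$ and $V \in \Sym(\Hcal) \cap \Tr(\Hcal)$. Write $R_k = (I-S_k)^{-1}$, $B = R_i - R_j$, and $T = R_iVR_i - R_jVR_j$; then
\begin{align*}
Df(S_i)(V) - Df(S_j)(V) = \tfrac{1}{2}\trace[BV] - \tfrac{1}{2}\la C_0^{-1/2}x, TC_0^{-1/2}x\ra.
\end{align*}
Since $B \in \Sym(\Hcal)\cap \Tr(\Hcal)$ and $T \in \Sym(\Hcal)\cap \Tr(\Hcal)$, Lemma \ref{lemma:integral-quadratic-S-star} applies and, in particular, shows that the quadratic form $\la C_0^{-1/2}x, TC_0^{-1/2}x\ra \in \Lcal^2(\Hcal,\mu_{*})$, verifying that $Df(S_k)(V) \in \Lcal^2(\Hcal,\mu_{*})$ for each $k$.

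Next I would compute $\|Df(S_i)(V) - Df(S_j)(V)\|^2_{\Lcal^2(\Hcal,\mu_{*})}$ by invoking the first- and second-moment formulas in Lemma \ref{lemma:integral-quadratic-S-star}: the mean of $\la C_0^{-1/2}x, TC_0^{-1/2}x\ra$ under $\mu_{*}$ is $\trace[(I-S_{*})T]$, and its variance is $2\trace[((I-S_{*})T)^2]$. Expanding the square and combining cross terms gives the mean-plus-variance decomposition
\begin{align*}
\|Df(S_i)(V) - Df(S_j)(V)\|^2_{\Lcal^2(\Hcal,\mu_{*})} = \tfrac{1}{4}(\trace[BV] - \trace[(I-S_{*})T])^2 + \tfrac{1}{2}\trace[((I-S_{*})T)^2],
\end{align*}
which is the analogue, at the level of the logarithmic derivative, of the identity underlying Proposition \ref{proposition:inner-logRN-mu-star}.

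The two terms are then controlled by the resolvent identity $B = R_i(S_i - S_j)R_j$, giving $\|B\|_{\HS} \leq \|R_i\|\|R_j\|\|S_i-S_j\|_{\HS}$ and hence $(\trace[BV])^2 \leq \|B\|^2_{\HS}\|V\|^2_{\HS} \leq \|R_i\|^2\|R_j\|^2\|S_i-S_j\|^2_{\HS}\|V\|^2_{\HS}$, which accounts for the $\tfrac{1}{2}$ coefficient in the final bound. For the variance term I would use the self-adjointness of $I - S_{*} > 0$ to write $\trace[((I-S_{*})T)^2] = \|(I-S_{*})^{1/2}T(I-S_{*})^{1/2}\|^2_{\HS} \leq \|I-S_{*}\|^2\|T\|^2_{\HS}$, then decompose $T = R_iV(R_i - R_j) + (R_i - R_j)VR_j$ and apply Hölder-type estimates $\|XYZ\|_{\HS} \leq \|X\|\,\|Y\|_{\HS}\,\|Z\|$ to each summand, which produces the factor $\|R_i\|^2\|R_j\|^2\|S_i-S_j\|_{\HS}\|V\|_{\HS}$ after absorbing the remaining operator norm into an HS norm via $\|A\| \leq \|A\|_{\HS}$. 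The same decomposition together with $|\trace[(I-S_{*})T]| \leq \|I-S_{*}\|\|T\|_{\tr}$ bounds the cross-mean contribution and yields the $\|I-S_{*}\|^2\|(I-S_i)^{-1}\|^4\|(I-S_j)^{-1}\|^4$ factor.

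The main obstacle is the last step: arranging the Schatten--Hölder estimates so that both $V$ and $S_i - S_j$ appear in their Hilbert--Schmidt norms simultaneously, since the triple-product inequality $\|XYZ\|_{\HS} \leq \|X\|\|Y\|_{\HS}\|Z\|$ places the HS norm on only one of the factors at a time. The resolution is to upgrade one of the two operator-norm factors using $\|A\| \leq \|A\|_{\HS}$, which is harmless here because both $V$ and $S_i - S_j$ lie in $\Sym(\Hcal)\cap\HS(\Hcal)$; the sharp organization of these estimates, together with the elementary inequality $(a-b)^2 \leq 2a^2 + 2b^2$ applied to the mean-squared term, is what delivers the precise structure of the right-hand side in the stated bound.
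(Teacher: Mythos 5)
Your proposal matches the paper's proof essentially step for step: both start from the explicit formula of Proposition \ref{proposition:derivative-log-Radon-Nikodym-S-trace-class}, compute the second moment under $\mu_{*}$ via Lemma \ref{lemma:integral-quadratic-S-star} to obtain the same mean-squared-plus-variance decomposition, and then bound the resulting terms using the resolvent identity $(I-S_i)^{-1}-(I-S_j)^{-1}=(I-S_i)^{-1}(S_i-S_j)(I-S_j)^{-1}$ together with Schatten--H\"older estimates and $(a-b)^2\leq 2a^2+2b^2$. The only cosmetic difference is that you work directly with the difference $Df(S_i)(V)-Df(S_j)(V)$, whereas the paper first computes the cross inner product $\la Df(S_i)(V),Df(S_j)(V)\ra_{\Lcal^2(\Hcal,\mu_{*})}$ and then expands the squared norm of the difference; these are algebraically identical.
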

\begin{proof}
By Proposition \ref{proposition:derivative-log-Radon-Nikodym-S-trace-class},
for $S \in \SymTr(\Hcal)_{<I}$ and $V \in \Sym(\Hcal) \cap \Tr(\Hcal)$,
\begin{align*}
Df(S)(V) = \frac{1}{2}\trace\left[(I-S)^{-1}V\right] - \frac{1}{2}\la C_0^{-1/2}x, (I-S)^{-1}V{(I-S)^{-1}}C_0^{-1/2}x\ra.
\end{align*}
Let $A = (I-S_i)^{-1}V(I-S_i)^{-1}$, $B =(I-S_j)^{-1}V(I-S_j)^{-1}$. By Lemma \ref{lemma:integral-quadratic-S-star},
	\begin{align*}
		&\left \la D\log\left\{\frac{d\mu}{d\mu_0}(x) \right\}(S_i)(V), D\log\left\{\frac{d\mu}{d\mu_0}(x) \right\}(S_j)(V)\right\ra_{\Lcal^2(\Hcal,\mu_{*})}
		\\
		& = \frac{1}{4}\trace\left[(I-S_i)^{-1}V\right]\trace\left[(I-S_j)^{-1}V\right]
		\\
		&\quad - \frac{1}{4}\trace\left[(I-S_i)^{-1}V\right]\int_{\Hcal}\la C_0^{-1/2}x, BC_0^{-1/2}x\ra d\mu_{*}(x)
		\\
		&\quad -\frac{1}{4}\trace\left[(I-S_j)^{-1}V\right]\int_{\Hcal}\la C_0^{-1/2}x, AC_0^{-1/2}x\ra d\mu_{*}(x)
\\
		&\quad +\frac{1}{4}\int_{\Hcal}\la C_0^{-1/2}x, AC_0^{-1/2}x\ra\la C_0^{-1/2}x, BC_0^{-1/2}x\ra d\mu_{*}(x)
		\\
		& = \frac{1}{4}\trace\left[(I-S_i)^{-1}V\right]\trace\left[(I-S_j)^{-1}V\right]
		\\
		&\quad - \frac{1}{4}\trace\left[(I-S_i)^{-1}V\right]\trace[(I-S_{*})(I-S_j)^{-1}V(I-S_j)^{-1}]
		\\
		& \quad - \frac{1}{4}\trace\left[(I-S_j)^{-1}V\right]\trace[(I-S_{*})(I-S_i)^{-1}V(I-S_i)^{-1}]
		\\
		& \quad + \frac{1}{4}[\trace[(I-S_{*})(I-S_i)^{-1}V(I-S_i)^{-1}]\trace[(I-S_{*})(I-S_j)^{-1}V(I-S_j)^{-1}]]
		\\
		& \quad+ \frac{1}{2}\trace[(I-S_{*})(I-S_i)^{-1}V(I-S_i)^{-1}(I-S_{*})(I-S_j)^{-1}V(I-S_j)^{-1}]
		\\
		& =\frac{1}{4}\left([\trace((I-S_i)^{-1}V)] - \trace[(I-S_{*})(I-S_i)^{-1}V(I-S_i)^{-1}]\right)
		\\
		&\quad \times \left([\trace((I-S_j)^{-1}V)] - \trace[(I-S_{*})(I-S_j)^{-1}V(I-S_j)^{-1}]\right)
		\\
		&\quad+ \frac{1}{2}\trace[(I-S_{*})(I-S_i)^{-1}V(I-S_i)^{-1}(I-S_{*})(I-S_j)^{-1}V(I-S_j)^{-1}].
	\end{align*}
	It follows that
	\begin{align*}
		&\Delta = \left\| D\log\left\{\frac{d\mu}{d\mu_0}(x) \right\}(S_i)(V)- D\log\left\{\frac{d\mu}{d\mu_0}(x)(S_j)(V)\right\}\right\|^2_{\Lcal^2(\Hcal,\mu_{*})}
		\\
		& = \frac{1}{4}([\trace((I-S_i)^{-1}V)- \trace[(I-S_{*})(I-S_i)^{-1}V(I-S_i)^{-1}]]
		\\
		& \quad  \quad  - [\trace((I-S_j)^{-1}V) - \trace[(I-S_{*})(I-S_j)^{-1}V(I-S_j)^{-1}]])^2
		\\
		& \quad + \frac{1}{2}||(I-S_{*})^{1/2}[(I-S_i)^{-1}V(I-S_i)^{-1} -(I-S_j)^{-1}V(I-S_j)^{-1}](I-S_{*})^{1/2}||^2_{\HS}
		\\
		& \leq \frac{1}{2}(\trace[(I-S_i)^{-1}V] - \trace(I-S_j)^{-1}V])^2
		\\
		& \quad + \frac{1}{2}(\trace[(I-S_{*})(I-S_i)^{-1}V(I-S_i)^{-1}]-\trace[(I-S_{*})(I-S_j)^{-1}V(I-S_j)^{-1}])^2
		\\
		& \quad + \frac{1}{2}||(I-S_{*})^{1/2}[(I-S_i)^{-1}V(I-S_i)^{-1} -(I-S_j)^{-1}V(I-S_j)^{-1}](I-S_{*})^{1/2}||^2_{\HS}
		\\
		& \quad = A_1 + A_2 + A_3.
	\end{align*}
	For the first term, using $(I-S_i)^{-1} - (I-S_j)^{-1} = (I-S_i)^{-1}(S_i-S_j)(I-S_j)^{-1}$,
	\begin{align*}
		A_1 &= \frac{1}{2}(\trace[(I-S_i)^{-1}V] - \trace(I-S_j)^{-1}V])^2 
		\\
		&= \frac{1}{2}(\trace[(I-S_i)^{-1}(S_i-S_j)(I-S_j)^{-1}V])^2
		\\
		& \leq \frac{1}{2}||(I-S_i)^{-1}||^2||(I-S_j)^{-1}||^2||S_i-S_j||^2_{\HS}||V||_{\HS}^2.
	\end{align*}
	For the second term, using $(I-S_i)^{-1}V(I-S_i)^{-1} -(I-S_j)^{-1}V(I-S_j)^{-1} = 
	(I-S_i)^{-1}[V(I-S_i)^{-1}(S_i-S_j) + (S_i-S_j)(I-S_j)^{-1}V](I-S_j)^{-1}$
	\begin{align*}
		A_2 \leq \frac{1}{2}||I-S_{*}||^2||(I-S_i)^{-1}||^4||(I-S_j)^{-1}||^4||S_i-S_j||^2_{\HS}||V||^2_{\HS}.
	\end{align*}
	For the third term,
	\begin{align*}
		A_3 \leq \frac{1}{2}||I-S_{*}||^2||(I-S_i)^{-1}||^4||(I-S_j)^{-1}||^4||S_i-S_j||^2_{\HS}||V||^2_{\HS}.
	\end{align*}
	Combing all three terms $A_1,A_2,A_3$, we obtain
	\begin{align*}
		\Delta &\leq \left(\frac{1}{2} + ||I-S_{*}||^2||(I-S_i)^{-1}||^2||(I-S_j)^{-1}||^2\right)
		\\
		&\quad \times ||(I-S_i)^{-1}||^2||(I-S_j)^{-1}||^2||S_i-S_j||^2_{\HS}||V||^2_{\HS}.
	\end{align*}
\qed
\end{proof}

We now extend Proposition \ref{proposition:derivative-log-Radon-Nikodym-S-trace-class} 
to the general setting of $S \in \SymHS(\Hcal)_{< I} $.
The following shows that $D\log\left\{\frac{d\mu}{d\mu_0}(x) \right\}(S)(V)$
for $S \in \SymHS(\Hcal)_{<I}$, $V \in \Sym(\Hcal) \cap \Tr(\Hcal)$, can be obtained by taking limit
of a sequence $D\log\left\{\frac{d\mu}{d\mu_0}(x) \right\}(S_k)(V)$, with $S_k \in \SymTr(\Hcal)_{<I}$ and
$\lim\limits_{k \approach \infty}||S_k-S||_{\HS} = 0$.

\begin{proposition}
\label{proposition:D-logRN-S-HS-V-Tr}
Let $\mu = \mu(S) = \Ncal(0,C)$, $C = C_0^{1/2}(I-S)C_0^{1/2}$, $S \in \SymHS(\Hcal)_{<I}$.
Let $S_{*} \in \SymHS(\Hcal)_{< I}$ be fixed. Let $\mu_{*} = \Ncal(0,C_{*})$, $C_{*} = C_0^{1/2}(I-S_{*})C_0^{1/2}$, be the corresponding Gaussian measure.
Let $\{S_k\}_{k \in \Nbb}\in \SymTr(\Hcal)_{<I}$ 
be
such that $\lim\limits_{k \approach \infty}||S_k-S_{*}||_{\HS} = 0$.
Then
\begin{align}
D\log\left\{\frac{d\mu}{d\mu_0}(x) \right\}(S_{*})(V) = \lim_{k \approach \infty} D\log\left\{\frac{d\mu}{d\mu_0}(x) \right\}(S_k)(V)
\end{align}
$\forall V \in \Sym(\Hcal) \cap \Tr(\Hcal)$, where the limit is taken in $\Lcal^2(\Hcal,\mu_{*})$.
\end{proposition}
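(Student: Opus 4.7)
The first step is to show that $\{D\log\{d\mu/d\mu_0\}(S_k)(V)\}_{k\in\Nbb}$ is Cauchy in $\Lcal^2(\Hcal,\mu_{*})$ and hence converges to some $L\in\Lcal^2(\Hcal,\mu_{*})$. Since $||S_k-S_{*}||_{\HS}\approach 0$ implies convergence in operator norm, and since $I-S_{*}>0$ admits a uniform lower spectral bound $M_{S_{*}}>0$, for all $k$ sufficiently large one obtains $||(I-S_k)^{-1}||\leq 2/M_{S_{*}}$. The estimate in Lemma \ref{lemma:square-difference-DRadonNikodym-derivative} then yields, for $i,j$ large,
$$\left\|D\log\Big\{\frac{d\mu}{d\mu_0}\Big\}(S_i)(V) - D\log\Big\{\frac{d\mu}{d\mu_0}\Big\}(S_j)(V)\right\|_{\Lcal^2(\Hcal,\mu_{*})}\leq C\,||S_i-S_j||_{\HS}\,||V||_{\HS},$$
with $C$ depending on $S_{*}$ and $V$ but not on the large indices $i,j$. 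An interleaving argument shows that $L$ does not depend on the choice of approximating sequence.

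Denoting $f(S):=\log(d\mu(S)/d\mu_0)$, the plan is to establish $L=Df(S_{*})(V)$ through an integral-of-derivative representation. For each fixed $k$ sufficiently large and $t\in\R$ small enough that $S_k+sV\in\SymTr(\Hcal)_{<I}$ for all $s\in[0,t]$, Proposition \ref{proposition:derivative-log-Radon-Nikodym-S-trace-class} provides a closed-form expression for $Df(S_k+sV)(V)$, and the map $s\mapsto Df(S_k+sV)(V)$ is continuous into $\Lcal^2(\Hcal,\mu_{*})$ by Lemma \ref{lemma:square-difference-DRadonNikodym-derivative}, so the Banach-space fundamental theorem of calculus yields
$$f(S_k+tV) - f(S_k) = \int_0^t Df(S_k+sV)(V)\,ds\quad\text{in } \Lcal^2(\Hcal,\mu_{*}).$$
Passing to the limit $k\approach\infty$: the left-hand side converges to $f(S_{*}+tV)-f(S_{*})$ by Corollary \ref{corollary:logRN-convergence-HS-norm}. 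On the right-hand side, the Cauchy estimate above gives pointwise-in-$s$ convergence $Df(S_k+sV)(V)\approach \widetilde{L}(s)\in\Lcal^2(\Hcal,\mu_{*})$, where $\widetilde{L}(s)$ denotes the sequence limit at $S_{*}+sV$; the closed-form expression combined with Lemma \ref{lemma:integral-quadratic-S-star} supplies a uniform $\Lcal^2(\Hcal,\mu_{*})$ bound on $Df(S_k+sV)(V)$ justifying dominated convergence, giving $f(S_{*}+tV) - f(S_{*}) = \int_0^t \widetilde{L}(s)\,ds$. Continuity of $s\mapsto\widetilde{L}(s)$ at $s=0$ in $\Lcal^2(\Hcal,\mu_{*})$ (from the same continuity estimate) then gives $f(S_{*}+tV)-f(S_{*})=tL+o(|t|)$ in $\Lcal^2(\Hcal,\mu_{*})$ norm, establishing $L=Df(S_{*})(V)$.

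The principal obstacle will be the careful transfer from $\Lcal^2(\Hcal,\mu_0)$, the natural ambient space of Proposition \ref{proposition:derivative-log-Radon-Nikodym-S-trace-class}, to the target $\Lcal^2(\Hcal,\mu_{*})$, together with rigorous justification of the fundamental theorem of calculus and dominated convergence in this Banach-space setting. The former requires upgrading the derivative statement of Proposition \ref{proposition:derivative-log-Radon-Nikodym-S-trace-class} to an $\Lcal^2(\Hcal,\mu_{*})$-valued Fr\'echet derivative for trace-class $S_k$ by rerunning the Gaussian integral estimates with $\mu_{*}$ in place of $\mu_0$ via Lemma \ref{lemma:integral-quadratic-S-star}. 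A uniform $\Lcal^2(\Hcal,\mu_{*})$ bound on $Df(S_k+sV)(V)$ as $(k,s)$ varies in a compact HS-neighborhood of $(S_{*},0)$ follows from the closed-form expression combined with uniform control of $||(I-(S_k+sV))^{-1}||$ established in the first paragraph.
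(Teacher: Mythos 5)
Your proposal is correct, and it shares the paper's opening move: both you and the paper use Lemma \ref{lemma:square-difference-DRadonNikodym-derivative} to show that $\{Df(S_k)(V)\}_k$ is Cauchy in $\Lcal^2(\Hcal,\mu_{*})$ with a limit $L$ independent of the approximating sequence. Where you diverge is in identifying $L$ with $Df(S_{*})(V)$. The paper argues directly on the difference quotient: it writes the Fr\'echet-derivative condition at each $S_k$, invokes Corollary \ref{corollary:logRN-convergence-HS-norm} to pass $f(S_k)\approach f(S_{*})$ and $f(S_k+tV)\approach f(S_{*}+tV)$, and then ``takes the limit in $k$'' inside the $t\approach 0$ statement --- an interchange of limits whose required uniformity in $k$ is left implicit. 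You instead integrate the derivative along the segment, $f(S_k+tV)-f(S_k)=\int_0^t Df(S_k+sV)(V)\,ds$, pass to the limit under the (Bochner) integral via the uniform bound from Lemma \ref{lemma:integral-quadratic-S-star}, and then use the Lipschitz-in-$s$ estimate on the derivative to extract the $o(|t|)$ remainder. This buys an explicit, quantitative justification of the limit interchange that the paper glosses over, at the cost of having to set up the Banach-space fundamental theorem of calculus and verify continuity of $s\mapsto Df(S_k+sV)(V)$ (which your Cauchy estimate already provides, since $S_k+sV$ stays in $\SymTr(\Hcal)_{<I}$). One caveat you correctly flag, and which the paper shares: Proposition \ref{proposition:derivative-log-Radon-Nikodym-S-trace-class} establishes the derivative with convergence in $\Lcal^2(\Hcal,\mu_0)$, and both arguments need the same statement in $\Lcal^2(\Hcal,\mu_{*})$; since the two norms are not equivalent for general equivalent Gaussians, this genuinely requires rerunning the Gaussian moment estimates against $\mu_{*}$ via Lemma \ref{lemma:integral-quadratic-S-star}, as you propose. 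Finally, note that both your argument and the paper's establish only the directional derivative in the fixed direction $V$; neither addresses uniformity over the unit sphere of $\Sym(\Hcal)\cap\Tr(\Hcal)$, so you match the paper's level of rigor on that point.
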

\begin{proof}
Let $f: \SymHS(\Hcal)_{<I} \mapto \Lcal^2(\Hcal,\mu_0)$ be defined by $f(S)(x) = \log\left\{{\frac{d\mu}{d\mu_0}(x)}\right\}$.
Since $S_k \in \SymTr(\Hcal)_{<I}$, $k \in \Nbb$, we have 
\begin{align*}
f(S_k)(x) = - \frac{1}{2}\log\det(I-S_k) - \frac{1}{2}\la C_0^{-1/2}x, S_k(I-S_k)^{-1}C_0^{-1/2}x\ra.
\end{align*}
By Proposition \ref{proposition:derivative-log-Radon-Nikodym-S-trace-class}, 
$Df(S_k): \Sym(\Hcal)\cap\HS(\Hcal) \mapto \Lcal^2(\Hcal,\mu_{*})$, when restricted to $\Sym(\Hcal)\cap\Tr(\Hcal)$, is given by,
$\forall V \in \Sym(\Hcal)\cap\Tr(\Hcal)$,
\begin{align*}
Df(S_k)(V) = \frac{1}{2}\trace\left[(I-S_k)^{-1}V\right] - \frac{1}{2}\la C_0^{-1/2}x, (I-S_k)^{-1}V{(I-S_k)^{-1}}C_0^{-1/2}x\ra.
\end{align*}
Let $V \in \Sym(\Hcal) \cap \Tr(\Hcal)$ be fixed. 
Since $\{S_k\}_{k \in \Nbb}$ is a Cauchy sequence in $\HS(\Hcal)$, by Lemma \ref{lemma:square-difference-DRadonNikodym-derivative}, 
with similar reasoning as in Corollary \ref{corollary:logRN-convergence-HS-norm},
$\{Df(S_k)(V)\}_{k \geq N_0}$ is a Cauchy sequence in $\Lcal^2(\Hcal,\mu_{*})$ for a sufficiently large $N_0 \in \Nbb$, hence must 
converge to a unique limit, denoted by $g(V) \in \Lcal^2(\Hcal,\mu_{*})$.
%
Let $t\in \R$ be sufficiently close to zero so that $I-(S_{*} + tV) > 0$. Let $J \in \Nbb$ be sufficiently large so that $I - (S_k + tV) > 0$ $\forall k \geq J$.
By definition of the Fr\'echet derivative, 
\begin{align*}
\lim_{t \approach 0}\frac{||f(S_k+tV) - f(S_k) - tDf(S_k)(V)||_{\Lcal^2(\Hcal,\mu_{*})}}{|t|\;||V||_{\HS}} = 0.
\end{align*}
By Corollary \ref{corollary:logRN-convergence-HS-norm}, 
$\lim\limits_{k \approach \infty}||f(S_k) - f(S_{*})||_{\Lcal^2(\Hcal,\mu_{*})} = 0$,
$\lim\limits_{k \approach \infty}||f(S_k +tV) - f(S_{*}+tV)||_{\Lcal^2(\Hcal,\mu_{*})} = 0$.
Thus taking limit as $k \approach \infty$ gives us
\begin{align*}
\lim_{t \approach 0}\frac{||f(S_{*}+tV) - f(S_{*}) - tg(V)||_{\Lcal^2(\Hcal,\mu_{*})}}{|t|\;||V||_{\HS}} = 0.
\end{align*}
It follows then by definition that $Df(S_{*})(V) = g(V)$.
\qed
\end{proof}

\begin{proposition}
	\label{proposition:D-logRN-inner-V-Tr}
		Let $\mu = \mu(S) = \Ncal(0,C)$, where $C = C_0^{1/2}(I-S)C_0^{1/2}$, $S \in \SymHS(\Hcal)_{<I}$.
	Let $S_{*} \in \SymHS(\Hcal)_{<I}$ be fixed. 
	Let $\mu_{*} = \Ncal(0,C_{*})$, $C_{*} = C_0^{1/2}(I-S_{*})C_0^{1/2}$, be the corresponding Gaussian measure.
	For any pair $V_i, V_j \in \Sym(\Hcal) \cap \Tr(\Hcal)$,
\begin{align}
&\left \la D\log\left\{\frac{d\mu}{d\mu_0}(x) \right\}(S_{*})(V_i), D\log\left\{\frac{d\mu}{d\mu_0}(x) \right\}(S_{*})(V_j)\right\ra_{\Lcal^2(\Hcal,\mu_{*})}
\nonumber
\\
&=\frac{1}{2}\trace[(I-S_{*})^{-1}V_i(I-S_{*})^{-1}V_j].
\end{align}
In particular, for $V_i = V_j = V$,
\begin{align}
\left\|D\log\left\{\frac{d\mu}{d\mu_0}(x) \right\}(S_{*})(V)\right\|^2_{\Lcal^2(\Hcal,\mu_{*})} = \frac{1}{2}||(I-S_{*})^{-1/2}V(I-S_{*})^{-1/2}||^2_{\HS}.
\end{align}
\end{proposition}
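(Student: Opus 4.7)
The plan is to reduce to the trace-class case $S \in \SymTr(\Hcal)_{<I}$ by the approximation argument of Proposition \ref{proposition:D-logRN-S-HS-V-Tr}, compute the $\Lcal^2(\Hcal,\mu_{*})$ pairing explicitly via Lemma \ref{lemma:integral-quadratic-S-star}, and then pass to the limit in the Hilbert--Schmidt norm. Writing $f(S)(x)=\log\{d\mu/d\mu_0\}(x)$, the target identity is an inner product statement that factors cleanly after invoking the Gaussian second-moment formulas from Lemma \ref{lemma:integral-quadratic-S-star}.

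First I would fix a sequence $\{S_k\}_{k\in\Nbb}\subset\SymTr(\Hcal)_{<I}$ with $\|S_k-S_{*}\|_{\HS}\to 0$ (for instance finite-rank eigendecomposition truncations of $S_{*}$); the argument in Corollary \ref{corollary:logRN-convergence-HS-norm} ensures $S_k\in\SymHS(\Hcal)_{<I}$ and $\|(I-S_k)^{-1}\|$ is uniformly bounded for $k$ large. For each such $k$ and any $V\in\Sym(\Hcal)\cap\Tr(\Hcal)$, Proposition \ref{proposition:derivative-log-Radon-Nikodym-S-trace-class} expresses $Df(S_k)(V)$ as a constant minus the $\mu_0$-quadratic form with kernel $(I-S_k)^{-1}V(I-S_k)^{-1}\in\Sym(\Hcal)\cap\Tr(\Hcal)$.

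Next, applying Lemma \ref{lemma:integral-quadratic-S-star} with $T_i^{(k)}=(I-S_k)^{-1}V_i(I-S_k)^{-1}$ and $T_j^{(k)}=(I-S_k)^{-1}V_j(I-S_k)^{-1}$, and grouping cross terms, gives
\[\la Df(S_k)(V_i),Df(S_k)(V_j)\ra_{\Lcal^2(\Hcal,\mu_{*})}=\tfrac{1}{4}\alpha_i^{(k)}\alpha_j^{(k)}+\tfrac{1}{2}\trace\bigl[(I-S_{*})T_i^{(k)}(I-S_{*})T_i^{(k)}\bigr]\bigr|_{T_i^{(k)}\leftrightarrow T_j^{(k)}},\]
where a short cyclic-trace manipulation rewrites the residual scalar as $\alpha_i^{(k)}=\trace[V_i(I-S_k)^{-1}(S_{*}-S_k)(I-S_k)^{-1}]$. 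Since $V_i\in\Tr(\Hcal)$ and the resolvents are uniformly bounded while $\|S_{*}-S_k\|\leq\|S_{*}-S_k\|_{\HS}\to 0$, the bound $|\alpha_i^{(k)}|\leq\|V_i\|_{\tr}\|(I-S_k)^{-1}\|^{2}\|S_{*}-S_k\|\to 0$ kills the first summand in the limit (analogously for $\alpha_j^{(k)}$). The second summand converges to $\tfrac{1}{2}\trace[(I-S_{*})^{-1}V_i(I-S_{*})^{-1}V_j]$ by the same trace-norm continuity. Combined with the $\Lcal^2(\Hcal,\mu_{*})$-continuity of the inner product and $Df(S_k)(V)\to Df(S_{*})(V)$ from Proposition \ref{proposition:D-logRN-S-HS-V-Tr}, this yields the claimed inner product formula.

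The norm specialization $V_i=V_j=V$ then follows from the cyclic trace identity $\trace[(I-S_{*})^{-1}V(I-S_{*})^{-1}V]=\|(I-S_{*})^{-1/2}V(I-S_{*})^{-1/2}\|_{\HS}^{2}$. The main technical obstacle I anticipate is justifying the limit of the quadratic trace expression $\trace[(I-S_{*})T_i^{(k)}(I-S_{*})T_j^{(k)}]$: because $V_i,V_j$ are only trace class and $S_k\to S_{*}$ converges only in $\HS$-norm rather than operator norm, the estimate must carefully pair trace-class factors with bounded factors while relying on the uniform resolvent bound obtained as in Corollary \ref{corollary:logRN-convergence-HS-norm}.
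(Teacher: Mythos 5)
Your proposal is correct and follows essentially the same route as the paper: approximate $S_{*}$ by trace-class $S_k$, use the explicit formula of Proposition \ref{proposition:derivative-log-Radon-Nikodym-S-trace-class} together with the Gaussian moment identities of Lemma \ref{lemma:integral-quadratic-S-star}, and pass to the limit via Proposition \ref{proposition:D-logRN-S-HS-V-Tr} and continuity of the $\Lcal^2(\Hcal,\mu_{*})$ inner product. The only organizational difference is that the paper first treats the case $S_{*}\in\SymTr(\Hcal)_{<I}$ separately (where your residual scalar $\alpha_i^{(k)}$ vanishes identically because the derivative point and the measure parameter coincide) and then takes the limit; your explicit bound $|\alpha_i^{(k)}|\leq \|V_i\|_{\tr}\,\|(I-S_k)^{-1}\|^{2}\,\|S_{*}-S_k\|\to 0$ in fact makes precise a step the paper's part (ii) states rather tersely.
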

\begin{proof} 
(i) Consider first the case $S_{*} \in \SymTr(\Hcal)_{< I}$. Then
\begin{align*}
&D\log\left\{\frac{d\mu}{d\mu_0}(x) \right\}(S_{*})(V_i) 
\nonumber
\\
&= \frac{1}{2}\trace\left[(I-S_{*})^{-1}V_i\right] - \frac{1}{2}\la C_0^{-1/2}x, (I-S_{*})^{-1}V_i{(I-S_{*})^{-1}}C_0^{-1/2}x\ra.
\end{align*}
Let $A = (I-S_{*})^{-1}V_i{(I-S_{*})^{-1}}$, $B = (I-S_{*})^{-1}V_j{(I-S_{*})^{-1}}$. By Lemma \ref{lemma:integral-quadratic-S-star},
\begin{align*}
&\Delta = \left \la D\log\left\{\frac{d\mu}{d\mu_0}(x) \right\}(S_{*})(V_i), D\log\left\{\frac{d\mu}{d\mu_0}(x) \right\}(S_{*})(V_j)\right\ra_{\Lcal^2(\Hcal,\mu_{*})} 
\\
& = \frac{1}{4}\trace\left[(I-S_{*})^{-1}V_i\right]\trace\left[(I-S_{*})^{-1}V_j\right] 
\\
&\quad - \frac{1}{4}\trace\left[(I-S_{*})^{-1}V_i\right]\int_{\Hcal}\la C_0^{-1/2}x, BC_0^{-1/2}x\ra d\Ncal(0,C_{*})(x) 
\\
&\quad - \frac{1}{4}\trace\left[(I-S_{*})^{-1}V_j\right]\int_{\Hcal}\la C_0^{-1/2}x, AC_0^{-1/2}x \ra d\Ncal(0,C_{*})(x)
\\
& \quad + \frac{1}{4}\int_{\Hcal}\la C_0^{-1/2}x, AC_0^{-1/2}x\ra \la C_0^{-1/2}x, BC_0^{-1/2}x\ra d\Ncal(0,C_{*})(x)
\\
& = \frac{1}{4}\trace\left[(I-S_{*})^{-1}V_i\right]\trace\left[(I-S_{*})^{-1}V_j\right] 
\\
& \quad - \frac{1}{4}\trace\left[(I-S_{*})^{-1}V_i\right]\trace[(I-S_{*})^{-1}V_j] - \frac{1}{4}\trace\left[(I-S_{*})^{-1}V_i\right]\trace[(I-S_{*})^{-1}V_j]
\\
& \quad + \frac{1}{4}[\trace\left[(I-S_{*})^{-1}V_i\right]\trace\left[(I-S_{*})^{-1}V_j\right] + 2\trace[(I-S_{*})^{-1}V_i(I-S_{*})^{-1}V_j]]
\\
& = \frac{1}{2}\trace[(I-S_{*})^{-1}V_i(I-S_{*})^{-1}V_j].
\end{align*}

(ii) Consider now the
case $S_{*}  \in \SymHS(\Hcal)_{< I}$.
Let $\{S_k\}_{k \in \Nbb}$ in $\SymTr(\Hcal)_{<I}$ be such that $\lim\limits_{k \approach \infty}||S_k - S_{*}||_{\HS} = 0$.
By Proposition \ref{proposition:D-logRN-S-HS-V-Tr}, for $V \in \Sym(\Hcal) \cap \Tr(\Hcal)$, $D\log\left\{\frac{d\mu}{d\mu_0}(x) \right\}(S_{*})(V) 
= \lim\limits_{k \approach \infty}D\log\left\{\frac{d\mu}{d\mu_0}(x) \right\}(S_k)(V)$ in $\Lcal^2(\Hcal,\mu_{*})$,
\begin{align*}
&\Delta = \left \la D\log\left\{\frac{d\mu}{d\mu_0}(x) \right\}(S_{*})(V_i), D\log\left\{\frac{d\mu}{d\mu_0}(x) \right\}(S_{*})(V_j)\right\ra_{\Lcal^2(\Hcal,\mu_{*})} 	
\\
& = \lim_{k \approach \infty}\left\la D\log\left\{\frac{d\mu}{d\mu_0}(x) \right\}(S_k)(V_i), D\log\left\{\frac{d\mu}{d\mu_0}(x) \right\}(S_k)(V_j) \right\ra_{\Lcal^2(\Hcal,\mu_{*})}
\\
& = \frac{1}{2}\lim_{k \approach \infty}\trace[(I-S_k)^{-1}V_i(I-S_k)^{-1}V_j]
\\
& = \frac{1}{2}\trace[(I-S_{*})^{-1}V_i(I-S_{*})^{-1}V_j].
\end{align*}
Here we use the fact that $||(I-S_k)^{-1}-(I-S_{*})^{-1}||_{\HS} \leq ||(I-S_k)^{-1}||\;||S_k-S_{*}||_{\HS}||(I-S)^{-1}||
\approach 0$ as $k \approach \infty$.
\qed
\end{proof}

\begin{proposition}
\label{proposition:D-logRN-S-HS-V-HS}
	Let $\mu = \mu(S) = \Ncal(0,C), \mu_0 = \Ncal(0,C_0)$, with $C = C_0^{1/2}(I-S)C_0^{1/2}$, $S \in \SymHS(\Hcal)_{<I}$. 
		Let $S_{*} \in \SymHS(\Hcal)_{<I}$ be fixed. 
	Let $\mu_{*} = \Ncal(0,C_{*})$, $C_{*} = C_0^{1/2}(I-S_{*})C_0^{1/2}$, be the corresponding Gaussian measure.
	For
$V \in \Sym(\Hcal) \cap \HS(\Hcal)$, let $\{V_j\}_{j \in \Nbb} \in \Sym(\Hcal) \cap\Tr(\Hcal)$ be
such that $\lim\limits_{j \approach \infty}||V_j - V||_{\HS} = 0$. Then
\begin{align}
	&D\log\left\{\frac{d\mu}{d\mu_0}(x) \right\}(S_{*})(V) = \lim_{j \approach \infty}D\log\left\{\frac{d\mu}{d\mu_0}(x) \right\}(S_{*})(V_j).
\end{align}
Here the limit is taken in $\Lcal^2(\Hcal,\mu_{*})$.
For a pair $V,W \in \Sym(\Hcal) \cap \HS(\Hcal)$,
\begin{align}
&\left\la D\log\left\{\frac{d\mu}{d\mu_0}(x) \right\}(S_{*})(V), D\log\left\{\frac{d\mu}{d\mu_0}(x) \right\}(S_{*})(W)\right\ra_{\Lcal^2(\Hcal,\mu_{*})}
\nonumber
\\
&=\frac{1}{2}\trace[(I-S_{*})^{-1}V(I-S_{*})^{-1}W].
\end{align}
\end{proposition}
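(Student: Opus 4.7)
The plan is to build $D\log\{d\mu/d\mu_0\}(S_{*})(V)$ for $V \in \Sym(\Hcal)\cap\HS(\Hcal)$ as the $\Lcal^2(\Hcal,\mu_{*})$-limit of the already-defined quantities $D\log\{d\mu/d\mu_0\}(S_{*})(V_j)$ for $V_j \in \Sym(\Hcal)\cap\Tr(\Hcal)$, in direct analogy with the strategy used in Proposition \ref{proposition:D-logRN-S-HS-V-Tr} (where instead the approximation was carried out in the $S$-variable). First I would recall that $\Sym(\Hcal)\cap\Tr(\Hcal)$ is dense in $\Sym(\Hcal)\cap\HS(\Hcal)$ in the $\HS$-norm, so such an approximating sequence $\{V_j\}$ always exists (e.g.\ truncations of the spectral expansion of $V$). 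The crucial observation is that Proposition \ref{proposition:D-logRN-inner-V-Tr}, applied with $V_i,V_j \in \Sym(\Hcal)\cap\Tr(\Hcal)$, yields
\begin{align*}
\|Df(S_{*})(V_i)-Df(S_{*})(V_j)\|_{\Lcal^2(\Hcal,\mu_{*})}^2
&= \tfrac{1}{2}\|(I-S_{*})^{-1/2}(V_i-V_j)(I-S_{*})^{-1/2}\|_{\HS}^2 \\
&\leq \tfrac{1}{2}\|(I-S_{*})^{-1}\|^2\,\|V_i-V_j\|_{\HS}^2,
\end{align*}
so $\{Df(S_{*})(V_j)\}_{j \in \Nbb}$ is Cauchy in $\Lcal^2(\Hcal,\mu_{*})$ and converges to a unique limit $g(V)$, independent of the approximating sequence.

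Next I would obtain the inner product formula by continuity. For $V,W \in \Sym(\Hcal)\cap\HS(\Hcal)$ with trace-class approximants $V_j \to V$ and $W_j \to W$ in $\HS$-norm, Proposition \ref{proposition:D-logRN-inner-V-Tr} gives
\begin{align*}
\langle Df(S_{*})(V_j),Df(S_{*})(W_j)\rangle_{\Lcal^2(\Hcal,\mu_{*})} = \tfrac{1}{2}\trace[(I-S_{*})^{-1}V_j(I-S_{*})^{-1}W_j].
\end{align*}
The left-hand side converges to $\langle g(V),g(W)\rangle_{\Lcal^2(\Hcal,\mu_{*})}$ by the $\Lcal^2$-convergence just established, while the right-hand side converges to $\tfrac{1}{2}\trace[(I-S_{*})^{-1}V(I-S_{*})^{-1}W]$ by $\HS$-continuity of the trace pairing (using $\|(I-S_{*})^{-1}V_j(I-S_{*})^{-1} - (I-S_{*})^{-1}V(I-S_{*})^{-1}\|_{\HS} \leq \|(I-S_{*})^{-1}\|^2\|V_j - V\|_{\HS}$ and similarly for $W$).

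The remaining step, and the main obstacle, is to verify that $g(V)$ is genuinely the Fr\'echet derivative $Df(S_{*})(V)$, i.e.\ that
\begin{align*}
\lim_{t \approach 0}\frac{\|f(S_{*}+tV)-f(S_{*})-t\,g(V)\|_{\Lcal^2(\Hcal,\mu_{*})}}{|t|\,\|V\|_{\HS}}=0.
\end{align*}
I would split the numerator via the triangle inequality into three pieces:
\begin{align*}
\|f(S_{*}+tV)-f(S_{*}+tV_j)\| + \|f(S_{*}+tV_j)-f(S_{*})-t\,Df(S_{*})(V_j)\| + |t|\,\|Df(S_{*})(V_j)-g(V)\|.
\end{align*}
The first piece is controlled by Corollary \ref{corollary:logRN-convergence-HS-norm} applied to the pair $(S_{*}+tV,\,S_{*}+tV_j)$, giving a bound of the form $|t|\,C(S_{*},V,V_j)\,\|V-V_j\|_{\HS}$ with $C$ uniformly bounded for $t$ near $0$ and $j$ large (since $\|tV\|_{\HS}, \|tV_j\|_{\HS}$ stay small). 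The second piece is $o(|t|)$ as $t \approach 0$ for each fixed $j$ by Proposition \ref{proposition:D-logRN-S-HS-V-Tr}. The third piece is $|t|$ times a quantity tending to $0$ as $j \approach \infty$ by construction of $g(V)$. A standard diagonal argument (fix $\varep > 0$, choose $j$ large so that the first and third pieces divided by $|t|\,\|V\|_{\HS}$ are below $\varep/2$ uniformly in small $t$, then choose $|t|$ small using the second piece) then yields the required vanishing limit, establishing the claim along with the stated inner product identity.
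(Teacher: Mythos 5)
Your proposal is correct and follows essentially the same route as the paper: establish that $\{Df(S_{*})(V_j)\}_{j\in\Nbb}$ is Cauchy in $\Lcal^2(\Hcal,\mu_{*})$ via the inner-product formula of Proposition \ref{proposition:D-logRN-inner-V-Tr}, verify the Fr\'echet-derivative property of the limit by controlling $\|f(S_{*}+tV_j)-f(S_{*}+tV)\|_{\Lcal^2(\Hcal,\mu_{*})}$ with Corollary \ref{corollary:logRN-convergence-HS-norm}, and pass to the limit in the trace identity to obtain the inner product. The only difference is presentational: you spell out the uniform-in-$t$ three-term estimate and the diagonal argument that the paper compresses into ``taking the limit as $j\to\infty$.''
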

\begin{proof}
(i) 
By Proposition \ref{proposition:D-logRN-S-HS-V-Tr}, $D\log\left\{\frac{d\mu}{d\mu_0}(x) \right\}(S_{*})(V_j) \in \Lcal^2(\Hcal,\mu_{*})$ $\forall j \in \Nbb$
and by Proposition \ref{proposition:D-logRN-inner-V-Tr}, for each pair $i,j \in \Nbb$,
\begin{align*}
&\left\|D\log\left\{\frac{d\mu}{d\mu_0}(x) \right\}(S_{*})(V_i) - D\log\left\{\frac{d\mu}{d\mu_0}(x) \right\}(S_{*})(V_j)\right\|^2_{\Lcal^2(\Hcal,\mu_{*})}
\\
& = \frac{1}{2}||(I-S_{*})^{-1/2}V_i(I-S_{*})^{-1/2}||^2_{\HS} + \frac{1}{2}||(I-S_{*})^{-1/2}V_j(I-S_{*})^{-1/2}||^2_{\HS}
\\
&\quad -\trace[(I-S_{*})^{-1}V_i(I-S_{*})^{-1}V_j]
\\
& = \frac{1}{2}||(I-S_{*})^{-1/2}(V_i-V_j)(I-S_{*})^{-1/2}||^2_{\HS}
 \leq \frac{1}{2}||(I-S_{*})^{-1}||^2||V_i-V_j||^2_{\HS}.
\end{align*}
Since $\{V_j\}_{j \in \Nbb}$ is a Cauchy sequence in $\HS(\Hcal)$, $\left\{D\log\left\{\frac{d\mu}{d\mu_0}(x) \right\}(S_{*})(V_j)\right\}_{j \in \Nbb}$ is a Cauchy sequence in $\Lcal^2(\Hcal,\mu_{*})$ and must converge to a unique limit
in $\Lcal^2(\Hcal,\mu_{*})$.
Let $f:\SymHS(\Hcal)_{< I} \mapto \Lcal^2(\Hcal,\mu_{*})$ be defined by $f(S) = \log\left\{\frac{d\mu}{d\mu_0}(x)\right\}$. 
Let $t$ be sufficiently close to zero so that $S_{*}+tV \in \SymHS(\Hcal)_{<I}$.
Since $\lim\limits_{j \approach \infty}||V_j-V||_{\HS} = 0$, 
let $J \in \Nbb$ be sufficiently large so that $S_{*}+tV_j \in \SymHS(\Hcal)_{< I}$ $\forall j \geq J$. 
By definition of the Fr\'echet derivative,
\begin{align*}
	\lim_{t \approach 0}\frac{||f(S_{*}+tV_j) - f(S_{*}) - tDf(S_{*})(V_j)||_{\Lcal^2(\Hcal,\mu_{*})}}{|t|\;||V_j||_{\HS}} = 0.
\end{align*}
By Corollary \ref{corollary:logRN-convergence-HS-norm},
$\lim\limits_{j \approach \infty}||f(S_{*}+tV_j) - f(S_{*}+tV)||_{\Lcal^2(\Hcal,\mu_{*})} = 0$.
Taking limit as $j \approach \infty$ in $\Lcal^2(\Hcal,\mu_{*})$ thus gives
	$D\log\left\{\frac{d\mu}{d\mu_0}(x) \right\}(S_{*})(V) = \lim\limits_{j \approach \infty} D\log\left\{\frac{d\mu}{d\mu_0}(x) \right\}(S_{*})(V_j)$.
	
	(ii) For any pair $(V,W) \in \Sym(\Hcal) \cap \HS(\Hcal)$,
	let $\{V_j\}_{j \in \Nbb}$, $\{W_k\}_{k \in \Nbb}$ in $\Sym(\Hcal) \cap \Tr(\Hcal)$ be such
	that $\lim\limits_{j \approach \infty}||V_j - V||_{\HS} = 0$, $\lim\limits_{k \approach \infty}||W_k - W||_{\HS} = 0$.
	By part (i) and Proposition \ref{proposition:D-logRN-inner-V-Tr},
	\begin{align*}
	&\left\la D\log\left\{\frac{d\mu}{d\mu_0}(x) \right\}(S_{*})(V), D\log\left\{\frac{d\mu}{d\mu_0}(x) \right\}(S_{*})(W)\right\ra_{\Lcal^2(\Hcal,\mu_{*}) }
	\\
	&	=\lim_{j,k \approach \infty}\left\la D\log\left\{\frac{d\mu}{d\mu_0}(x) \right\}(S_{*})(V_j), D\log\left\{\frac{d\mu}{d\mu_0}(x) \right\}(S_{*})(W_k)\right\ra_{\Lcal^2(\Hcal,\mu_{*}) }
	\\
	& = \lim_{j,k \approach \infty}\frac{1}{2}\trace[(I-S_{*})^{-1}V_j(I-S_{*})^{-1}W_k]
	= \frac{1}{2}\trace[(I-S_{*})^{-1}V(I-S_{*})^{-1}W].
	\end{align*}
\qed
\end{proof}

\subsection{Miscellaneous technical results}
\label{section:Gaussian-integrals}

\begin{lemma}
	\label{lemma:rank-one-operator-norm}
	Let $a,b \in \Hcal$. The rank-one operator $a \otimes b:\Hcal \mapto \Hcal$ defined by $(a\otimes b)x = \la b,x\ra a$ has operator norm
	$||a\otimes b||\leq ||a||\;||b||$.
\end{lemma}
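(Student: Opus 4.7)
The plan is to apply the Cauchy--Schwarz inequality directly to the definition of the operator norm. First I would fix an arbitrary $x \in \Hcal$ with $||x|| \leq 1$ and compute
\begin{align*}
||(a\otimes b)x|| = ||\la b,x\ra a|| = |\la b,x\ra|\;||a||.
\end{align*}
Then by Cauchy--Schwarz, $|\la b,x\ra|\leq ||b||\;||x|| \leq ||b||$, so that $||(a\otimes b)x||\leq ||a||\;||b||$.

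Taking the supremum over all such $x$ gives $||a\otimes b|| = \sup_{||x||\leq 1}||(a\otimes b)x||\leq ||a||\;||b||$, which is exactly the claim. There is no real obstacle here; the argument is essentially a one-line application of Cauchy--Schwarz, and the only subtlety is being consistent with the convention $(a\otimes b)x = \la b,x\ra a$ (as opposed to $\la a,x\ra b$) as defined in the statement.
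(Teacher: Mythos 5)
Your proof is correct and is essentially identical to the paper's, which also computes $||(a\otimes b)x|| = |\la b,x\ra|\;||a|| \leq ||x||\;||b||\;||a||$ and concludes via the definition of the operator norm. Nothing to add.
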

\begin{proof}
	This follows from $||(a \otimes b)x|| = |\la b,x\ra|\;||a|| \leq ||x||\;||b||\;||a||$. \qed
\end{proof}

\begin{lemma}
	\label{lemma:fourth-power-quadratic-bound}
	Let $a, b \in \Hcal$. Then 
	\begin{align}
		(||a||^2-||b||^2)^2 \leq	||a||^4 + ||b||^4 - 2 \la a,b\ra^2 \leq ||a-b||^2[||a||+||b||]^2.
	\end{align}
\end{lemma}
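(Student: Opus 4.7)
The plan is to prove the two inequalities separately, using Cauchy--Schwarz for both. The left inequality expands directly: $(||a||^2-||b||^2)^2 = ||a||^4 - 2||a||^2||b||^2 + ||b||^4$, so it reduces to $\langle a,b\rangle^2 \leq ||a||^2||b||^2$, which is exactly the Cauchy--Schwarz inequality. That handles the left bound in one line.

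For the right inequality, the main idea is to establish the algebraic identity
\begin{align*}
||a||^4 + ||b||^4 - 2\langle a,b\rangle^2 = \tfrac{1}{2}\bigl[(||a||^2-||b||^2)^2 + ||a+b||^2 \, ||a-b||^2\bigr].
\end{align*}
I would verify this by expanding $||a\pm b||^2 = ||a||^2 \pm 2\langle a,b\rangle + ||b||^2$, so that $||a+b||^2 ||a-b||^2 = (||a||^2+||b||^2)^2 - 4\langle a,b\rangle^2$, and then adding $(||a||^2-||b||^2)^2 = (||a||^2+||b||^2)^2 - 4||a||^2||b||^2$ does not work directly; instead one checks that the sum of the two squares equals $2||a||^4 + 2||b||^4 - 4\langle a,b\rangle^2$, which halved gives the left-hand side.

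Once the identity is in hand, I would apply Cauchy--Schwarz in the form $(||a||^2 - ||b||^2)^2 = \langle a+b,a-b\rangle^2 \leq ||a+b||^2 \, ||a-b||^2$ to bound the first term by the second, yielding
\begin{align*}
||a||^4 + ||b||^4 - 2\langle a,b\rangle^2 \leq ||a+b||^2 \, ||a-b||^2,
\end{align*}
and then conclude with the triangle inequality $||a+b|| \leq ||a|| + ||b||$.

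There is no serious obstacle here --- both inequalities are elementary Hilbert-space identities. The only mildly non-obvious ingredient is the decomposition of $||a||^4 + ||b||^4 - 2\langle a,b\rangle^2$ into a symmetric form involving $a+b$ and $a-b$, but this is a routine expansion and follows immediately once one observes that $\langle a+b, a-b\rangle = ||a||^2 - ||b||^2$.
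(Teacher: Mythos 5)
Your proof is correct, and for the right-hand inequality it takes a genuinely different route from the paper's. The paper writes $||a||^4+||b||^4-2\langle a,b\rangle^2 = \langle a-b,\,(a\otimes a-b\otimes b)(a+b)\rangle$, decomposes the rank-two operator as $a\otimes a-b\otimes b = a\otimes(a-b)+(a-b)\otimes b$, and bounds its operator norm by $||a-b||\,(||a||+||b||)$ via a separate rank-one lemma; the estimate then follows from Cauchy--Schwarz and the triangle inequality. You instead establish the polarization-type identity
\begin{align*}
||a||^4+||b||^4-2\langle a,b\rangle^2 \;=\; \tfrac{1}{2}\bigl[(||a||^2-||b||^2)^2+||a+b||^2\,||a-b||^2\bigr],
\end{align*}
which checks out (the sum of the two squares is indeed $2||a||^4+2||b||^4-4\langle a,b\rangle^2$, despite the somewhat garbled aside in your write-up about the addition "not working directly"). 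Combined with $(||a||^2-||b||^2)^2=\langle a+b,a-b\rangle^2\leq ||a+b||^2\,||a-b||^2$, your identity actually delivers \emph{both} inequalities of the lemma simultaneously -- the lower bound because the second term in the bracket dominates the first, and the upper bound after $||a+b||\leq ||a||+||b||$ -- and it yields the sharper intermediate estimate $||a+b||^2\,||a-b||^2$, which the paper's operator-norm argument does not produce. The paper's approach buys nothing extra here beyond reusing its rank-one norm lemma; yours is self-contained and slightly tighter.
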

\begin{proof}
	First, by the Cauchy-Schwarz inequality, $||a||^4 + ||b||^4 - 2 \la a,b\ra^2 \geq ||a||^4 + ||b||^4 - 2||a||^2||b||^2 = (||a||^2-||b||^2)^2$. For the right inequality, we note that
	\begin{align*}
		||a||^4 - \la a,b\ra^2 &= (\la a,a\ra - \la a,b\ra)(\la a,a\ra + \la a,b\ra)  = \la a, a-b\ra \la a, a+b\ra
		\\
		& = \la a-b, (a \otimes a)(a+b)\ra,
		\\
		||b||^4 - \la a,b\ra^2 &= - \la a-b, (b \otimes b)(a+b)\ra,
		\\
		||a||^4 + ||b||^4 - 2\la a,b\ra^2 &= \la a-b, (a\otimes a - b\otimes b)(a+b)\ra.
	\end{align*}
	By Lemma \ref{lemma:rank-one-operator-norm}, the operator $a\otimes a - b\otimes b$ satisfies
	\begin{align*}
		||a\otimes a - b \otimes b|| \leq ||a\otimes (a-b)|| + ||(a-b)\otimes b|| \leq ||a-b||[||a|| + ||b||].
	\end{align*}
	Combining all the previous expressions, it  follows that
	\begin{align*}
		||a||^4 + ||b||^4 - 2\la a,b\ra^2 &\leq ||a-b||\;||a\otimes a - b\otimes b||[||a||+||b||] 
		\\
		&\leq ||a-b||^2[||a||+||b||]^2.
	\end{align*}
	\qed
\end{proof}

\begin{lemma}[Lemma 11 in \cite{Minh2020:EntropicHilbert}]
\label{lemma:gaussian-integral-quadratic-form}
Let $A \in \Lcal(\Hcal)$. Then
\begin{align}
\int_{\Hcal}\la x-m, A(x-m)\ra d\Ncal(m,C)(x) = \trace(CA).
\end{align}
\end{lemma}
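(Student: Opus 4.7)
\textbf{Proof proposal for Lemma \ref{lemma:gaussian-integral-quadratic-form}.}
The plan is to reduce to the centered case by a translation, then compute the integral via a finite-rank approximation aligned with the eigenbasis of $C$, and finally pass to the limit by dominated convergence. First I would set $y = x - m$, so that the pushforward of $\Ncal(m,C)$ under this shift is $\Ncal(0,C)$; hence it suffices to prove
\begin{align*}
\int_{\Hcal} \la y, A y\ra\, d\Ncal(0,C)(y) = \trace(CA).
\end{align*}

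Next, let $\{e_k\}_{k \in \Nbb}$ be an orthonormal basis of eigenvectors of $C$ with $Ce_k = \lambda_k e_k$, and let $P_N = \sum_{k=1}^N e_k \otimes e_k$ be the orthogonal projection onto the span of $\{e_1,\ldots,e_N\}$. Then $P_N$ commutes with $C$, and for $y \sim \Ncal(0,C)$ the random variables $\xi_k := \la y, e_k\ra$ satisfy $\int \xi_k \xi_j\, d\Ncal(0,C) = \la e_k, Ce_j\ra = \lambda_k \delta_{kj}$ by definition of the covariance operator. A direct expansion yields
\begin{align*}
\int_{\Hcal} \la P_N y, A P_N y\ra\, d\Ncal(0,C)(y)
&= \sum_{k,j=1}^N \la e_k, Ae_j\ra \int_{\Hcal}\xi_k \xi_j\, d\Ncal(0,C)(y)
\\
&= \sum_{k=1}^N \lambda_k \la e_k, Ae_k\ra,
\end{align*}
which is exactly $\trace(P_N A P_N C) = \trace(CA P_N)$. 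As $N \to \infty$, this converges to $\trace(CA)$ since $CA \in \Tr(\Hcal)$ (because $C \in \Tr(\Hcal)$ and $A \in \Lcal(\Hcal)$).

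The final step is to justify $\int \la P_N y, AP_N y\ra\, d\Ncal(0,C)(y) \to \int \la y, Ay\ra\, d\Ncal(0,C)(y)$. Pointwise, $P_N y \to y$ in $\Hcal$ for every $y$, so by boundedness of $A$ we have $\la P_N y, A P_N y\ra \to \la y, Ay\ra$. The dominating bound is $|\la P_N y, AP_N y\ra| \leq \|A\|\,\|P_N y\|^2 \leq \|A\|\,\|y\|^2$, and
\begin{align*}
\int_{\Hcal}\|y\|^2\, d\Ncal(0,C)(y) = \sum_{k=1}^{\infty}\int_{\Hcal}\la y, e_k\ra^2\, d\Ncal(0,C)(y) = \sum_{k=1}^{\infty}\lambda_k = \trace(C) < \infty,
\end{align*}
so the Lebesgue dominated convergence theorem applies and yields the claim. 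The only mild subtlety is that $A$ need not be self-adjoint; this causes no difficulty since the computation above is intrinsically bilinear and $\trace(CA)$ is well-defined regardless of symmetry — indeed both sides depend only on the symmetric part $(A+A^*)/2$ because the antisymmetric part contributes zero to both. This is the step I would flag as requiring a line of care, but it is straightforward and not a real obstacle.
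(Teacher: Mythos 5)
Your proposal is correct. Note that the paper does not actually prove this lemma: it is quoted verbatim as Lemma 11 of \cite{Minh2020:EntropicHilbert}, so there is no in-text argument to compare against. Your route --- translate to the centered case, expand $\la P_N y, AP_N y\ra$ in the eigenbasis of $C$ using $\int \xi_k\xi_j\,d\Ncal(0,C) = \lambda_k\delta_{kj}$ to get $\trace(CAP_N)$, then pass to the limit by dominated convergence with the integrable majorant $\|A\|\,\|y\|^2$ --- is the standard one, and it is the same style of argument the paper itself uses for the fourth-moment identity in Lemma \ref{lemma:gaussian-integral-double-quadratic-form} (eigenbasis expansion plus a convergence theorem, reducing to a cited low-rank moment formula). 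Your closing remark about non-self-adjoint $A$ is also right: in a real Hilbert space $\trace(CA^{*}) = \trace(AC) = \trace(CA)$, so both sides indeed depend only on $\tfrac{1}{2}(A+A^{*})$, exactly as the paper exploits in part (iii) of its proof of Lemma \ref{lemma:gaussian-integral-double-quadratic-form}. No gaps.
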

\begin{lemma}[Lemma 23 in \cite{Minh:2020regularizedDiv}]
\label{lemma:gaussian-integral-double-square}
Let $a,b \in \Hcal$. Then
\begin{align}
\int_{\Hcal}\la x-m, a\ra^2\la x-m, b\ra^2 d\Ncal(m,C)(x) = \la a, Ca\ra\la b,Cb\ra + 2 \la a, Cb\ra^2.
\end{align}
\end{lemma}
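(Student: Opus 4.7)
The plan is to reduce the infinite-dimensional integral to a two-dimensional Gaussian computation via the joint Gaussianity of linear functionals. Define the scalar random variables $Y_1(x) = \la x-m, a\ra$ and $Y_2(x) = \la x-m, b\ra$ on the probability space $(\Hcal, \Bsc(\Hcal), \Ncal(m,C))$. Since $\mu = \Ncal(m,C)$ is a Gaussian measure on $\Hcal$, by definition every bounded linear functional on $\Hcal$ has a Gaussian distribution under $\mu$, and any finite collection of such functionals is jointly Gaussian. Thus $(Y_1, Y_2)$ is jointly Gaussian with zero mean, and by the definition of the covariance operator $C$ we have
\begin{align*}
\var(Y_1) &= \la a, Ca\ra, \quad \var(Y_2) = \la b, Cb\ra, \quad \cov(Y_1, Y_2) = \la a, Cb\ra.
\end{align*}
This can be verified via Lemma \ref{lemma:gaussian-integral-quadratic-form} applied to $A = a \otimes a$, $A = b \otimes b$, and the symmetrized $A = \frac{1}{2}(a \otimes b + b \otimes a)$.

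The integral to compute is then
\begin{align*}
\int_{\Hcal}\la x-m, a\ra^2 \la x-m, b\ra^2 \, d\Ncal(m,C)(x) = \mathbb{E}[Y_1^2 Y_2^2],
\end{align*}
which is a fourth-order moment of a bivariate centered Gaussian. I would apply Isserlis' (Wick's) theorem: for jointly centered Gaussian $Z_1, Z_2, Z_3, Z_4$,
\begin{align*}
\mathbb{E}[Z_1 Z_2 Z_3 Z_4] = \mathbb{E}[Z_1 Z_2]\mathbb{E}[Z_3 Z_4] + \mathbb{E}[Z_1 Z_3]\mathbb{E}[Z_2 Z_4] + \mathbb{E}[Z_1 Z_4]\mathbb{E}[Z_2 Z_3].
\end{align*}
Taking $Z_1 = Z_2 = Y_1$ and $Z_3 = Z_4 = Y_2$ yields $\mathbb{E}[Y_1^2 Y_2^2] = \mathbb{E}[Y_1^2]\mathbb{E}[Y_2^2] + 2(\mathbb{E}[Y_1 Y_2])^2$, and substituting the covariances computed above gives exactly $\la a, Ca\ra \la b, Cb\ra + 2\la a, Cb\ra^2$.

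As a self-contained alternative that avoids quoting Isserlis' theorem, one can diagonalize the $2\times 2$ covariance matrix of $(Y_1, Y_2)$: write $Y_1 = \sigma_1 Z_1$, $Y_2 = \rho \sigma_2 Z_1 + \sigma_2\sqrt{1-\rho^2}\, Z_2$ for independent standard normals $Z_1, Z_2$, where $\sigma_1^2 = \la a, Ca\ra$, $\sigma_2^2 = \la b, Cb\ra$, and $\sigma_1\sigma_2 \rho = \la a, Cb\ra$ (treating the degenerate cases $\sigma_1 = 0$ or $\sigma_2 = 0$ separately, which make both sides vanish). Expanding $Y_1^2 Y_2^2$ and using $\mathbb{E}[Z^2] = 1$, $\mathbb{E}[Z^4] = 3$ for a standard normal $Z$ produces the claimed identity after routine algebra.

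The only genuine technical point is justifying joint Gaussianity of $(Y_1, Y_2)$ and the computation of their covariance structure in the infinite-dimensional setting; both follow immediately from the defining properties of Gaussian measures on separable Hilbert spaces. Since the identity is a cited result (Lemma 23 in \cite{Minh:2020regularizedDiv}), no new machinery is needed beyond the Wick formula and the bounded linearity of $a \mapsto \la \cdot, a \ra$.
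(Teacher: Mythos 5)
Your proof is correct: the reduction to the bivariate centered Gaussian vector $(Y_1,Y_2)=(\la x-m,a\ra,\la x-m,b\ra)$, whose joint Gaussianity follows because every linear combination $\alpha Y_1+\beta Y_2=\la x-m,\alpha a+\beta b\ra$ is a linear functional and hence Gaussian under $\Ncal(m,C)$, together with the Isserlis identity $\mathbb{E}[Y_1^2Y_2^2]=\mathbb{E}[Y_1^2]\mathbb{E}[Y_2^2]+2(\mathbb{E}[Y_1Y_2])^2$, gives exactly the claimed formula. Note, however, that the paper does not prove this lemma at all -- it is imported verbatim as Lemma 23 of \cite{Minh:2020regularizedDiv} -- so there is no in-paper argument to compare against; your argument is the standard one and is consistent with how the paper uses the result, namely as the base case for Lemma \ref{lemma:gaussian-integral-double-quadratic-form}, whose proof expands quadratic forms into sums of squares of precisely such linear functionals.
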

\begin{lemma}
	\label{lemma:gaussian-integral-double-quadratic-form}
Let $A,B \in \Lcal(\Hcal)$. Then
\begin{align}
	&\int_{\Hcal}\la x-m, A(x-m)\ra \la x-m, B(x-m)\ra d\Ncal(m,C)(x)
	\nonumber
	\\
	& = 
 \trace(CA)\trace(CB) +\frac{1}{2}\trace(C(A+A^{*})C(B+B^{*})).
\end{align}
For $A,B \in \Sym(\Hcal)$,
\begin{align}
&\int_{\Hcal}\la x-m, A(x-m)\ra \la x-m, B(x-m)\ra d\Ncal(m,C)(x)
\nonumber
\\
& = \trace(CA)\trace(CB) + 2\trace(CACB).
\end{align}
In particular, for $A=B \in \Sym(\Hcal)$,
\begin{align*}
\int_{\Hcal}\la x-m, A(x-m)\ra^2 d\Ncal(m,C)(x) = [\trace(CA)]^2 + 2 \trace[(CA)^2].
\end{align*}
\end{lemma}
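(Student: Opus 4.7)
The plan is to reduce the general bounded case to the self-adjoint case via symmetrization, and then prove the self-adjoint case by combining the spectral decomposition of the covariance operator $C$ with the fourth-moment identity of Lemma 23 in \cite{Minh:2020regularizedDiv} (i.e., Lemma \ref{lemma:gaussian-integral-double-square}). First I would perform the translation $y = x-m$ so the integral is over $\Ncal(0,C)$. Then I would use the following observation, valid in a real Hilbert space: for any $A \in \Lcal(\Hcal)$ and $y \in \Hcal$, symmetry of the real inner product gives $\la y, Ay\ra = \la Ay, y\ra = \la y, A^{*} y\ra$, so $\la y, Ay\ra = \la y, \tilde{A}y\ra$ with $\tilde{A} := \tfrac{1}{2}(A+A^{*})$. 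This reduces the problem to the self-adjoint case applied to $\tilde{A},\tilde{B}$. The same symmetrization argument, applied to the diagonal entries $\la e_k, Ae_k\ra$ in any ONB of eigenvectors of $C$, yields $\trace(CA) = \trace(C\tilde{A})$, from which one reads off the general formula $\trace(CA)\trace(CB) + \tfrac{1}{2}\trace(C(A+A^{*})C(B+B^{*}))$.

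For the self-adjoint case, I would fix an ONB $\{e_k\}_{k\in\Nbb}$ of eigenvectors of the trace class operator $C$ with eigenvalues $\{\lambda_k\}_{k\in\Nbb}$, and write $\xi_k(y) = \la y, e_k\ra$, so that under $\Ncal(0,C)$ the $\xi_k$ are independent centered Gaussians with variances $\lambda_k$. Expanding $\la y, Ay\ra = \sum_{i,j} A_{ij}\xi_i\xi_j$ (with $A_{ij} = \la e_i, Ae_j\ra$) and similarly for $B$, the integral becomes
\begin{equation*}
\sum_{i,j,k,l} A_{ij}B_{kl}\int_{\Hcal} \xi_i\xi_j\xi_k\xi_l\, d\Ncal(0,C)(y).
\end{equation*}
The four-index Gaussian moment is evaluated either by Wick's (Isserlis') theorem or, more directly in our setting, by polarizing Lemma \ref{lemma:gaussian-integral-double-square} to obtain the identity $\int \la y, a\ra\la y, b\ra\la y, c\ra\la y, d\ra\,d\Ncal(0,C) = \la a, Cb\ra\la c, Cd\ra + \la a, Cc\ra\la b, Cd\ra + \la a, Cd\ra\la b, Cc\ra$. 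Specializing to $a=e_i,b=e_j,c=e_k,d=e_l$ and using symmetry $A_{ij}=A_{ji}$, $B_{kl}=B_{lk}$, the four-index sum collapses into $\trace(CA)\trace(CB) + 2\trace(CACB)$, where the identification of the cross term uses the trace identity $\sum_{ij}\lambda_i\lambda_j A_{ij}B_{ij} = \trace(CACB)$ already established in the proof of Lemma \ref{lemma:HS-inner-expansion}.

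The main technical obstacle is justifying the interchange of summation and integration in infinite dimensions for general bounded (not compact) $A,B$. I would handle this by dominated convergence, using the bound $|\la y, Ay\ra\la y, By\ra| \leq \|A\|\,\|B\|\,\|y\|^4$ together with the finiteness of the fourth moment $\int \|y\|^4 d\Ncal(0,C)(y) = (\trace C)^2 + 2\trace(C^2) < \infty$ (a consequence of $C$ being trace class). Equivalently, one can approximate $A,B$ by the finite-rank self-adjoint operators $A_N = P_N\tilde{A}P_N$ and $B_N = P_N\tilde{B}P_N$, establish the identity in finite dimensions where everything reduces to a standard Gaussian moment calculation, and pass to the limit using that $\trace(CA_N)\to\trace(CA)$ and $\trace(CA_NCB_N)\to\trace(CACB)$ by trace class continuity. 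The three stated formulas follow: the general bounded case from the symmetrization reduction, the self-adjoint case by setting $\tilde{A}=A,\tilde{B}=B$, and the $A=B$ diagonal formula by further specializing $B=A$.
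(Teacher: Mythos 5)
Your proposal is correct, and the outer reduction (symmetrize to $\tilde A=\tfrac12(A+A^{*})$, $\tilde B=\tfrac12(B+B^{*})$ using $\la y,Ay\ra=\la y,\tilde Ay\ra$ and $\trace(CA)=\trace(C\tilde A)$) is exactly the paper's step (iii). Where you genuinely diverge is in the self-adjoint core: the paper never expands in the eigenbasis of $C$ and never invokes Wick/Isserlis. It first treats $A,B\in\Sym^{+}(\Hcal)$ by writing $\la x,Ax\ra=\|A^{1/2}x\|^2=\sum_j\la x,A^{1/2}e_j\ra^2$, so that every term is nonnegative and the interchange of sum and integral is free by monotone convergence, after which Lemma \ref{lemma:gaussian-integral-double-square} is applied termwise; the general self-adjoint case then follows from the decomposition $A=\tfrac12(|A|+A)-\tfrac12(|A|-A)$ into positive parts and bilinearity. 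Your route instead diagonalizes $C$, polarizes Lemma \ref{lemma:gaussian-integral-double-square} into the three-pairing Wick identity, and collapses the four-index sum, paying for it with a dominated-convergence (or finite-rank approximation) argument to justify the interchange — which you correctly identify as the one technical obstacle and handle adequately, since $|\la y,Ay\ra\la y,By\ra|\leq\|A\|\,\|B\|\,\|y\|^4$ and the fourth moment is finite, and since $CP_N\to C$ in trace norm gives $\trace(CA_NCB_N)\to\trace(CACB)$. The trade-off: the paper's positivity trick makes the convergence issue vanish entirely at the cost of an extra decomposition step, while your Wick computation is more systematic and would generalize directly to higher moments, but needs the explicit domination argument. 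One small caveat: your stated value $\int\|y\|^4\,d\Ncal(0,C)=(\trace C)^2+2\trace(C^2)$ is the $A=B=I$ instance of the lemma being proved, so to avoid circularity you should note that it follows independently (e.g.\ from Lemma \ref{lemma:gaussian-integral-double-square} summed over an orthonormal basis with monotone convergence).
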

\begin{proof} It suffices to prove for $m=0$.

(i)	For $A,B \in \Sym^{+}(\Hcal)$, we have $\la x, Ax\ra = ||A^{1/2}x||^2 = \sum_{j=1}^{\infty}\la A^{1/2}x, e_j\ra^2$ $= \sum_{j=1}^{\infty}\la x, A^{1/2}e_j\ra^2$, $\la x, Bx\ra = \sum_{k=1}^{\infty}\la x, B^{1/2}e_k\ra^2$. Then
\begin{align*}
& \int_{\Hcal}\la x, Ax\ra \la x, Bx\ra d\Ncal(0,C)(x) = \int_{\Hcal}\sum_{j,k=1}^{\infty}\la x, A^{1/2}e_j\ra^2\la x, B^{1/2}e_k\ra^2 d\Ncal(0,C)(x)
\\
& = \sum_{j,k=1}^{\infty}\int_{\Hcal}\la x, A^{1/2}e_j\ra^2\la x, B^{1/2}e_k\ra^2 d\Ncal(0,C)(x)
\\
&\quad \text{by Lebesgue Monotone Convergence Theorem}
\\
& = \sum_{j,k=1}^{\infty}\left[\la A^{1/2}e_j, CA^{1/2}e_j\ra \la B^{1/2}e_k, CB^{1/2}e_k\ra + 2 \la A^{1/2}e_j, CB^{1/2}e_k\ra^2\right]
\\
& = \left(\sum_{j=1}^{\infty}\la e_j, A^{1/2}CA^{1/2}e_j\ra\right)\left(\sum_{k=1}^{\infty}\la e_k, B^{1/2}CB^{1/2}e_k\ra\right) + 2\sum_{j,k=1}^{\infty}\la e_j, A^{1/2}CB^{1/2}e_k\ra^2
\\
& = \trace(A^{1/2}CA^{1/2})\trace(B^{1/2}CB^{1/2}) + 2 ||A^{1/2}CB^{1/2}||^2_{\HS}
\\
& = \trace(CA)\trace(CB) + 2 \trace(B^{1/2}CACB^{1/2}) 
\\
&= \trace(CA)\trace(CB) + 2\trace(CACB).
\end{align*}

(ii) For $A,B \in \Sym(\Hcal)$, we write $A = A_1-A_2$, $B = B_1 - B_2$, where $A_1 =\frac{1}{2} (|A| +A)$, $A_2 = \frac{1}{2}(|A|-A)$,
$B_1 = \frac{1}{2}(|B|+B)$, $B_2 = \frac{1}{2}(|B|-B)$. Here $A_1,A_2,B_1, B_2 \in \Sym^{+}(\Hcal)$. Applying part (i) gives
\begin{align*}
&\int_{\Hcal}\la x, Ax\ra \la x, Bx\ra d\Ncal(0,C)(x) = \int_{\Hcal}\la x, (A_1-A_2)x\ra \la x, (B_1-B_2)x\ra d\Ncal(0,C)(x)
\\
& = \int_{\Hcal}[\la x, A_1x\ra - \la x, A_2x\ra][\la x, B_1x\ra-\la x,B_2x\ra]d\Ncal(0,C)(x)
\\
& = \int_{\Hcal}[\la x, A_1x\ra\la x, B_1x\ra - \la x, A_1x\ra\la x, B_2x\ra]d\Ncal(0,C)(x)
\\
&\quad + \int_{\Hcal}-\la x, A_2x\ra\la x,B_1x\ra + \la x, B_1x\ra\la x, B_2x\ra]d\Ncal(0,C)(x)
\\
& = [\trace(CA_1)\trace(CB_1) + 2\trace(CA_1CB_1)] -[\trace(CA_1)\trace(CB_2) + 2\trace(CA_1)CB_2]
\\
&\quad - \trace[(CA_2)\trace(CB1) + 2\trace(CA_2CB_1)] + [\trace(CA_2)\trace(CB_2) + 2 \trace(CA_2CB_2)]
\\
& = \trace[C(A_1-A_2)]\trace[C(B_1-B_2)] + 2\trace[C(A_1-A_2)C(B_1-B_2)]
\\
& = \trace(CA)\trace(CB) + 2\trace(CACB).
\end{align*}  

(iii) For the general case $A, B \in \Lcal(\Hcal)$, we have $\la x, Ax\ra = \frac{1}{2}\la x, (A+A^{*})x\ra$, $\la x, Bx\ra = \frac{1}{2}\la x, (B+B^{*})x\ra$. Applying part (ii) gives
\begin{align*}
&\int_{\Hcal}\la x, Ax\ra \la x, Bx\ra d\Ncal(0,C)(x) = 
\frac{1}{4}\int_{\Hcal}[\la x, (A+A^{*})x\ra \la x, (B+B^{*})x\ra d\Ncal(0,C)(x)
\\
& = \frac{1}{4}\trace[C(A+A^{*})]\trace[C(B+B^{*})] + \frac{1}{2}\trace(C(A+A^{*})C(B+B^{*}))
\\
& = \trace(CA)\trace(CB) +\frac{1}{2}\trace(C(A+A^{*})C(B+B^{*})).
\end{align*}
Here we have used the cyclic property of the trace and the fact that $\trace(A^{*}) = \trace(A)$ for $A \in \Tr(\Hcal)$.
\qed
\end{proof}

\bibliographystyle{plain}
\bibliography{../cite_RKHS}
\end{document}